\numberwithin{equation}{section}
\DeclareSymbolFont{cyrletters}{OT2}{wncyr}{m}{n}
\DeclareMathSymbol{\Sha}{\mathalpha}{cyrletters}{"58}
\newcommand{\tony}[1]{{\color{blue} \sf
    $\spadesuit\spadesuit\spadesuit$ TONY: [#1]}}
\newcommand{\F}{\mathbf{F}}
\newcommand{\CC}{\mathbf{C}}
\newcommand{\wt}[1]{\widetilde{#1}}
\newcommand{\Q}{\mathbf{Q}}
\newcommand{\Z}{\mathbf{Z}}
\newcommand{\mf}[1]{\mathfrak{#1}}
\newcommand{\Gal}{\operatorname{Gal}}
\newcommand{\ul}[1]{\underline{#1}}
\newcommand{\ol}[1]{\overline{#1}}
\newcommand{\wh}[1]{\widehat{#1}}
\newcommand{\mbb}[1]{\mathbb{#1}}
\newcommand{\Cal}[1]{\mathcal{#1}}
\newcommand{\A}{\mathbf{A}}
\newcommand{\co}{\colon}
\newcommand{\mrm}[1]{\mathrm{#1}}
\newcommand{\cX}{X^{\circ}}
\newcommand{\bs}{\backslash}
\DeclareMathOperator{\GL}{GL}
\DeclareMathOperator{\SL}{SL}
\DeclareMathOperator{\Frob}{Frob}
\DeclareMathOperator{\N}{\mathbb{N}}
\DeclareMathOperator{\Tr}{Tr}
\DeclareMathOperator{\Aut}{Aut}
\DeclareMathOperator{\Nm}{Nm}
\DeclareMathOperator{\Spec}{Spec\,}
\DeclareMathOperator{\End}{End}
\DeclareMathOperator{\Res}{Res}
\DeclareMathOperator{\Stab}{Stab}
\DeclareMathOperator{\Bun}{Bun}
\DeclareMathOperator{\Id}{Id}
\DeclareMathOperator{\Gr}{Gr}
\DeclareMathOperator{\Sym}{Sym}
\DeclareMathOperator{\inv}{inv}
\DeclareMathOperator{\Fix}{Fix}
\DeclareMathOperator{\Sht}{Sht}
\DeclareMathOperator{\Hecke}{Hecke}
\DeclareMathOperator{\Sat}{Sat}
\newtheorem{thm}{Theorem}[section]
\newtheorem{lemma}[thm]{Lemma}
\newtheorem{prop}[thm]{Proposition}
\newtheorem{cor}[thm]{Corollary}
\theoremstyle{remark}
\newtheorem{remark}[thm]{Remark} 
\newtheorem{defn}[thm]{Definition}
\newtheorem{const}[thm]{Construction}
\newtheorem{example}[thm]{Example}
\newtheorem{exdef}[thm]{Example/Definition}
\def\th@remark{%
  \thm@headfont{\bfseries}%
  \normalfont 
  \thm@preskip \thm@preskip 
  \thm@postskip\thm@preskip
}
\def\imod#1{\allowbreak\mkern5mu({\operator@font mod}\,\,#1)}
\numberwithin{equation}{section}
\title[Nearby cycles of parahoric shtukas, and a fundamental lemma]{Nearby cycles of parahoric shtukas,\\ and a fundamental lemma for base change}
\author{Tony Feng}
\begin{document}

\begin{abstract}
Using the Langlands-Kottwitz paradigm, we compute the trace of Frobenius composed with Hecke operators on the cohomology of nearby cycles, at places of parahoric reduction, of perverse sheaves on certain moduli stacks of shtukas. Following an argument of Ng\^{o}, we then use this to give a geometric proof of a base change fundamental lemma for parahoric Hecke algebras for $\GL_n$ over local function fields. This generalizes a theorem of Ng\^{o}, who proved the base change fundamental lemma for spherical Hecke algebras for $\GL_n$ over local function fields, and extends to positive characteristic (for $\GL_n$) a fundamental lemma originally introduced and proved by Haines for $p$-adic local fields. 
\end{abstract}

\maketitle

\tableofcontents



\section{Introduction}\label{intro}
\subsection{Motivation}
There are two main goals of this paper: 
\begin{enumerate}
\item To compute the trace of Frobenius composed with Hecke operators on the cohomology of nearby cycles at places of \emph{parahoric} reduction for certain moduli stacks of shtukas, and 
\item To parlay the resulting formulas into a \emph{geometric} proof of a fundamental lemma for base change for central elements in parahoric Hecke algebras over local function fields. 
\end{enumerate} 
The first goal is accomplished by using the Grothendieck-Lefschetz trace formula to break up the computation of the trace into two pieces: (1) counting points on certain moduli spaces, and (2) understanding the stalks of the nearby cycles sheaves. These pieces are then each resolved by a sequence of technical steps whose overall strategy is rather well-known, and which would require a considerable amount of notation to describe. Therefore, in this introduction we will focus on informally explaining the idea of the second goal.

The fundamental lemma of interest was proposed and proved by Haines \cite{Haines09} for $p$-adic (i.e. mixed characteristic) local fields, and generalizes the fundamental lemma for base change for spherical Hecke algebras proved (independently) in the $p$-adic case by Clozel \cite{Clo90} and Labesse \cite{Lab90}, building on work of Kottwitz \cite{Kott86b}, and in the function field case (for $\GL_n$) by Ng\^{o} \cite{Ngo06}. 

The original motivation for this fundamental lemma was to study the cohomology of a Shimura variety with parahoric level structure, and in particular to determine the semisimple zeta factor at a place of parahoric reduction. The fundamental lemma enters in comparing the trace of Frobenius and Hecke operators on this cohomology with the geometric side of the Arthur-Selberg trace formula. We refer the interested reader to \cite{Haines09}, especially p. 573, for more details. 

The same applications are available in the function field setting, with Shimura varieties replaced by the \emph{moduli stacks of shtukas}, which have been utilized by Drinfeld (\cite{Drin87}, for $\GL_2$), L. Lafforgue (\cite{Laff02}, for $\GL_n$), and V. Lafforgue (\cite{Laff18}, for general reductive groups) to spectacular success towards the global Langlands correspondence over function fields.

 However, in this paper we have chosen to emphasize the \emph{geometric} aspect of the fundamental lemma, rather than its applications to the Langlands program. In contrast to the proof of \cite{Haines09} for the $p$-adic case, which following in the tradition of \cite{Clo90} and \cite{Lab90} is via $p$-adic harmonic analysis, our proof works by exploiting additional geometry and structure which is available in the function field setting. Our strategy is very much based on that of \cite{Ngo06}, and indeed specializes to it in the case of spherical Hecke algebras. 
 
 Broadly speaking, the base change fundamental lemma compares an orbital integral with a twisted orbital integral. To elaborate, let $F$ be a local field, $G$ a reductive group over $F$, $\gamma \in G(F)$, and $f$ a function on $G(F)$. The \emph{orbital integral} corresponding to this data is 
 \begin{equation}\label{orbital integral}
\mrm{O}_{\gamma}(f)  := \int_{G(F)/G_{\gamma}(F)} f(g^{-1} \gamma g) \, dg
 \end{equation}
 where $G_{\gamma}(F)$ is the centralizer of $\gamma$ in $G(F)$. We will take $f$ to be in an appropriate Hecke algebra $\Cal{H}_G(F)$. (Of course we also need to discuss the normalization of Haar measures, but let us leave that for later, in \S \ref{FL statement}.) 
 
 Let $E/F$ be an unramified extension of degree $r$, $\delta \in G(E)$, and $f_E$ a function on $G(E)$. The \emph{twisted orbital integral} corresponding to this data is 
 \begin{equation}\label{twisted orbital integral}
 \mrm{TO}_{\delta \sigma}(f_E)  := \int_{G(E)/G_{\delta \sigma}(F)} f_E(g^{-1} \delta \sigma (g)) \, dg
 \end{equation}
 where $\sigma \in \Gal(E/F)$ is the lift of (arithmetic) Frobenius, and 
 \[
 G_{\delta \sigma}(F)  = \{ g \in G(E) \co g^{-1} \delta \sigma(g) = \delta\}
 \]
 is the twisted centralizer of $\gamma$ in $G(E)$. Again, we will take $f_E$ to be in an appropriate Hecke algebra $\Cal{H}_G(E)$.
 
 If $\Cal{H}_{G(E),J}$ and $\Cal{H}_{G(F),J}$ are corresponding \emph{parahoric} Hecke algebras, then there is a base change homomorphism for their centers
 \[
 b  \co Z(\Cal{H}_{G(E),J}) \rightarrow Z(\Cal{H}_{G(F),J}).
 \]
 There is also a norm map $N$ from stable twisted conjugacy classes in $G(E)$ to stable conjugacy classes in $G(F)$. 
 
In the special case $G =\GL_n$, the base change fundamental lemma for the center of parahoric Hecke algebras predicts that for $\sigma$-regular, $\sigma$-semisimple $\delta \in G(E)$ and $f_E \in Z(\Cal{H}_{G(E),J})$, we have 
 \begin{equation}\label{intro BCFL}
  \mrm{TO}_{\delta \sigma}(f_E) = \mrm{O}_{N(\delta)} (b(f_E)).
 \end{equation}
This is almost what we will prove. (For more general $G$, the formulation is more complicated; see \cite{Haines09}, Theorem 1.0.3 and \S 5.) 

\subsection{The idea of the proof}
 
 Now we can describe our strategy of proof of \eqref{intro BCFL}. The starting point is the seminal work of Kottwitz on counting points of Shimura varieties over finite fields. In \cite{Kott92} Kottwitz proves a formula expressing the trace of Frobenius composed with a Hecke operator on the cohomology of certain PEL Shimura varieties as a sum of a product of (twisted) orbital integrals: 
\begin{equation}\label{intro Kottwitz formula}
\Tr(h \circ \Frob_p, H^*(\mrm{Sh}_K, \ol{\Q}_{\ell})) = \sum (\ldots) \mrm{O}_{\gamma} (h^p) \mrm{TO}_{\delta \sigma}(h_p)
\end{equation}
where $\mrm{Sh}_K$ is an appropriate Shimura variety and $h$ is a Hecke operator. In fact the purpose of the fundamental lemma is to re-express the twisted orbital integrals in \eqref{intro Kottwitz formula}, so as to be able to compare the expression with the geometric side of the Arthur-Selberg trace formula. But in this paper we adopt an opposite perspective, instead viewing \eqref{intro Kottwitz formula} as  \emph{giving a geometric interpretation of (twisted) orbital integrals} (in the $p$-adic case) in terms of the cohomology of Shimura varieties. 

In the function field setting, which is the one of interest to this paper, one can prove an analogous formula of the form
\begin{equation}\label{intro Ngo formula A}
\Tr(h_A \circ \Frob_{x_0}\circ \tau, H^*(\Sht_A, \Cal{A})) = \sum (\ldots) \mrm{O}_{\gamma} (h_A^{x_0}) \mrm{TO}_{\delta \sigma}(h_{A,x_0})
\end{equation}
for an appropriate moduli stack $\Sht_A$, an appropriate sheaf $\Cal{A}$, an appropriate Hecke operator $h_A$, and an additional symmetry $\tau$. (Roughly, $\tau$ is a ``rotation'' symmetry that arises from the moduli problem.) 

However, it turns out that we can \emph{also} construct a moduli stack $\Sht_B$ such that 
\begin{equation}\label{intro Ngo formula B}
\Tr(h_B \circ \Frob_{x_0} \circ \tau, H^*(\Sht_B, \Cal{B})) = \sum (\ldots) \mrm{O}_{\gamma} (h_B^{x_0}) \mrm{O}_{\Nm(\sigma)}(b(h_{B,x_0}))
\end{equation}
for an appropriate sheaf $\Cal{B}$, an appropriate Hecke operator $h_B$, and an additional symmetry $\tau$ similar to that from \eqref{intro Ngo formula A}. The crucial point is that in \eqref{intro Ngo formula B} the twisted orbital integral is replaced with the orbital integral of a base changed function. 

We remark that the computations \eqref{intro Ngo formula A} and \eqref{intro Ngo formula B} were obtained in \cite{Ngo06} for places of good (hyperspecial) reduction, in which case one finds a spherical Hecke operator. In the present work, which concerns places of parahoric bad reduction, the analogous computations \eqref{intro Ngo formula A} and \eqref{intro Ngo formula B} are of independent interest, and actually form the main content of this paper. They require several nontrivial inputs, including, for the parahoric setting that we study here, a version of the Kottwitz Conjecture for shtukas, as well as a geometric interpretation of the base change homomorphism for Hecke algebras. Nevertheless, let us elide these points for now.

The upshot is that \eqref{intro Ngo formula A} and \eqref{intro Ngo formula B} translate the problem of comparing orbital integrals and twisted orbital integrals into comparing (the cohomology of) two different moduli spaces $\Sht_A$ and $\Sht_B$. (We remark that the relationship we seek turns out to be subtler than equality, but again we elide this issue for now.) To do this, we realize $\Sht_A$ and $\Sht_B$ as specializations of ``bigger'' moduli spaces $\wt{\Sht}_A$ and $\wt{\Sht}_B$. We then apply the Langlands-Kottwitz method to the spaces $\wt{\Sht}_A$ and $\wt{\Sht}_B$, obtaining formulas analogous to \eqref{intro Ngo formula A} and \eqref{intro Ngo formula B}, but the crucial point is that over ``many'' points of these larger moduli spaces (necessarily away from $\Sht_A$ and $\Sht_B$, the original spaces of interest), the output of the Langlands-Kottwitz method has \emph{no} twisted orbital integrals, hence does not require any fundamental lemma to compare. We then deduce the desired equality over the specializations to $\Sht_A$ and $\Sht_B$ by a continuation principle. The key to making this strategy work is a strategic design of the moduli spaces $\Sht_A$ and $\Sht_B$, which we take from \cite{Ngo06}.

\subsection{Statement of the base change fundamental lemma}\label{FL statement}

We now give a precise formulation of the fundamental lemma of interest. It is an exact analogue for local function fields of the fundamental lemma studied in \cite{Haines09}. We will impose several assumptions that simplify the formulation, referring the general case to \cite{Haines09}. In particular we let $G$ be a reductive group over a local field $F$, and assume that $G$ is unramified and $G_{\mrm{der}}$ is simply connected.

\subsubsection{Normalization of Haar measures}\label{sssec: Haar measures}
 Recall the notation of \S \ref{intro}. To give a well-defined meaning to the orbital integral \eqref{orbital integral} and twisted orbital integral \eqref{twisted orbital integral}, we need to specify Haar measures on  $G, G_{\gamma}$ and $G_{\delta \sigma}$. We assume that $\gamma$ is regular semisimple.

We fix a hyperspecial vertex and an alcove containing it in the Bruhat-Tits building for $G$ over $F$. By Bruhat-Tits theory this induces maximal compact subgroups $K_F \subset  G(F)$ and $K_E \subset G(E)$. 

\begin{itemize}
\item We pick the left-invariant Haar measures $dg$ on $G(F)$ and $G(E)$ such that $dg(K_F)  =1$ and $dg(K_E)  =1$. 
\item We pick the left-invariant Haar measures $dh$ on $G_{\gamma}(F)$ and $G_{\delta \sigma}(F)$ such that $dg(K_F \cap G_{\gamma}(F) ) = 1$ and $dh$ on $G_{\delta \sigma}(E) $ is the canonical transfer of Haar measure from $G_{\gamma}$ to its inner form $G_{\delta \sigma}$. \tony{put a reference to Kottwitz}
\end{itemize}

Taking the quotient measure $\frac{dg}{dh}$ on $G(F)/G_{\gamma}(F)$ and $G(E)/G_{\delta \sigma}(E)$, now \eqref{orbital integral} and \eqref{twisted orbital integral} have been fully defined. \\

\subsubsection{Parahoric Hecke algebras} We now fix a facet in the given alcove whose closure contains the fixed hyperspecial point, which induces corresponding (compact open) parahoric groups $J_F \subset G(F)$ and $J_E \subset G(E)$. Let $\Cal{H}_{G(F), J} = \mrm{Fun}_c(J_F \backslash G(F) / J_F, \ol{\Q}_{\ell})$ and $\Cal{H}_{G(E), J} = \mrm{Fun}_c(J_E \backslash G(E) / J_E,  \ol{\Q}_{\ell})$ be the corresponding parahoric Hecke algebras. (Parahoric Hecke algebras are discussed in more detail in \S \ref{parahoric HA}.)

\subsubsection{The base change homomorphism} Let $Z(\Cal{H}_{G(F),J})$ be the center of $\Cal{H}_{G(F),J}$, and define $Z(\Cal{H}_{G(E),J})$ similarly. There is a base change homomorphism 
\[
b \co Z(\Cal{H}_{G(E),J}) \rightarrow Z(\Cal{H}_{G(F),J}),
\]
which is defined in \S \ref{defn base change}. To give a brief characterization of the base change homomorphism: under the Bernstein isomorphism 
\[
- *_J \mbb{I}_K\co Z(\Cal{H}_{G(F),J}) \xrightarrow{\sim} \Cal{H}_{G(F),K}
\]
obtained by convolving with the indicator function $\mbb{I}_K$, it corresponds to the usual base change homomorphism for spherical Hecke algebras.  
\[
\begin{tikzcd}
Z(\Cal{H}_{G(E), J}) \ar[r, "b"] \ar[d, "- *_J \mbb{I}_K", "\sim"']  & Z(\Cal{H}_{G(F), J}) \ar[d, "- *_J \mbb{I}_K", "\sim"'] \\
\Cal{H}_{G(E), K} \ar[r, "b"] & \Cal{H}_{G(F), K}
\end{tikzcd}
\]

\subsubsection{The norm map} Let $\sigma \in \Gal(E/F)$ be a lift of (arithmetic) Frobenius. The ``concrete norm''
\[
\Nm_{E/F} \co G(E) \rightarrow G(F)
\]
defined by 
\[
\Nm_{E/F} (\delta) := \delta \cdot \sigma(\delta) \cdot \ldots \sigma^{r-1} (\delta)
\]
descends to a norm map 
\[
N \co G(E)/\text{stable $\sigma$-conjugacy} \rightarrow G(F)/\text{stable conjugacy}.
\]

\subsubsection{Formulation of the fundamental lemma}

The following fundamental lemma was proved by Haines in the $p$-adic setting \cite[Theorem 1.0.3]{Haines09}.

\begin{thm}[Haines]\label{FL formulation}
Let $E/F$ be an unramified extension of $p$-adic local fields of degree $r$ and residue characteristic $p$. Let $\psi \in Z(\Cal{H}_{G(E),J})$ and $\delta \in G(E)$ such that $N(\delta)$ is semisimple. Then we have
\[
  \mrm{SO}_{\delta \sigma}^{G(E)}(\psi) = \mrm{SO}_{N(\delta)}^G (b(\psi)).
\]
\end{thm}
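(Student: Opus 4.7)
The plan is to follow the strategy of Haines \cite{Haines09} in the $p$-adic setting, which is the direct analog for Shimura varieties of what the present paper does for moduli of shtukas. I would reduce the identity for an arbitrary $\psi \in \Cal{H}_{G(E),J}$ to the case of Bernstein generators $\phi_E = z_\mu^E$, and then establish the latter by comparing two trace formulas on a Shimura variety with parahoric level at $p$.

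\textbf{Step 1 (Reduction to Bernstein generators).} The center $Z(\Cal{H}_{G(E),J})$ admits a Bernstein basis indexed by dominant coweights $\mu$, and by linearity it suffices to prove the theorem for $\psi = z_\mu^E$. Under the commutative square defining $b$ in \S\ref{FL statement}, these Bernstein generators are compatible with $b$, so the task reduces to a matching of (stable) twisted vs.\ untwisted orbital integrals of a single pair $(\phi_E, b(\phi_E))$.

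\textbf{Step 2 (Two trace formulas).} Choose a PEL-type Shimura datum with $G$ unramified at $p$, prime-to-$p$ level $K^p$, and parahoric level $J$ at $p$; let $\mrm{Sh}_{K^pJ}$ denote the resulting Shimura variety, with its Kottwitz--Rapoport--Zink local model. The Kottwitz test function conjecture (Haines--Ng\^o, Pappas--Rapoport, Zhu) identifies the semisimple $\Frob_p^r$-trace function of the nearby cycles on the local model with $\phi_E$, and combined with the Langlands--Kottwitz counting-points formula at parahoric level produces
\[
\Tr(f^p \cdot \Frob_p^r \mid H^*(\mrm{Sh}_{K^pJ,\ol{\F}_p}, R\Psi \ol{\Q}_\ell)) = \sum(\cdots) \, \mrm{O}_\gamma(f^p) \, \mrm{TO}_{\delta\sigma}(\phi_E).
\]
A second expression for the same trace, obtained by mimicking the $\Sht_B$-type construction of the present paper or, equivalently, by invoking the Arthur--Selberg trace formula together with the spherical base change fundamental lemma of Clozel--Labesse--Kottwitz, yields
\[
\Tr(f^p \cdot \Frob_p^r \mid H^*(\mrm{Sh}_{K^pJ,\ol{\F}_p}, R\Psi \ol{\Q}_\ell)) = \sum(\cdots) \, \mrm{O}_\gamma(f^p) \, \mrm{O}_{N(\delta)}(b(\phi_E)).
\]
Equating the two, varying the prime-to-$p$ test function $f^p$, and using linear independence of orbital-integral distributions on the stable regular semisimple set, one extracts a termwise equality $\mrm{TO}_{\delta\sigma}(\phi_E) = \mrm{O}_{N(\delta)}(b(\phi_E))$. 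Stabilization (essentially trivial for $\GL_n$, where all relevant inner forms and endoscopic data are trivial) upgrades this to the stable identity of the theorem.

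\textbf{Main obstacle.} The heart of the argument is the Kottwitz test function conjecture of Step 2, identifying nearby cycles on parahoric local models with Bernstein generators; in the $p$-adic setting this is an entirely geometric statement about central sheaves on affine flag varieties (Gaitsgory, Zhu) together with a delicate analysis of Kottwitz--Rapoport--Zink local models, completed only relatively recently. A secondary difficulty is producing the second trace formula in a form directly displaying $\mrm{O}_{N(\delta)}(b(\phi_E))$, which requires either a $p$-adic avatar of the $\Sht_B$ construction or a careful orchestration of the global trace formula with spherical base change; this is the $p$-adic analog of the most novel geometric input of the present paper. Granted these two inputs, Step 1 and the extraction at the end of Step 2 are formal.
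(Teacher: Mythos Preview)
The paper does not prove this theorem. Theorem~\ref{FL formulation} is stated with attribution to Haines and cited as \cite{Haines09}, Theorem 1.0.3; it serves as background and motivation for the paper's own main result (Theorem~\ref{main}), which is a function-field analog for $\GL_n$. So there is no ``paper's own proof'' to compare against.

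That said, your sketch mischaracterizes Haines's method. As the paper explicitly notes (see the Remark following Theorem~\ref{FL formulation}, and the sentence in \S1.1 beginning ``In contrast to the proof of \cite{Haines09}\ldots''), Haines's argument is via $p$-adic harmonic analysis --- the global simple trace formula together with Kottwitz's stabilization of the twisted trace formula --- in the tradition of Clozel and Labesse. It does \emph{not} proceed through Shimura varieties, local models, and nearby cycles. The geometric strategy you outline (Langlands--Kottwitz counting + test-function identification via nearby cycles + a second geometric trace expression) is precisely the novelty of \emph{this} paper, carried out for shtukas in positive characteristic; it is not Haines's proof, and the paper presents its approach as an alternative to, not a rehearsal of, Haines's.

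Your Step 2 also contains a real gap. You claim the ``second expression'' displaying $\mrm{O}_{N(\delta)}(b(\phi_E))$ can be obtained either by a $p$-adic analog of the $\Sht_B$ construction or ``equivalently'' by the Arthur--Selberg trace formula with the spherical base change lemma. These are not equivalent, and the first option does not exist: the $\Sht_B$ construction depends on having $r$ independently moving legs in $X^r$ that one degenerates to the diagonal, a feature with no known Shimura-variety counterpart. The second option --- comparing an untwisted and a twisted global trace formula, feeding in the already-known spherical fundamental lemma --- is indeed close to what Haines actually does, but that is a purely representation-theoretic argument and not the geometric one you have framed your proposal around.
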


Here $\mrm{SO}$ are \emph{stable (twisted) orbital integrals}, for whose definition we refer to \cite[\S 5.1]{Haines09}. Since our eventual result will be for $G = \GL_n$, where stable conjugacy coincides with conjugacy, we can ignore the issue of stabilization.

\begin{remark}
Haines has informed us that his proof, which is based on the global simple trace formula and Kottwitz's stabilization of the twisted trace formula, does not carry over (at least, not without nontrivial additional work) to the positive characteristic setting. \footnote{However, we note that W. Ray Dulany proved the base change fundamental lemma for $\GL_2$ by hand in the function field case \cite{RD10}. We thank Tom Haines for informing us about Ray Dulany's work.}	
\end{remark}

\subsection{Statement of results}
 We now formulate our main result, which is an extension (in a special case) of Theorem \ref{FL formulation} to positive characteristic. 
	
By the Bernstein isomorphism, a basis for $ Z(\Cal{H}_{G(E),J})$ is given by the functions $\psi_{\mu}$ for $\mu$ a dominant coweight of $G$, which correspond under $- * _J \mbb{I}_K$ to the indicator functions of the double coset in $K_E \backslash G(E)/K_E$ indexed by $\mu$. 

\begin{exdef} If $G = \GL_n$, and  $T \subset \GL_n$ is the usual (diagonal) maximal torus, then we may identify $ X_*(T) \cong \Z^n$ in the standard way. The dominant weights coweights $X_*(T)_+$ are those $\mu = (\mu_1,\ldots, \mu_n) $ with $\mu_1 \geq \mu_2 \geq \ldots  \geq \mu_n$. We define
\[
|\mu| := \mu_1  + \ldots + \mu_n.
\]
\end{exdef}

In this paper we prove: 	

\begin{thm}\label{main}
Let $E/F$ be an unramified degree $r$ extension of characteristic $p$ local fields. If $\delta \in \GL_n(E)$ is such that $N(\delta)$ is regular semisimple and separable, and $\psi \in Z(\Cal{H}_{\GL_n(E),J})$ is a linear combination of $\psi_{\mu}$ with $|\mu| = 0$, then we have
\[
  \mrm{TO}_{\delta \sigma}(\psi) = \mrm{O}_{N(\delta)} (b(\psi)).
\]
\end{thm}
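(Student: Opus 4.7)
The plan is to follow the geometric strategy of Ng\^o, adapted to the parahoric setting. The starting point is to construct two moduli stacks of shtukas $\Sht_A$ and $\Sht_B$ for $\GL_n$ over the function field $F$, with parahoric level structure at a distinguished place $x_0$ corresponding to the parahoric $J$, and with legs capturing the coweights appearing in $\psi$. The stacks are designed, following \cite{Ngo06}, so that they come equipped with a ``rotation'' automorphism $\tau$ whose fixed-point set on the Frobenius-twisted geometric points produces exactly the (twisted) orbital integrals appearing in \eqref{intro BCFL}. Applying the Grothendieck–Lefschetz trace formula, combined with a Langlands–Kottwitz-style point-counting that decomposes Frobenius-plus-$\tau$ fixed points by Kottwitz triples, should yield the formulas \eqref{intro Ngo formula A} and \eqref{intro Ngo formula B}, where the local factor at $x_0$ in \eqref{intro Ngo formula A} is a twisted orbital integral $\mrm{TO}_{\delta\sigma}(\psi)$ and the local factor at $x_0$ in \eqref{intro Ngo formula B} is an ordinary orbital integral $\mrm{O}_{N(\delta)}(b(\psi))$.

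Carrying this out at a place of parahoric reduction requires two new geometric inputs beyond \cite{Ngo06}. First, one needs a parahoric version of the Kottwitz conjecture for nearby cycles: the stalk of the nearby cycles of the chosen IC-type sheaf on the affine flag variety attached to $J_E$ at a semisimple $\sigma$-conjugacy class should equal, up to a normalization, a trace on a standard representation of the dual group --- this is what translates the geometry at $x_0$ into the function $\psi$ on one side and $b(\psi)$ on the other. Second, one needs a \emph{geometric} interpretation of the base change homomorphism $b$: the square through $-\ast_J \mathbb{I}_K$ in \S\ref{FL statement} should be matched with a sheaf-theoretic square relating convolution with the IC sheaf on the affine Grassmannian to convolution with its image under Gaitsgory's nearby cycles functor. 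These are precisely the computations that occupy the main body of the paper, and they combine to give \eqref{intro Ngo formula A} and \eqref{intro Ngo formula B}.

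Having the two trace formulas, the remaining task is to compare them. Here the key step is to embed $\Sht_A$ and $\Sht_B$ as closed specializations of larger moduli stacks $\wt\Sht_A$ and $\wt\Sht_B$ parametrized by an auxiliary base (essentially by letting the relative positions of the legs vary). Over a dense open of this base --- where the legs at $x_0$ move off --- the Langlands–Kottwitz formulas for $\wt\Sht_A$ and $\wt\Sht_B$ produce only honest orbital integrals, with no twisted factor at $x_0$, and a direct sheaf-theoretic isomorphism between the relevant IC complexes (a consequence of the compatibility of Satake with Gaitsgory's nearby cycles) forces the two traces to agree pointwise. One then argues that the traces of Frobenius composed with the Hecke-plus-$\tau$ correspondence are constructible functions on the base, so the equality propagates by continuity/specialization to the closed stratum, yielding the identity of \eqref{intro Ngo formula A} with \eqref{intro Ngo formula B} at $\Sht_A$ and $\Sht_B$. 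Finally, one runs the argument over enough global fields $F$ and enough auxiliary data (varying the other legs and the $\tau$-symmetry) to separate the contribution of a single Kottwitz triple and thereby isolate $\mrm{TO}_{\delta\sigma}(\psi) = \mrm{O}_{N(\delta)}(b(\psi))$ for the given $\delta$; the restriction to $|\mu|=0$ is the condition that makes the relevant shtuka stacks well-defined (so that the determinant line is trivialized) and regular-semisimple-separability of $N(\delta)$ ensures the corresponding stratum is \'etale, making the separation of Kottwitz triples clean.

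The main obstacle I expect is the second input above: the geometric interpretation of the base change homomorphism at parahoric level. In the spherical case treated in \cite{Ngo06}, base change is incarnated by Frobenius pullback on the Satake category, which is essentially formal. At parahoric level one must instead relate the center of the affine Hecke algebra over $E$ to that over $F$ via a Frobenius-equivariant structure on Gaitsgory's central sheaves, and verify that the resulting map matches $b$ under the Bernstein isomorphism as in the diagram of \S\ref{FL statement}. A close second obstacle is the parahoric Kottwitz conjecture for shtukas, which, although expected, requires a careful harmonic-analytic identification of stalks of nearby cycles with Bernstein functions --- essentially the parahoric enhancement of the test function conjecture.
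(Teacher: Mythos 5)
Your overall architecture is the paper's: two moduli problems with a rotation $\tau$, Langlands--Kottwitz counting in which the parahoric Kottwitz conjecture for nearby cycles produces $\psi'_{r,\mu}$ at $x_0$ on one side and a Gaitsgory-nearby-cycles geometrization of the Bernstein-center base change produces $b(\psi'_{r,\mu})$ on the other, and finally isolation of a single Kottwitz triple by auxiliary Hecke operators together with weak approximation (your ``enough global fields'' is really one global field plus approximation of $\delta$ by a rational element, using local constancy of orbital integrals near regular semisimple separable elements). However, there is a genuine gap at the foundation: you construct $\Sht_A$ and $\Sht_B$ directly for the constant group $\GL_n$. For the constant group these stacks are of infinite type, the fixed-point groupoids of $h\circ\Frob\circ\tau$ are infinite (non-elliptic classes contribute), and the pushforward complexes are not constructible, so neither the Grothendieck--Lefschetz step nor any continuation argument can be run. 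The whole reason Theorem \ref{main} is stated only for $\GL_n$ is that one can globalize to a division algebra $D$ split at $x_0$ and ramified at a set $Z$ with $\# Z \geq n^2(||\mu_1||+\ldots+||\mu_r||)$: then every nonzero rational element is elliptic and the structure morphisms $\pi_A,\pi_B$ are proper (Propositions \ref{proper} and \ref{prop: integral model proper}), including over the parahoric point via the integral model. Your proposal never introduces this device, and without it the strategy collapses at the first step.

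Relatedly, your propagation mechanism is too weak as stated. You say the traces are ``constructible functions on the base, so the equality propagates by continuity/specialization''; constructibility gives no such principle, since a constructible function can jump exactly on the closed stratum $x_0^r$ where the twisted orbital integral lives. What is actually needed, and what the paper uses, is that properness plus universal local acyclicity make $\Cal{A}^{\mu}_r$ and $\Cal{B}^{\mu}_r$ local systems on $(\cX)^r$; the comparison (Theorem \ref{ngo main}) is then an identity of traces of elements of $\pi_1((\cX)^r,x^r)\rtimes S_r$ on two such local systems, proved by Cebotarev density from the pairwise-distinct locus where no fundamental lemma is needed; and the passage to $x_0$ is not a limit of trace functions but the identification, via commutation of nearby cycles with proper pushforward (Lemma \ref{nearby cycles proper}, Corollary \ref{cor: nearby cycles = generic coh}), of the cohomology of nearby cycles at $x_0^r$ with the generic stalk, compatibly with the Hecke and $\tau$ actions. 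Both ingredients again hinge on the division-algebra properness you omitted. (A smaller inaccuracy: in the spherical case the geometrization of base change is not ``Frobenius pullback on the Satake category''; both in Ng\^o's work and here it is the cyclic commutativity constraint on the convolution power $\Cal{F}_{\mu}^{*r}$ combined with the Saito--Shintani trace identity evaluated on the semi-infinite orbits $S_{\lambda}$.)
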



\begin{remark}
Let us make some remarks on the hypotheses. The hypothesis $|\mu| = 0$ arises geometrically as a condition for the non-emptiness of moduli stacks of shtukas. It can be interpreted as saying that $\psi$ comes from the Hecke algebra of $\SL_n$. 

The restriction to $\GL_n$ comes from a need to obtain a proper moduli stack, in order to have enough control over the cohomology of the relevant moduli stacks of shtukas. In general the moduli stacks of shtukas are of infinite type, and their cohomology not constructible. However, for $\GL_n$ we can use the trick of globalizing to a division algebra in order to create a proper global space with the right local behavior. 

\end{remark}




\begin{remark}
As T. Haines pointed out to us, another key aspect of the fundamental lemma is the assertion that $\mrm{O}_{\gamma}(b(\psi))= 0$ if $\gamma$ is not a norm. Our strategy does not seem to naturally give access to this statement. On the other hand, since Labesse gave a \emph{purely local} proof of this statement for the spherical case in mixed characteristic, which was extended to the center of parahoric Hecke algebras in \cite{Haines09} \S 5.2, it should generalize to positive characteristic. 
\end{remark}

\subsubsection{Related work} The fundamental lemma for base change for \emph{spherical} Hecke algebras, which arises from Theorem \ref{FL formulation} in the special case where $J= K$ is a hyperspecial maximal compact subgroup, was proved in the $p$-adic case by Clozel \cite{Clo90} and Labesse \cite{Lab90}, using key input from Kottwitz \cite{Kott86b} who checked it for the unit element. These arguments were generalized by Haines to proved the base change fundamental lemma for centers of parahoric Hecke algebras, as has been discussed.\footnote{See the last paragraph of the introduction to \cite{Haines09} for a discussion of to what extent Theorem \ref{FL formulation} follows from the fundamental lemma for twisted endoscopy. Although base change is a special case of twisted endoscopy, the fundamental lemma for twisted endoscopy should imply that there is a matching function for $\psi$, but does not identify it in terms of the base change homomorphism.}


For local function fields (i.e. positive characteristic), the spherical case $J=K$ of Theorem \ref{main} was established by Ng\^{o} \cite{Ngo06}, also for $\GL_n$ and also for $|\mu|=0$ (with the same reasons for the restrictions). Indeed, our  strategy as described in \S \ref{intro} is the one pioneered by \cite{Ngo06}. Similar results were obtained independently and simultaneously by Lau \cite{Lau04}.

\subsection{Summary of the paper}

Although our current argument does not work beyond $\GL_n$, we hope that after future technical improvements in the theory of shtukas it can be generalized to a much wider class of reductive groups. For this reason, for the individual steps we have tried to work with more general groups when possible. It seems worthwhile to give a brief outline of the organization of the paper, pointing out exactly where we can be more general.

In \S \ref{shtukas background} we review the theory of shtukas for nonconstant reductive group schemes, summarizing the essential background facts. Also, a key point is to define an ``integral model'' for the moduli stack of shtukas, extending over points of parahoric bad reduction. 

 In \S \ref{kottwitz for shtukas} we establish an analogue of the Kottwitz Conjecture for moduli of shtukas. This works even for fairly general (not necessarily constant) reductive groups $\Cal{G} \rightarrow X$: Gaitsgory originally proved it for constant groups, and his argument was generalized by Zhu in \cite[Theorem 7.3]{Zhu14} and Pappas-Zhu in \cite{PZ13}. 
 
 In \S \ref{counting fixed points} we establish some counting formulas for points of shtukas over finite fields. This is a minor variant of the work of Ng\^{o} B.C. and Ng\^{o} Dac T., which was previously only formulated at places of hyperspecial level structure. Our contribution is to write it out for the case of parahoric bad reduction that we require. 
 
 
 In \S \ref{hecke base change} we provide a geometric interpretation of the base change homomorphism for spherical Hecke algebras and the center of parahoric Hecke algebras for general split reductive groups $G$ over a local field. For $\GL_n$ this was proved by Ng\^{o}, in a formulation that was rather specific to $\GL_n$. We generalize the argument to spherical Hecke algebras for arbitrary split reductive groups, and then use that to deduce a result for (central elements in) parahoric Hecke algebras. 
 
 In \S \ref{moduli  problems} we introduce the two moduli problems $\Sht_A$ and $\Sht_B$ which are to be compared, and recall Ng\^{o}'s theorem stating the precise comparison. Using this we deduce an equality of traces on the nearby cycles sheaves at the point of parahoric reduction. Here we also crucially use that the moduli of shtukas associated to a sufficiently ramified division algebra is proper, which implies that the cohomology is a local system. 
 
 In \S \ref{computation of trace} we compute these traces in terms of (twisted) orbital integrals, giving formulas in the paradigm of Kottwitz, and then use them in \S \ref{FL proof} to deduce the cases of the base change fundamental lemma claimed in Theorem \ref{main}.

\subsection{Acknowledgments}

I am indebted to Zhiwei Yun for teaching me basically everything that I know about shtukas, and in particular for directing me to \cite{Ngo06}. I thank Zhiwei, Laurent Clozel, Gurbir Dhillon, Tom Haines, Jochen Heinloth, Urs Hartl, Bao Le Hung, and Rong Zhou for helpful conversations related to this work, and Brian Conrad and Timo Richarz for comments and suggestions on a draft. I am particularly grateful to Tom Haines and the very thorough referee for crucial corrections and explanations. 

This project was conceived at the 2017 Arbeitsgemeinschaft in Oberwolfach, and completed while I was a guest at the Institute for Advanced Study, and under the support of an NSF Graduate Fellowship. I am grateful to these institutions for their support. 

\section{Notation}\label{sec: notation}

We collect some notation that will be used frequently throughout the paper.

\begin{itemize}
\item Let $X$ be a smooth projective curve over a finite field $k=\F_q$, and -- changing notation from the introduction -- let $F=k(X)$ be its global function field for the rest of the paper. We assume that $X$ has a rational point, and fix such a point $x_0\in X(\F_{q})$.
\item We will let $\cX$ be an open subset of $X$, usually the complement of some points for ramification and possibly also $x_0$.
\item We denote by $|X|$ the set of closed points of $X$, and for $x\in X$ we write $k(x)$ for the residue field of $x$. 
\item For $x \in X$, we let $\Cal{O}_x$  be the completion of $\Cal{O}_{X,x}$ at its maximal, and $F_x$ be the fraction field of $\Cal{O}_x$. We set $D_x := \Spec \Cal{O}_x$.
\item We let $G$ be a connected reductive group over $F$, whose derived group is simply connected. We extend $G$ to a parahoric group scheme $\Cal{G} \rightarrow X$ (which is also possible -- cf. \cite{Laff18} \S 12.1), and that $\Cal{G} \otimes F_x$ is split at all points $x \in X$ where $\Cal{G}(\Cal{O}_x)$ is not hyperspecial. 
\item We let $U \subset X$ be the dense open subscheme where $\Cal{G}$ is reductive. 
\item We denote by $\Cal{E}^0$ the trivial (fppf) $\Cal{G}$-torsor over $X$. 	
\end{itemize}

\section{Moduli of shtukas}\label{shtukas background}

In this section we recall material concerning shtukas and their perverse sheaves. This is mostly background, but we emphasize that it is important for us to work at the generality of nonconstant groups. This allows us to define an ``integral model'' for parahoric shtukas, which is \emph{much} easier than the corresponding problem for Shimura varieties. References for this section are \cite[ \S 3]{Zhu14}, \cite[\S 12]{Laff18}, and \cite{HR16}.

\subsection{$\Cal{G}$-bundles}\label{subsec: G-bundles}

Let $\Cal{G} \rightarrow X$ be a smooth affine group scheme with (connected) reductive generic fiber $G$, such that $\Cal{G}|_{\Cal{O}_{x}}$ is a parahoric group scheme for each $x \in X$. We assume that $\Cal{G}|_{ F_x}$ is split at all points $x \in X$ where $\Cal{G}(\Cal{O}_x)$ is not hyperspecial.

\subsubsection{} We recall the notion of $\Cal{G}$-bundles and affine Grassmannians, the study of which seems to have been initiated by \cite{PR10}.

\begin{defn}
A \emph{$\Cal{G}$-bundle $\Cal{E}$} is a $\Cal{G}$-torsor for the fppf topology. We define  $\Bun_{\Cal{G}}$ to be the (Artin) stack\footnote{For the fact that this is really an Artin stack in the generality required here, see \cite{Bro13}. We thank Brian Conrad for bringing this reference to our attention.} representing the functor 
\[
\Bun_{\Cal{G}} \colon S \mapsto \left\{ \text{$\Cal{G}$-bundles on $X \times S$}\right\}   .
\]
\end{defn}

\begin{defn} We define the \emph{global affine Grassmannian} $\Gr_{\Cal{G}}$ to be the ind-scheme representing the functor
\[
\Gr_{\Cal{G}} \colon S \mapsto \left\{ (x,\Cal{E} , \beta)  \colon  \begin{array}{@{}c@{}} 
x \in X(S) \\
  \Cal{E} \in \Bun_{\Cal{G}}(S)  \\
  \beta \co \Cal{E}|_{X_S - \Gamma_x} \cong \Cal{E}^0 |_{X_S-\Gamma_x} \\ 
 \end{array} \right\}   
\]
where here and throughout $\Cal{E}^0$ denotes the  trivial $\Cal{G}$-torsor, and $\Gamma_x$ is the graph of $x$, viewed as a divisor in $S \times X$. 
\end{defn}

We have a map 
\[
\pi \co \Gr_{\Cal{G}} \rightarrow X
\]
sending $(x,\Cal{E} , \beta)  \mapsto x$.

\begin{example}
For any closed point $x$, the fiber $\Gr_G|_x$ is the partial affine flag variety associated with $G|_{D_x}$, as was studied for example in \cite{PR08}. If $G|_{D_x}$ happens to be \emph{reductive}, then $G|_{D_ x} \cong (G |_x) \otimes_{k(x)} \Cal{O}_x$, where $G|_x := G \times_X {x}$ is a constant group scheme, hence $\Gr_G|_x$ is the usual affine Grassmannian attached to $G|_x$ over $k(x)$. If on the other hand $G|_{D_x}$ happens to be an \emph{Iwahori} group scheme, then $\Gr_G |_x$ is an affine flag variety.
\end{example}

\subsubsection{Adding level structure}\label{sssec: level structure}

  Let $\Bun_{\Cal{G}, n \Gamma	} $ be the moduli stack of $\Cal{G}$-bundles with ``$n$-th order level structure'', i.e. 
\[
\Bun_{\Cal{G}, n \Gamma } \colon S \mapsto \left\{ (x,\Cal{E}, \psi )  \colon  \begin{array}{@{}c@{}} 
  x \in X(S) \\ 
  \Cal{E} \in \Bun_{\Cal{G}}(S)  \\
\psi \colon \Cal{E}|_{ n \Gamma_x} \xrightarrow{\sim} \Cal{E}^0|_{ n \Gamma_x}
 \end{array} \right\}   
 \]
  where $n\Gamma_x$ is viewed as a divisor in $S \times X$. 

  Let $\Bun_{\Cal{G}, \infty \Gamma	} $ be the moduli stack of $\Cal{G}$-bundles with ``infinite level structure'', i.e. 
\[
\Bun_{\Cal{G}, \infty \Gamma } \colon S \mapsto \left\{ (x,\Cal{E}, \psi )  \colon  \begin{array}{@{}c@{}} 
  x \in X(S) \\ 
  \Cal{E} \in \Bun_{\Cal{G}}(S)  \\
\psi \colon \Cal{E}|_{ \wh{\Gamma}_x} \xrightarrow{\sim} \Cal{E}^0|_{ \wh{\Gamma}_x}
 \end{array} \right\}   
 \]
 where $\wh{\Gamma}_x$ is the completion of $X \times S$ along $\Gamma_x$. One can also think of $\psi$ as a compatible family of level structures over $ n \Gamma_x$ as $n \rightarrow \infty$. We will use the notation $\wh{\Gamma}_x^{\circ}  ``:= \wh{\Gamma}_x - \Gamma_x"$ with the meaning as in \cite[Notation 1.7]{Laff18}.  
 
 Let $\Cal{L}^+ \Cal{G}$ be the global ``arc group'', defined by 
 \[
 \Cal{L}^+\Cal{G} \co S \mapsto  \left\{ (x, \beta )  \colon  \begin{array}{@{}c@{}}  
x \in X(S)  \\\beta \in \Cal{G}(\wh{\Gamma}_x)
 \end{array} \right\}    .
 \]
 
 We clarify that $\Cal{L}^+ \Cal{G}$ is a pro-algebraic group scheme over $X$, as it is the restriction of scalars of smooth affine group schemes over $X$, and $\Bun_{\Cal{G}, \infty \Gamma	} $ is an Artin stack of infinite type, as it is and a $\Cal{L}^+ \Cal{G}$-torsor over $\Bun_{\Cal{G}}$.

 \begin{remark}\label{rem: Gr left arc action}
 There is an action of $\Cal{L}^+ \Cal{G}$ on $\Gr_{\Cal{G}}$ by changing the level structure $\psi$. 
 \end{remark}

\subsubsection{Global Schubert varieties}\label{sssec: schubert Gr} Let $T \subset G$ be a maximal torus. In \cite[\S 2]{Rich16} (generalizing work in the tamely ramified case of \cite[\S 3.3]{Zhu14}) it is shown how to  associate to $\mu \in X_*(T_{\ol{F}}) $ a global Schubert variety $\Gr_{\Cal{G}}^{\leq \mu}$. Of course, this is well-known in the split case.

\subsubsection{Geometric Satake} We fix some notation pertaining to the Geometric Satake correspondence \cite{MV07}. For a space $Y$ over $X$, we denote by $Y|_U$ the fibered product of $Y$ with $U \hookrightarrow X$ (recall that \S \ref{sec: notation} that $U \subset X$ is the locus where $\Cal{G}$ is reductive, so $G$ is the generic fiber of $\Cal{G}$.)

Since we want to work over $\F_q$, we need a slightly modified version of the Geometric Satake equivalence -- \cite{RZ15} for an explanation of the theory over general fields. Fix $\ell$ and a choice of $\sqrt{q} \in \ol{\Q}_{\ell}$, so we get a half Tate twist as in \cite[Definition A.1]{RZ15}, so that \cite[Theorem A.12]{RZ15} applies.

\begin{defn}
Given a finite-dimensional representation $W$ of Langlands' L-group ${}^L G$, we let $\mrm{Sat}_{\Gr_{\Cal{G}}}(W)$ be the associated $\Cal{L}^+ \Cal{G}$-equivariant (for the action of Remark \ref{rem: Gr left arc action}) perverse sheaf on $\Gr_{\Cal{G}}|_U$ furnished by Geometric Satake, in the sense of \cite[Proposition 5.5.16]{Zhu15}. Note that $\mrm{Sat}_{\Gr_{\Cal{G}}}(W)$ is automatically $\Cal{L}^+ \Cal{G}|_U$-equivariant. (Strictly speaking, \cite{RZ15} concerns the local affine Grassmannian. For a statement of Geometric Satake for non-constant groups phrased in terms of the Beilinson-Drinfeld Grassmannian, see \cite[Theorem 12.16]{Laff18}.) 

If $G$ is split then irreducible finite-dimensional representations $W$ of ${}^L  G   = \wh{G}$ are indexed by dominant coweights $\mu \in  X_*(T)_+$ for a maximal split torus $T \subset G$, and we denote by $\Sat_{\Gr_{\Cal{G}}}(\mu) := \Sat_{\Gr_{\Cal{G}}}(W_{\mu})$ the corresponding perverse sheaf.
\end{defn}

This is the primal source for constructing perverse sheaves on a plethora of objects, which will all be denoted $\mrm{Sat}_{\ldots}(W)$ or $\mrm{Sat}_{\ldots}(\mu)$. 

\subsection{Hecke stacks} 

\subsubsection{} We now define objects that geometrize the Hecke operators. 

\begin{defn}
We define the \emph{Hecke stack} $\Hecke_{\Cal{G}}$ by the functor of points 
\[
\Hecke_{\Cal{G}} \co S \mapsto \left\{ (x, \Cal{E}, \Cal{E}',  \varphi) \colon  \begin{array}{@{}c@{}} 
x \in X(S) \\
  \Cal{E}, \Cal{E}' \in \Bun_{\Cal{G}}(S)  \\
\varphi \colon \Cal{E}'|_{X \times S - \Gamma_{x}} \xrightarrow{\sim} \Cal{E}|_{X \times S - \Gamma_{x}}
 \end{array} \right\}  .
\]

\end{defn}

We have structure maps 
\[
\xymatrix{
& \Hecke_{\Cal{G}} \ar[dl]_{h^{\leftarrow}} \ar[dr]^{h^{\rightarrow}} \ar[d]^{\pi} \\
\Bun_{\Cal{G}} & X & \Bun_{\Cal{G}}
}
\]
where the map $h^{\leftarrow}$ takes $(\Cal{E}, \Cal{E}') \mapsto \Cal{E}$, and the map $h^{\rightarrow}$ takes $(\Cal{E}, \Cal{E}') \mapsto \Cal{E}'$. 

One can think of the $\Hecke_{\Cal{G}}$ as looking locally, in the smooth topology, like $\Bun_{\Cal{G}} \times_X \Gr_{\Cal{G}}$. To make this precise, recall that there is an action of $L^+ \Cal{G} $ on $\Bun_{\Cal{G},  \infty \Gamma}$, by changing the level structure. 

 \begin{prop}\label{Hecke to Gr}  There is an isomorphism 
 \[
\xi \colon  \Hecke_{\Cal{G}} \xrightarrow{\sim} (\Gr_{\Cal{G}} \times_{X} \Bun_{\Cal{G}, \infty \Gamma})/\Cal{L}^+ \Cal{G}
 \]
 where the quotient is for the diagonal action of $\Cal{L}^+ \Cal{G}$.
 \end{prop}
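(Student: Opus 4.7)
The plan is to use Beauville--Laszlo gluing to recognize $\Cal{G}$-torsors on $X\times S$ as being assembled from their restrictions to $X \times S - \Gamma_x$ and to $\wh{\Gamma}_x$, glued via an isomorphism on $\wh{\Gamma}_x^{\circ}$. For smooth affine $\Cal{G}$, this principle is standard (see e.g.\ \cite{Laff18} \S 12 and \cite{Rich16}), and I will use it as a black box. The isomorphism $\xi$ will then be obtained by first trivializing $\Cal{E}$ on $\wh{\Gamma}_x$ and afterwards taking the quotient by this choice of trivialization.

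First I would construct an isomorphism at the ``rigidified'' level,
\[
\wt{\xi} : \Hecke_{\Cal{G}} \times_{h^{\rightarrow}, \Bun_{\Cal{G}}} \Bun_{\Cal{G}, \infty \Gamma} \xrightarrow{\sim} \Gr_{\Cal{G}} \times_{X} \Bun_{\Cal{G}, \infty \Gamma},
\]
by specifying it on $S$-points. Given an $S$-point $\bigl((x, \Cal{E}, \Cal{E}', \varphi), (x, \Cal{E}, \psi)\bigr)$ of the source, use $\psi$ to transport the Beauville--Laszlo data for $\Cal{E}'$ onto the trivial bundle: glue $\Cal{E}^0|_{X \times S - \Gamma_x}$ with $\Cal{E}'|_{\wh{\Gamma}_x}$ along the cocycle
\[
\Cal{E}^0|_{\wh{\Gamma}_x^{\circ}} \xrightarrow{\psi^{-1}} \Cal{E}|_{\wh{\Gamma}_x^{\circ}} \xrightarrow{\varphi^{-1}} \Cal{E}'|_{\wh{\Gamma}_x^{\circ}}
\]
to produce $(\Cal{E}'', \beta) \in \Gr_{\Cal{G}}(S)$ (where $\beta$ is the tautological trivialization of $\Cal{E}''$ away from $\Gamma_x$), and output $\bigl((x, \Cal{E}'', \beta), (x, \Cal{E}, \psi)\bigr)$. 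The inverse is the symmetric construction: glue $\Cal{E}|_{X \times S - \Gamma_x}$ with $\Cal{E}''|_{\wh{\Gamma}_x}$ via $\beta^{-1} \circ \psi$, yielding $\Cal{E}'$ with tautological $\varphi: \Cal{E}'|_{X \times S - \Gamma_x} \xrightarrow{\sim} \Cal{E}|_{X \times S - \Gamma_x}$. The two constructions are mutually inverse by the uniqueness part of Beauville--Laszlo.

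To finish, note that $\wt{\xi}$ is equivariant for the $\Cal{L}^+\Cal{G}$-action, where the source carries the action on $\psi$ alone, and the target carries the diagonal action (on $\beta$ via Remark \ref{rem: Gr left arc action}, and on $\psi$ by changing the level structure): substituting $(\beta, \psi) \mapsto (g\beta, g\psi)$ leaves the gluing cocycle $\beta^{-1} \circ \psi$ invariant. Since $\Bun_{\Cal{G}, \infty \Gamma} \to \Bun_{\Cal{G}} \times X$ is an $\Cal{L}^+\Cal{G}$-torsor (as recalled in \S\ref{sssec: level structure}), passing to $\Cal{L}^+\Cal{G}$-quotients on both sides of $\wt{\xi}$ yields the claimed isomorphism $\xi$. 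The only nontrivial technical input is verifying Beauville--Laszlo in the generality of non-constant parahoric $\Cal{G}$-torsors, which is handled by the references above.
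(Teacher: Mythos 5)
Your argument is correct and is essentially the paper's proof: both rigidify $\Hecke_{\Cal{G}}$ by adjoining a trivialization $\psi$ of $\Cal{E}$ on $\wh{\Gamma}_x$ (an $\Cal{L}^+\Cal{G}$-torsor over $\Hecke_{\Cal{G}}$), use Beauville--Laszlo to produce the $\Gr_{\Cal{G}}$-point, check $\Cal{L}^+\Cal{G}$-equivariance, and descend; you merely make explicit the gluing that the paper invokes ``implicitly.'' One small notational slip: since your $\psi$ trivializes $\Cal{E}$ (the target of $\varphi$), the rigidified fiber product should be taken along $h^{\leftarrow}$ rather than $h^{\rightarrow}$.
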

 
This is actually taken as the \emph{definition} of the Hecke stack in \cite[\S 12.3.1]{Laff18}. Although it is well-known we have not found the statement formulated in quite this way, so we give a proof for completeness.

\begin{proof}
Giving an isomorphism $\Hecke_{\Cal{G}} \xrightarrow{\sim} (\Gr_{\Cal{G}} \times_X \Bun_{\Cal{G},  \infty \Gamma})/\Cal{L}^+ \Cal{G}$ is equivalent to giving an  $\Cal{L}^+ \Cal{G}$-equivariant isomorphism from an $\Cal{L}^+ \Cal{G}$-torsor over $\Hecke_{\Cal{G}} $ to $\Gr_{\Cal{G}} \times_{X} \Bun_{\Cal{G},  \infty \Gamma}$, so we will construct the latter.

Let $\wt{\Hecke}_{\Cal{G}}  \rightarrow \Hecke_{\Cal{G}} $ be the $\Cal{L}^+ \Cal{G}$-torsor representing $(x, \varphi \co \Cal{E}' \dashrightarrow \Cal{E}) \in \Hecke_{\Cal{G}}$ \emph{plus} a choice of trivialization $\psi \co \Cal{E}|_{\wh{\Gamma}_x} \cong \Cal{E}^0|_{\wh{\Gamma}_x}$.

There is a map 
\[
\wt{\Hecke}_{\Cal{G}}  \rightarrow \Gr_{\Cal{G}} \times_{X^I} \Bun_{\Cal{G},  \infty \Gamma}
\]
 sending
 \[
 (x, \varphi \co \Cal{E}' \dashrightarrow \Cal{E}, \psi) \mapsto (x,\Cal{E}' , \psi \circ \varphi), (x, \Cal{E}, \psi)
 \]
 where we have implicitly used the Beauville-Laszlo theorem \cite[Lemma 3.1]{Zhu14} to extend  $\psi \circ \varphi$, which is a priori only defined on $\wh{\Gamma}_x^{\circ}$, to $X-\Gamma_x$. It is easily checked that this is an isomorphism, by defining an inverse directly, and that it is $\Cal{L}^+ \Cal{G}$-equivariant.

\end{proof}

\begin{remark}In practice, we can always translate these statements into ones about locally finite type Artin stacks by considering substacks obtained by bounding the type of the modification, and noting that the action of $\Cal{L}^+ \Cal{G}$ on such a substack factors through a finite type quotient.
\end{remark}

\subsubsection{Geometric Satake for Hecke stacks}

\begin{defn} Denoting by $D^b_c(-)$ the bounded derived category, we define a functor
\[
\mrm{Sat}_{\Hecke_{\Cal{G}}} \colon \mrm{Rep}_{{}^L G}	 \rightarrow D^b_c( \Hecke_{\Cal{G}}|_U )
\]
as follows (by definition, $D^b_c( \Hecke_{\Cal{G}} )$ is the $\Cal{L}^+G$-equivariant constructible derived category of $\Gr_{\Cal{G}}$ with coefficients in $\Q_{\ell}$). If $W \in \mrm{Rep}_{{}^L G}$, then 
\[
\mrm{Sat}_{\Gr_{\Cal{G}}}(W) \boxtimes \ol{\Q}_{\ell, \Bun_{\Cal{G}, \infty \Gamma}} \in D^b_c(\Gr_{\Cal{G}} \times_{X}  \Bun_{\Cal{G}, \infty \Gamma} |_U)
\]
descends to the quotient by $\Cal{L}^+ \Cal{G}$ by the fact that $\mrm{Sat}_{\Gr_{\Cal{G}}}(W)$ is $\Cal{L}^+ \Cal{G}$-equivariant. We set $\mrm{Sat}_{\Hecke_{\Cal{G}}}(W)$ to be  the pullback of this descent via the isomorphism $\xi^*$ from Proposition  \ref{Hecke to Gr}.  
\end{defn}

\subsubsection{Hecke stacks with bounded modification} For $\mu \in X_*(T_{\ol{F}})$ we define $\Hecke_{\Cal{G}}^{\leq \mu}$ as follows. First, we have the Schubert variety $\Gr_{\Cal{G}}^{\leq \mu} \rightarrow \Gr_{\Cal{G}}$, which has an $\Cal{L}^+ \Cal{G}$-action. This induces a substack of $(\Gr_{\Cal{G}} \times_{X} \Bun_{\Cal{G}, \infty \Gamma})/\Cal{L}^+ \Cal{G}$, and we define $\Hecke_{\Cal{G}}^{\leq \mu}$ to be the pullback via $\xi^*$ of Proposition \ref{Hecke to Gr}.

If $\Cal{G}  = G \times X$ is constant and split over $X$, then up to taking reduced substacks, $\Hecke_{\Cal{G}}^{\leq \mu}$ admits a very concrete definition as ``modifications of $G$-bundles with invariant bounded by $\mu$''. In \S \ref{subsec: D-shtukas} we explicate this for $\GL_n$-bundles, which may be an enlightening example.

\subsection{Shtukas}

\subsubsection{} We now define the moduli stack of $\Cal{G}$-shtukas. At places $x \in X$ where $\Cal{G}|_{D_x}$ is a parahoric group scheme, this should be thought of as an ``integral model'' of the usual moduli stacks in which the legs are demanded to be disjoint from the level structure.

\begin{defn}\label{def: shtukas}
We define the moduli stack of $\Cal{G}$-shtukas by the following cartesian diagram 
\[
\begin{tikzcd}
\Sht_{\Cal{G}} \ar[r]   \ar[d] & \Bun_{\Cal{G}}  \ar[d, "\Id \times \Frob"] \\
\Hecke_{\Cal{G}} \ar[r, "h^{\leftarrow} \times h^{\rightarrow}"] & \Bun_{\Cal{G}}  \times \Bun_{\Cal{G}}
\end{tikzcd}
\]
More explicitly, $\Sht_{\Cal{G}}$ represents the following moduli problem:
\[
\Sht_{\Cal{G}} \co S \mapsto \left\{ (x, \Cal{E} , \varphi) \colon  \begin{array}{@{}c@{}} 
x \in X(S) \\
  \Cal{E}\in \Bun_{\Cal{G}}(S)  \\
\varphi \colon {}^{\sigma} \Cal{E}|_{X \times S - \Gamma_{x}} \xrightarrow{\sim} \Cal{E}|_{X \times S - \Gamma_{x}}
 \end{array} \right\}  
\]
where $\sigma$ is the Frobenius on the $S$ factor in $X \times  S$, and  ${}^{\sigma} \Cal{E}$ is the pullback of $\Cal{E}$ under the map $1 \times \sigma \co X \times S \rightarrow X \times S$. 
	\end{defn}
	
	We have an evident map 
	\[
	\pi \co \Sht_{\Cal{G}} \rightarrow X \quad \text{sending} \quad (x, \Cal{E}, \Cal{E}',  \varphi)  \mapsto x.
	\]

\subsubsection{Perverse sheaves on shtukas}  From Definition \ref{def: shtukas} we have a tautological map 
\[
\iota \co \Sht_{\Cal{G}}  \rightarrow \Hecke_{\Cal{G}} .
\] 

\begin{defn}
For $W \in \mrm{Rep}({}^L G)$, we define $\Sat_{\Sht_{\Cal{G}}}(W) := \iota^*(\Sat_{\Hecke_{\Cal{G}}}(W))$. This is a perverse sheaf up to shift on $\Sht_{\Cal{G}}|_U$, since the affine Grassmannian is \'{e}tale-locally equivalent to $\Sht_{\Cal{G}}$ (see \S \ref{kottwitz for shtukas}, or \cite[\S 1.1]{Laff18}).
\end{defn}

	\subsubsection{Schubert varieties of shtukas} 
	
For $\mu \in X_*(\wh{T}_{\ol{F}})$, we define $\Sht_{\Cal{G}}^{\leq \mu} = \iota^*  \Hecke_{\Cal{G}}^{\leq\mu}$. This is a closed substack  of $\Sht_{\Cal{G}}$ which is the support of $\Sat_{\Sht_{\Cal{G}}}(\mu)$. We call these ``Schubert varieties of shtukas'' even though they are, of course, not varieties but (Deligne-Mumford) stacks.

\subsubsection{Hecke operators on shtukas}\label{sssec: hecke corr for shtuka}
 There are Hecke correspondences of shtukas that induce Hecke operators on $\Sht_{\Cal{G}}$, hence on their cohomology.

\begin{defn}
We define $\Hecke(\Sht_{\Cal{G}})$ to be the moduli stack parametrizing $x,y \in X(S)$ along with a diagram
\[
\begin{tikzcd}
{}^{\sigma} \Cal{E}  \ar[r, dashed, "x"] \ar[d, dashed, "\sigma(y)", "\sigma(\beta)"']  & \Cal{E} \ar[d, dashed, "y", "\beta"'] \\
{}^{\sigma} \Cal{E}'  \ar[r, dashed, "x"] &   \Cal{E}' 
\end{tikzcd}
\]
Here we note: 
\begin{itemize}
\item $\Cal{E}$ and $\Cal{E}'$ are $\Cal{G}$-torsors on $X \times S$, and ${}^{\sigma} \Cal{E} $ and ${}^{\sigma} \Cal{E}'$ are their twists by $1 \times \sigma$. 
\item The $x$ above the horizontal arrows mean an isomorphism on the complement of $\Gamma_x$. 
\item The $y$ (resp. $\sigma(y)$) next to the vertical arrows means an isomorphism on the complement of $\Gamma_y$ (resp. $\Gamma_{\sigma(y)}$). 
\item The map $\sigma(\beta)$ is the twist of  $\beta$. We emphasize that it is determined by $\beta$, rather than being an additional datum.
\end{itemize}
\end{defn}

We evidently have a diagram
\[
\begin{tikzcd}
\Hecke(\Sht_{\Cal{G}}) \ar[rr] \ar[dr, "\pi_2"'] & &  \Hecke_{\Cal{G}} \ar[dl, "\pi"] \\
& X 
\end{tikzcd}
\]
where the horizontal arrow sends this data to $(y, \Cal{E}, \Cal{E}', \beta)$, which allows us to define $\Hecke(\Sht_{\Cal{G}})^{\leq\mu}$ for $\mu \in X_*(T_{\ol{F}})$, and $\Sat_{\Hecke(\Sht_{\Cal{G}})}(W)$ for  $W \in \mrm{Rep}({}^L G)$. 

We also evidently have a diagram
\[
\begin{tikzcd}
&  \Hecke(\Sht_{\Cal{G}}) \ar[dl, "h^{\leftarrow}"'] \ar[dr, "h^{\rightarrow}"] \\
\Sht_{\Cal{G}} & & \Sht_{\Cal{G}}
\end{tikzcd}
\]
where the arrows $h^{\leftarrow}$ and $h^{\rightarrow}$ send this data to $(x, {}^{\sigma}\Cal{E} \dashrightarrow \Cal{E})$ and $(x,  {}^{\sigma}\Cal{E}' \dashrightarrow \Cal{E}')$ respectively. For $v \in X$, let 
\[
\Hecke(\Sht_{\Cal{G}})^{\leq \mu}_v := \pi_2^{-1}(v).
\]
A choice of $v \in X$ and $\mu \in X_*(T_{\ol{F}})$ induces a correspondence 
 \begin{equation}\label{eqn: hecke corr on sht 0}
\begin{tikzcd}
&  \Hecke(\Sht_{\Cal{G}})^{\leq \mu}_v \ar[dl, "h^{\leftarrow}"'] \ar[dr, "h^{\rightarrow}"] \\
\Sht_{\Cal{G}} \ar[dr, "\pi"'] &  &  \Sht_{\Cal{G}} \ar[dl,"\pi"] \\
 & X & 
\end{tikzcd}
\end{equation}
 which is the analogue of the classical Hecke correspondences.
 
\begin{defn}\label{def: hecke action on coh}
Since $\pi \circ h^{\leftarrow}  = \pi \circ h^{\rightarrow}$ and $h^{\leftarrow*}  (\mrm{Sat}_{\Sht_{\Cal{G}}}(W)) \cong h^{\rightarrow*} ( \mrm{Sat}_{\Sht_{\Cal{G}}}(W) )	$, from $ \Hecke(\Sht_{\Cal{G}})^{\leq \mu}_v $ we get a corresponding Hecke operator on $R \pi_!\Sat_{\Sht_{\Cal{G}}}(W) \in D^+(X)$.
\end{defn}

\subsection{Iterated shtukas and factorization}

This entire discussion carries through to ``iterated'' versions of $\Gr_{\Cal{G}}$, $\Hecke_{\Cal{G}}$ and $\Sht_{\Cal{G}}$. We will content ourselves with stating the essentials, leaving the reader to generalize the preceding discussion. (A reference is \cite[\S 1,2]{Laff18}.) 

\subsubsection{Iterated affine Grassmannian}\label{sssec: iterated aff gr}
The iterated global affine Grassmannian  
\[
\pi \co \Gr_{\Cal{G}} \wt{\times} \Gr_{\Cal{G}} \rightarrow X^2
\]
is defined by the functor of points
\[
\Gr_{\Cal{G}} \wt{\times} \Gr_{\Cal{G}}\co S \mapsto  \left\{ (x_1, x_2, \Cal{E}_1, \Cal{E}_2, \varphi,  \beta)  \colon  \begin{array}{@{}c@{}} 
x_1,x_2 \in X(S) \\
  \Cal{E}_1, \Cal{E}_2 \in \Bun_{\Cal{G}}(S)  \\
  \varphi \co \Cal{E}_1 |_{X_S - \Gamma_{x_1}}  \xrightarrow{\sim} \Cal{E}_2|_{X_S-\Gamma_{x_1}} \\ 
  \beta \co \Cal{E}_2|_{X_S - \Gamma_{x_2}} \xrightarrow{\sim} \Cal{E}^0 |_{X_S-\Gamma_{x_2}} \\ 
 \end{array} \right\}   
\]

We may denote $\Gr_{\Cal{G},X^r} = \Gr_{\Cal{G}} \wt{\times} \ldots \wt{\times}\Gr_{\Cal{G}}$ ($r$ times), although the  reader should be warned that this notation is sometimes used elsewhere in the literature to denote a different object. We also have Schubert cells: given $\mu_1, \ldots, \mu_r \in X_*(T_{\ol{F}})$, we can define $\Gr_{\Cal{G},X^r}^{\leq (\mu_1,\ldots,\mu_r)}$ in a way that is by now obvious.

\subsubsection{Iterated shtukas}  We now define the iterated shtukas. 

\begin{defn}\label{def: iterated shtukas}
We define the moduli stack $\Sht_{\Cal{G},X^r}$ by the functor of points:
\[
\Sht_{\Cal{G},X^r} \co S \mapsto \left\{   \begin{array}{@{}c@{}} 
x_1,\ldots,x_r \in X(S)  \\
\Cal{E}_0, \Cal{E}_1, \ldots, \Cal{E}_r \xrightarrow{\sim} {}^{\sigma} \Cal{E}_0  \in \Bun_{\Cal{G}}(S)  \\
\varphi_i \colon \Cal{E}_i|_{X \times S - \Gamma_{x_{i+1}}} \xrightarrow{\sim} \Cal{E}_{i+1}|_{X \times S - \Gamma_{x_{i+1}}}
 \end{array} \right\}
\]
	\end{defn}
	
	\begin{remark}
	The stack of iterated shtukas $\Sht_{\Cal{G},X^r}$ can also be defined as a repeated fibered product of  $\Sht_{\Cal{G}}$ over $\Bun_{\Cal{G}}$, which is more analogous to how we defined $\Sht_{\Cal{G}}$.
	\end{remark}
	
	We have an evident map 
	\[
	\pi \co \Sht_{\Cal{G},X^r} \rightarrow X^r
	\]
	projecting to the datum of $(x_1, \ldots, x_r)$.

We can similarly  define $\Hecke_{\Cal{G},X^r}$ and $\Hecke(\Sht_{\Cal{G},X^r}) $.	For a tuple $W_1, \ldots, W_r \in \mrm{Rep}({}^L G)$ we can define a shifted perverse sheaf $\Sat_{\Sht_{\Cal{G},X^r} }(W_1, \ldots, W_r)$ using Geometric Satake. 

We also have Schubert cells for $\Sht_{\Cal{G},X^r} $: given $\mu_1, \ldots, \mu_r \in X_*(\wh{T}_{\ol{F}})$, we can define $\Sht_{\Cal{G},X^r}^{\leq (\mu_1,\ldots,\mu_r)}$ in a way that is by now obvious.

\subsection{$\Cal{D}$-shtukas}\label{subsec: D-shtukas}

 One of the main difficulties with $\Sht_{\Cal{G}}$ is that it is of infinite type in general. To study the fundamental lemma for $\GL_n$, we can globalize to a division algebra instead of the constant group $\GL_n$, which gives us a \emph{proper} moduli problem. We now explain the salient facts about this special case. Since the literature already contains several excellent expositions of the theory of $\Cal{D}$-shtukas, we will content ourselves with a brief summary. A reference for everything here is \cite[\S 1]{Ngo06}; see \cite{Laff97} or \cite{Lau04} for more extensive treatments.

Let $D$ be a division algebra $F$ of dimension $n^2$, ramified over a (necessarily finite) set of points $Z \subset  X$. We assume that our fixed (rational) point $x_0\notin Z$, so $D_{x_0} := D \otimes_F F_{x_0} \cong \mf{gl}_n(F_{x_0})$. Later we will need to assume that $\# Z$ is sufficiently large. 

 We extend $\Cal{D}$ to an $\Cal{O}_X$-algebra $\Cal{D}$ such that  $\Cal{D}_x$ is a maximal order in $D_x$ for all $x$, and we let $G \rightarrow X$ be the associated group scheme of units. Let $\Cal{G} \rightarrow X$ be a a parahoric group scheme which is hyperspecial away from $x_0$ but perhaps not hyperspecial at $x_0$; we will be most interested in the case where $\Cal{G}$ is not hyperspecial at $x_0$.

\subsubsection{Modification types}
 Let $T \subset \GL_n$ be the standard maximal torus. The dominant coweights are 
\[
 X_*(T)_+ \cong \Z^n_+ := \{ \mu = (\mu^{(1)}, \ldots,\mu^{(n)}) \co \mu^{(1)} \geq \mu^{(2)} \geq \ldots \mu^{(n)}\}.
\]
The \emph{relative position} of lattices in $k[[t]]^n$ is a $\mu \in  X_*(T)_+$ determined by the Cartan decomposition 
\[
\GL_n(k((t))) = \bigcup_{\mu \in  X_*(T)_+} \GL_n(k[[t]]) t^{\mu}  \GL_n(k[[t]]).
\]

Let $\cX :=  X-Z-\{x_0\}$. For $x \in \cX$, a modification of $\Cal{G}$-bundles outside $x$ is an isomorphism $\varphi \co \Cal{E}'|_{\cX-x} \xrightarrow{\sim} \Cal{E}|_{\cX-x}$. Let $F_x$ be the completion of $F$ at $x$, and $\ol{F}_x = \ol{\F}_q \wh{\otimes} F_x$, i.e. the completion of the maximal unramified extension $F_x$. Then $\Cal{E}'|_{\Spec \Cal{O}_x}$ and $\Cal{E}|_{\Spec \Cal{O}_x}$ induce two lattices in $\ol{F}_x^{\oplus  n}$ by using $\varphi \otimes F_x$ to identify their generic fibers. Since a choice of uniformizer at $x$ induces an isomorphism $\ol{F}_x \cong \ol{F}_q((t))$, the previous discussion applies so that we can speak of the ``relative position'' $\mu \in X_*(T)_+$ of these two lattices (it is easily checked to be well-defined, independently of the choice of uniformizer). We will call this the \emph{modification type} of $\varphi$. 

\subsubsection{The global affine Grassmannian}

Up to taking the reduced substack, we can interpret $\Gr_{\Cal{G}}^{\leq \mu}|_{\cX}$ as the subscheme of $\Gr_{\Cal{G}}|_{\cX}$ parametrizing
\[
 \beta \co \Cal{E}|_{X_S - \Gamma_x} \cong \Cal{E}^0 |_{X_S-\Gamma_x} 
 \]
 such that for each geometric point of $S$, the modification type of $\beta$ is $\leq \mu$. The sheaf $\mrm{Sat}_{\Gr_{\Cal{G}}}(\mu)$ is the IC sheaf of $\Gr_{\Cal{G}}^{\leq \mu}$, i.e. the middle extension of the constant weight-zero sheaf on the open cell. 
 
 \begin{prop}\label{prop: gr sat ula}
 The sheaf $\mrm{Sat}_{\Gr_{\Cal{G}}}(\mu)$ is universally locally acyclic with respect to the morphism $\Gr_{\Cal{G}}|_{\cX} \rightarrow \cX$. 
 \end{prop}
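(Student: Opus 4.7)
The plan is to reduce the ULA statement to the case of a product family, where it follows from a general formal property of ULA. Since ULA is étale-local on the base, it is enough to verify the conclusion after passing to an étale cover of $\cX$.

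Over $\cX = X - Z - \{x_0\}$, the group scheme $\Cal{G}$ is reductive and even hyperspecial by construction. By a standard argument for global affine Grassmannians (\cite{Rich16} \S 2, generalizing the split constant case of Beilinson--Drinfeld), given any point $x \in \cX$ there exists an étale neighborhood $\cX' \to \cX$ of $x$ and an isomorphism of $\cX'$-schemes
\[
\Gr_{\Cal{G}}^{\leq \mu}|_{\cX'} \xrightarrow{\sim} \Gr_{G_x}^{\leq \mu} \times \cX',
\]
where $G_x = \Cal{G}|_{D_x}$ denotes the local group at $x$. Because $\Sat_{\Gr_{\Cal{G}}}(\mu)$ is defined as the middle extension of the constant sheaf on the smooth open part of $\Gr_{\Cal{G}}^{\leq \mu}$ (shifted to be perverse along the fibers of $\pi$), and because the IC construction is compatible with smooth base change, the pullback of $\Sat_{\Gr_{\Cal{G}}}(\mu)$ along this trivialization is identified, up to a cohomological shift, with the exterior product $\mrm{IC}_{\Gr_{G_x}^{\leq \mu}} \boxtimes \ol{\Q}_{\ell, \cX'}$.

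To finish, I would invoke the general fact that for any constructible complex $\Cal{F}$ on a scheme $Y$ and any base $S$, the exterior product $\Cal{F} \boxtimes \ol{\Q}_{\ell, S}$ is universally locally acyclic with respect to the second projection $Y \times S \to S$. Applied to $Y = \Gr_{G_x}^{\leq \mu}$ and $S = \cX'$, this immediately yields the desired ULA property after étale base change, and hence on $\cX$ by the local nature of ULA. The most subtle ingredient is establishing the étale-local triviality of $\Gr_{\Cal{G}}^{\leq \mu}$ over $\cX$ for the (possibly inner but not split) reductive group $\Cal{G}|_{\cX}$; in the $\Cal{D}$-shtuka setting of \S \ref{subsec: D-shtukas} this can be reduced to the split case by passing further to a finite étale cover of $X$ over which the division algebra $D$ splits, at which point the classical theory of Beilinson--Drinfeld applies directly.
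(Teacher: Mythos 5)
Your argument is correct, and it is in substance the standard proof; the paper itself does not argue this point but simply quotes Ng\^{o} (\cite{Ngo06} \S 1.1 Corollaire 6), whose proof rests on exactly the ingredients you use, namely that over the good locus the family $\Gr_{\Cal{G}}^{\leq \mu} \rightarrow \cX$ is \'{e}tale-locally a constant family, together with the fact that a complex pulled back from the fiber of a product is universally locally acyclic over the base (any constructible complex over a field is ULA over the point, and ULA is preserved by base change). So the difference is only that you supply a self-contained argument where the paper gives a citation. Two small points worth tightening: over $\cX = X - Z - \{x_0\}$ the group scheme is in fact hyperspecial (it is a form of $\GL_n$ coming from the Azumaya algebra $\Cal{D}|_{X-Z}$), so the \'{e}tale-local trivialization $\Gr_{\Cal{G}}^{\leq\mu}|_{\cX'} \cong \Gr_{G_x}^{\leq\mu} \times \cX'$ follows from the usual choice of an \'{e}tale coordinate plus the fact that an Azumaya algebra is \'{e}tale-locally a matrix algebra; your closing remark about passing to a finite \'{e}tale cover of $X$ splitting $D$ globally is unnecessary (and a global splitting cover need not be \'{e}tale over all of $X$) -- \'{e}tale-local splitting on $\cX$, which your main argument already invokes, is all that is needed. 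With those cosmetic adjustments the proof is complete.
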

 
\begin{proof}
This is \cite[\S 1.1 Corollaire 6]{Ngo06}. Note that Ng\^{o}'s formulation is slightly different, but is actually deduced from the version that we state, which is the usual formulation in Geometric Satake. 
\end{proof}

\subsubsection{The Hecke stack}

The Hecke stack $\Hecke_{\Cal{G}}^{\leq \mu}|_{\cX}$ parametrizes modifications of $G$-torsors over $\cX$
\[
(x, \varphi \colon \Cal{E}'|_{X \times S - \Gamma_{x}} \xrightarrow{\sim} \Cal{E}|_{X \times S - \Gamma_{x}})
 \]
 with modification type $\leq \mu$ at all geometric points of $S$. 
 
 
\subsubsection{Shtukas and iterated shtukas}

The moduli stack $\Sht_{\Cal{G},(X-Z)^r}$ parametrizes 
\[
 \left\{   \begin{array}{@{}c@{}} 
x_1,\ldots,x_r \in (X-Z)(S)  \\
\Cal{E}_0, \Cal{E}_1, \ldots, \Cal{E}_r \cong {}^{\sigma} \Cal{E}_0  \in \Bun_{\Cal{G}}(S)  \\
\varphi_i \colon \Cal{E}_i|_{X \times S - \Gamma_{x_{i+1}}} \xrightarrow{\sim} \Cal{E}_{i+1}|_{X \times S - \Gamma_{x_{i+1}}}
 \end{array} \right\}
\]
The Schubert ``variety'' $\Sht_{\Cal{G},(X-Z)^r}^{\leq  (\mu_1, \ldots, \mu_r) }$ associated to $ (\mu_1, \ldots, \mu_r) \in (X_*(T)_+)^r$ can be interpreted, up to reduced structure, as the substack where the modification type of $\varphi_i$ is bounded by $\mu_i$ at all geometric points of $S$. For such a tuple we can also form $\Sat_{\Sht_{\Cal{G}},(X-Z)^r}(\mu_1, \ldots, \mu_r)$ on $\Sht_{\Cal{G},(X-Z)^r}$, which is perverse up to shift and supported on $\Sht_{\Cal{G},(X-Z)^r}^{\leq  (\mu_1, \ldots, \mu_r) }$.

\begin{prop}\label{prop: sht sat ula}
The (shifted) perverse sheaf $\Sat_{\Sht_{\Cal{G}},(X-Z)^r}(\mu_1, \ldots, \mu_r)|_{(\cX)^r}$ is locally acylic with respect to the morphism  $\pi \co \Sat_{\Sht_{\Cal{G}},(X-Z)^r}|_{(\cX)^r} \rightarrow (\cX)^r$.
\end{prop}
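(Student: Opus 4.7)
The plan is to reduce the local acyclicity claim to Proposition \ref{prop: gr sat ula} (and an iterated analog on $\Gr_{\Cal{G},X^r}$) by exploiting the étale-local identification of shtukas with the affine Grassmannian. Since local acyclicity is preserved under smooth pullback and can be tested after smooth surjective cover, I first pass from $\Sht$ to $\Hecke$ as follows. By the iterated version of Definition \ref{def: shtukas}, $\Sht_{\Cal{G},X^r}$ fits into a Cartesian square against $\Hecke_{\Cal{G},X^r}$ with the other edge given by the graph of Frobenius $(\Id,\Frob) \co \Bun_{\Cal{G}} \to \Bun_{\Cal{G}} \times \Bun_{\Cal{G}}$. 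Since Frobenius in characteristic $p$ has vanishing differential, $(\Id,\Frob)$ is formally étale, hence the induced morphism $\iota \co \Sht_{\Cal{G},X^r} \to \Hecke_{\Cal{G},X^r}$ is formally smooth. Consequently it suffices to establish local acyclicity for $\Sat_{\Hecke_{\Cal{G},X^r}}(\mu_1,\ldots,\mu_r)|_{\cX^r}$ with respect to $\Hecke_{\Cal{G},X^r}|_{\cX^r} \to \cX^r$.

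Next I reduce from $\Hecke$ to $\Gr$. The iterated analog of Proposition \ref{Hecke to Gr} identifies $\Hecke_{\Cal{G},X^r}$, up to quotienting by a pro-smooth group action and twisting by an $\Cal{L}^+\Cal{G}$-torsor over $\Bun_{\Cal{G}}$, with $\Gr_{\Cal{G},X^r} \times_{X^r} \Bun_{\Cal{G},\infty\Gamma}$; under this identification $\Sat_{\Hecke_{\Cal{G},X^r}}(\mu_1,\ldots,\mu_r)$ is the descent of $\Sat_{\Gr_{\Cal{G},X^r}}(\mu_1,\ldots,\mu_r) \boxtimes \ol{\Q}_{\ell}$. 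By smooth descent the question becomes the local acyclicity of $\Sat_{\Gr_{\Cal{G},X^r}}(\mu_1,\ldots,\mu_r)|_{\cX^r}$ relative to $\Gr_{\Cal{G},X^r}|_{\cX^r} \to \cX^r$.

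On the open locus where the $x_i$ are pairwise disjoint, $\Gr_{\Cal{G},X^r}$ factorizes as a fibered product of copies of $\Gr_{\Cal{G}}$, so the Satake sheaf is an external tensor product and local acyclicity follows from Proposition \ref{prop: gr sat ula} applied to each factor. To extend across the partial diagonals I invoke the factorization (fusion) property of geometric Satake in families, as recorded for nonconstant parahoric groups in \cite[Theorem A.12]{RZ15} and \cite[\S 5.5]{Zhu15}: the sheaf $\Sat_{\Gr_{\Cal{G},X^r}}(\mu_1,\ldots,\mu_r)$ is the middle perverse extension of the external product from the disjoint locus, and local acyclicity is preserved by such middle extensions along the smooth morphism to $\cX^r$.

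The main obstacle is Step 1, namely making rigorous the intuition that the Frobenius twist in the definition of shtukas is harmless étale-locally. The vanishing-differential argument for Frobenius is standard, but one must be careful because $\Bun_{\Cal{G}}$ is not quasi-compact; in practice, the argument is carried out after restricting to the finite-type substacks that support $\Sat_{\Sht_{\Cal{G},X^r}}(\mu_1,\ldots,\mu_r)$, where the Schubert bounds given by the $\mu_i$ allow the relevant pieces of $\Bun_{\Cal{G}}$ and $\Cal{L}^+\Cal{G}$ to be replaced by finite-type quotients. With that caveat, Steps 2 and 3 are essentially formal consequences of Proposition \ref{Hecke to Gr} and the geometric Satake formalism already cited.
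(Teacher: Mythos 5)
Your overall strategy (transfer the question from shtukas to the Beilinson--Drinfeld affine Grassmannian, where ULA of the Satake sheaf is known) is the right one, and is essentially how the result is proved in the literature; the paper itself simply quotes \cite[\S 1.4 Corollaire 2]{Ngo06}. But your Step 1 contains a genuine error. The graph $(\Id,\Frob)\co \Bun_{\Cal{G}} \to \Bun_{\Cal{G}}\times\Bun_{\Cal{G}}$ is \emph{not} formally \'etale: vanishing of the differential of Frobenius makes it formally \emph{unramified}, but it is a monomorphism of large codimension (roughly $\dim\Bun_{\Cal{G}}$ on finite-type pieces), and unramified plus not flat is the opposite of \'etale. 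Consequently its base change $\iota\co\Sht_{\Cal{G},X^r}\to\Hecke_{\Cal{G},X^r}$ is not formally smooth --- indeed a relative dimension count over $X^r$ (fibers of $\Sht^{\leq\ul\mu}$ have the dimension of $\Gr^{\leq\ul\mu}$, while $\Hecke^{\leq\ul\mu}$ carries an extra $\Bun_{\Cal{G}}$ worth of directions) rules this out. Since ULA does not pull back along arbitrary morphisms, your reduction from $\Sht$ to $\Hecke$ collapses at this point. The correct way to use the ``Frobenius has zero differential'' intuition is Varshavsky's transversality statement, which in this paper is packaged as Theorem \ref{varshavsky local model} and Corollary \ref{cor: local model sheaf}: there is a diagram $\Sht_{\Cal{G}}^{\leq\ul\mu} \xleftarrow{f} \Cal{W}^{\leq\ul\mu} \xrightarrow{g} \Gr_{\Cal{G}}^{\leq\ul\mu}$ over $X^r$ with $f$ smooth surjective, $g$ smooth, and $f^*\Sat_{\Sht_{\Cal{G}}}(\ul\mu)=g^*\Sat_{\Gr_{\Cal{G}}}(\ul\mu)$. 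ULA of $\Sat_{\Gr_{\Cal{G}}}(\ul\mu)$ over $(\cX)^r$ then pulls back along the smooth map $g$ and descends along the smooth surjection $f$, which is the reduction you wanted; your detour through $\Hecke$ and the claimed smoothness of $\iota$ is both unnecessary and unjustifiable.

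There is a second, smaller gap in Step 3: it is not a general principle that intermediate (middle) extension preserves local acyclicity, so you cannot deduce ULA of $\Sat_{\Gr_{\Cal{G},X^r}}(\mu_1,\ldots,\mu_r)$ across the partial diagonals from ULA on the disjoint locus by that phrase alone. The ULA of the fusion/convolution Satake sheaves on the Beilinson--Drinfeld Grassmannian over the whole base is itself a (known but nontrivial) theorem --- it is part of the fusion formalism of geometric Satake (see \cite[\S 5.5]{Zhu15}, \cite{RZ15}, or, in the $\Cal{D}$-module-free setting relevant here, the argument of \cite[\S 1]{Ngo06}, which proceeds via smallness of convolution morphisms) --- and should be cited as such rather than derived from a false general statement. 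With Step 1 replaced by the local model diagram and Step 3 replaced by a citation of the ULA theorem for the iterated/BD Satake sheaf, your argument becomes correct and is essentially the proof underlying the reference the paper gives.
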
 

\begin{proof}
This \cite[\S 1.4 Corollaire 2]{Ngo06}, but with the same remark as in the proof of Proposition \ref{prop: gr sat ula}.
\end{proof}

\subsubsection{Global geometry}

The stack $\Sht_{\Cal{G}}$ has infinitely many connected components owing to the positive-dimensional center of $\Cal{G}$. We wish to and can rectify this in the usual way: let $a \in \A_F^{\times}$ be a non-trivial idele of degree 1, and let $\Sht_{\Cal{G}}/a^{\Z}$ be the quotient obtained by formally adjoining an isomorphism $\Cal{E} \cong \Cal{E} \otimes \Cal{O}(a)$. Similarly define $\Sht_{\Cal{G},(X-Z)^r}/a^{\Z}$. We still have the map 
\[
\pi \co \Sht_{\Cal{G},(X-Z)^r}/a^{\Z} \rightarrow (X-Z)^r
\]
and the Geometric Satake sheaves descend to $\Sht_{\Cal{G},(X-Z)^r}/a^{\Z}$, which in an effort to curtail increasingly monstrous notation we continue to denote by $\mrm{Sat}_{\Sht_{\Cal{G},(X-Z)
^r}}(\mu_1, \ldots, \mu_r)$. Furthermore, we still have: 

\begin{prop}[{\cite[\S 1.4 Corollaire 2]{Ngo06}}]\label{ULA} The shifted perverse sheaf $\Sat_{\Sht_{\Cal{G}},(X-Z)^r}(\mu_1, \ldots, \mu_r)$ restricted to $\Sht_{\Cal{G},(\cX)^r}/a^{\Z}$ is locally acyclic with respect to the map 
\[
\pi \co \Sht_{\Cal{G},(\cX)^r}/a^{\Z} \rightarrow (\cX)^r.
\] 
\end{prop}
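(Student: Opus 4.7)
The plan is to deduce Proposition \ref{ULA} directly from Proposition \ref{prop: sht sat ula} by observing that the only additional ingredient is étale descent along the quotient by $a^{\Z}$, which preserves universal local acyclicity.

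First I would verify that the $a^{\Z}$-action on $\Sht_{\Cal{G},(X-Z)^r}$ is free. The action is induced from the action on $\Bun_{\Cal{G}}$ given by $\Cal{E} \mapsto \Cal{E} \otimes \Cal{O}(a)$. Since $a$ is an idele of degree $1$, composing with any degree-valued invariant (e.g.\ the determinant pushed to $\Pic(X)$, which is meaningful because $\Cal{G}_{\mrm{der}}$ is simply connected and the abelianization contributes a nontrivial degree) shows this action has no fixed points on $\Bun_{\Cal{G}}$, and hence none on $\Sht_{\Cal{G},(X-Z)^r}$. Consequently the quotient map
\[
q \co \Sht_{\Cal{G},(X-Z)^r} \rightarrow \Sht_{\Cal{G},(X-Z)^r}/a^{\Z}
\]
is a $\Z$-Galois étale cover. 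Moreover, the Satake sheaf $\Sat_{\Sht_{\Cal{G}},(X-Z)^r}(\mu_1,\ldots,\mu_r)$ is tautologically $a^{\Z}$-equivariant, since its construction via pullback from the Hecke stack through $\iota$ and the smooth-local identification with $\Gr_{\Cal{G},X^r}$ is insensitive to tensoring the underlying $\Cal{G}$-bundles by $\Cal{O}(a)$. Hence it descends along $q$ to the sheaf appearing in the statement of Proposition \ref{ULA}.

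Next I would invoke the general fact that universal local acyclicity is étale-local on the source: if $q \co Y \rightarrow Y'$ is an étale surjection, $f' \co Y' \rightarrow S$ a morphism, and $\Cal{F}'$ a sheaf on $Y'$, then $\Cal{F}'$ is $f'$-ULA if and only if $q^* \Cal{F}'$ is $(f' \circ q)$-ULA. Applying this with $f' = \pi$ on the quotient and $q$ as above, and noting that $\pi \circ q$ is precisely the projection $\Sht_{\Cal{G},(X-Z)^r} \to (X-Z)^r$ of Proposition \ref{prop: sht sat ula}, the desired local acyclicity on $\Sht_{\Cal{G},(\cX)^r}/a^{\Z}$ transfers directly.

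The main (and very minor) obstacle is purely bookkeeping: confirming freeness of the action, étaleness of the quotient, and $a^{\Z}$-equivariance of the Satake sheaves. All of the substantive local-acyclicity content is already in Proposition \ref{prop: sht sat ula}, itself imported from \cite[\S 1.4 Corollaire 2]{Ngo06}. An alternative, more intrinsic route would skip Proposition \ref{prop: sht sat ula} entirely and instead transport local acyclicity from the iterated affine Grassmannian $\Gr_{\Cal{G},X^r}$ directly via the smooth-local equivalence implicit in Definition \ref{def: iterated shtukas} and Proposition \ref{Hecke to Gr}, but this just unpacks Ng\^{o}'s proof.
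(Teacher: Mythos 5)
Your argument is correct and is essentially the route the paper takes: the paper simply cites Ng\^{o}'s Corollaire 2 (i.e.\ Proposition \ref{prop: sht sat ula}) and treats the passage to $\Sht_{\Cal{G},(\cX)^r}/a^{\Z}$ as harmless because the $a^{\Z}$-action merely glues isomorphic components (cf.\ the remark after Corollary \ref{cor: kottwitz for shtukas}), which is exactly the free-action/\'{e}tale-descent bookkeeping you spell out. Your verification that the Satake sheaf is $a^{\Z}$-equivariant and that local acyclicity is \'{e}tale-local on the source is the correct way to make that implicit step precise, so there is nothing to add.
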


We now prepare to state the properness result for the morphism $\Sht_{\Cal{G},(\cX)^r}/a^{\Z} \rightarrow (\cX)^r$. Any $\mu \in X_*(T)_{+}$ can be uniquely written as 
\[
\mu = \mu^+ + \mu^-
\]
where 
\begin{align*}
\mu^+ &:= (\mu_1^+ \geq \ldots \geq \mu_r^+ \geq 0) \\
\mu^- &:= (0 \geq \mu_1^- \geq \ldots \geq \mu_r^-)
\end{align*}
and for all $1 \leq i \leq r$, we have either $\mu_i = \mu^+_i$ or $\mu_i  = \mu^-_i$. We define 
\[
||\mu|| := \max(|\mu^+|, |\mu^-|).
\]

\begin{prop}[{\cite[\S 1.6 Proposition 2]{Ngo06}}]\label{proper}
Let $ (\mu_1, \ldots, \mu_r) \in (X_*(T)_+)^r$. Suppose that the locus $Z$ of ramification points of $\Cal{D}$ satisfies
\[
\# Z  \geq n^2 (||\mu_1|| + \ldots + || \mu_r||). 
\]
Then the morphism 
\[
\pi^{\circ} \co \Sht_{\Cal{G},(\cX)^r}^{\leq  (\mu_1, \ldots, \mu_r) }/a^{\Z} \rightarrow (\cX)^r
\]
is proper. 
\end{prop}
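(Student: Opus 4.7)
The plan is to verify the valuative criterion of properness for $\pi^\circ$. Let $R$ be a discrete valuation ring with fraction field $K$ and residue field $\kappa$, and suppose we are given a map $\Spec R \to (\cX)^r$ whose generic fiber is equipped with a section of $\Sht_{\Cal{G},(\cX)^r}^{\leq(\mu_1,\ldots,\mu_r)}/a^{\Z}$, i.e.\ a chain of $\Cal{G}$-bundles $\Cal{E}_0, \ldots, \Cal{E}_r$ on $X_K$ together with an identification $\Cal{E}_r \cong {}^\sigma\Cal{E}_0$ and modifications $\varphi_i$ of type $\leq \mu_i$ at the respective legs $x_i \in \cX(R)$. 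The goal is to produce a unique lift over $\Spec R$, possibly after a finite extension of $R$ as allowed by the valuative criterion.

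Once an extension of $\Cal{E}_0$ to a $\Cal{G}$-bundle on $X_R$ is given, the remaining bundles $\Cal{E}_i$ and modifications are recovered automatically: the Schubert varieties $\Gr_{\Cal{G}}^{\leq \mu_i}|_{\cX} \to \cX$ are proper, so each $\varphi_i$ propagates uniquely over $\Spec R$ once its source is extended. The problem therefore reduces to existence and uniqueness of an extension of $\Cal{E}_0$. A Langton-style argument produces some extension, so the real content is \emph{boundedness}: the specialization $\Cal{E}_0|_\kappa$ must be confined, modulo the $a^{\Z}$-action, to a quasi-compact open substack of $\Bun_{\Cal{G}}$, on which $\Bun_{\Cal{G}}/a^{\Z}$ is separated of finite type, so that uniqueness follows from the separatedness. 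The Frobenius identification $\Cal{E}_r\cong{}^\sigma\Cal{E}_0$ together with the chain of modifications translates into a quasi-periodicity on $\Cal{E}_0$: it is related to its Frobenius twist by a modification of total type bounded by $\sum_i \|\mu_i\|$.

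The main obstacle, and the source of the numerical hypothesis $\#Z \geq n^2 \sum_i \|\mu_i\|$, is to rule out destabilizing $\Cal{D}$-stable subsheaves. Writing $\Cal{V}$ for the locally free right $\Cal{D}$-module of rank one corresponding to $\Cal{E}_0$, at each $z \in Z$ the order $\Cal{D}_z$ is a maximal order in a local division algebra of reduced degree $n$, so the $\Cal{D}_z$-submodules of $\Cal{V}_z$ are precisely the powers $\varpi_z^i \Cal{V}_z$ of the two-sided maximal ideal. Consequently, any $\Cal{D}$-stable coherent subsheaf $\Cal{W} \subset \Cal{V}$ is highly constrained at each ramification point: its local invariant at $z \in Z$ is quantized, so that the $\Cal{O}_X$-degree of $\Cal{W}$ relative to the average slope can only be balanced in increments of size $1/n^2$ at each $z \in Z$, contributing a total rigidity proportional to $\#Z/n^2$. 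On the other hand, the quasi-periodicity above shows that across one application of Frobenius the slope of any destabilizing $\Cal{W}$ can only be shifted by an amount bounded by $\sum_i \|\mu_i\|$. Iterating Frobenius and balancing these two bounds, one concludes that no destabilizing $\Cal{W}$ can exist as soon as $\#Z \geq n^2 \sum_i \|\mu_i\|$. This yields the required boundedness, and combined with the propagation of modifications and the separatedness of $\Bun_{\Cal{G}}/a^{\Z}$ on quasi-compact opens, completes the verification of the valuative criterion, hence the properness claim.
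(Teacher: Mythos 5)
First, note that the paper does not actually prove this proposition: it is quoted from \cite{Ngo06} \S 1.6 Proposition 2 (whose proof follows Lafforgue's treatment of $\Cal{D}$-chtoucas), so your sketch has to be measured against that argument. Your outer shell (valuative criterion, extend the bundle, propagate the modifications via properness of the global Schubert varieties, then a numerical count at the ramified places) is the right shape, but the load-bearing steps are misplaced or incorrect. The reduction ``once $\Cal{E}_0$ extends, the rest is recovered automatically'' ignores the Frobenius constraint: propagating the chain over $\Spec R$ using properness of $\Gr_{\Cal{G}}^{\leq \mu_i}$ does produce extensions of $\Cal{E}_1, \ldots, \Cal{E}_r$, but a shtuka requires the resulting extension of $\Cal{E}_r$ to coincide with ${}^{\sigma}$ of the chosen extension of $\Cal{E}_0$, and this compatibility is exactly what can fail: the modification can degenerate along the special fibre. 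That degeneration analysis is where the whole content of the proof lies, and it is there --- not in any boundedness statement --- that the hypothesis $\# Z \geq n^2(||\mu_1|| + \ldots + ||\mu_r||)$ is used, via a degree count at the places of $Z$ against the total degree of modification permitted by the $\mu_i$. Relatedly, your appeal to ``separatedness of $\Bun_{\Cal{G}}/a^{\Z}$ on quasi-compact opens'' for uniqueness is not available: $\Bun_{\Cal{G}}$ is not separated (two locally free extensions over a trait can differ by an elementary modification along the special fibre); uniqueness must be extracted from the shtuka structure itself.

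Second, the mechanism you propose for boundedness does not work as stated, and boundedness is in any case not where the numerical hypothesis enters. Since $D$ is a division algebra, the generic fibre of $\Cal{V}$ is a one-dimensional $D$-vector space, so a $\Cal{D}$-stable coherent subsheaf of $\Cal{V}$ is generically either $0$ or all of $\Cal{V}$: there are simply no intermediate-rank destabilizing $\Cal{D}$-stable subsheaves to rule out, with or without any condition on $\# Z$. In fact, because the Harder--Narasimhan filtration of the underlying vector bundle is canonical and hence $\Cal{D}$-stable, the underlying bundle of every rank-one locally free $\Cal{D}$-module is automatically semistable, so the relevant quasi-compactness (finite-typeness of $\Sht_{\Cal{G}}^{\leq(\mu_1,\ldots,\mu_r)}/a^{\Z}$ for division algebras) holds unconditionally; if your reading of the hypothesis were correct, the proposition would hold with no constraint relating $\# Z$ to the $\mu_i$, which is false. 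The lattice-chain quantization you invoke at $z \in Z$ concerns full-rank $\Cal{D}_z$-lattices and does not yield the asserted ``slope rigidity'' for destabilizing subsheaves. As written, then, the argument does not establish properness; repairing it requires the degeneration analysis of \cite{Ngo06} \S 1.6 (after Lafforgue), in which the inequality between $\# Z$ and $\sum_i ||\mu_i||$ is played off against the quantized cost of modifying $\Cal{D}$-lattices at the ramified places to exclude degenerate limits in the valuative criterion.
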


We need to extend this result to our integral model $X-Z = \cX \cup \{x_0\}$.  

\begin{prop}\label{prop: integral model proper}
Let $ (\mu_1, \ldots, \mu_r) \in (X_*(T)_+)^r$. Suppose that the locus $Z$ of ramification points of $\Cal{D}$ satisfies
\[
\# Z  \geq n^2 (||\mu_1|| + \ldots + || \mu_r||). 
\]
Then the morphism 
\[
\pi \co \Sht_{\Cal{G},(X-Z)^r}^{\leq  (\mu_1, \ldots, \mu_r) }/a^{\Z} \rightarrow (X-Z)^r
\]
is proper. 
\end{prop}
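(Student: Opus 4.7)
The plan is to reduce to Ng\^{o}'s properness theorem by comparing $\Sht_{\Cal{G}}$ with the analogous moduli stack for a hyperspecial model of the group scheme. Let $\Cal{G}^{\natural} \to X$ denote the (unique) smooth affine group scheme with $\Cal{G}^{\natural}|_{X \setminus \{x_0\}} = \Cal{G}|_{X \setminus \{x_0\}}$ and whose restriction to $D_{x_0}$ is the hyperspecial Bruhat--Tits parahoric containing $\Cal{G}|_{D_{x_0}}$. Since $\Cal{G}^{\natural}$ is hyperspecial at every point of $X \setminus Z$, Ng\^{o}'s argument underlying Proposition \ref{proper} applied to $\Cal{G}^{\natural}$ in place of $\Cal{G}$ gives properness over all of $(X-Z)^r$; that is, the exclusion of $x_0$ in Proposition \ref{proper} reflects solely the parahoric bad reduction at $x_0$, which is absent for $\Cal{G}^{\natural}$. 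Under the same hypothesis $\# Z \geq n^2 (||\mu_1|| + \ldots + ||\mu_r||)$, the morphism
\[
\pi^{\natural} \co \Sht_{\Cal{G}^{\natural},(X - Z)^r}^{\leq (\mu_1,\ldots,\mu_r)}/a^{\Z} \longrightarrow (X-Z)^r
\]
is proper.

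The inclusion $\Cal{G} \hookrightarrow \Cal{G}^{\natural}$ of smooth affine group schemes over $X$ is an isomorphism away from $x_0$; at $x_0$ it realizes $\Cal{G}|_{D_{x_0}}$ as a parahoric subgroup of the hyperspecial parahoric $\Cal{G}^{\natural}|_{D_{x_0}}$, with quotient on special fibers a partial flag variety $G^{\natural}/P$ of the reductive quotient $G^{\natural}$. This induces a natural forgetful morphism
\[
f \co \Sht_{\Cal{G},(X-Z)^r}^{\leq (\mu_1,\ldots,\mu_r)}/a^{\Z} \longrightarrow \Sht_{\Cal{G}^{\natural},(X-Z)^r}^{\leq (\mu_1,\ldots,\mu_r)}/a^{\Z};
\]
this is well-defined because the image of $\Gr_{\Cal{G}}^{\leq \mu_i}$ under $\Gr_{\Cal{G}} \to \Gr_{\Cal{G}^{\natural}}$ is contained in $\Gr_{\Cal{G}^{\natural}}^{\leq \mu_i}$ for each $i$. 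If $f$ is proper, then the conclusion follows by composition with $\pi^{\natural}$.

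To show $f$ is proper, I would factor it as $f = h \circ g$, where
\[
g \co \Sht_{\Cal{G},(X-Z)^r}^{\leq (\mu_1,\ldots,\mu_r)}/a^{\Z} \longrightarrow \Sht_{\Cal{G}^{\natural},(X-Z)^r}^{\leq (\mu_1,\ldots,\mu_r)}/a^{\Z} \times_{\Bun_{\Cal{G}^{\natural}}} \Bun_{\Cal{G}}
\]
sends $(\Cal{E},\varphi)$ to $(\Cal{E} \times^{\Cal{G}} \Cal{G}^{\natural}, \varphi^{\natural}, \Cal{E})$, and $h$ is projection to the first factor. The map $\Bun_{\Cal{G}} \to \Bun_{\Cal{G}^{\natural}}$ is a twisted $G^{\natural}/P$-bundle (the change-of-structure-group torsor is trivial away from $x_0$ and a partial flag variety fibration at $x_0$), hence proper, so $h$ is proper by base change. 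The morphism $g$ is a closed immersion: a point of the fibered product is a triple $(\Cal{E}^{\natural}, \varphi^{\natural}, \Cal{E})$ consisting of a $\Cal{G}^{\natural}$-shtuka together with a $\Cal{G}$-reduction $\Cal{E}$ of $\Cal{E}^{\natural}$; such a triple arises from a $\Cal{G}$-shtuka precisely when the modification $\varphi^{\natural}$ (together with the reductions $\Cal{E}$ and ${}^{\sigma}\Cal{E}$) refines to a $\Cal{G}$-modification of type $\leq \mu_i$ at each leg, a condition cut out by the closed Schubert substacks of the relative Hecke stack $\Hecke_{\Cal{G}}$.

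The main subtlety concerns legs at $x_0$, where $\varphi^{\natural}$ is undefined on the formal disk $D_{x_0}$, so the $\Cal{G}$-Schubert bound at that leg cannot be read off directly from the $\Cal{G}^{\natural}$-Schubert bound. One must verify that the condition ``the induced $\Cal{G}$-modification has type $\leq \mu_i$'' cuts out a closed subscheme of the partial flag variety parametrizing the parahoric reduction; this closedness follows from the projectivity of the Schubert varieties $\Gr_{\Cal{G}}^{\leq \mu_i}$ inside the affine flag variety for $\Cal{G}_{x_0}$ and of the fibration $\Gr_{\Cal{G}} \to \Gr_{\Cal{G}^{\natural}}$, so that the $\Cal{G}$-bound condition is the pullback of a closed condition along a proper map.
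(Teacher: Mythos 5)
Your proposal is correct and takes essentially the same route as the paper: reduce to the group scheme that is hyperspecial at $x_0$ (your $\Cal{G}^{\natural}$, the paper's $G$), for which Ng\^{o}'s Proposition \ref{proper} gives properness over all of $(X-Z)^r$, and then prove the forgetful map $\Sht_{\Cal{G}}^{\leq(\mu_1,\ldots,\mu_r)}/a^{\Z} \to \Sht_{G}^{\leq(\mu_1,\ldots,\mu_r)}/a^{\Z}$ is proper by factoring through a fiber product, using that $\Bun_{\Cal{G}} \to \Bun_{G}$ is proper (flag-variety fibers) and that the remaining map is proper because the $\Cal{G}$-Schubert bound is a closed condition. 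The paper organizes this second step via the auxiliary fiber products $\Hecke_{\Cal{G}}'$ and $\Sht_{\Cal{G}}'$ rather than $\Sht_{G}\times_{\Bun_{G}}\Bun_{\Cal{G}}$, but this is the same decomposition, and it only asserts properness (not a closed immersion) of the final map, which is all that is needed.
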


\begin{proof}
Let $G$ be the group scheme isomorphic to $\Cal{G}$ at places away from $x_0$ but hyperspecial at $x_0$. Then Proposition \ref{proper} applied to $\Sht_G$ shows that 
\[
\pi' \co \Sht_{G,(X-Z)^r}^{\leq  (\mu_1, \ldots, \mu_r) }/a^{\Z} \rightarrow (X-Z)^r
\]
is proper. The map $\Cal{G} \rightarrow G$ induces a projection
\[
\mrm{pr} \co  \Sht_{\Cal{G},(X-Z)^r}^{\leq  (\mu_1, \ldots, \mu_r) }/a^{\Z} \rightarrow  \Sht_{G,(X-Z)^r}^{\leq  (\mu_1, \ldots, \mu_r) }/a^{\Z}
\]
which we claim is proper. It obviously suffices to prove the claim. For that we consider the commutative diagram below, omitting some subscripts and superscripts, etc. for clarity of presentation. 
\[
\begin{tikzcd}
\Sht_{\Cal{G}} \ar[r]  \ar[ddr, dashed] \ar[d] & \Bun_{\Cal{G}}  \ar[d, "\Id \times \Frob"] \ar[ddr] \\
\Hecke_{\Cal{G}} \ar[r, "h^{\leftarrow} \times h^{\rightarrow}"] \ar[ddr, dashed] & \Bun_{\Cal{G}}  \times \Bun_{\Cal{G}} \ar[ddr] \\
&  \Sht_{G} \ar[r]   \ar[d] & \Bun_{G}  \ar[d, "\Id \times \Frob"] \\
&  \Hecke_{G} \ar[r, "h^{\leftarrow} \times h^{\rightarrow}"] & \Bun_{G}  \times \Bun_{G}
\end{tikzcd}
\]
In this diagram the squares with solid arrows are cartesian. Let $\Hecke_{\Cal{G}}'$ and $\Sht_{\Cal{G}}'$ denote the fibered products 
\[
\begin{tikzcd}
\Hecke_{\Cal{G}}' \ar[r, "h^{\leftarrow} \times h^{\rightarrow}"] \ar[d]& 
\Bun_{\Cal{G}}  \times \Bun_{\Cal{G}} \ar[d] \\
 \Hecke_{G} \ar[r, "h^{\leftarrow} \times h^{\rightarrow}"] & \Bun_{G}  \times \Bun_{G}
\end{tikzcd}
\]
and 
\[
\begin{tikzcd}
\Sht_{\Cal{G}}' \ar[r]   \ar[d] & \Bun_{\Cal{G}}  \ar[d, "\Id \times \Frob"] \\
\Hecke_{\Cal{G}}' \ar[r, "h^{\leftarrow} \times h^{\rightarrow}"] & \Bun_{\Cal{G}}  \times \Bun_{\Cal{G}}
\end{tikzcd}.
\]
The map $\Bun_{\Cal{G}} \rightarrow \Bun_G$ is proper, hence so is pullback $\Hecke_{\Cal{G}}'  \rightarrow \Hecke_G$. Since the map $\Hecke_{\Cal{G}}^{\leq  (\mu_1, \ldots, \mu_r) } \rightarrow (\Hecke_{\Cal{G}}')^{\leq  (\mu_1, \ldots, \mu_r) }$ is proper, the map $\Sht_{\Cal{G}}^{\leq  (\mu_1, \ldots, \mu_r) } \rightarrow (\Sht_{\Cal{G}}')^{\leq  (\mu_1, \ldots, \mu_r) }$ is also proper. As the map $\rightarrow (\Sht_{\Cal{G}}')^{^{\leq  (\mu_1, \ldots, \mu_r) }} \rightarrow \Sht_G^{^{\leq  (\mu_1, \ldots, \mu_r) }}$ is also proper, so is the composition
\[
\Sht_{\Cal{G}}^{\leq  (\mu_1, \ldots, \mu_r) } \rightarrow (\Sht_{\Cal{G}}')^{^{\leq  (\mu_1, \ldots, \mu_r) }}  \rightarrow \Sht_G^{^{\leq  (\mu_1, \ldots, \mu_r) }}.
\] 
\end{proof}

\section{The Kottwitz Conjecture for shtukas} \label{kottwitz for shtukas}

\subsection{Parahoric Hecke algebras}\label{parahoric HA}

Let $G$ be a split reductive group over a non-archimedean local field $F_t$ with uniformizer $t$. (The splitness assumption is not necessary, but is certainly adequate for our eventual purposes and simplifies the notation significantly.)

\subsubsection{Spherical Hecke algebra}
Let $K$ be a hyperspecial maximal compact subgroup of $G(F_t)$. By Bruhat-Tits theory we may extend $G$ to an integral model over the valuation subring $\Cal{O}_t \subset F_t$ such that $G(\Cal{O}_t) = K$. 

Let $\Cal{H}_{G,K}  = \mrm{Fun}_c(K \bs G(F_t) / K , \ol{\Q}_{\ell})$ be the corresponding spherical Hecke algebra. This has several canonical bases, so we fix notation for them. Let $T \subset G$ be a maximal split torus. As is well known, we have a Cartan decomposition 
\begin{equation}\label{eq: cartan decomp}
G(F_t) = \bigcup_{\mu \in X_*(T)_+} K t^{\mu} K 
\end{equation}
indexed by the dominant coweights $X_*(T)_+ \cong \Z^n$, where $t^{\mu}$ is the character such that for a character $\chi \in X^*(T)$, we have $\chi(t^{\mu})  = t^{\langle \chi, \mu \rangle}$. 

\begin{defn}\label{defn: hecke function double coset}
For $\mu \in X_*(T)_+$, we denote by $f_{\mu} \in \Cal{H}_{G,K}$ the indicator function of $K t^{\mu} K$.
\end{defn}

\subsubsection{Geometrization of spherical Hecke algebra}\label{sssec: geom spherical HA}
A second basis is obtained by interpreting categorifying the Hecke algebra in terms of $L^+ G$-equivariant perverse sheaves on the affine Grassmannian $\Gr_G$. Since we want to work over $\F_q$, we need a slightly modified version of the Geometric Satake equivalence; see \cite{Zhu15} for an explanation of the theory over general fields. We summarize the essential points. Recall that $\Gr_G(k_t) =G(F_t)/G(\Cal{O}_t)$ where $k_t$ is the residue field of $F_t$. Fixing $\ell$ and a choice of $\sqrt{q} \in \ol{\Q}_{\ell}$, so we get a half Tate twist as in \cite[Definition A.1]{RZ15}, Geometric Satake furnishes a fully faithful symmetric monoidal functor 
\[
\mrm{Rep}(\wh{G})  \rightarrow \mrm{Perv}_{G(\Cal{O}_t)} (\Gr_G) .
\]
The simple objects in $\mrm{Rep}(\wh{G})$ are indexed by $\mu \in X_*(T)_+$, and we denote by $\mrm{Sat}_{\Gr_G}(\mu)$ the corresponding perverse sheaf on $\Gr_G$, which is the IC sheaf of the Schubert variety $\Gr_G^{\leq\mu}$, Tate twisted to be pure of weight zero. 

For any $\Cal{F} \in \mrm{Perv}_{G(\Cal{O}_t)} (\Gr_G)$, we have under the function-sheaf dictionary \cite[\S III.12]{KW01} a trace function $f_{\Cal{F}}$ on $G(\Cal{O}_t) \bs G(F_t)/G(\Cal{O}_t)$ given by 
\begin{equation}\label{eq: trace function}
f_{\Cal{F}}(x) = \Tr(\Frob, \Cal{F}_{\ol{x}}).
\end{equation}

\begin{defn}
We define $\psi_{\mu}$ to be the trace function associated to $\Sat_{\Gr_G}(\mu)$. (By \cite{RZ15}, especially Lemma  A.13, this corresponds under the Satake equivalence to the character of the highest weight representation $V_{\mu}$.)
 \end{defn}

\begin{defn}Since $\mrm{Sat}_{\Gr_{\Cal{G}}}(\mu)$ is a $\Cal{G}(\Cal{O}_{x_0})$-equivariant sheaf on $\Gr_{\Cal{G},x_0}$, its stalks are the same on any open Schubert cell $\Gr^{= \nu}$ corresponding to $Kt^{\nu} K$ in the Cartan decomposition \eqref{eq: cartan decomp}. We denote this common stalk by $\Sat_{\Gr_G}(\mu)_{\nu}$. 
\end{defn}

\begin{lemma}\label{lem: spherical hecke formula}
We have 
\[
\psi_{\mu} = \sum_{\nu \leq u} \Tr(\Frob, \Sat_{\Gr_G}(\mu)_{\nu}) f_{\nu}
\]
\end{lemma}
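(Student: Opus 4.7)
The statement is essentially an unpacking of the function-sheaf dictionary applied to the equivariant sheaf $\Sat_{\Gr_G}(\mu)$, using the Schubert stratification as the organizing principle. The plan is to combine three facts: (i) $\Sat_{\Gr_G}(\mu)$ is supported on $\Gr_G^{\leq\mu} = \bigsqcup_{\nu\leq\mu} \Gr_G^{=\nu}$; (ii) $\Sat_{\Gr_G}(\mu)$ is $G(\Cal{O}_t)$-equivariant, so its Frobenius trace on stalks is constant along each Schubert cell $\Gr_G^{=\nu}$; and (iii) under $\Gr_G(k_t) = G(F_t)/G(\Cal{O}_t)$ with the Cartan decomposition \eqref{eq: cartan decomp}, the $k_t$-points of $\Gr_G^{=\nu}$ are exactly $K t^\nu K / K$, so the indicator function $f_\nu$ of $K t^\nu K$ corresponds to the indicator function of $\Gr_G^{=\nu}(k_t)$.

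First, I would view both sides as $K$-invariant functions on $\Gr_G(k_t) = G(F_t)/K$, which is legitimate since $\psi_\mu$ is $K$-bi-invariant and $f_\nu$ is the characteristic function of $Kt^\nu K$. By definition \eqref{eq: trace function}, for $x \in \Gr_G(k_t)$ one has $\psi_\mu(x) = \Tr(\Frob, \Sat_{\Gr_G}(\mu)_{\bar x})$, which vanishes unless $x \in \Gr_G^{\leq\mu}(k_t)$, giving the support condition $\nu \leq \mu$.

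Next, for $x \in \Gr_G^{=\nu}(k_t)$, I would invoke $G(\Cal{O}_t)$-equivariance: any two $k_t$-points of $\Gr_G^{=\nu}$ are in the same $G(\Cal{O}_t)$-orbit (since $K t^\nu K / K$ is a single $K$-orbit), and the equivariance isomorphisms are defined over $k_t$ because $G(\Cal{O}_t) = L^+G(k_t)$, hence they commute with Frobenius. Therefore the Frobenius trace on $\Sat_{\Gr_G}(\mu)_{\bar x}$ does not depend on the choice of $x \in \Gr_G^{=\nu}(k_t)$, justifying the notation $\Tr(\Frob, \Sat_{\Gr_G}(\mu)_\nu)$ for this common value.

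Putting these together, $\psi_\mu$ takes the value $\Tr(\Frob, \Sat_{\Gr_G}(\mu)_\nu)$ on $\Gr_G^{=\nu}(k_t) = K t^\nu K / K$ for each $\nu \leq \mu$, and vanishes elsewhere. This is precisely the asserted identity $\psi_\mu = \sum_{\nu \leq \mu} \Tr(\Frob, \Sat_{\Gr_G}(\mu)_\nu) f_\nu$. The only mild subtlety is to ensure the Frobenius-equivariance of the stalk isomorphisms coming from equivariance, but this is immediate from the fact that elements of $K$ are $k_t$-rational; no serious obstacle arises.
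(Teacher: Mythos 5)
Your proposal is correct and follows essentially the same route as the paper: the paper's proof is the same observation that $\Sat_{\Gr_G}(\mu)$ is supported on $\Gr^{\leq\mu}=\bigcup_{\nu\leq\mu}\Gr^{=\nu}$ combined with the definition of $f_\nu$ as the indicator of $Kt^\nu K$, with the equivariance/constancy-of-stalks point you spell out already built into the paper's definition of the common stalk $\Sat_{\Gr_G}(\mu)_\nu$ just before the lemma. Your extra remark on Frobenius-compatibility of the equivariance isomorphisms is a harmless (and correct) elaboration, not a divergence.
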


\begin{proof}
This is immediate from the fact that $\Sat_{\Gr}(\mu)_{\nu}$ is supported on $\Gr^{\leq \mu}$, which is the union of the $\Gr^{= \nu}$ for $\nu \leq \mu$, and the definition of $f_{\nu}$ as the characteristic function on $K t^{\nu} K$.  
\end{proof}

 We have a Satake isomorphism 
 \[
 \Cal{H}_G(K) \xrightarrow{\sim} R(\wh{G}) \cong  \ol{\Q}_{\ell}[X_*(T)]^W
 \]
 where $W$ is the Weyl group of $T$. (The Satake isomorphism is reviewed in \S \ref{subsec: satake transform}.) Here $R(\wh{G})$ is the representation ring of $\wh{G}$, which is generated by the classes of the highest weight representations $V_{\mu}$.


\subsubsection{Parahoric Hecke algebras}

Let $J$ be a parahoric subgroup of $G$ stabilizing a facet whose closure contains the vertex corresponding to $K$ in the Bruhat-Tits building of $G(F_t)$. Let $\Cal{H}_{G,J} = \mrm{Fun}_c(J \bs G(F_t) / J , \ol{\Q}_{\ell})$ be the corresponding parahoric Hecke algebra.

\begin{thm}[Bernstein,{ \cite[Theorem 3.1.1]{Haines09}}]\label{thm: bernstein isom} Convolution with $f_0 = \mbb{I}_K$ (the identity of $\Cal{H}_{G,K}$) induces an isomorphism 
\[
- *_J \mbb{I}_K \co Z(\Cal{H}_{G,J}) \xrightarrow{\sim} \Cal{H}_{G,K}.
\]

\end{thm}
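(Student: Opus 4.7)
The plan is to identify both $Z(\Cal{H}_{G,J})$ and $\Cal{H}_{G,K}$ with the Weyl group invariants $\ol{\Q}_{\ell}[X_*(T)]^W$, and then check that convolution with $\mbb{I}_K$ intertwines the two identifications. First I would verify that $-*_J \mbb{I}_K$ is well-defined: since $J \subset K$, any $K$-biinvariant function is in particular $J$-biinvariant, so $\mbb{I}_K \in \Cal{H}_{G,J}$, and normalizing so that $\mrm{vol}(K) = 1$ makes $\mbb{I}_K$ an idempotent. For $z \in Z(\Cal{H}_{G,J})$, centrality gives $z *_J \mbb{I}_K = \mbb{I}_K *_J z$; the right-hand side is left $K$-invariant and the left-hand side is right $K$-invariant, so the product lies in $\Cal{H}_{G,K}$. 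The map is a ring homomorphism because $(z_1 *_J \mbb{I}_K) *_J (z_2 *_J \mbb{I}_K) = z_1 *_J z_2 *_J \mbb{I}_K *_J \mbb{I}_K = (z_1 z_2) *_J \mbb{I}_K$, using centrality of $z_2$ and idempotency of $\mbb{I}_K$.

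For the isomorphism, I would first reduce to the Iwahori case. Pick an Iwahori $I \subset J$; the quotient $J/I$ is the group of $\F_q$-points of a connected reductive group over the residue field, so averaging over $J/I$ produces an idempotent $e_J \in \Cal{H}_{G,I}$, and one has an identification $\Cal{H}_{G,J} \cong e_J *_I \Cal{H}_{G,I} *_I e_J$. Since $z *_I e_J = e_J *_I z$ for $z \in Z(\Cal{H}_{G,I})$, the map $z \mapsto e_J *_I z$ lands in $Z(\Cal{H}_{G,J})$; a key nontrivial fact from the theory of affine Hecke algebras is that this map is surjective, which reduces the problem to the Iwahori case.

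At Iwahori level, Bernstein's presentation of $\Cal{H}_{G,I}$ produces lattice generators $\Theta_{\mu}$ for $\mu \in X_*(T)$ and finite Hecke generators $T_w$ for $w \in W$, satisfying the Bernstein relations; Bernstein's theorem then identifies $Z(\Cal{H}_{G,I}) = \ol{\Q}_{\ell}[\Theta_{\mu}]^{W} \cong \ol{\Q}_{\ell}[X_*(T)]^W$. On the spherical side, the Satake isomorphism identifies $\Cal{H}_{G,K} \cong \ol{\Q}_{\ell}[X_*(T)]^W$. The heart of the proof is to check that convolution with $\mbb{I}_K$ (which in the Iwahori-Hecke algebra expands as the normalized sum $\sum_{w \in W} q^{\ell(w)/2} T_w / \sum_{w \in W} q^{\ell(w)}$) intertwines these parametrizations. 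Concretely, one computes the convolution of each symmetrized Bernstein element $\sum_{\mu' \in W \mu} \Theta_{\mu'}$ with $\mbb{I}_K$, expresses the result in the basis of $K$-double cosets, and compares against the Satake image of the corresponding $W$-invariant monomial $[\mu]$.

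The main obstacle I expect is the final compatibility check: the Bernstein and Satake parametrizations differ by a normalization involving $q^{\langle \rho, \mu \rangle}$, where $\rho$ is the half-sum of positive roots, arising from the Weyl denominator in the character formula, and one must carefully track these factors through the argument. A secondary subtlety is the surjectivity of $Z(\Cal{H}_{G,I}) \twoheadrightarrow Z(\Cal{H}_{G,J})$, which is established using the explicit Bernstein basis together with the combinatorial structure of the Iwahori-Weyl group relative to the parahoric subgroup $J$. Since the result is of course well known, in a rigorous write-up I would simply invoke Haines' formulation rather than reprove these inputs.
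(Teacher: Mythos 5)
The paper offers no proof of this statement: it is quoted directly from Haines (\cite{Haines09}, Theorem 3.1.1), and your sketch --- reduction to the Iwahori case via the idempotent $e_J$, Bernstein's presentation of $Z(\Cal{H}_{G,I})$, and the Satake-compatibility check with the $q^{\langle \rho,\mu\rangle}$ normalizations --- is the standard argument behind that citation, so deferring to Haines at the end is exactly what the paper does. The only slip is cosmetic: $J/I$ is not a group ($I$ is not normal in $J$); the idempotent $e_J = \mrm{vol}(J)^{-1}\mbb{I}_J \in \Cal{H}_{G,I}$ exists simply because $J$ is a compact open subgroup containing $I$, so nothing in your reduction is affected.
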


\begin{defn}\label{IC basis hecke algebra}
For $\mu \in X_*(T)_+$, 
\begin{itemize}
\item We denote by $f_{\mu}'$ the unique element of $Z(\Cal{H}_{G,J})$  such that $f_{\mu}' * \mbb{I}_K = f_{\mu}$.
\item We denote by $\psi_{\mu}' \in Z(\Cal{H}_{G,J}) $  the unique element such that  $ \psi_{\mu}' * \mbb{I}_K = \psi_{\mu} \in \Cal{H}_{G,K}$. 
\end{itemize}
\end{defn}

\subsubsection{Geometrization of parahoric Hecke algebras} There is a  geometrization of the parahoric Hecke algebra analogous to \S \ref{sssec: geom spherical HA}, which goes as follows (it will be elaborated on later in \S \ref{hecke base change}). Briefly, let $\Cal{G}$ be the parahoric group scheme corresponding to $J$ by Bruhat-Tits theory. Then the Hecke algebra $\Cal{H}_{G,J}$ the Grothendieck ring (i.e. the group completion of equivalence classes of objects, with ring structure induced by the convolution) of $\mrm{Perv}_{\Cal{G}(\Cal{O})}(\Gr_{\Cal{G}})$. 

Note that if $J=I$ is an Iwahori subgroup (the stabilizer of full alcove), then $\Gr_{\Cal{G}}$ is the affine flag variety $\mrm{Fl}_G$. In general $\Gr_{\Cal{G}}$ is a partial affine flag variety, which one can think of as a mix between the affine Grassmannian and affine flag variety. 

One might ask which sheaves the functions $\psi_{\mu}'$ correspond to. The answer is that they can be realized as nearby cycles of certain global degenerations, and it is the key point underlying this section.

\subsection{Nearby cycles}\label{subsec: nearby cycles}

We recall the definition and essential (for us) properties of the nearby cycles functor. For a reference, see \cite[Expos\'{e} XIII]{Del73}. 

A \emph{Henselian trait} is a triple $(S, s,\eta)$ where $S$ is a the spectrum of a discrete valuation ring, $s$ is the special point of $S$ and $\eta$ is the generic point of $S$. Choose geometric points $\ol{s}$ and $\ol{\eta}$ lying over $s$ and $\eta$, respectively. Let $\ol{S}$ be the normalization of $S$ in $\ol{\eta}$. We denote by 
\begin{align*}
\ol{i} &\co \ol{s} \rightarrow \ol{S} \\
\ol{j} &\co \ol{\eta} \rightarrow \ol{S}
\end{align*}
the obvious maps. 

Let $f \co Y \rightarrow S$ be a finite type scheme over $S$. One defines a topos $Y \times_s \eta$ as in \cite[Expos\'{e} XIII \S 1.2]{Del73}, so that the category $D_c^b(Y \times_s \eta, \ol{\Q}_{\ell})$ is the category of $\Cal{F} \in D_c^b(Y \times_s \ol{s}, \ol{\Q}_{\ell})$ together with a continuous $\Gal(\ol{\eta}/\eta)$-action compatible with the $\Gal(\ol{s}/s)$-action on $Y_{\ol{s}}$ via the natural map $\Gal(\ol{\eta}/\eta) \rightarrow \Gal(\ol{s}/s)$.

\begin{defn}
Given $\Cal{F} \in D_c^b(Y_{\eta}, \ol{\Q}_{\ell})$ we define the \emph{nearby cycles} $R\Psi(\Cal{F}) \in D_c^b(Y \times_s \ol{s}, \ol{\Q}_{\ell})$ by 
\[
R\Psi(\Cal{F})  := \ol{i}^* R\ol{j}_* (\Cal{F}_{\ol{\eta}})
\]
with the $\Gal(\ol{\eta}/\eta)$-action obtained by transport of structure from that on $\Cal{F}_{\ol{\eta}}$. 
\end{defn}

\begin{remark}\label{rem: nearby cycles descent}
When the nearby cycles construction is performed with $S  = \Spec \F_q[\![t]]$, the sheaf $R\Psi(\Cal{F})$ is a priori only defined over $Y_{\ol{\F}_q}$, but can be descended to $Y_{\F_q}$ by choosing a splitting $\Gal(\ol{\F}_q/\F_q) \rightarrow \Gal(\ol{\F_q((t))}/\F_q((t)))$. When dealing with nearby cycles on affine Grassmannians (or related objects) this is often what we mean (see \cite[Footnote 4 on page 8]{Gaits01}). Only after such a descent one can associate a trace function to $R\Psi(\Cal{F})$. We shall point out when this descent is being used, but as a blanket rule it is necessary every time we wish to talk about a trace function. 
\end{remark}

\begin{lemma}\label{nearby cycles proper}
If $f \co Y \rightarrow S$ is proper, then the base change homomorphism 
\[
 R\Psi f_* \rightarrow f_* R\Psi 
\]
is an isomorphism.
\end{lemma}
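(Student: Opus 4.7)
The plan is to reduce this to two applications of proper base change, combined with functoriality of pushforward. Unwinding the definition, $R\Psi(Rf_*\Cal{F}) = \overline{i}^* R\overline{j}_* (Rf_*\Cal{F})_{\overline{\eta}}$, so the strategy is to move the $f_*$ past $\overline{j}^*$ (the pullback into $(-)_{\overline{\eta}}$), then past $R\overline{j}_*$, and finally past $\overline{i}^*$. The first and last moves are proper base change; the middle move is transitivity of pushforward.

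Concretely, I would set up the diagram of cartesian squares
\[
\begin{tikzcd}
Y_{\overline{s}} \ar[r, "\overline{i}_Y"] \ar[d, "f_{\overline{s}}"'] & Y_{\overline{S}} \ar[d, "f_{\overline{S}}"] & Y_{\overline{\eta}} \ar[l, "\overline{j}_Y"'] \ar[d, "f_{\overline{\eta}}"] \\
\overline{s} \ar[r, "\overline{i}"] & \overline{S} & \overline{\eta} \ar[l, "\overline{j}"']
\end{tikzcd}
\]
obtained by pulling back $f \co Y \to S$ along $\overline{S} \to S$. Since $f$ is proper, so are $f_{\overline{s}}$, $f_{\overline{S}}$, and $f_{\overline{\eta}}$. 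Proper base change for the right square yields $(Rf_*\Cal{F})_{\overline{\eta}} \cong Rf_{\overline{\eta}*}(\Cal{F}_{\overline{\eta}})$. The two ways of expressing pushforward along $f_{\overline{S}} \circ \overline{j}_Y = \overline{j} \circ f_{\overline{\eta}}$ give the natural isomorphism $R\overline{j}_* \circ Rf_{\overline{\eta}*} \cong Rf_{\overline{S}*} \circ R\overline{j}_{Y*}$. Finally, proper base change for the left square gives $\overline{i}^* \circ Rf_{\overline{S}*} \cong Rf_{\overline{s}*} \circ \overline{i}_Y^*$. Composing these three isomorphisms yields
\[
R\Psi(Rf_*\Cal{F}) \cong \overline{i}^* R\overline{j}_* Rf_{\overline{\eta}*}(\Cal{F}_{\overline{\eta}}) \cong \overline{i}^* Rf_{\overline{S}*} R\overline{j}_{Y*}(\Cal{F}_{\overline{\eta}}) \cong Rf_{\overline{s}*} R\Psi(\Cal{F}),
\]
which is the claim as an isomorphism in $D^b_c(Y \times_s \overline{s}, \overline{\Q}_\ell)$.

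The only remaining point is to verify that this isomorphism respects the $\Gal(\overline{\eta}/\eta)$-action used to upgrade these complexes to objects of $D^b_c(Y \times_s \eta, \overline{\Q}_\ell)$; but since each of the three isomorphisms above is natural and obtained from the six-functor formalism, it is automatically Galois-equivariant by transport of structure. No step here is substantive---the only ``hard part'' is notational bookkeeping to identify which proper base change applies to which square---so the argument is really just an unwinding of definitions once one writes down the right diagram.
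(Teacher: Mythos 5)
Your argument is correct, and it is in fact the standard proof of this statement; the paper does not reprove it but simply cites \cite{Del73}, Expos\'e XIII (2.1.7.1), so what you have written is a self-contained justification of the cited fact rather than a divergence from the paper's method. Your chain of identifications -- proper base change to identify $(Rf_{\eta*}\Cal{F})_{\ol{\eta}} \cong Rf_{\ol{\eta}*}(\Cal{F}_{\ol{\eta}})$, transitivity of derived pushforward along $f_{\ol{S}} \circ \ol{j}_Y = \ol{j} \circ f_{\ol{\eta}}$, and proper base change across the square over $\ol{s} \to \ol{S}$ -- is exactly how the reference proves it, and the naturality argument for $\Gal(\ol{\eta}/\eta)$-equivariance is fine. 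Two bookkeeping points are worth fixing. First, the isomorphism $(Rf_{\eta*}\Cal{F})_{\ol{\eta}} \cong Rf_{\ol{\eta}*}(\Cal{F}_{\ol{\eta}})$ is proper base change for the cartesian square with corners $Y_{\ol{\eta}}$, $Y_{\eta}$, $\ol{\eta}$, $\eta$ (base change along $\ol{\eta} \to \eta$, using that $f_\eta$ is proper); this is not the right-hand square of the diagram you drew, which sits over $\ol{\eta} \to \ol{S}$ and does not involve $Y_\eta$ at all, so you should either enlarge the diagram or cite the correct square. Second, the remaining two steps take place over $\ol{S}$, the normalization of $S$ in $\ol{\eta}$, which is a generally non-noetherian valuation ring; proper base change does apply there (it holds for quasi-compact quasi-separated schemes with torsion coefficients and passes to $\ol{\Q}_{\ell}$-coefficients by the usual limit argument), but since you are working outside the finite-type-over-a-field setting it deserves a word. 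Neither point affects the correctness of the argument.
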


\begin{proof}
This is \cite[Expos\'{e} XIII (2.1.7.1)]{Del73}. 
\end{proof}

\begin{cor}\label{cor: nearby cycles = generic coh}
If $f \co Y \rightarrow S$ is proper, then the natural map
\[
H_c^i(Y_{\ol{\eta}}, \ol{\Q}_{\ell}) \rightarrow H_c^i(Y_{\ol{s}}, R \Psi(\ol{\Q}_{\ell})).
\] 
is an isomorphism. 
\end{cor}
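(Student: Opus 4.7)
The plan is to deduce this corollary directly from Lemma \ref{nearby cycles proper} by comparing stalks at the geometric special point $\ol{s}$.

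First I would unwind both sides using properness. Since $f \co Y \to S$ is proper, the fibers $Y_{\ol{\eta}}$ and $Y_{\ol{s}}$ are proper, so $H^i_c(Y_{\ol{\eta}}, \ol{\Q}_\ell) = H^i(Y_{\ol{\eta}}, \ol{\Q}_\ell)$ and $H^i_c(Y_{\ol{s}}, R\Psi(\ol{\Q}_\ell)) = H^i(Y_{\ol{s}}, R\Psi(\ol{\Q}_\ell))$. Writing $\wt{f} \co \wt{Y} := Y \times_S \ol{S} \to \ol{S}$ for the base change and $\ol{i}_Y, \ol{j}_Y$ for the inclusions of $Y_{\ol{s}}, Y_{\ol{\eta}}$ into $\wt{Y}$, proper base change for $\wt{f}$ gives
\[
R\Gamma(Y_{\ol{s}}, R\Psi(\ol{\Q}_\ell)) \;=\; R\Gamma(Y_{\ol{s}}, \ol{i}_Y^* R\ol{j}_{Y,*}\ol{\Q}_\ell) \;=\; \ol{i}^* R\wt{f}_* R\ol{j}_{Y,*}\ol{\Q}_\ell.
\]

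Next I would apply Lemma \ref{nearby cycles proper}. Using $R\wt{f}_* \circ R\ol{j}_{Y,*} = R\ol{j}_* \circ Rf_{\ol{\eta},*}$ (which is the content of the base-change isomorphism $R\Psi f_* \xrightarrow{\sim} f_* R\Psi$ in the proper case), we obtain
\[
\ol{i}^* R\wt{f}_* R\ol{j}_{Y,*}\ol{\Q}_\ell \;=\; \ol{i}^* R\ol{j}_* (Rf_{\ol{\eta},*}\ol{\Q}_\ell).
\]
Because $\ol{S}$ is the spectrum of a strictly henselian local ring with closed point $\ol{s}$ and generic point $\ol{\eta}$, the functor $\ol{i}^* R\ol{j}_*$ on complexes over $\ol{\eta}$ simply computes $R\Gamma(\ol{\eta}, -)$ (compatibly with Galois actions). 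Therefore
\[
R\Gamma(Y_{\ol{s}}, R\Psi(\ol{\Q}_\ell)) \;=\; R\Gamma(\ol{\eta}, Rf_{\ol{\eta},*}\ol{\Q}_\ell) \;=\; R\Gamma(Y_{\ol{\eta}}, \ol{\Q}_\ell),
\]
and passing to $H^i$ gives the claimed isomorphism. One checks that this agrees with the natural specialization map from $H^i(Y_{\ol{\eta}})$ to the $R\Psi$-cohomology, so there is no sign/orientation ambiguity.

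There is no real obstacle here: the only inputs are Lemma \ref{nearby cycles proper} (proper base change for nearby cycles) and the triviality that on a strictly henselian trait the functor $\ol{i}^* R\ol{j}_*$ reduces to taking global sections over $\ol{\eta}$. The mild bookkeeping point is to be careful that all cohomology groups carry compatible $\Gal(\ol{\eta}/\eta)$-actions, which is automatic from the construction of $R\Psi$ via transport of structure from $\Cal{F}_{\ol{\eta}}$.
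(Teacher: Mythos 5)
Your argument is correct and amounts to the paper's own (implicit) proof: the corollary is Lemma \ref{nearby cycles proper} applied to the proper morphism $f$ itself, evaluated at the closed point of the strictly henselian trait $\ol{S}$, where $\ol{i}^*R\ol{j}_*$ of a complex on $\ol{\eta}$ is just its (geometric generic) stalk, together with $H_c = H$ on the proper fibers. The only cosmetic slip is the attribution of $R\wt{f}_* R\ol{j}_{Y,*} = R\ol{j}_* Rf_{\ol{\eta},*}$ to the lemma --- that identity is mere compatibility of derived pushforward with composition ($\wt{f}\circ \ol{j}_Y = \ol{j}\circ f_{\ol{\eta}}$); the actual content of the lemma is the proper base change step you invoke in your first display.
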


\begin{lemma}\label{nearby cycles smooth}
If $f \co Y \rightarrow S$ is lisse, then the base change homomorphism 
\[
f^* R\Psi \rightarrow R\Psi f^*
\]
is an isomorphism.
\end{lemma}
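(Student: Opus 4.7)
My plan is to reduce the statement directly to smooth (lisse) base change for the étale pushforward $R\ol{j}_*$, by unfolding the definition of $R\Psi$. Concretely, I would first form the base-changed diagram
\[
\begin{tikzcd}
\ol{Y}_{\ol{\eta}} \ar[r, "\ol{j}_Y"] \ar[d, "\ol{f}_{\ol{\eta}}"'] & \ol{Y} \ar[d, "\ol{f}"] & \ol{Y}_{\ol{s}} \ar[l, "\ol{i}_Y"'] \ar[d, "\ol{f}_{\ol{s}}"] \\
\ol{\eta} \ar[r, "\ol{j}"'] & \ol{S} & \ol{s} \ar[l, "\ol{i}"]
\end{tikzcd}
\]
where $\ol{Y} := Y \times_S \ol{S}$, and observe that the outer vertical arrows are smooth because smoothness is preserved under arbitrary base change. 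Given $\Cal{F} \in D_c^b(Y_\eta, \ol{\Q}_\ell)$, by definition $R\Psi(\Cal{F}) = \ol{i}^* R\ol{j}_*(\Cal{F}_{\ol{\eta}})$ and $R\Psi(f^*\Cal{F}) = \ol{i}_Y^* R(\ol{j}_Y)_* (\ol{f}_{\ol{\eta}}^*\Cal{F}_{\ol{\eta}})$.

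The key step is smooth base change (SGA 4, XVI): since $\ol{f}$ is smooth, the natural map
\[
\ol{f}^* R\ol{j}_* \Cal{F}_{\ol{\eta}} \longrightarrow R(\ol{j}_Y)_* \ol{f}_{\ol{\eta}}^*\Cal{F}_{\ol{\eta}}
\]
is an isomorphism. Applying $\ol{i}_Y^*$ to both sides and using the identification $\ol{i}_Y^* \circ \ol{f}^* = \ol{f}_{\ol{s}}^* \circ \ol{i}^*$ (coming from commutativity of the right square and functoriality of pullback) yields the desired isomorphism
\[
\ol{f}_{\ol{s}}^* R\Psi(\Cal{F}) = \ol{f}_{\ol{s}}^* \ol{i}^* R\ol{j}_* \Cal{F}_{\ol{\eta}} \xrightarrow{\sim} \ol{i}_Y^* R(\ol{j}_Y)_* \ol{f}_{\ol{\eta}}^* \Cal{F}_{\ol{\eta}} = R\Psi(f^*\Cal{F}).
\]
Finally, I would check that this isomorphism is compatible with the inertia (Galois) action on each side, which is automatic from the construction via transport of structure.

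The only potentially delicate point is that $\ol{S}$ is not of finite type and generally not Noetherian, so one needs smooth base change in the form valid for pro-objects/strict henselizations; but this is exactly the generality in which SGA 4 proves smooth base change, so it applies without modification. Everything else is formal manipulation of derived functors and the definition of nearby cycles, so I do not anticipate a substantive obstacle.
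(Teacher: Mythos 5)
Your proof is correct. The paper itself disposes of this lemma by citing SGA~7, Expos\'e XIII (2.1.7.2) of \cite{Del73}, and the argument you give --- unfolding $R\Psi = \ol{i}^*R\ol{j}_*$, applying smooth base change for $R\ol{j}_*$ along the smooth morphism $\ol{f} \co \ol{Y} \to \ol{S}$, then applying $\ol{i}_Y^*$ and the compatibility $\ol{i}_Y^*\ol{f}^* = \ol{f}_{\ol{s}}^*\ol{i}^*$ --- is precisely the standard proof underlying that reference, so you have simply supplied the proof where the paper gives a citation. The caveats you flag are the right ones and are harmless: smooth base change in SGA~4 XVI is stated without Noetherian hypotheses (it needs only that $\ol{j}$ be quasi-compact quasi-separated and that the torsion be prime to the residue characteristics), the passage from torsion to $\ol{\Q}_{\ell}$-coefficients is the usual limit argument, and the $\Gal(\ol{\eta}/\eta)$-equivariance is indeed automatic since the whole base-changed diagram, and hence the base change map, is obtained by transport of structure from the Galois action on $\ol{S}$ and $\Cal{F}_{\ol{\eta}}$.
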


\begin{proof}
This is \cite[Expos\'{e} XIII (2.1.7.2)]{Del73}. 
\end{proof}

\subsection{Degeneration to affine flag varieties}

Let $X$ be a smooth curve (not necessarily projective) over $\F_q$ and let $\Cal{G} \rightarrow X$ be a parahoric group scheme, with parahoric level structure at $x_0$. We consider the global affine Grassmannian for $\Cal{G}$ 
as in \S \ref{subsec: G-bundles}: 
\[
\pi \co \Gr_{\Cal{G}} \rightarrow X.
\]
Consider the restriction $\Gr_{\Cal{G}}|_{D_{x_0}}$, where $D_{x_0} := \Spec \Cal{O}_{x_0}$ is the spectrum of the completed local ring at $x_0$.  We apply the nearby cycles construction of \S \ref{subsec: nearby cycles} and in the form of Remark \ref{rem: nearby cycles descent}, to 
\begin{itemize}
\item $S = D_{x_0}$, $Y = \Gr_{\Cal{G}}|_{D_{x_0}}$, and 
\item $\Cal{F} = \mrm{Sat}_{\Gr_{\Cal{G}} }(\mu)|_{D_{x_0}^{*}}$, where $D_{x_0}^{*} = \Spec F_{x_0}$ is thought of as a local ``punctured disk'' around $x_0$. 
\end{itemize}
This produces a $\Cal{L}^+\Cal{G}$-equivariant perverse sheaf $R \Psi(\Sat_{\Gr_{\Cal{G}}}(\mu)|_{D_{x_0}^{\circ}})$ on $\Gr_{\Cal{G}}|_{x_0}$, which we will abbreviate by $R \Psi(\Sat_{\Gr_{\Cal{G}}}(\mu))$.

\begin{thm}[Gaitsgory \cite{Gaits01}, Zhu \cite{Zhu14}]\label{gaitsgory central}
The sheaf $R \Psi(\Sat_{\Gr_{\Cal{G}}}(\mu)))$ is central. 
\end{thm}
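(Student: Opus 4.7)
My plan is to follow the Gaitsgory-Zhu strategy, interpreting centrality as the assertion that for any $\Cal{L}^+\Cal{G}|_{x_0}$-equivariant perverse sheaf $\Cal{S}$ on $\Gr_{\Cal{G}}|_{x_0}$, there is a canonical isomorphism
\[
R\Psi(\Sat_{\Gr_{\Cal{G}}}(\mu)) * \Cal{S} \xrightarrow{\sim} \Cal{S} * R\Psi(\Sat_{\Gr_{\Cal{G}}}(\mu))
\]
satisfying the usual hexagon-type compatibilities that make $R\Psi(\Sat_{\Gr_{\Cal{G}}}(\mu))$ an object of the Drinfeld center of $\mrm{Perv}_{\Cal{L}^+\Cal{G}|_{x_0}}(\Gr_{\Cal{G}}|_{x_0})$.

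The key geometric input is the two-point Beilinson-Drinfeld affine Grassmannian $\Gr_{\Cal{G}}\wt{\times}\Gr_{\Cal{G}} \to X^2$ from \S\ref{sssec: iterated aff gr}. Over the open complement of the diagonal, factorization gives an isomorphism with the product $\Gr_{\Cal{G}} \times_{\F_q} \Gr_{\Cal{G}}$, while the restriction to the diagonal is the convolution Grassmannian $\Gr_{\Cal{G}} \wt{\times}_X \Gr_{\Cal{G}}$ whose fiber over $x_0$ computes the $*$-convolution on $\Gr_{\Cal{G}}|_{x_0}$. I would restrict to $(X\times \{x_0\})$ and use Geometric Satake for the BD Grassmannian (in the form of \cite{Laff18} Theorem 12.16) to extend the external product $\Sat_{\Gr_{\Cal{G}}}(\mu) \boxtimes \Cal{S}$ across $(x_0,x_0)$. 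Taking nearby cycles as the moving leg $y$ specializes to $x_0$ and commuting $R\Psi$ past the proper convolution map (via Lemma~\ref{nearby cycles proper}) would then identify the result on the special fiber with $R\Psi(\Sat_{\Gr_{\Cal{G}}}(\mu)) * \Cal{S}$.

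The commutativity constraint comes from repeating the construction with the two legs swapped (the sheaf $\Cal{S}$ on the leg at $x_0$, and $\Sat_{\Gr_{\Cal{G}}}(\mu)$ on the leg moving toward $x_0$ from the other side), producing $\Cal{S} * R\Psi(\Sat_{\Gr_{\Cal{G}}}(\mu))$; the $S_2$-symmetry of the two-point BD Grassmannian, together with the fact that the external product $\Sat_{\Gr_{\Cal{G}}}(\mu) \boxtimes \Cal{S}$ is symmetric in the appropriate sense, produces the canonical isomorphism. The hexagon and associativity compatibilities would be checked by running the analogous argument on the three-point BD Grassmannian, inserting $\Cal{S}_1, \Cal{S}_2$ at the fixed legs and $\Sat_{\Gr_{\Cal{G}}}(\mu)$ at the moving middle leg.

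The main obstacle, and the reason the argument in the parahoric/non-constant setting is genuinely harder than the original Gaitsgory argument for constant groups, is controlling the geometry near the diagonal at $x_0$: the fibers of $\pi$ change from $\Gr_{G_{x_0}}$ (ordinary affine Grassmannian) to an affine partial flag variety as one moves to the parahoric point, so one cannot invoke a uniform local model argument. This is precisely the technical point addressed by Zhu's Theorem 7.3 in \cite{Zhu14} and by Pappas-Zhu in \cite{PZ13}, who construct a local model for $\Gr_{\Cal{G}}$ near $x_0$ (in terms of affine Schubert varieties in a tamely ramified Kac-Moody flag variety) and establish the requisite ULA properties for $\Sat_{\Gr_{\Cal{G}}}(\mu)$; once these are in hand, the Gaitsgory-style symmetry argument on the BD Grassmannian can be run essentially verbatim.
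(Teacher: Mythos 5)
First, a point of comparison: the paper does not prove Theorem \ref{gaitsgory central} at all. It is imported as a citation, with the remark immediately following it attributing the statement in the present generality (non-constant parahoric $\Cal{G}$) to Zhu (\cite{Zhu14}, Theorem 7.3) and the original prototype (constant group schemes, a slightly different degeneration) to Gaitsgory \cite{Gaits01}. So there is no in-paper argument to measure your proposal against; what can be assessed is whether your sketch faithfully reflects the cited proof.

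Your outline has the right overall shape (a one-parameter degeneration over the curve, nearby cycles commuted past proper convolution morphisms via the analogue of Lemma \ref{nearby cycles proper}, and the Zhu/Pappas--Zhu \cite{PZ13} local-model and ULA input to handle the non-constant parahoric group), but the mechanism you propose for producing the commutativity constraint has a genuine gap. The sheaf $\Cal{S}$ against which centrality must be checked is an \emph{arbitrary} $\Cal{L}^+\Cal{G}|_{x_0}$-equivariant perverse sheaf on the partial affine flag variety $\Gr_{\Cal{G}}|_{x_0}$; it is not of Satake type and admits no canonical spreading-out over the curve. Hence you cannot ``use Geometric Satake for the BD Grassmannian to extend $\Sat_{\Gr_{\Cal{G}}}(\mu)\boxtimes\Cal{S}$ across $(x_0,x_0)$,'' nor can you place $\Cal{S}$ on a moving leg and invoke the $S_2$-symmetry (fusion) of the two-point BD Grassmannian: fusion commutativity is a statement about Satake sheaves only, and restricting to such $\Cal{S}$ would at best give commutativity within the image of the central functor, not centrality in the parahoric Hecke category. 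In the Gaitsgory--Zhu argument the two legs play asymmetric roles: one forms a twisted product $\Sat_{\Gr_{\Cal{G}}}(\mu)\,\wt{\boxtimes}\,\Cal{S}$ on a convolution ind-scheme over $X$ whose fiber over $y\neq x_0$ is $\Gr_{G}\times \Gr_{\Cal{G}}|_{x_0}$ and whose fiber over $x_0$ is the convolution space $\Gr_{\Cal{G}}|_{x_0}\,\wt{\times}\,\Gr_{\Cal{G}}|_{x_0}$, and one compares the two convolution diagrams corresponding to performing the modification at the moving point before or after the modification at $x_0$; over the punctured curve both pushforwards agree because the twisted product is an honest external product there, and properness of the convolution maps (plus the ULA statements) identifies the nearby cycles of the two sides with $R\Psi(\Sat_{\Gr_{\Cal{G}}}(\mu))*\Cal{S}$ and $\Cal{S}*R\Psi(\Sat_{\Gr_{\Cal{G}}}(\mu))$ respectively. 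The higher compatibilities are likewise checked with further convolution diagrams of this asymmetric kind, not with the three-point fusion Grassmannian carrying three Satake sheaves. With that correction your sketch does track the cited proof, but as written the central step does not apply to general $\Cal{S}$.
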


\begin{remark} 
In the present formulation and level of generality, this theorem is actually due to
X. Zhu in \cite[Theorem 7.3]{Zhu14}. Gaitsgory proved the first prototype of Theorem \ref{gaitsgory central}, but working with constant group schemes $\Cal{G}$, and a slightly different degeneration.
\end{remark}

\begin{cor}\label{gaitsgory nearby function}
Assume that $G := \Cal{G}|_{F_{x_0}}$ is split. Then the trace function (in the sense of \eqref{eq: trace function}) associated to $R \Psi(\Sat_{\Gr_{\Cal{G}}}(\mu)))$ is $\psi_{\mu}'$ (Definition \ref{IC basis hecke algebra}). 
\end{cor}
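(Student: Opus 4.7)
The plan is to characterize $\psi_\mu'$ via the Bernstein isomorphism (Theorem \ref{thm: bernstein isom}) and identify the trace function of $R\Psi(\Sat_{\Gr_{\Cal{G}}}(\mu))$ by verifying the defining relation $\psi_\mu' *_J \mbb{I}_K = \psi_\mu$. By Theorem \ref{gaitsgory central}, the sheaf $R\Psi(\Sat_{\Gr_{\Cal{G}}}(\mu))$ is central in the category of $\Cal{L}^+\Cal{G}|_{x_0}$-equivariant perverse sheaves on $\Gr_{\Cal{G}, x_0}$; after descending from $\ol{\F}_q$ to $\F_q$ as in Remark \ref{rem: nearby cycles descent}, its trace function $f$ lies in $Z(\Cal{H}_{G,J})$. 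So by Bernstein it suffices to show $f *_J \mbb{I}_K = \psi_\mu$.

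To access the convolution geometrically, I would introduce a hyperspecial group scheme $\wt{G} \to X$ (produced by Bruhat-Tits theory) that agrees with $\Cal{G}$ away from $x_0$. The canonical inclusion of parahoric into hyperspecial gives a morphism $\Cal{G} \to \wt{G}$ of smooth affine group schemes over $X$, which is an isomorphism over $X \setminus \{x_0\}$; this induces a morphism $p \colon \Gr_{\Cal{G}}|_{D_{x_0}} \to \Gr_{\wt{G}}|_{D_{x_0}}$ that is an isomorphism over the generic disk $D_{x_0}^*$ and, on the special fiber, is the partial-flag projection $G(F_{x_0})/J \to G(F_{x_0})/K$ with fibers $K/J$. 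Restricted to the bounded Schubert locus $\Gr_{\Cal{G}}^{\leq \mu}|_{D_{x_0}}$ supporting our sheaves, $p$ is proper.

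The heart of the argument is the chain of identifications
\[
Rp_! R\Psi(\Sat_{\Gr_{\Cal{G}}}(\mu)) \;=\; R\Psi\bigl(Rp_! \Sat_{\Gr_{\Cal{G}}}(\mu)|_{D_{x_0}^*}\bigr) \;=\; R\Psi\bigl(\Sat_{\Gr_{\wt{G}}}(\mu)|_{D_{x_0}^*}\bigr) \;=\; \Sat_{\Gr_G}(\mu),
\]
where the first equality is proper base change for nearby cycles (Lemma \ref{nearby cycles proper}); the second uses that $p$ is an isomorphism over $D_{x_0}^*$ and that $\Sat_{\Gr_{\Cal{G}}}(\mu)$ and $\Sat_{\Gr_{\wt{G}}}(\mu)$ agree there (since the two group schemes coincide on $D_{x_0}^*$, so both restrict to the Satake sheaf of the common group); and the third uses Proposition \ref{prop: gr sat ula}: since $\Sat_{\Gr_{\wt{G}}}(\mu)$ is ULA over $D_{x_0}$, its nearby cycles is (up to the shift fixing our perverse normalization) simply its restriction $\Sat_{\Gr_G}(\mu)$ to the special fiber at $x_0$.

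Finally, passing to trace functions, proper pushforward $Rp_!$ corresponds to pushforward of functions along $p_{x_0} \colon G(F_{x_0})/J \to G(F_{x_0})/K$, which on $J$-bi-invariant functions coincides with the convolution $- *_J \mbb{I}_K$ (as $\mbb{I}_K$ is precisely the $J$-bi-invariant function supported on the fiber $K/J$ of $p_{x_0}$ over the base point). This yields $f *_J \mbb{I}_K = \psi_\mu$ as required. The main obstacle is this last step: identifying geometric proper pushforward with the algebraic convolution $-*_J \mbb{I}_K$. It is essentially bookkeeping via the function-sheaf dictionary, but requires care with Haar normalizations and the convolution structure on parahoric/spherical Hecke categories.
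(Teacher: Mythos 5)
Your proposal is correct and follows essentially the same route as the paper: centrality plus the Bernstein isomorphism reduce the claim to $f *_J \mbb{I}_K = \psi_\mu$, which is then checked by pushing forward along the proper map to the hyperspecial (constant) affine Grassmannian, commuting nearby cycles with this proper pushforward, and using that nearby cycles for the constant group is just restriction to the special fiber. The only cosmetic difference is that where you verify by hand that proper pushforward of functions along $G(F_{x_0})/J \to G(F_{x_0})/K$ is $-*_J \mbb{I}_K$ (a correct computation with the normalization $dh(J)=1$), the paper instead cites Gaitsgory's Theorem 1(d) for the sheaf-level statement.
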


\begin{remark}
Note that we need to use Remark \ref{rem: nearby cycles descent} to descend $R \Psi(\Sat_{\Gr_{\Cal{G}}}(\mu)))$ to $\Gr_{\Cal{G}}|_{x_0}$, so that it makes sense to speak of the trace function. 
\end{remark}

\begin{proof}
 Since $R \Psi(\Sat_{\Gr_{\Cal{G}}}(\mu))$ is a $\Cal{L}^+ \Cal{G}$-equivariant perverse sheaf on $\Gr_{\Cal{G}}|_{x_0}$, which is central by Theorem \ref{gaitsgory central}, we have a priori that its trace function
\[
\Tr(\Frob, R \Psi(\Sat_{\Gr_{\Cal{G}}}(\mu)))
\]
lies in $ Z(\Cal{H}_{G,\Cal{G}(\Cal{O}_{x_0}) } (F_x))$. Since $G$ is split we can extend it to a constant group scheme over $D_{x_0}$, which we continue to denote $G$, such that $G(\Cal{O}_{x_0}) =: K$ is a hyperspecial maximal compact subgroup of $G(F_x)$. Write also $J := \Cal{G}(\Cal{O}_{x_0})$ for the parahoric subgroup. By the Bernstein isomorphism (Theorem \ref{thm: bernstein isom})
\[
- * \mbb{I}_K \co Z(\Cal{H}_{G,J }) \xrightarrow{\sim} \Cal{H}_{G,K}
\]
it suffices to check that 
\begin{equation}\label{eqn: convolve spherical}
\Tr(\Frob, R \Psi(\Sat_{\Gr_{\Cal{G}}}(\mu))) * \mbb{I}_K =  \psi_{\mu}.
\end{equation}
By \cite[Theorem 1 (d)]{Gaits01} the  map \eqref{eqn: convolve spherical} is realized sheaf-theoretically by the pushforward via the proper map 
\[
\mrm{pr} \co \Gr_{\Cal{G}} \rightarrow \Gr_{G}
\]
or in other words, 
\[
\Tr(\Frob, R \Psi(\Sat_{\Gr_{\Cal{G}}}(\mu))) * \mbb{I}_K = \Tr(\Frob, \mrm{pr}_! R \Psi(\Sat_{\Gr_{\Cal{G}}}(\mu))).
\]
Now, by Lemma \ref{nearby cycles proper} and the fact that $\mrm{pr}$ is an isomorphism over $D_{x_0}^*$ (since $G|_{D_{x_0}^*} \cong \Cal{G}|_{D_{x_0}^*}$) we have
\[
\mrm{pr}_! R\Psi (\Sat_{\Gr_{\Cal{G}}}(\mu)) = R\Psi ( \Sat_{\Gr_{G}}(\mu))
\]
but since $\Cal{G}_0|_{D_x} \rightarrow D_x$ is constant, we simply have 
\[
R\Psi ( \Sat_{\Gr_{G}}(\mu)) = \Sat_{\Gr_G}(\mu)|_{x_0},
\]
whose trace function is $\psi_{\mu}$ by definition. 
\end{proof}

 \subsubsection*{Schubert stratification} Let $\Gr_{\Cal{G}, x_0}$ be the fiber of $\Gr_{\Cal{G}}$ over $x_0$. We discuss the stratification induced by the $\Cal{G}(\Cal{O}_{x_0})$-action on $\Gr_{\Cal{G}, x_0}$. 
 
 The analogue of the Cartan decomposition \eqref{eq: cartan decomp} is 
 \[
 \Cal{G}(\Cal{O}_{x_0}) \bs \Cal{G}(F_{x_0}) / \Cal{G}(\Cal{O}_{x_0}) \cong \wt{W}_J \bs \wt{W} /\wt{W}_{J}
 \]
 where $\wt{W}$ is the extended affine Weyl group, and $\wt{W}_J$ is the subgroup corresponding to the parahoric subgroup $J :=  \Cal{G}(\Cal{O}_{x_0})$. We refer to \cite[\S 2.6]{Haines09} for the notation and definitions; all that we require are the following abstract facts: 
 \begin{itemize}
 \item The $\Cal{G}(\Cal{O}_{x_0})$-orbits on $\Gr_{\Cal{G}, x_0}$ are indexed by $\nu \in \wt{W}_J \bs \wt{W} /\wt{W}_{J}$. We denote the orbit corresponding to $\nu  \in \wt{W}_J \bs \wt{W} /\wt{W}_{J}$ by $\Gr_{\Cal{G}, x_0}^{= \nu}$ and its closure by $\Gr_{\Cal{G}, x_0}^{ \leq \nu}$. 
 \item There is a partial order on $\wt{W}_J \bs \wt{W} /\wt{W}_{J}$, which can be characterized by the property that $\mu \geq \nu$ if and only if $\Gr_{\Cal{G}, x_0}^{ \leq \mu} \supset \Gr_{\Cal{G}, x_0}^{= \nu}$. 
 \end{itemize}

\begin{defn} Since $R\Psi(\Sat_{\Gr_{\Cal{G}}}(\mu))$ is a perverse sheaf on $\Gr_{\Cal{G}, x_0}$, equivariant for the proalgebraic group underlying $J$ \cite[Theorem 1]{Gaits01}, its stalks are the same on any open Schubert cell $\Gr_{\Cal{G}}^{ =\nu}$. We denote this common stalk by $R\Psi(\Sat_{\Gr}(\mu))_{\nu}$. 
\end{defn}

\begin{lemma}\label{lem: parahoric hecke formula}
We have 
\[
\psi_{\mu}' = \sum_{\nu \leq \mu} \Tr(\Frob, R\Psi( \Sat_{\Gr_{\Cal{G}}}(\mu))_{\nu}) f_{\nu}
\]
\end{lemma}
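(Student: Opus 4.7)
The plan is to prove this in exact parallel to the proof of Lemma \ref{lem: spherical hecke formula}, combining Corollary \ref{gaitsgory nearby function} with the function-sheaf dictionary and the Schubert stratification of $\Gr_{\Cal{G}, x_0}$. The input from Corollary \ref{gaitsgory nearby function} already identifies $\psi_{\mu}'$ with the trace function of $R\Psi(\Sat_{\Gr_{\Cal{G}}}(\mu))$ on $\Gr_{\Cal{G}, x_0}$, using the descent of Remark \ref{rem: nearby cycles descent}. So the whole content is to unwind this trace function using the stratification.

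First I would note that $R\Psi(\Sat_{\Gr_{\Cal{G}}}(\mu))$ is $J = \Cal{G}(\Cal{O}_{x_0})$-equivariant, inherited from the $\Cal{L}^+ \Cal{G}$-equivariance of $\Sat_{\Gr_{\Cal{G}}}(\mu)$ and preserved by nearby cycles (this is already invoked in the definition of the stalks $R\Psi(\Sat_{\Gr_{\Cal{G}}}(\mu))_{\nu}$ just above the lemma). Hence the stalks are constant on each Schubert cell $\Gr_{\Cal{G}, x_0}^{=\nu}$. Next, since nearby cycles does not enlarge support and $\Sat_{\Gr_{\Cal{G}}}(\mu)$ is supported on the global Schubert variety $\Gr_{\Cal{G}}^{\leq \mu}$, whose special fiber at $x_0$ lies inside $\Gr_{\Cal{G}, x_0}^{\leq \mu}$, the sheaf $R\Psi(\Sat_{\Gr_{\Cal{G}}}(\mu))$ is supported on $\Gr_{\Cal{G}, x_0}^{\leq \mu} = \bigsqcup_{\nu \leq \mu} \Gr_{\Cal{G}, x_0}^{=\nu}$.

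Finally, I would apply the function-sheaf dictionary stratum-by-stratum. Under the identification $\Gr_{\Cal{G}, x_0}(\ol{\F}_q) = G(F_{x_0})/J$, the $J$-orbit $\Gr_{\Cal{G}, x_0}^{=\nu}$ corresponds to the double coset indexed by $\nu \in \wt{W}_J \backslash \wt{W}/\wt{W}_J$, whose characteristic function on $J \backslash G(F_{x_0})/J$ is $f_{\nu}$. The trace function of $R\Psi(\Sat_{\Gr_{\Cal{G}}}(\mu))$ thus decomposes as
\[
\psi_{\mu}' = \sum_{\nu \leq \mu} \Tr(\Frob, R\Psi(\Sat_{\Gr_{\Cal{G}}}(\mu))_{\nu}) \, f_{\nu},
\]
as claimed. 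There is no real obstacle here; the only thing to be careful about is the descent of Remark \ref{rem: nearby cycles descent}, which is needed to associate any trace function at all to $R\Psi(\Sat_{\Gr_{\Cal{G}}}(\mu))$ and is already absorbed into the statement of Corollary \ref{gaitsgory nearby function}.
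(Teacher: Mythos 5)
Your argument is correct and is exactly the paper's: the paper simply says the proof is the same as for Lemma \ref{lem: spherical hecke formula}, i.e.\ combine the identification of $\psi_{\mu}'$ with the trace function of $R\Psi(\Sat_{\Gr_{\Cal{G}}}(\mu))$ (Corollary \ref{gaitsgory nearby function}) with $J$-equivariance (constancy of stalks on the cells $\Gr_{\Cal{G},x_0}^{=\nu}$), the support being $\Gr_{\Cal{G},x_0}^{\leq\mu}$, and the definition of $f_{\nu}$ as the indicator function of the corresponding double coset. Nothing is missing.
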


\begin{proof}
The argument is the same as for Lemma \ref{lem: spherical hecke formula}.
\end{proof}

\subsection{Local models for shtukas}

\begin{defn}
Let $\Cal{X}$ and $\Cal{Y}$ be Artin stacks. We say that $\Cal{Y}$ is a \emph{smooth local model} for $\Cal{X}$ if there exists an Artin stack $\Cal{W}$ and a diagram
\[
\begin{tikzcd}
& \Cal{W} \ar[dl, twoheadrightarrow, "\text{smooth}"', "f"] \ar[dr, "\text{smooth}", "g"']  \\
\Cal{X} & & \Cal{Y} 
\end{tikzcd}
\]
A diagram as above is called a (smooth) \emph{local model diagram}.
\end{defn}

\begin{thm}\label{varshavsky local model}
The stack $\Gr_{\Cal{G}, X^r}^{\leq (\mu_1, \ldots, \mu_r)} $ is a smooth local model for $\Sht_{\Cal{G}}^{ \leq (\mu_1, \ldots, \mu_r)}$. 
\end{thm}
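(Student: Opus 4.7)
The plan is to adapt Varshavsky's construction of local model diagrams for shtukas (cf.\ \cite[\S 2]{Laff18}) to the parahoric setting. I would define
\[
\Cal{W} := \Sht_{\Cal{G}, X^r}^{\leq (\mu_1, \ldots, \mu_r)} \times_{\Bun_{\Cal{G}}} \Bun_{\Cal{G}, \infty \Gamma},
\]
where the map $\Sht_{\Cal{G}, X^r}^{\leq (\mu_1, \ldots, \mu_r)} \to \Bun_{\Cal{G}}$ sends a shtuka to its base bundle $\Cal{E}_0$, and $\Bun_{\Cal{G}, \infty\Gamma} \to \Bun_{\Cal{G}}$ is the natural forgetful map of \S \ref{sssec: level structure}. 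Concretely, $\Cal{W}$ parametrizes a shtuka in $\Sht_{\Cal{G}, X^r}^{\leq (\mu_1, \ldots, \mu_r)}$ together with a trivialization $\psi$ of $\Cal{E}_0$ on the formal neighborhood $\wh{\Gamma}_{x_1} \cup \cdots \cup \wh{\Gamma}_{x_r}$ of the legs.

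The forgetful projection $f \colon \Cal{W} \to \Sht_{\Cal{G}, X^r}^{\leq (\mu_1, \ldots, \mu_r)}$ is the base change of the $\Cal{L}^+\Cal{G}$-torsor $\Bun_{\Cal{G}, \infty\Gamma} \to \Bun_{\Cal{G}}$, which gives a smooth surjection after passing to a finite-type quotient of $\Cal{L}^+\Cal{G}$ (as in the remark following Proposition \ref{Hecke to Gr}, on a fixed Schubert substack the action factors through a smooth finite-dimensional quotient of $\Cal{L}^+\Cal{G}$). The second map $g \colon \Cal{W} \to \Gr_{\Cal{G}, X^r}^{\leq (\mu_1, \ldots, \mu_r)}$ is then constructed via the iterated analogue of the argument in the proof of Proposition \ref{Hecke to Gr}: given $\psi$, the modifications $\varphi_i$, and the identification $\Cal{E}_r \cong {}^\sigma \Cal{E}_0$, Beauville-Laszlo gluing assembles the local data at the legs into a canonical point of the iterated global affine Grassmannian whose modification type is bounded by $(\mu_1, \ldots, \mu_r)$.

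The main obstacle is to show that $g$ is smooth. This is where the positive-characteristic hypothesis enters crucially: the derivative of the absolute Frobenius on $\Bun_{\Cal{G}}$ vanishes, so the shtuka constraint $\Cal{E}_r \cong {}^\sigma \Cal{E}_0$ imposes no restriction on first-order deformations of the data. Combining the Cartesian square defining $\Sht_{\Cal{G}, X^r}$ with the description $\Hecke_{\Cal{G}} \cong (\Gr_{\Cal{G}} \times_X \Bun_{\Cal{G}, \infty\Gamma})/\Cal{L}^+\Cal{G}$ of Proposition \ref{Hecke to Gr} (in its iterated form), a direct tangent space calculation shows that \'{e}tale-locally on $\Gr_{\Cal{G}, X^r}^{\leq (\mu_1, \ldots, \mu_r)}$, the stack $\Cal{W}$ is isomorphic to a product with (a finite-type truncation of) $\Bun_{\Cal{G}, \infty \Gamma}$, whence $g$ is smooth of the expected relative dimension. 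The adjustment needed for the parahoric setting amounts to verifying smoothness of $\Bun_{\Cal{G}}$ when $\Cal{G}$ is parahoric at $x_0$, which follows from Bruhat-Tits theory (parahoric group schemes being smooth over $\Cal{O}_{x_0}$).
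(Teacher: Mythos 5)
Your proposal is correct and follows essentially the same route as the paper: add level structure along the legs (your $\Bun_{\Cal{G},\infty\Gamma}$ fiber product, truncated to finite level $n\Gamma$ with $n$ large relative to $\ul{\mu}$, exactly as in the paper), map to $\Gr_{\Cal{G},X^r}^{\leq(\mu_1,\ldots,\mu_r)}$ by the iterated form of the Beauville--Laszlo gluing of Proposition \ref{Hecke to Gr}, and deduce smoothness of $g$ from the vanishing of the differential of Frobenius. The paper packages that last step as the transversality calculation of \cite[Lemma 4.3]{Var04} applied to the outer cartesian diagram, which is precisely the tangent-space argument you sketch.
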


\begin{remark}
We learned from the referee that Theorem \ref{varshavsky local model} -- in fact, the stronger statement that one gets an \emph{\'{e}tale} local model -- has already been proved for smooth affine group schemes in \cite[\S 3]{HS18}. (Our proof also only uses that $\Cal{G}$ is smooth and affine.)
\end{remark}

\begin{proof}

 For ease of presentation, we assume that $r=1$ in the proof; the argument for the general case is a completely straightforward generalization.
 
As in \cite[Proposition 2.11]{Laff18} (beginning of the proof) we can add a level structure at a closed subscheme $N \subset X$ to rigidify all spaces under consideration from stacks to schemes. Since this addition of level structure induces smooth covers of all objects, we will suppress it from the notation; the upshot is that we can reason with all objects as if they were schemes. 

We let $\Bun_{G, n \Gamma}$ be the smooth torsor over $\Bun_{\Cal{G}}$ as in \S \ref{sssec: level structure}, and 
 $\Hecke_{\Cal{G}, n\Gamma}$ the pullback torsor over $\Hecke_{\Cal{G}}$. Informally, $\Hecke_{\Cal{G}, n \Gamma}$ parametrizes modifications $(x,\varphi \co \Cal{E} \dashrightarrow \Cal{E}')$ together with the additional datum of a trivialization of $\Cal{E}'$ on $\Gamma_{nx}$.

 Since for any given $\mu$ the $\Cal{L}^+ \Cal{G}$-action on $\Hecke_{\Cal{G}}^{\leq \mu}$ and $\Gr_{\Cal{G}}^{\leq \mu}$ factors through a quotient group scheme of finite type, Proposition \ref{Hecke to Gr} implies that for $n$ sufficiently large relative to fixed $\mu$, there is an isomorphism $\Hecke_{\Cal{G}, n \Gamma}^{\leq \mu} \rightarrow \Gr_{\Cal{G}}^{\leq \mu} \times \Bun_{\Cal{G},n\Gamma}$.

Consider the diagram below, where all squares are cartesian: 
\begin{equation}\label{eq: local model big diagram}
\begin{tikzcd}
\Sht_{\Cal{G}, n \Gamma}^{\leq \mu} \ar[d] \ar[r,  twoheadrightarrow, "\text{smooth}"] & \Sht_{\Cal{G}}^{\leq \mu} \ar[r]   \ar[d] & \Bun_{\Cal{G}}  \ar[d, "\Id \times \Frob"] \\
\Hecke_{\Cal{G}, n \Gamma} \ar[r,  twoheadrightarrow,  "\text{smooth}"] \ar[d, "\sim"]  & \Hecke_{\Cal{G}}^{\leq \mu}  \ar[r, "h^{\leftarrow} \times h^{\rightarrow}"] & \Bun_{\Cal{G}}  \times \Bun_{\Cal{G}} \\
 \Bun_{\Cal{G}, n \Gamma} \times \Gr_G^{\leq \mu}
\end{tikzcd}
\end{equation}
It suffices to show that map $\Sht_{\Cal{G}, n \Gamma}^{\leq \mu}  \rightarrow \Gr_G^{\leq \mu}$, induced by the composition of the leftmost vertical arrows, is smooth. This follows by the same transversality calculation in \cite[Lemma 4.3]{Var04} applied to the outer diagram in \eqref{eq: local model big diagram}
\begin{equation}
\begin{tikzcd}
\Sht_{\Cal{G}, n \Gamma}^{\leq \mu} \ar[d] \ar[r] & \Bun_{\Cal{G}}  \ar[d, "\Id \times \Frob"] \\
\Hecke_{\Cal{G}, n \Gamma}  \ar[d, "\sim"] \ar[r]  & \Bun_{\Cal{G}}  \times \Bun_{\Cal{G}} \\
 \Bun_{\Cal{G}, n \Gamma} \times \Gr_G^{\leq \mu}
\end{tikzcd}
\end{equation}
using that $ \Bun_{\Cal{G}, n \Gamma} \rightarrow \Bun_{\Cal{G}}$ is smooth. 
\end{proof}

\begin{cor}\label{cor: local model sheaf}
Let $\ul{\mu} \in X_*(T)^r$. There is a local model diagram  
 \[
\begin{tikzcd}
& \Cal{W}^{\leq \ul{\mu}} \ar[dl, twoheadrightarrow, "\text{smooth}"', "f"] \ar[dr, "\text{smooth}", "g"']  \\
\Sht_{\Cal{G}}^{\leq \ul{\mu}}  \ar[dr] & & \Gr_{\Cal{G}}^{\leq \ul{\mu}} \ar[dl] \\
&X^r
\end{tikzcd}
\]
with 
\[
f^* \Sat_{\Sht_{\Cal{G}}}(\ul{\mu}) =  g^* \Sat_{\Gr_{\Cal{G}}}(\ul{\mu}).
\]
\end{cor}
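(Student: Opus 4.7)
\smallskip

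\noindent\textbf{Proof proposal for Corollary \ref{cor: local model sheaf}.}
The plan is to take the local model $\Cal{W}^{\leq \ul{\mu}}$ essentially out of the proof of Theorem \ref{varshavsky local model}, and then chase through the definitions of the Geometric Satake sheaves to verify the identification $f^{*}\Sat_{\Sht_{\Cal{G}}}(\ul{\mu}) \cong g^{*}\Sat_{\Gr_{\Cal{G}}}(\ul{\mu})$.

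More concretely, for $n$ sufficiently large relative to $\ul{\mu}$, I take $\Cal{W}^{\leq \ul{\mu}} := \Sht_{\Cal{G},n\Gamma}^{\leq \ul{\mu}}$ (with the analogous level structure over each of the $r$ legs, as in the proof of Theorem \ref{varshavsky local model}). The map $f \co \Cal{W}^{\leq \ul{\mu}} \to \Sht_{\Cal{G}}^{\leq \ul{\mu}}$ that forgets the level structure is a torsor under a smooth affine group scheme, hence smooth and surjective. To define $g$, I use that for $n$ large enough the isomorphism $\xi$ of Proposition \ref{Hecke to Gr} descends (on the bounded substack $\leq \ul{\mu}$) to an isomorphism
\[
\Hecke_{\Cal{G},n\Gamma}^{\leq \ul{\mu}} \;\xrightarrow{\sim}\; \Bun_{\Cal{G},n\Gamma} \times_{X^r} \Gr_{\Cal{G},X^r}^{\leq \ul{\mu}},
\]
and then define $g$ as the composition $\Sht_{\Cal{G},n\Gamma}^{\leq \ul{\mu}} \to \Hecke_{\Cal{G},n\Gamma}^{\leq \ul{\mu}} \to \Gr_{\Cal{G},X^r}^{\leq \ul{\mu}}$; the smoothness of $g$ is exactly what is extracted from the transversality computation recorded in the proof of Theorem \ref{varshavsky local model} (the outer diagram in~\eqref{eq: local model big diagram}).

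For the sheaf identification I unwind the definitions. By construction, $\Sat_{\Hecke_{\Cal{G}}}(\ul{\mu})$ is the descent (via $\xi$) of $\Sat_{\Gr_{\Cal{G}},X^r}(\ul{\mu}) \boxtimes \ol{\Q}_{\ell, \Bun_{\Cal{G},\infty\Gamma}}$ through the quotient by $\Cal{L}^+\Cal{G}$, and $\Sat_{\Sht_{\Cal{G}}}(\ul{\mu})$ is in turn the $\iota$-pullback shifted to be perverse along the fibers of $\pi$. Pulling back along $f$ and using the cartesian squares in~\eqref{eq: local model big diagram}, the pullback of $\Sat_{\Sht_{\Cal{G}}}(\ul{\mu})$ to $\Cal{W}^{\leq\ul{\mu}}$ factors through $\Hecke_{\Cal{G},n\Gamma}^{\leq\ul{\mu}} \cong \Bun_{\Cal{G},n\Gamma}\times_{X^r}\Gr_{\Cal{G},X^r}^{\leq\ul{\mu}}$, where it is exterior product of the constant sheaf with $\Sat_{\Gr_{\Cal{G}},X^r}(\ul{\mu})$; its further pullback to $\Sht_{\Cal{G},n\Gamma}^{\leq\ul{\mu}}$ is exactly $g^{*}\Sat_{\Gr_{\Cal{G}}}(\ul{\mu})$.

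The routine but slightly delicate point is the bookkeeping of perverse shifts: both sides are a priori perverse only up to shift, and the definition of $\Sat_{\Sht_{\Cal{G}}}(\ul{\mu})$ is normalized to be perverse along the fibers of $\pi \co \Sht_{\Cal{G}}|_U \to U$, while $\Sat_{\Gr_{\Cal{G}}}(\ul{\mu})$ is perverse along the fibers of $\Gr_{\Cal{G}}|_U \to U$. These shifts match because the diagram is compatible with the projection to $X^r$ and $f$, $g$ are smooth of the same relative dimension over $X^r$ (since they both come from a torsor under the same smooth group scheme, modulo the chosen level structure); so no further shift is needed. I expect the main obstacle to be phrasing this perverse-normalization check cleanly across the two smooth pullbacks, rather than any substantive geometric difficulty — the identification itself is essentially forced by the way $\Sat_{\Hecke_{\Cal{G}}}(\ul{\mu})$ was defined by descent from $\Gr_{\Cal{G},X^r}$.
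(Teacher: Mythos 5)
Your proposal is correct and follows essentially the same route as the paper: the paper's proof simply takes $\Cal{W}^{\leq \ul{\mu}} = \Sht_{\Cal{G}, n\Gamma}^{\leq \ul{\mu}}$ and cites the diagram \eqref{eq: local model big diagram} together with the definition of the Satake sheaves, which is exactly the construction and unwinding you carry out (just more explicitly, including the shift bookkeeping).
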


\begin{proof}
This follows from the diagram \eqref{eq: local model big diagram} and the definition of the Satake sheaves, taking $\Cal{W}^{\leq \ul{\mu}}  = \Sht_{\Cal{G}, N}^{\leq \ul{\mu}}$. 
\end{proof}

\subsection{The trace function of nearby cycles}
Recall that the nearby cycles $R \Psi(\Cal{F})$ has an action of the inertia group. We have a decomposition 
\[
R\Psi(\Cal{F}) \cong R\Psi(\Cal{F})^{\mrm{un}} \oplus R\Psi(\Cal{F})^{\mrm{non-un}}
\]
into unipotent and non-unipotent parts for this inertial action. The associated trace function is independent of the choice of splitting in Remark \ref{rem: nearby cycles descent} if $R\Psi(\Cal{F})$ is unipotent, i.e. if $R\Psi(\Cal{F})^{\mrm{non-un}}  = 0$. (Otherwise, to get a well-defined trace function we need to project to the unipotent summand -- this is the ``semisimple trace of Frobenius''.)

\begin{lemma}\label{lem: nearby cycles unipotent}
The complex $R \Psi_{x_0}(\Sat_{\Sht_{\Cal{G}}}(\ul{\mu}))$ is unipotent, i.e. 
\[
R \Psi_{x_0}(\Sat_{\Sht_{\Cal{G}}}(\ul{\mu}))^{\mrm{non-un}} = 0.
\]
\end{lemma}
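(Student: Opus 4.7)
The plan is to reduce the statement to the analogous unipotence of nearby cycles on the global affine Grassmannian, which is part of the Gaitsgory--Zhu package invoked earlier.

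First, I would invoke the local model diagram from Corollary~\ref{cor: local model sheaf}: there is an Artin stack $\Cal{W}^{\leq\ul{\mu}}$ with smooth maps $f \co \Cal{W}^{\leq\ul{\mu}} \to \Sht_{\Cal{G}}^{\leq\ul{\mu}}$ and $g \co \Cal{W}^{\leq\ul{\mu}} \to \Gr_{\Cal{G}}^{\leq\ul{\mu}}$ fitting in a diagram over $X^r$, such that
\[
f^*\Sat_{\Sht_{\Cal{G}}}(\ul{\mu}) \;\cong\; g^*\Sat_{\Gr_{\Cal{G}}}(\ul{\mu}).
\]
In particular the structure maps to $X^r$ match, so restricting to the completed disk $D_{x_0}^r$ and applying $R\Psi$ fiberwise at $x_0$ is compatible on both sides.

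Next I would apply Lemma~\ref{nearby cycles smooth}, which says that nearby cycles commute with smooth pullback. Applied to both $f$ and $g$, this gives canonical isomorphisms
\[
f^* R\Psi_{x_0}(\Sat_{\Sht_{\Cal{G}}}(\ul{\mu})) \;\cong\; R\Psi_{x_0}(f^*\Sat_{\Sht_{\Cal{G}}}(\ul{\mu})) \;\cong\; R\Psi_{x_0}(g^*\Sat_{\Gr_{\Cal{G}}}(\ul{\mu})) \;\cong\; g^* R\Psi_{x_0}(\Sat_{\Gr_{\Cal{G}}}(\ul{\mu})),
\]
all equivariant for the action of the inertia group $I_{x_0}$, since these identifications arise from base change isomorphisms (not involving the $I_{x_0}$-action). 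Consequently the non-unipotent summand of $R\Psi_{x_0}(\Sat_{\Sht_{\Cal{G}}}(\ul{\mu}))$ pulls back via the smooth surjection $f$ to the non-unipotent summand of $g^* R\Psi_{x_0}(\Sat_{\Gr_{\Cal{G}}}(\ul{\mu}))$. Since $f$ is smooth and surjective, pullback along $f$ is conservative on perverse sheaves (and hence on any direct summand), so it suffices to show that $R\Psi_{x_0}(\Sat_{\Gr_{\Cal{G}}}(\ul{\mu}))$ is unipotent.

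Finally, the unipotence of $R\Psi(\Sat_{\Gr_{\Cal{G}}}(\ul{\mu}))$ is precisely the content of the Gaitsgory--Zhu theorem on nearby cycles at places of parahoric reduction (the same input that provided Theorem~\ref{gaitsgory central} -- see \cite{Gaits01} Proposition~7 for the constant-group case and \cite{Zhu14} Theorem~7.3 / \cite{PZ13} for the parahoric extension). The main technical worry would be checking that the descent convention of Remark~\ref{rem: nearby cycles descent} and the inertia action are preserved by the smooth base change; but this is built into the base change isomorphism for $R\Psi$, so no surprise should arise. The substantive ingredient is really the unipotence on the affine Grassmannian, which we are importing as a black box.
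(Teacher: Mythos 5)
Your proposal is correct and follows essentially the same route as the paper: reduce via the local model diagram of Corollary~\ref{cor: local model sheaf} and the smooth-base-change compatibility of nearby cycles (Lemma~\ref{nearby cycles smooth}) to the unipotence of $R\Psi(\Sat_{\Gr_{\Cal{G}}}(\ul{\mu}))$ on the affine Grassmannian, which is Gaitsgory's result (\cite{Gaits01} \S 5.1 Proposition 7). The extra remarks on inertia-equivariance and conservativity of smooth pullback just spell out what the paper leaves implicit.
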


\begin{proof}
By Corollary \ref{cor: local model sheaf} plus the compatibility of the inertia action with the isomorphism of Lemma \ref{nearby cycles smooth}, it suffices to know that $R \Psi_{x_0}(\Sat_{\Gr_{\Cal{G}}}(\ul{\mu}))^{\mrm{non-un}} = 0$. This is established in \cite[\S 5.1 Proposition 7.]{Gaits01}.
\end{proof}

Let $\ul{\mu} \in X_*(T)^r$. By Corollary \ref{cor: local model sheaf}, we may set 
\[
\Sat_{\Cal{W}}(\ul{\mu}) :=  f^* \Sat_{\Sht_{\Cal{G}}}(\ul{\mu}) =  g^* \Sat_{\Gr_{\Cal{G}}}(\ul{\mu}).
\]
We write $R\Psi_{x_0}$  to emphasize that we are taking nearby cycles over the point $x_0$. By Lemma \ref{nearby cycles smooth}, and implicitly using Corollary \ref{cor: local model sheaf}, we have 
\[
f^* R\Psi_{x_0}(\Sat_{\Sht_{\Cal{G}}}(\ul{\mu}))  =  R\Psi_{x_0}(\Sat_{\Cal{W}}(\ul{\mu})) = g^* R\Psi_{x_0}(\Sat_{\Gr_{\Cal{G}}}(\ul{\mu})). 
\]
Thus, for $w \in \Cal{W}^{\leq \ul{\mu}}(k)$ lying over $y \in \Sht_{\Cal{G}}^{\leq \ul{\mu}}(k)$ and $z \in \Gr_{\Cal{G}}^{\leq \ul{\mu}}(k)$, we have 
\[
\Tr(\Frob, R\Psi_{x_0}(\Sat_{\Sht_{\Cal{G}}}(\ul{\mu}))_y) = \Tr(\Frob, R\Psi_{x_0}(\Sat_{\Cal{W}}(\ul{\mu}))_w) =  \Tr(\Frob, R \Psi_{x_0}(\Sat_{\Gr_{\Cal{G}}}(\ul{\mu}))_z). 
\]
Therefore, the stalks of $R \Psi_{x_0}(\Sat_{\Sht_{\Cal{G}}}(\ul{\mu}))$ are constant along the stratification 
\[
\Sht_{\Cal{G}}^{\leq \ul{\mu}} = \coprod_{\ul{\nu} \leq \ul{\mu}} \Sht_{\Cal{G}}^{=\ul{\nu}},
\]
and we deduce:

\begin{cor}\label{cor: kottwitz for shtukas}
For $\ul{\nu} \in X_*(T)_+$, we have  
\[
\Tr(\Frob, R \Psi_{x_0}(\Sat_{\Sht_{\Cal{G}}}(\ul{\mu}))_{\ul{\nu}} ) = \Tr(\Frob, R\Psi_{x_0}( \Sat_{\Gr_{\Cal{G}}}(\ul{\mu}))_{\ul{\nu}}).
\]
\end{cor}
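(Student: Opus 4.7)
The plan is to reduce the identity to the stalk compatibility established in the paragraph preceding the corollary. First I would invoke Corollary \ref{cor: local model sheaf} to obtain an \'{e}tale local model diagram
\[
\Sht_{\Cal{G}}^{\leq \ul{\mu}} \xleftarrow{f} \Cal{W}^{\leq \ul{\mu}} \xrightarrow{g} \Gr_{\Cal{G}}^{\leq \ul{\mu}}
\]
with both $f$ and $g$ smooth and $f^*\Sat_{\Sht_{\Cal{G}}}(\ul{\mu})\cong g^*\Sat_{\Gr_{\Cal{G}}}(\ul{\mu})$. Because nearby cycles commute with smooth pullback (Lemma \ref{nearby cycles smooth}), applying $R\Psi_{x_0}$ and then pulling back gives a canonical isomorphism
\[
f^* R\Psi_{x_0}(\Sat_{\Sht_{\Cal{G}}}(\ul{\mu})) \cong g^* R\Psi_{x_0}(\Sat_{\Gr_{\Cal{G}}}(\ul{\mu}))
\]
as sheaves on $\Cal{W}^{\leq \ul{\mu}}$, compatibly with the Frobenius actions (after descent as in Remark \ref{rem: nearby cycles descent}, which is legitimate thanks to Lemma \ref{lem: nearby cycles unipotent}).

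Next I would choose a geometric point $w \in \Cal{W}^{\leq \ul{\mu}}$ whose images $y := f(w)$ and $z := g(w)$ lie in the Schubert strata $\Sht_{\Cal{G}}^{=\ul{\nu}}$ and $\Gr_{\Cal{G}}^{=\ul{\nu}}$ respectively. Such a $w$ exists because $f$ and $g$ are smooth surjections onto their targets and the stratifications of $\Sht_{\Cal{G}}^{\leq\ul{\mu}}$ and $\Gr_{\Cal{G}}^{\leq\ul{\mu}}$ by relative position $\ul{\nu}$ are pulled back from one another through $\Cal{W}^{\leq\ul{\mu}}$ (this is essentially the content of the local model construction). Taking stalks at $w$ in the displayed isomorphism and tracing Frobenius then yields
\[
\Tr(\Frob, R\Psi_{x_0}(\Sat_{\Sht_{\Cal{G}}}(\ul{\mu}))_{y}) = \Tr(\Frob, R\Psi_{x_0}(\Sat_{\Gr_{\Cal{G}}}(\ul{\mu}))_{z}).
\]

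Finally I would appeal to equivariance to conclude that each of these stalks depends only on $\ul{\nu}$, not on the specific choice of $y$ or $z$: the sheaf $R\Psi_{x_0}(\Sat_{\Gr_{\Cal{G}}}(\ul{\mu}))$ is $\Cal{G}(\Cal{O}_{x_0})$-equivariant (by Theorem \ref{gaitsgory central} and its proof), so its stalks along the Schubert cell $\Gr_{\Cal{G},x_0}^{=\ul{\nu}}$ are all isomorphic as Frobenius modules, justifying the notation $R\Psi_{x_0}(\Sat_{\Gr_{\Cal{G}}}(\ul{\mu}))_{\ul{\nu}}$; and then $R\Psi_{x_0}(\Sat_{\Sht_{\Cal{G}}}(\ul{\mu}))_{\ul{\nu}}$ is automatically well-defined because it agrees with the former stalk-by-stalk through the local model diagram. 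The main point to verify carefully is this last constancy along the stratification on the shtuka side, since $\Sht_{\Cal{G}}$ does not carry an $\Cal{L}^+\Cal{G}$-action directly; but the transport through the smooth $f$ and $g$, combined with the equivariance on $\Gr_{\Cal{G}}$, provides it.
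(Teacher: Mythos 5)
Your proposal is correct and follows essentially the same route as the paper: the local model diagram of Corollary \ref{cor: local model sheaf}, smooth base change for nearby cycles (Lemma \ref{nearby cycles smooth}) to identify $f^*R\Psi_{x_0}(\Sat_{\Sht_{\Cal{G}}}(\ul{\mu}))$ with $g^*R\Psi_{x_0}(\Sat_{\Gr_{\Cal{G}}}(\ul{\mu}))$, and comparison of Frobenius traces on stalks at a point $w$ lying over matching strata, with the $\Cal{G}(\Cal{O}_{x_0})$-equivariance on the affine Grassmannian side supplying the constancy along each Schubert cell. The only cosmetic difference is that you spell out the stratification compatibility and the descent/unipotence point explicitly, which the paper leaves implicit.
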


\begin{remark}We will actually need to work with $\Sht_{\Cal{G}}/a^{\Z}$ instead. Since this is obtained from $\Sht_{\Cal{G}}$ by gluing isomorphic components, the result is exactly the same. 
\end{remark}

\section{Counting parahoric shtukas} \label{counting fixed points}

Our eventual goal is to establish a formula for the trace of an operator, formed as a composition of Hecke operators and Frobenius, on the cohomology of the nearby  cycles sheaf of (a variant of) $\Sht_{\Cal{G}}/a^{\Z} \rightarrow X$, at a place of parahoric bad reduction. The mold for such calculations was set by Kottwitz in \cite{Kott92}, who computed this sort of trace for certain PEL Shimura varieties, at places of good (hyperspecial) reduction. It has since been extended vastly by work of many authors; we note that in particular that Kisin and Pappas constructed integral models for Shimura varieties with parahoric level structure (a problem which itself has a long history, with contributions from many authors -- see the references in \cite{KP15}) and computed the trace of Frobenius on nearby cycles for unramified groups in \cite{KP15}, and for tamely ramified groups in \cite{HRpre}. Our result is a function field analogue of this computation. 

In this section we carry out one step of this calculation, which deals with counting the number of fixed points of Frobenius composed with Hecke correspondences. (The precise setup will be explained in \S \ref{subsec: hecke setup}.) In fact most of the work has already been done by B.C. Ng\^{o}  and T. Ng\^{o} Dac, who studied the case of moduli of shtukas with hyperspecial reduction in the series of papers \cite{Ngo06}, \cite{NgoNgo}, \cite{Ngo13}, and \cite{Ngo15}. The only new element here is that we are considering parahoric reduction. We note also that our results should follow from work of Hartl and Arasteh Rad proving the analogue of the Langlands-Rapoport Conjecture for shtukas \cite{HR16}. 

\subsection{Setup}\label{subsec: hecke setup}
Throughout this section $\cX$ is an unspecified open subset $X$, which in the case of $\Cal{D}$-shtukas will be $X-Z$ where $Z$ is the set of ramification places of $D$. We let $G$ be a quasi-split, connected reductive group over $F$ with simply-connected derived group, or the group attached to a division algebra $D$ as in \S \ref{subsec: D-shtukas}. (This unwieldy hypothesis is in place because the statements of \cite{NgoNgo} and \cite{Ngo13} use the first general hypothesis, but apply also $\Cal{D}$-shtukas, cf. \cite[\S 4]{Ngo06}, and we are also interested in the latter.) Let $\Cal{G} \rightarrow X$ a parahoric group scheme, with parahoric reduction at $x_0$.

Let $K_v = \Cal{G}(\Cal{O}_v)$. Let $K_v t^{\beta_v} K_v \in K_v \backslash G(F_v) / K_v $ be a choice of double coset for all $v$, trivial for almost all $v$. Let $T' \subset \cX$ be the set of all $v$ where $\beta_v \neq  0$, i.e. where the corresponding Hecke operator $h_{\beta_v}$ is not the identity. \emph{We assume that $K_v$ is hyperspecial for all $v \in T'$.}

There is a Hecke correspondence (\S \ref{sssec: hecke corr for shtuka})
 \begin{equation}\label{eqn: hecke corr on sht}
\begin{tikzcd}
&  \Hecke(\Sht_{\Cal{G}})^{\leq \beta_v}_v /a^{\Z} \ar[dl, "h^{\leftarrow}"'] \ar[dr, "h^{\rightarrow}"] \\
\Sht_{\Cal{G}}/a^{\Z} \ar[dr, "\pi"] &  &  \Sht_{\Cal{G}}/a^{\Z}  \ar[dl, "\pi"']  \\
& X
\end{tikzcd}
\end{equation}
for each $\beta_v$. This induces a Hecke operator $h_{\beta_v}$ on the cohomology of $\Sht_{\Cal{G}}/a^{\Z}$ (Definition \ref{def: hecke action on coh}). See \cite[\S 3]{NgoNgo} for more discussion about the Hecke correspondences. 

We abbreviate $\beta := (\beta_v)_{v \in T'}$ and denote the corresponding Hecke operator $\prod h_{\beta_v}$ by $h_{\beta, T'}$. We want to compute (a variant of) 
\[
\Tr(h_{\beta, T'} \circ \Frob, \pi_! R\Psi_{x_0}(\Sat_{\Sht_{\Cal{G}}}(\mu))) 
 \]
where  $x_0$ is our fixed place of parahoric reduction. By the Grothendieck-Lefschetz trace formula, we have 
\begin{align*}	
&\Tr(h_{\beta, T'} \circ \Frob, R\Psi_{x_0}(\Sat_{\Sht_{\Cal{G}}}(\mu)))  \\
& \hspace{1in} = \sum_{\xi \in  \Fix(h_{\beta, T'} \circ \Frob)} \frac{1}{\# \Aut \xi} \Tr(h_{\beta, T'} \circ \Frob_{\Xi}, R\Psi_{x_0}(\Sat_{\Sht_{\Cal{G}}}(\mu))_{\Xi} ).
\end{align*}

We will compute this by focusing first on counting $\Fix(h_{\beta, T} \circ \Frob)$. This was done by \cite{Ngo06} for $\Cal{D}$-shtukas at points of with no level structure (good reduction), and extended by \cite{NgoNgo} for general reductive groups and \cite{Ngo13} for more complicated setups; however, these counts only account for the contribution from the ``elliptic part''. 

In the case where $G_F$ is anisotropic mod center, the elliptic part will obviously compose everything. This is one of the reasons why it is convenient to work with division algebras, and one of the difficulties in carrying out the strategy for general groups. Since $\Sht_{\Cal{G}}$ is of infinite type in general, it will have infinitely many points even over finite fields. 

\subsection{The groupoid of fixed points}
We consider a slightly more general situation. We will define a groupoid $\Cal{C}(\alpha,\beta; T,T'; d)$ which occurs as the fixed points of a composition of Hecke and Frobenius operators on a certain moduli stack of shtukas. Then we will count its mass in the sense of groupoids. 

\begin{defn}If $\Cal{C}$ is a finite groupoid with finite automorphism groups then we define 
\[
\# \Cal{C} := \sum_{c\in \Cal{C}} \frac{1}{\# \Aut(c)}.
\]
Note that the $\F_q$-points of a finite type Deligne-Mumford stack, which includes any Schubert cell in a moduli stack of shtukas, satisfies this assumption.
\end{defn}

\begin{defn}[{\cite[\S 4]{NgoNgo}}]
Let $T, T' \subset |X| - I $. Let 
\begin{align*}
\alpha &= (\alpha_v  \in K_v \backslash G(F_v)  / K_v )_{v \in T}  \\
\beta &= (\beta_v  \in K_v \backslash G(F_v) /  K_v)_{v \in T'}.
\end{align*}
(In terms of the notation of \S \ref{subsec: hecke setup}, we are identifying $\beta_v$ with $K_v t^{\beta_v} K_v$.) We define the groupoid of fixed points $\Cal{C}(\alpha,\beta; T,T'; d)$ as follows: its objects are triples $(\Cal{E}, t, t')$ with 
\begin{enumerate}
\item $t \co \Cal{E}^{\sigma} |_{\ol{X}-\ol{T}} \xrightarrow{\sim} \Cal{E} |_{\ol{X}-\ol{T}}$, with modification type $\alpha$ on $T$, and 
\item $t' \co \Cal{E}^{\sigma^d} |_{\ol{X}-\ol{T}'} \xrightarrow{\sim} \Cal{E} |_{\ol{X}-\ol{T}'}$, with modification type $\beta$ on $T'$,
\item satisfying the following compatibility:
\[
\begin{tikzcd}
\Cal{E}^{\sigma^{d+1}}  |_{\ol{X}-\ol{T} - \ol{T}'}\ar[r, "\sigma^d(t)"] \ar[d, "\sigma(t')"] &  \Cal{E}^{\sigma^d}|_{\ol{X}-\ol{T} - \ol{T}'} \ar[d, "t'"] \\
\Cal{E}^{\sigma}|_{\ol{X}-\ol{T} - \ol{T}'} \ar[r, "t"] &\Cal{E} |_{\ol{X}-\ol{T} - \ol{T}'}
\end{tikzcd}
\]
\end{enumerate}
The automorphisms of $(\Cal{E}, t, t')$ are defined to be automorphisms of $\Cal{E}$ commuting with $t$ and $t'$. 
\end{defn}

The relation to our initial problem is given by the following. 

\begin{lemma}\label{lem: groupoid description 0}
Suppose $x \in |X|$ is a point of degree $d$. Then we have an isomorphism of groupoids 
\[
\Fix(h_{\beta,T'} \circ \Frob, \Sht_{\Cal{G}}^{= \mu}|_x) \cong \Cal{C}(\mu, \beta; \{x\}, T';d).
\]
\end{lemma}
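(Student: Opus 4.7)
The plan is to unwind the definitions on both sides and match them directly; no substantial obstacle is anticipated, and the only real work is careful bookkeeping.

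The key observation is that since $x \in |X|$ has degree $d$, its residue field is $k(x) = \F_{q^d}$, so $\Sht_{\Cal{G}}^{=\mu}|_x$ is a Deligne-Mumford stack over $\F_{q^d}$, and the Frobenius appearing in $\Fix(h_{\beta, T'} \circ \Frob, \Sht_{\Cal{G}}^{=\mu}|_x)$ must be interpreted as the Frobenius relative to $\F_q$, which on $\ol{\F}_q$-points acts as $\sigma^d$, where $\sigma$ is the $q$-power Frobenius. This accounts for the appearance of $\sigma^d$ in the definition of $\Cal{C}(\mu, \beta; \{x\}, T'; d)$.

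First I would construct a functor from $\Fix(h_{\beta, T'} \circ \Frob, \Sht_{\Cal{G}}^{=\mu}|_x)$ to $\Cal{C}(\mu, \beta; \{x\}, T'; d)$. Unwinding the Hecke correspondence \eqref{eqn: hecke corr on sht}, a $\ol{\F}_q$-valued point of the fixed-point groupoid is a point $z \in \Hecke(\Sht_{\Cal{G}})^{\leq \beta}_{T'}(\ol{\F}_q)$ equipped with an identification of $h^{\rightarrow}(z) \in \Sht_{\Cal{G}}$ with the Frobenius pullback of $h^{\leftarrow}(z)$. Writing the underlying $\Cal{G}$-torsor of $h^{\leftarrow}(z)$ as $\Cal{E}$, with shtuka structure $t : {}^{\sigma} \Cal{E} \dashrightarrow \Cal{E}$ of modification type $\mu$ at $x$, the datum of $z$ includes a modification at $T'$ of type $\beta$ between $h^{\leftarrow}(z)$ and $h^{\rightarrow}(z) \cong ({}^{\sigma^d} \Cal{E}, {}^{\sigma^d} t)$. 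This yields a modification $t' : {}^{\sigma^d} \Cal{E} \dashrightarrow \Cal{E}$ of type $\beta$ at $T'$. The condition that $z$ is a morphism in the Hecke correspondence of shtukas (rather than merely of $\Cal{G}$-torsors) imposes the compatibility of the two shtuka structures under $t'$, which unpacks to exactly the commutative square in condition (3) of the definition of $\Cal{C}$.

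Next I would construct the inverse functor: given $(\Cal{E}, t, t') \in \Cal{C}(\mu, \beta; \{x\}, T'; d)$, the pair $(\Cal{E}, t)$ defines an $\ol{\F}_q$-point of $\Sht_{\Cal{G}}^{=\mu}|_x$, while $t'$ provides the Hecke correspondence datum together with the Frobenius identification exhibiting $(\Cal{E}, t)$ as a fixed point. Compatibility (3) ensures that this truly lives in $\Hecke(\Sht_{\Cal{G}})$ rather than just $\Hecke_{\Cal{G}}$. The two functors are mutually inverse by construction, and on morphisms both groupoids have as automorphisms exactly those of $\Cal{E}$ commuting with both $t$ and $t'$.

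The main pitfall is purely notational: keeping track of the direction of modifications (source versus target) in the Hecke correspondence and of the arithmetic/geometric Frobenius convention. I would adopt the conventions of \cite{Ngo06} and \cite{NgoNgo}, from which the present lemma is a routine extension to the parahoric setting -- the only difference from the hyperspecial case in loc.\ cit.\ is that the modification type at $x$ is allowed to be any $\mu$ rather than only one producing a modification bounded by a hyperspecial-level minuscule coweight.
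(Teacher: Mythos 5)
Your proposal is correct and takes the same route as the paper, whose entire proof is the single remark that the isomorphism is ``immediate upon writing down the definitions'': your unwinding --- identifying the shtuka structure with $t$, the fixed-point datum for the Hecke-twisted Frobenius with $t'$ (with $\sigma^d$ appearing because $\deg x = d$), and the commutative square (3) with the compatibility required to lie in $\Hecke(\Sht_{\Cal{G}})$ rather than just $\Hecke_{\Cal{G}}$ --- is precisely that verification. One small wording correction: the Frobenius acting as $\sigma^d$ on $\ol{\F}_q$-points is the Frobenius of the fiber relative to $k(x)=\F_{q^d}$ (equivalently, the geometric Frobenius at $x$), not the Frobenius ``relative to $\F_q$'' as you wrote, though this does not affect your argument.
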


\begin{proof}
This is immediate upon writing down the definitions. 
\end{proof}

We actually want to study the truncated space $\Sht_{\Cal{G}}^{= \mu}/a^{\Z}$, so we modify the discussion accordingly. Let $\Xi \subset Z(G)(\A)$ be a cocompact lattice. Then $\Xi$ acts on $\Sht_{\Cal{G}}^{= \mu}$ via Hecke correspondences, and we define $\Sht_{\Cal{G}}^{= \mu}/\Xi$ to be the quotient. Similarly we define $\Cal{C}(\mu, \beta; \{x\}, T';d)_\Xi$ to be the quotient by the $\Xi$-action. (See \cite[near the end of \S 4]{NgoNgo}, for more details.)

\begin{lemma}\label{lem: equivalence of groupoids}
Suppose $x \in |X|$ is a point of degree $d$. Then we have an isomorphism of groupoids 
\[
\Fix(h_{\beta,T'} \circ \Frob_{x}, \Sht_{\Cal{G}}^{= \mu}/\Xi|_x) \cong \Cal{C}(\mu, \beta; \{x\}, T';d)_\Xi.
\]
\end{lemma}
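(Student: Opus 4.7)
The plan is to deduce this from the unquotiented version in Lemma \ref{lem: groupoid description 0} by showing that the isomorphism constructed there is $\Xi$-equivariant, and then passing to $\Xi$-quotients on both sides.

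First, I would spell out the $\Xi$-actions on both sides. On $\Sht_{\Cal{G}}^{=\mu}|_x$, the group $\Xi \subset Z(G)(\A)$ acts by the Hecke correspondences supported outside $\{x\} \cup T'$, which at the level of the moduli problem amounts to twisting the underlying $\Cal{G}$-bundle $\Cal{E}$ by the central character attached to $\xi \in \Xi$. On $\Cal{C}(\mu, \beta; \{x\}, T'; d)$, I would take the corresponding action $(\Cal{E}, t, t') \mapsto (\Cal{E}\otimes\xi, t, t')$, with $t, t'$ carried along unchanged since $\Xi$ is central (the twist commutes with any modification). Because $\Xi$ lies in the center of the Hecke algebra, this action commutes with $h_{\beta, T'}$, and because $\Xi$ is defined over $\F_q$, it commutes with $\Frob_x$; so the $\Xi$-action preserves the fixed locus of the composite operator on the shtuka side.

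Second, I would verify that the isomorphism from Lemma \ref{lem: groupoid description 0} intertwines these two actions. This is essentially tautological: the identification of groupoids simply rearranges the data $(\Cal{E}, t, t')$, so twisting $\Cal{E}$ by $\xi$ on the shtuka side corresponds to twisting $\Cal{E}$ by $\xi$ on the groupoid side. Passing to $\Xi$-quotients, using the definitions of $\Cal{C}(\mu, \beta; \{x\}, T';d)_\Xi$ and $\Sht_{\Cal{G}}^{=\mu}/\Xi$, then gives the required isomorphism of groupoids.

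I expect the only real subtlety to lie in matching groupoid conventions across the two sides: one must interpret ``fixed points on a quotient'' as the quotient of the fixed points, rather than as a more refined stacky formation that would include components indexed by nontrivial cosets of $\Xi$. This is the convention used by Ng\^{o}--Ng\^{o} Dac (\cite{NgoNgo}) in their parallel counting arguments, and with this convention in force the reduction to Lemma \ref{lem: groupoid description 0} is immediate.
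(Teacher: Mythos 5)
Your proof is correct and follows essentially the same route as the paper, whose own proof is simply to take the $\Xi$-quotient of the isomorphism in Lemma \ref{lem: groupoid description 0}; your added verification of $\Xi$-equivariance and the remark on the fixed-point convention just make explicit what the paper treats as immediate.
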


\begin{proof}
Immediate by taking the quotient of Lemma \ref{lem: groupoid description 0} with respect to the $\Xi$-action. 
\end{proof}

Hence we want to study $\# \Cal{C}(\mu, \beta; \{x\}, T';d)_\Xi$. The strategy for these counts goes back to Kottwitz's study of points of Shimura varieties (with hyperspecial level structure) over finite fields \cite{Kott92}. 
\begin{enumerate}
\item We first show that there is a cohomological invariant, the \emph{Kottwitz invariant}, which controls the possible ``generic fibers'' of members of $\Cal{C}(\alpha,\beta; T,T'; d)$. 
\item We then express the \emph{size of an isogeny class} as a product of (twisted) orbital integrals. 
\item We then express the \emph{number of isogeny classes} associated to each Kottwitz invariant in terms of certain cohomology groups. 
\end{enumerate}
These steps have been carried out in papers of B.C. Ng\^{o} and T. Ng\^{o} Dac, as already mentioned, but not quite in the generality required here. In particular, these previous papers avoid the case where $T$ meets a point with non-trivial level structure (because the moduli problem was not defined over such points), which is exactly the situation that we are interested in. So we will describe the modifications needed to extend the argument to our setting, and only briefly summarize the parts that are already covered in the papers of B.C. Ng\^{o} and T. Ng\^{o} Dac.

\subsection{Kottwitz triples and classification of generic fibers}

Our first step is to define a category that looks like the category of ``generic fibers of $\Cal{C}(\alpha,\beta; T,T'; d)$''. 

\begin{defn}[{\cite[\S 5]{NgoNgo}}] Let $T, T' \subset |X| - I $. We define the groupoid $C(T,T';d)$ as follows: its objects are triples $(V, \tau, \tau')$ with 
\begin{enumerate}
\item $V$ a $G$-torsor over $F_{\ol{k}} := F\otimes_k \ol{k}$, 
\item an isomorphism $\tau \co V^{\sigma}\xrightarrow{\sim} V$, where $V^{\sigma} = V \otimes_{F_{\ol{k}}, \sigma} F_{\ol{k}}$,
\item $\tau' \co V^{\sigma^d} \xrightarrow{\sim}V$,
\end{enumerate}
satisfying the following conditions:
\begin{enumerate}
\item (``commutativity'') The following diagram commutes:
\[
\begin{tikzcd}
V^{\sigma^{d+1}} \ar[r, "\sigma^d(\tau)"] \ar[d, "\sigma(\tau')"] & V^{\sigma^d} \ar[d, "\tau'"] \\
V^{\sigma} \ar[r, "\tau"] & V
\end{tikzcd}
\]
\item For $x \notin T$, $(V_x, \tau_x)$ is isomorphic to the trivial isocrystal $(G(F_x \wh{\otimes}_{\F_q} \ol{\F}_q), \Id \wh{\otimes}_{\F_q} \sigma)$. 
\item For $x \in T$, $(V_x, \tau_x')$ is isomorphic to the trivial isocrystal $(G(F_x \wh{\otimes}_{\F_q}  \ol{\F}_q), \Id \wh{\otimes}_{\F_{q^d}} \sigma^d)$.
\end{enumerate}
The automorphisms of $(V, \tau, \tau')$ are automorphisms of $\Cal{E}$ commuting with $\tau$ and $\tau'$. 
\end{defn}

The operation of ``taking the generic fiber'' defines a functor \cite[\S 5.2]{NgoNgo}
\[
\Cal{C}(\alpha,\beta; T,T'; d) \rightarrow C(T,T' ;d). 
\]

\subsubsection{Kottwitz triples} Recall that a \emph{Kottwitz triple} is a datum $(\gamma_0, (\gamma_x)_{x \notin T}, (\delta_x)_{x \in T})$ where:
\begin{itemize}
\item $\gamma_0 $ is a stable conjugacy class of $G(F)$,
\item $\gamma_x$ is a conjugacy class of $G(F_x)$ for each $x \notin T$, and is stably conjugate to $\gamma_0$,
\item $\delta_x$ is a $\sigma$-conjugacy class of  $G(F_x \wh{\otimes}_{\F_q} \F_{q^d})$, whose norm 
\[
N(\delta_x) := \delta_x  \cdot \sigma( \delta_x) \cdot \ldots \cdot \sigma^{d-1}(  \delta_x) 
\]
is stably conjugate to $\gamma_0$. 
\end{itemize}

\begin{const}\label{construction of Kottwitz triple} We now recall from \cite[\S 6.1]{NgoNgo} how to attach to each $(V, \tau, \tau') \in C(T,T'; d)$ a Kottwitz triple $(\gamma_0, (\gamma_x)_{x \notin T}, (\delta_x)_{x \in T})$.

 First a remark on notation: for a map $\tau \co V^{\sigma} \rightarrow V$, we denote by $\tau^n \co V^{\sigma^n} \rightarrow V$ the map $\tau  \circ \sigma(\tau) \circ \ldots \circ \sigma^{n-1}(\tau)$.

\begin{enumerate}
\item \emph{Definition of $\gamma_0$.} Since $F_{\ol{k}}$ has cohomological dimension 1, the $G$-torsor $V$ is split over $F_{\ol{k}}$. Consider $\gamma =  \tau^d (\tau')^{-1}$, which is a \emph{linear} automorphism of $V$. Using the  ``commutativity'' axiom we find that
\[
\sigma(\gamma) = \sigma(\tau) \circ \sigma^2(\tau) \circ \ldots \circ \sigma^d(\tau) \circ \sigma(\tau')^{-1} = \tau^{-1} \gamma \tau.
\]
This shows that the conjugacy class of $\gamma$ is stable under $\sigma$, hence defined over $F$. Since $G$ was assumed to be quasi-split with simply connected derived subgroup, this conjugacy class must then contain an $F$-point. Thus, we have an element $\gamma_0 \in G(F)$ whose stable conjugacy class is well-defined. \\

\item \emph{Definition of $\gamma_x$, $x \notin T$.} By assumption, we can pick an isomorphism 
\[
(V_x, \tau_x) \cong (G(F_x \wh{\otimes}_{\F_q} \ol{\F}_q), \Id \wh{\otimes}_{\F_q} \sigma).
\]
Since $\tau$ and $\tau'$ commute, so do $\tau_x$ and $\tau_x'$, so that $\tau_x'$ defines an automorphism of $(G(F_x \wh{\otimes}_{\F_q} \ol{\F}_q), \Id \wh{\otimes}_{\F_q} \sigma)$. We can then write $\tau_{x}' = \gamma_x^{-1} \otimes \sigma^d$, for some $\gamma_x \in G(F_x)$ which is stably conjugate to $\gamma_0$. (The point is that picking this trivialization of $\tau_x$ amounts to setting ``$\tau_x =\Id $'' in the equation $\gamma_x = \tau_x^d (\tau_x')^{-1}$.) 

\item \emph{Definition of $\delta_x$, $x \in T$.} By assumption, we can pick an isomorphism 
\[
(V_x, \tau_x') \cong (G(F_x \wh{\otimes}_{\F_q} \ol{\F}_q), \Id \wh{\otimes}_{\F_q^d} \sigma^d).
\]
Since $\tau$ and $\tau'$ commute, so do $\tau_x$ and $\tau_x'$, so that $\tau_x$ defines an automorphism of $(G(F_x \wh{\otimes}_{\F_q} \ol{\F}_q), \Id \wh{\otimes}_{\F_q} \sigma)$. We can then write $\tau_{x} =  \delta_x \otimes \sigma$, for some $\delta_x \in G(F_x)$, well-defined up to $\sigma$-conjugacy, whose norm $N(\delta_x)  =  \delta_x  \cdot \sigma( \delta_x) \cdot \ldots \cdot \sigma^{r-1}(  \delta_x)$ is stably conjugate to $\gamma_0$. 

\end{enumerate}
\end{const}

\begin{defn}
We say that $(V, \tau, \tau') \in C(T,T';d)$ is \emph{semisimple} if $\gamma_0$  is semisimple, and \emph{elliptic} if $\gamma_0$ is elliptic. 

We say $(\Cal{E},t, t')  \in \Cal{C}(\alpha, \beta; T,T'; d)$ is \emph{semisimple} (resp. \emph{elliptic}) if the associated $(V,\tau,\tau')$ is semisimple (resp. elliptic).

\end{defn}

\subsubsection{The Kottwitz invariant} Following \cite[\S 6.2]{NgoNgo} we can attach to the Kottwitz triple $(\gamma_0, (\gamma_x), (\delta_x))$ a character $\inv(\gamma_0, (\gamma_x), (\delta_x)) \in X^*(Z(\wh{G}_{\gamma_0})^{\Gamma})$. Briefly, this is done as follows. 

\begin{itemize}
\item For $x \notin T$, since $\gamma_x$ and $\gamma_0$ are stably conjugate, by a theorem of Steinberg we can find $g \in G(F_x \wh{\otimes}_{k} \ol{k})$ such that 
\[
g \gamma_0 g^{-1} = \gamma_x.
\] 
Then (using that $\gamma_0 \in G(F)$) we have 
\[
g \gamma_0 g^{-1} = \gamma_x = \sigma(\gamma_x) = \sigma(g) \gamma_0 \sigma(g)^{-1}.
\]
This shows that $g^{-1}\sigma(g)$ is in the centralizer  of $\gamma_0$ in $G(F_x \wh{\otimes}_{k} \ol{k})$, hence defines a class in $B(G_{\gamma_0, x}) = G_{\gamma_0}(F_x \wh{\otimes}_{k} \ol{k})/\text{$\sigma$-conjugacy}$. 

\item For $x \in T$, we have $g \in G(F_x \wh{\otimes}_{k} \ol{k})$ such that $N \delta_x = g \gamma_0 g^{-1}$. Then $g^{-1}  \sigma^d(g)$ is in $G_{\gamma_0}(F_x\wh{\otimes}_{k} \ol{k})$ and defines a class in $B(G_{\gamma_0,x})$. 
\end{itemize}
For each $x$, we apply the map $B(G_{\gamma_0, x}) \rightarrow X^*(Z(\wh{G}_{\gamma_0})^{\Gamma_x})$ of \cite[\S 6]{Kott90} to get a local character $\inv_x(\gamma_0, (\gamma_x), (\delta_x)) \in  X^*(Z(\wh{G}_{\gamma_0})^{\Gamma_x})$. Almost all of the resulting characters are trivial, so that it makes sense to sum the restrictions of all these characters to $Z(\wh{G}_{\gamma_0})^{\Gamma}$, and we define this sum to be $\inv(\gamma_0, (\gamma_x), (\delta_x))$.

\begin{prop}[\cite{NgoNgo} Proposition 7.1] For elliptic $(V,\tau, \tau') \in C(T,T'; d)$, and $(\gamma_0, (\gamma_x), (\delta_x))$ the associated Kottwitz triple, if $\gamma_0$ is semisimple then we have $\inv(\gamma_0, (\gamma_x), (\delta_x)) = 0$.
\end{prop}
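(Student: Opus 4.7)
The approach I would take mirrors Kottwitz's original argument in the Shimura variety setting (\cite{Kott86b}), adapted to function fields as in \cite{NgoNgo} \S 7. The key philosophical point is that the Kottwitz invariant measures a global-to-local obstruction, so it must vanish whenever the collection of local classes it sums comes from a single global class, via a reciprocity/duality statement.

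More concretely, write $H := G_{\gamma_0}$, which is connected and reductive over $F$ by the semisimplicity of $\gamma_0$ and the assumption that $G_{\mrm{der}}$ is simply connected. The ellipticity hypothesis ensures $H/Z(G)$ is anisotropic over $F$, which is what will make the duality statement useable. First I would use the datum $(V, \tau, \tau')$ together with the choice $\gamma_0 \in G(F)$ (a stable conjugate of $\tau^d (\tau')^{-1}$) to build a global class $\kappa \in H^1(F, H)$, or more precisely in a variant encoding the $\sigma$ and $\sigma^d$ structures carried by $V$. Since $V$ is split over $F_{\ol{k}}$ and $\gamma_0$ is an $F$-form of $\tau^d(\tau')^{-1}$, the obstruction to matching these identifications on the nose is an $H$-cocycle.

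Next I would verify that the localization $\kappa_x$ of $\kappa$ at each place $x \in |X|$ recovers precisely the class in $B(H_x)$ used to define $\inv_x(\gamma_0,(\gamma_x),(\delta_x))$. For $x \notin T$, triviality of $(V_x, \tau_x)$ means the local information is carried entirely by $\tau_x'$, and the conjugating element $g$ with $g\gamma_0 g^{-1} = \gamma_x$ produces the same class $g^{-1}\sigma(g)$ as in Construction \ref{construction of Kottwitz triple}. For $x \in T$, the roles are interchanged: triviality of $(V_x, \tau_x')$ as a $\sigma^d$-isocrystal forces the class to be read off from $\tau_x = \delta_x \otimes \sigma$, and the element $g$ with $N\delta_x = g\gamma_0 g^{-1}$ yields $g^{-1}\sigma^d(g)$. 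Finally I would apply the function-field analogue of Kottwitz's reciprocity (see \cite{Kott90}), which says that the composite map
\begin{equation*}
H^1(F,H) \longrightarrow \bigoplus_{x \in |X|} B(H_x) \xrightarrow{\; \sum_x \inv_x \;} X^*(Z(\wh{H})^{\Gamma})
\end{equation*}
is zero. Since our sum is by construction the image of $\kappa$ under this composite, the vanishing is immediate.

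The main obstacle is Step 2: one must simultaneously account for the ``$\sigma$-type'' local classes (at $x \notin T$) and the ``$\sigma^d$-type'' local classes (at $x \in T$) in a uniform global cocycle. This is the technical heart of \cite{NgoNgo} \S 7 and is essentially a bookkeeping exercise with isocrystals, but it is where the commutativity axiom relating $\tau$ and $\tau'$ gets used decisively -- without it, the two ways of extracting local classes would not be compatible enough to assemble into one global cohomology class.
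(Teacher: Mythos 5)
The paper does not actually prove this proposition: it is quoted verbatim from \cite{NgoNgo} (Proposition 7.1), with no argument given here. Your sketch is the standard global-to-local reciprocity argument that underlies that cited proof, and its architecture is right: the local classes at all places are extracted from one global element (a $g \in G(F\otimes_k \ol{k})$ conjugating $\gamma_0$ to $\gamma = \tau^d(\tau')^{-1}$, together with the commuting pair $\tau,\tau'$), one checks that the localizations reproduce exactly the classes $g^{-1}\sigma(g)$, resp.\ $g^{-1}\sigma^d(g)$, entering the definition of $\inv_x$, and then a global vanishing theorem kills the sum; the commutativity axiom is indeed what makes the $\sigma$- and $\sigma^d$-structures assemble coherently, and ellipticity of $\gamma_0$ is what makes the restriction to $Z(\wh{G}_{\gamma_0})^{\Gamma}$ the right receptacle. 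One imprecision worth fixing: the global class does not naturally live in $H^1(F, G_{\gamma_0})$, and the displayed composite $H^1(F,H) \to \bigoplus_x B(H_x)$ does not exist as written. The correct home is the global analogue of the Kottwitz set, i.e.\ $\sigma$-conjugacy classes in $G_{\gamma_0}(F\otimes_k\ol{k})$, with localization maps to each $B(G_{\gamma_0,x})$ induced by $F\otimes_k\ol{k} \hookrightarrow F_x\wh{\otimes}_k\ol{k}$; the reciprocity statement one invokes (the function-field analogue of Kottwitz's product formula, proved via Tate--Nakayama duality and class field theory, cf.\ \cite{Kott90}) says that for such a globally defined class the sum of the local invariants, almost all of which are trivial, vanishes on $Z(\wh{G}_{\gamma_0})^{\Gamma}$. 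With that correction your outline matches the argument the paper is citing.
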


\begin{prop}\label{prop: gen fibers}
There exists $(V,\tau, \tau') \in C(T,T'; d)$ having a given elliptic Kottwitz triple $(\gamma_0, (\gamma_x), (\delta_x))$ if and only if $\inv (\gamma_0, (\gamma_x), (\delta_x)) = 0$. If the set of such is non-empty, then the number of isogeny classes within $C(T,T'; d)$ having the same Kottwitz triple is the cardinality of 
\[
\ker^1(F,G_{\gamma_0}) := \ker (H^1(F,G_{\gamma_0}) \rightarrow \prod_x H^1(F_x, G_{\gamma_0})).
\]
\end{prop}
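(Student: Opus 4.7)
The plan is to follow the template pioneered by Kottwitz for classifying isogeny classes of abelian varieties over finite fields in \cite{Kott92}, and adapted to the function-field/shtuka setting in \cite{NgoNgo}. Note that one direction of the ``iff'' (namely that nonemptiness forces $\inv=0$) is the preceding proposition; my task is to establish the converse, and then the count.

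\emph{Step 1: local construction.} Since $F_{\ol{k}}$ has cohomological dimension $\le 1$ and $G$ is quasi-split with $G_{\mrm{der}}$ simply connected, a theorem of Steinberg implies that every $G$-torsor over $F_{\ol{k}}$ is trivial, so it suffices to produce the semilinear data $(\tau,\tau')$ on the trivial torsor. At each place, the Kottwitz triple prescribes a natural local candidate: at $x\notin T$, take $\tau_x$ to be the standard Frobenius on the trivial isocrystal and $\tau'_x = \gamma_x^{-1}\cdot\sigma^d$; at $x\in T$, take $\tau_x'$ to be the standard $\sigma^d$-Frobenius and $\tau_x=\delta_x\cdot \sigma$. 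The commutativity axiom at each place then holds automatically, because it is governed by the relations $\gamma_x\sim_{\mrm{st}} \gamma_0$ and $N(\delta_x)\sim_{\mrm{st}}\gamma_0$ already built into the definition of a Kottwitz triple.

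\emph{Step 2: gluing, controlled by $\inv$.} The next step is to assemble these local data into a global $(V,\tau,\tau')$. Two different choices of local trivialization at $x$ differ by an element of $G_{\gamma_0}(F_x\wh{\otimes}_k \ol{k})$, whose (twisted) $\sigma$-conjugacy class defines an element of $B(G_{\gamma_0,x})$; applying the Kottwitz map $B(G_{\gamma_0,x})\to X^*(Z(\wh{G}_{\gamma_0})^{\Gamma_x})$ and summing over all places recovers $\inv(\gamma_0,(\gamma_x),(\delta_x))$. By a reciprocity argument in global Galois cohomology (cf.\ \cite{Kott90} \S 6), specialized to the cd-$\le 2$ global function field $F$ and using again that $G_{\mrm{der}}$ is simply connected, the vanishing of $\inv$ is precisely the condition that these local obstruction classes fit together into a class which is trivializable over $F_{\ol{k}}$. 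A trivialization then yields the desired global object in $C(T,T';d)$ with the prescribed Kottwitz triple.

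\emph{Step 3: counting isogeny classes.} Once a single object $(V_0,\tau_0,\tau_0')$ with the given elliptic Kottwitz triple exists, any other such is obtained from it by a cocycle twist: namely by a class in $H^1(F,G_{\gamma_0})$ whose image in each $H^1(F_x,G_{\gamma_0})$ is trivial (the local triviality being required to preserve the prescribed local data $(\gamma_x)$, $(\delta_x)$ up to isomorphism). This gives a natural action of $\ker^1(F,G_{\gamma_0})$ on the set of isogeny classes sharing the given Kottwitz triple; a direct cocycle manipulation, using that $G_{\gamma_0}$ is connected reductive (because $\gamma_0$ is semisimple, since elliptic) so that the local $H^2$ obstructions vanish, shows this action is simply transitive, yielding the claimed cardinality.

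The main obstacle I anticipate is the careful reciprocity calculation matching the sheaf-theoretic $\inv$ to the gluing obstruction; however, the presence of a ``parahoric'' place $x\in T$ adds no essential difficulty here, since the groupoid $C(T,T';d)$ is defined purely in terms of the generic fiber $G$ and does not remember the parahoric structure $\Cal{G}|_{D_{x_0}}$. Thus the calculation reduces, up to bookkeeping, to the one performed in \cite{NgoNgo} \S 7.
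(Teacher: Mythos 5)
Your outline is correct and follows essentially the same route as the paper, whose proof of this proposition consists of citing the arguments of \cite{NgoNgo} Proposition 11.1 and \cite{Ngo13} Proposition 4.3 — i.e.\ exactly the Kottwitz-style scheme you sketch (Steinberg triviality, gluing obstruction identified with $\inv$ via the Tate--Nakayama/reciprocity argument of \cite{Kott90} \S 6, and the $\ker^1(F,G_{\gamma_0})$-torsor count). You also correctly identify the point the paper relies on to extend beyond the hyperspecial case, namely that $C(T,T';d)$ only sees the generic fiber, so the parahoric structure at $x_0$ plays no role in this classification.
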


\begin{proof}
This follows from the proof of \cite[Proposition 11.1]{NgoNgo} combined with \cite[Proposition 4.3]{Ngo13}.
\end{proof}

\subsubsection{Automorphisms of the generic fiber}\label{sssec: aut generic fib}

Let $(V, \tau, \tau') \in  C(T,T';d)$ with elliptic Kottwitz triple $(\gamma_0, (\gamma_x) , (\delta_x))$. By \cite[\S 3.9]{Ngo13}, the automorphisms of $(V, \tau, \tau')$ are the $F_x$-points of an inner form $J_{\gamma_0}$ of $G_{\gamma_0}$ defined over $F$. 

\begin{remark}\label{inner form local} As pointed out in \cite[\S 3.9]{Ngo13}, the Hasse principle implies that $J_{\gamma_0}$ is determined by its local components: 
\begin{itemize}
\item For $x \notin T$, $J_{\gamma_0,x}$ is the centralizer of $\gamma_x$ in $G(F_x)$, 
\item For $x \in T$, $J_{\gamma_0,x}$ is the twisted centralizer of $\delta_x$ in $G(F_x \otimes_{\F_q} \F_{q^d})$.
\end{itemize}
\end{remark}

\subsection{Counting lattices}

We now study the fibers of the functor $\Cal{C}(\alpha, \beta; T,T'; d) \rightarrow  C(T,T';d)$. 

\begin{prop}\label{prop: lattice count}
Fix an elliptic $(V, \tau, \tau') \in C(T,T';d)$. Suppose $(\Cal{E}, t, t') \in \Cal{C}(\alpha, \beta; T,T'; d)$ lies over $(V, \tau, \tau')$. The size of the isogeny class of $(\Cal{E}, t, t')$ is 
\[
 \mrm{vol}(\Xi  \cdot J_{\gamma_0}(F) \backslash G_{\gamma_0}(\A)) \cdot  \prod_{x \notin T} \mrm{O}_{\gamma_x}(\phi_{\beta_x}) \prod_{x \in  T} \mrm{TO}_{\delta_x \sigma}(\phi_{\alpha_x}). 
\]
Here we normalize Haar measures as in \S \ref{sssec: Haar measures}.
\end{prop}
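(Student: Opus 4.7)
The plan is to adapt the Kottwitz-Ng\^o paradigm for counting isogeny classes to our parahoric setting. Fix a representative $(V,\tau,\tau')$ of the given generic fiber, with associated Kottwitz triple $(\gamma_0, (\gamma_x)_{x \notin T}, (\delta_x)_{x \in T})$ coming from Construction \ref{construction of Kottwitz triple}. An object $(\Cal{E},t,t') \in \Cal{C}(\alpha,\beta;T,T';d)$ lying over $(V,\tau,\tau')$ is the same as a $\Cal{G}$-integral structure on $V$ compatible with the modifications, and by the Beauville-Laszlo theorem such a structure is determined by a collection of local $\Cal{G}(\Cal{O}_x)$-lattices $L_x \subset V_x$, one at each place, such that the linearizations of $\tau_x$ and $\tau'_x$ act on $L_x$ with the prescribed modification types. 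Two such collections determine isomorphic objects precisely when they differ by the diagonal action of $\Aut(V,\tau,\tau') = J_{\gamma_0}(F)$ (recalled in \S\ref{sssec: aut generic fib}); taking the further quotient by $\Xi$ accounts for the truncation by $a^{\Z}$. Thus the isogeny class, counted groupoid-theoretically, is the set of $\Xi \cdot J_{\gamma_0}(F)$-orbits on the restricted product of local admissible lattices.

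The next step is to identify each local count with a (twisted) orbital integral. The division falls into three cases. At $x \notin T \cup T'$ both $\tau_x$ and $\tau'_x$ preserve $L_x$ integrally; using the trivialization that makes $\tau_x = \Id \otimes \sigma$, the condition becomes that $L_x$ is stable under $\gamma_x$, which is precisely the content of $\mrm{O}_{\gamma_x}(\mbb{I}_{K_x}) = \mrm{O}_{\gamma_x}(\phi_{\beta_x})$ since $\beta_x = 0$. At $x \in T'$ (hyperspecial by assumption), the same trivialization identifies admissible $L_x$ with those $L_x$ such that $\gamma_x^{-1} L_x$ sits in relative position $\beta_x$ to $L_x$; this is the defining double coset for $\mrm{O}_{\gamma_x}(\phi_{\beta_x})$. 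At $x \in T$, which crucially includes the parahoric place $x_0$, we trivialize $\tau'_x = \Id \otimes \sigma^d$ instead, so $L_x$ is required to be $\tau'_x$-stable (hence Frobenius-descended) while $\tau_x = \delta_x \otimes \sigma$ maps $L_x$ to a lattice in relative position $\alpha_x$; this is the defining configuration for $\mrm{TO}_{\delta_x \sigma}(\phi_{\alpha_x})$, now interpreted over the parahoric Hecke algebra. That the parahoric case works identically is because $\phi_{\alpha_{x_0}}$ is by definition the indicator function of $J_{x_0} t^{\alpha_{x_0}} J_{x_0}$, and the local $L_{x_0}$ sits in the parahoric affine Grassmannian rather than the spherical one.

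The third step is the adelic assembly. Summing the local counts over the restricted product $V(\A)$ and then passing to the $\Xi \cdot J_{\gamma_0}(F)$-quotient gives, by the usual unfolding, the product of local orbital integrals weighted by the volume $\mrm{vol}(\Xi \cdot J_{\gamma_0}(F) \backslash J_{\gamma_0}(\A))$, where the local measures $dg/dh$ from \S\ref{sssec: Haar measures} are precisely the ones absorbed into the definitions of $\mrm{O}_{\gamma_x}$ and $\mrm{TO}_{\delta_x \sigma}$. Using that $J_{\gamma_0}$ is an inner form of $G_{\gamma_0}$ with a canonical transfer of measure, we have $\mrm{vol}(\Xi \cdot J_{\gamma_0}(F) \backslash J_{\gamma_0}(\A)) = \mrm{vol}(\Xi \cdot J_{\gamma_0}(F) \backslash G_{\gamma_0}(\A))$, yielding the claimed formula.

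The hyperspecial (good reduction) version of this counting is already carried out in \cite{Ngo06}, \cite{NgoNgo}, and \cite{Ngo13}, so the main (mild) obstacle is a bookkeeping check that nothing essential changes at $x_0$ when the lattice $L_{x_0}$ is promoted from a hyperspecial to a parahoric one. The potentially delicate point is that the twisted orbital integral at $x_0$ must be computed against a function in the parahoric Hecke algebra (rather than the spherical one), and that the Haar-measure normalization on $G(F_{x_0})$ using $\mrm{vol}(K_{x_0}) = 1$ (rather than $\mrm{vol}(J_{x_0}) = 1$) matches the $\Xi \cdot J_{\gamma_0}(F) \backslash G_{\gamma_0}(\A)$ volume on the right-hand side. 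This amounts to observing that the local lattice count at $x_0$ is exactly $\int_{G(F_{x_0})/G_{\delta_{x_0} \sigma}(F_{x_0})} \phi_{\alpha_{x_0}}(g^{-1} \delta_{x_0} \sigma(g))\, dg/dh$ with the measures of \S\ref{sssec: Haar measures}, which is straightforward from the definitions.
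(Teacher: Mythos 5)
Your proposal is correct and takes essentially the same approach as the paper: lifts of $(V,\tau,\tau')$ are identified with collections of local lattices (parahoric at $x_0$) of prescribed relative position, counted modulo $\Xi \cdot J_{\gamma_0}(F)$, and the count is unfolded into the volume factor times the product of local (twisted) orbital integrals with the measures of \S\ref{sssec: Haar measures}; the paper merely compresses the unfolding by citing the well-known adelic counting formula (Ng\^{o}--Ng\^{o} Dac, Kottwitz) and calling the rest a straightforward rewriting. One cosmetic slip: at the place $x_0 \in T$ the twisted orbital integral is taken over $G(F_{x_0} \otimes_{\F_q} \F_{q^d})$ modulo the twisted centralizer, not over $G(F_{x_0})$ as written in your final display.
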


\begin{proof}
Promoting $(V, \tau, \tau') $ to $(\Cal{E}, t, t')$ amounts to choosing a $\Cal{G}_x \wh{\otimes}_k \ol{k}$-bundle over $\Spec \Cal{O}_x$ for all $x \in X$, plus an $I$-level structure, such that 
\begin{itemize}
\item for $v \notin |T|$, $\Cal{E}_x$ is fixed by $\tau$, 
\item for $v \notin |T|'$, $\Cal{E}_x$ is fixed by $\tau'$, 
\item for $x \in |T|$, the relative position of $\Cal{E}_x$ and $\tau(\Cal{E}_x)$ is given by $\alpha_x$, 
\item for $v \in |T|'$ (hence outside $|T|$), the relative position of $\Cal{E}_v$ and $\tau'(\Cal{E}_v)$ is given by $\beta_v$. 
\end{itemize}
As is well-known (cf. \cite[\S 9]{NgoNgo} or \cite[\S 5]{Ngo13} for the present situation; the earliest reference we know is \cite{Kott80}), this is counted by 
\[
\int_{\Xi \cdot J_{\gamma_0}(F) \backslash \prod_{x \in T} G(F_x \otimes_{\F_q} \F_{q^d}) \times G(\A^T)} \bigotimes_{x \in T} \phi_{\alpha_x}(h_x^{-1} \delta_x \sigma(h_x)) \bigotimes_{x \notin T} \phi_{\beta_x}(h_x^{-1} \gamma_x h_x) \, dh.
\]
Here we use Remark \ref{inner form local} to view $J_{\gamma_0}(F)$ as a subset of $G(F_x)$. The statement of the proposition is a straightforward rewriting of this formula. 
\end{proof}

\subsection{Count of elliptic elements}

Define
\[
\#\Cal{C}(\alpha,\beta; T,T'; d)^{\mrm{ell}} := \sum_{(\Cal{E},t, t')  \text{ elliptic}}  \frac{1}{\#\Aut(\Cal{E},t, t') }.
\]

Combining Proposition \ref{prop: gen fibers}, \S \ref{sssec: aut generic fib}, and Proposition \ref{prop: lattice count}, we obtain:  

\begin{thm}\label{thm: elliptic fixed point count} We have 
\begin{align*}
\#\Cal{C}(\alpha,\beta; T,T'; d)^{\mrm{ell}}  &= \sum_{\substack{ (\gamma_0, (\gamma_x),(\delta_x)) \\ \inv(\gamma_0, (\gamma_x),(\delta_x) ) = 0 \\ \gamma_0 \text{ elliptic}}}
\ker^1(F, G_{\gamma_0}) \cdot \mrm{vol}(\Xi \cdot J_{\gamma_0}(F)  \backslash J_{\gamma_0} (\A_F)) \cdot dg(K)^{-1} \\ 
& \hspace{1.5in} \cdot \left( \prod_{x \notin T}  \mrm{O}_{\gamma_x}(f_{\beta_v}) \right) \prod_{x \in |T|}  \mrm{TO}_{\delta_x \sigma}(f_{\alpha_x}) 
\end{align*}
\end{thm}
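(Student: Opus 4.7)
The plan is to evaluate $\#\Cal{C}(\alpha,\beta;T,T';d)^{\mrm{ell}}$ by factoring the counting problem through the ``generic fiber'' functor
\[
\Cal{C}(\alpha,\beta;T,T';d) \longrightarrow C(T,T';d),
\]
partitioning the target first into isogeny classes and then further into elliptic Kottwitz triples.

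First I would write, using the standard formula for the mass of a groupoid fibered over another groupoid,
\[
\#\Cal{C}(\alpha,\beta;T,T';d)^{\mrm{ell}} \;=\; \sum_{c}\, \frac{\#\{\text{fiber of }(\Cal{E},t,t')\mapsto(V,\tau,\tau')\text{ over }c\}}{\#\Aut(c)},
\]
where $c$ ranges over isomorphism classes of elliptic objects of $C(T,T';d)$. Per \S\ref{sssec: aut generic fib}, $\Aut(c)=J_{\gamma_0}(F)/\Xi$ (viewed as a subgroup via Remark \ref{inner form local}), while by Proposition \ref{prop: lattice count} the numerator equals the product of local (twisted) orbital integrals times a global volume. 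Combining these and absorbing the automorphism factor into the adelic volume gives, for each elliptic isogeny class with Kottwitz triple $(\gamma_0,(\gamma_x),(\delta_x))$, a contribution of
\[
\mrm{vol}(\Xi\cdot J_{\gamma_0}(F)\backslash J_{\gamma_0}(\A_F)) \cdot dg(K)^{-1} \cdot \prod_{x\notin T}\mrm{O}_{\gamma_x}(f_{\beta_x})\cdot\prod_{x\in T}\mrm{TO}_{\delta_x\sigma}(f_{\alpha_x}),
\]
where the $dg(K)^{-1}$ records the normalization of Haar measures as specified in \S\ref{sssec: Haar measures} (it converts the volume appearing in Proposition \ref{prop: lattice count} to the one appearing in the theorem, using that $G_{\gamma_0}$ and its inner form $J_{\gamma_0}$ have matching transferred measures).

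Next I would reindex the sum over isogeny classes in $C(T,T';d)^{\mrm{ell}}$ by Kottwitz triples. The construction of the triple in Construction \ref{construction of Kottwitz triple} is functorial on isogeny classes, and Proposition \ref{prop: gen fibers} tells us exactly two things: (i) an elliptic Kottwitz triple is realized by some $(V,\tau,\tau')\in C(T,T';d)$ if and only if its Kottwitz invariant $\inv(\gamma_0,(\gamma_x),(\delta_x))\in X^*(Z(\wh G_{\gamma_0})^\Gamma)$ vanishes, and (ii) when nonempty, the number of isogeny classes with a given Kottwitz triple equals $|\ker^1(F,G_{\gamma_0})|$. Crucially the local ingredients $(\gamma_x,\delta_x)$ depend only on the Kottwitz triple (not on the chosen isogeny class), so every isogeny class with a common triple contributes the same expression displayed above. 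Multiplying by $|\ker^1(F,G_{\gamma_0})|$ and summing over triples with vanishing Kottwitz invariant yields exactly the formula asserted in Theorem \ref{thm: elliptic fixed point count}.

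The main point requiring care is the bookkeeping of automorphisms: the groupoid mass on the left balances $\#\Aut$ of objects of $\Cal{C}$, while Proposition \ref{prop: lattice count} is a count of a full isogeny class and Proposition \ref{prop: gen fibers} counts isogeny classes, not individual objects. The reconciliation is that the automorphism group of any object of $C(T,T';d)$ above our elliptic locus is $J_{\gamma_0}(F)/\Xi$, and absorbing this into the adelic integral $\int_{J_{\gamma_0}(\A)}(\cdots)\,dh$ produces the quotient volume $\mrm{vol}(\Xi\cdot J_{\gamma_0}(F)\backslash J_{\gamma_0}(\A_F))$ together with the local (twisted) orbital integrals, with no remaining automorphism factor --- this is the classical Kottwitz manipulation, which applies here verbatim once the inputs of \S\ref{sssec: aut generic fib}, Proposition \ref{prop: gen fibers} and Proposition \ref{prop: lattice count} are in hand. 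I expect this accounting of measures and automorphism groups to be the only delicate step; everything else is a formal assembly of the cited results.
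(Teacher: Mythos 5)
Your proposal is correct and follows essentially the same route as the paper, whose proof of Theorem \ref{thm: elliptic fixed point count} is literally just the assembly of Proposition \ref{prop: gen fibers}, \S\ref{sssec: aut generic fib}, and Proposition \ref{prop: lattice count}: classify generic fibers by Kottwitz triples with vanishing invariant, count isogeny classes per triple by $\ker^1(F,G_{\gamma_0})$, and weigh each by the volume-times-(twisted) orbital integral expression. Your extra care with the automorphism/measure bookkeeping (absorbing $\Aut$ via $J_{\gamma_0}(F)$ and $\Xi$ into the adelic volume) is exactly the implicit Kottwitz-style manipulation the paper leaves to the cited results.
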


\begin{remark}
This is the analogue of \cite[Th\'{e}or\`{e}me 5.1]{Ngo13}. The expressions look almost the same, but one should keep in mind that in our applications  $T = \{x_0\}$, and $f_{\alpha_{x_0}}$ should be thought of as an indicator function on a partial affine flag variety rather than an affine Grassmannian. In addition, the measure of $K_{x_0}$ is adapted accordingly. 
\end{remark}

\begin{cor}\label{cor: number fixed point formula}
Let $\Sht_{\Cal{G}}^{\leq \mu}/a^{\Z}$ be the moduli stack of $\Cal{D}$-shtukas with parahoric level structure at $x_0$, as in \S \ref{subsec: D-shtukas}. Then 
\begin{align*}
\# \Fix(h_{\beta,T'} \circ \Frob, \Sht_{\Cal{G}}^{= \mu}/a^{\Z}|_{x_0})  &= \sum_{\substack{ (\gamma_0, (\gamma_x),(\delta_x)) \\ \inv(\gamma_0, (\gamma_x),(\delta_x) ) = 0}}
\ker^1(F, G_{\gamma_0}) \cdot \mrm{vol}(a^{\Z} \cdot J_{\gamma_0}(F)  \backslash J_{\gamma_0} (\A_F)) \cdot \\ 
& \hspace{1in}\cdot dg(K)^{-1} \cdot \left( \prod_{x \notin T} \mrm{O}_{\gamma_x}(f_{\beta_v})   \right) \cdot \mrm{TO}_{\delta_{x_0} \sigma}(f_{\mu}). 
\end{align*}
\end{cor}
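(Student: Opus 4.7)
\medskip

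\noindent\textbf{Proof proposal.}
My plan is to deduce the corollary as a direct specialization of Theorem \ref{thm: elliptic fixed point count} combined with the groupoid identification in Lemma \ref{lem: equivalence of groupoids}. Since $x_0 \in X(\F_q)$ is a rational point by the conventions of \S \ref{sec: notation}, we have $d = \deg(x_0) = 1$. Setting $T = \{x_0\}$, $\alpha = (\mu)$ (that is, $\alpha_{x_0} = \mu$), and $\Xi = a^{\Z}$, Lemma \ref{lem: equivalence of groupoids} gives an equivalence of groupoids
\[
\Fix(h_{\beta,T'} \circ \Frob,\ \Sht_{\Cal{G}}^{= \mu}/a^{\Z}|_{x_0}) \;\cong\; \Cal{C}(\mu, \beta;\ \{x_0\}, T';\ 1)_{a^{\Z}},
\]
so the groupoid cardinality on the left equals $\#\Cal{C}(\mu, \beta;\ \{x_0\}, T';\ 1)_{a^{\Z}}$.

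The second step is to explain why this count coincides with the elliptic count to which Theorem \ref{thm: elliptic fixed point count} directly applies. We are in the $\Cal{D}$-shtuka setting of \S \ref{subsec: D-shtukas}, where $G$ is the unit group of a division algebra $D$. Since $D$ is a division algebra, the reductive group $G_F$ is anisotropic modulo its center, so every semisimple element of $G(F)$ is elliptic. Thus, for any $(\Cal{E}, t, t') \in \Cal{C}(\mu, \beta;\ \{x_0\}, T';\ 1)$, the associated Kottwitz triple $(\gamma_0, (\gamma_x), (\delta_x))$ from Construction \ref{construction of Kottwitz triple} automatically has $\gamma_0$ elliptic. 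Consequently,
\[
\#\Cal{C}(\mu, \beta;\ \{x_0\}, T';\ 1) \;=\; \#\Cal{C}(\mu, \beta;\ \{x_0\}, T';\ 1)^{\mrm{ell}},
\]
and this equality descends to the quotient by $a^{\Z}$.

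Finally, I would apply Theorem \ref{thm: elliptic fixed point count} (with the quotient by $\Xi = a^{\Z}$ taken into account in the volume factor, as explained in the discussion preceding Lemma \ref{lem: equivalence of groupoids}) with the data $T = \{x_0\}$, $\alpha_{x_0} = \mu$, and $d=1$. The sum then runs over Kottwitz triples $(\gamma_0, (\gamma_x), (\delta_x))$ with vanishing Kottwitz invariant; the factor at the place $x_0 \in T$ contributes the twisted orbital integral $\mrm{TO}_{\delta_{x_0}\sigma}(f_{\mu})$ (noting that $f_\mu$ here is the indicator function of the relevant double coset in the parahoric $K_{x_0} \backslash G(F_{x_0})/K_{x_0}$, consistent with Remark following Theorem \ref{thm: elliptic fixed point count}), while the places $x \notin T$ contribute the ordinary orbital integrals $\mrm{O}_{\gamma_x}(f_{\beta_x})$. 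This yields precisely the stated formula.

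The main obstacle, which is more a matter of bookkeeping than of substance, is to verify carefully that the Haar measure normalizations at the parahoric place $x_0$ match the normalizations of \S \ref{sssec: Haar measures}, so that the factor $dg(K)^{-1}$ and the volume $\mrm{vol}(a^{\Z} \cdot J_{\gamma_0}(F) \backslash J_{\gamma_0}(\A_F))$ appear with the correct exponents. Once this bookkeeping is checked, the corollary follows immediately from the two ingredients above.
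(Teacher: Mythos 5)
Your proposal is correct and follows essentially the same route as the paper: identify the fixed points with $\Cal{C}(\mu,\beta;\{x_0\},T';1)_{a^{\Z}}$ via Lemma \ref{lem: equivalence of groupoids} (using $\deg x_0 = 1$), observe that ellipticity is automatic because the unit group of a division algebra is anisotropic mod center, and then apply Theorem \ref{thm: elliptic fixed point count}. The paper's proof is exactly this two-line argument, so no further comparison is needed.
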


\begin{proof}
This is immediate from Lemma \ref{lem: equivalence of groupoids} and the observation that every non-zero element of a division algebra is elliptic, since the associated group of units is anisotropic mod center. (As mentioned at the beginning of \S \ref{counting fixed points}, the results used from \cite{NgoNgo} and \cite{Ngo13} also apply to $\Cal{D}$-shtukas, and in fact were originally proved for this case in \cite[\S 4]{Ngo06}.) 
\end{proof}

\section{Geometrization of base change for Hecke algebras} \label{hecke base change}

In this section we present a geometric interpretation of the base change homomorphism for spherical Hecke algebras, and then for the center of parahoric Hecke algebras. The results here are a generalization to arbitrary split reductive groups $G$ of results from \cite{Ngo99}, which proved the result for $\GL_n$. 


Using the work of Gaitsgory on realizing central sheaves on the affine flag variety as nearby cycles, we then deduce a geometric interpretation of base change for the center of the parahoric Hecke algebra. 


\subsection{Definition of base change homomorphism}\label{defn base change}

\emph{For this section only, we let $F$ be a local field and $G$ be a reductive  group over $F$}. Given a compact open subgroup $H \subset G(F)$, we have the \emph{Hecke algebra} 
\[
\Cal{H}_{G,H} := \mrm{Fun}_c(H \backslash G(F) /H, \ol{\Q}_{\ell}).
\]
We begin by defining base change homomorphisms for some Hecke algebras with respect to a degree $r$ unramified extension of local fields $E/F$. 

For simplicity \emph{we assume that $G$ is split over $F$}. (Our results should extend at least to quasi-split $G$ without much difficulty.) We let $E/F$ be the unramified extension of degree $r$.

\begin{defn}[{\cite[\S 1]{Haines09}}] Let $K \subset G(F)$ be a hyperspecial maximal compact subgroup. The \emph{base change homomorphism for spherical Hecke algebras} (with respect to $E/F$) is the homomorphism of $\CC$-algebras
\[
\Cal{H}_{G(E), K} \rightarrow \Cal{H}_{G(F),K} 
\]
characterized by the following property. Let $W_F$ be the Weil group of $F$. For an admissible unramified homomorphism $\psi \co W_F \rightarrow {}^L G$ let $\psi' \co W_E \rightarrow {}^L G$ denote the restriction to $W_E \subset W_F$. Let $\pi_{\psi}$ and $\pi_{\psi'}$ denote the corresponding representations of $G(E)$ and $G(F)$ under the Local Langlands Correspondence. Then for any $\phi \in \Cal{H}_{G(E), K}$ we have 
\[
\langle \text{trace } \pi_{\psi'} , \phi \rangle = \langle \text{trace } \pi_{\psi} , b(\phi) \rangle 
\]
\end{defn}

\begin{defn}[{\cite[\S 1]{Haines09}}]
Let $J \subset K$ be a parahoric subgroup and $\mbb{I}_K$ denote the characteristic function of $K \subset G(F)$. By a theorem of Bernstein (cf. \cite[Theorem 3.1.1]{Haines09}), convolution with $\mbb{I}_K$ defines an isomorphism 
\[
- *_J \mbb{I}_K \co Z(\Cal{H}_{G,J}) \xrightarrow{\sim} \Cal{H}_{G,K}.
\]
We define the \emph{base change homomorphism for parahoric Hecke algebras} to be the homomorphism 
\[
b \co \Cal{H}_{G(E), J} \rightarrow \Cal{H}_{G(F),J} 
\]
making the following diagram commute: 
\begin{equation}\label{compatibility with spherical}
\begin{tikzcd}
Z(\Cal{H}_{G(E), J}) \ar[r, "b"] \ar[d, "- *_J \mbb{I}_K", "\sim"']  & Z(\Cal{H}_{G(F), J}) \ar[d, "- *_J \mbb{I}_K", "\sim"'] \\
\Cal{H}_{G(E), K} \ar[r, "b"] & \Cal{H}_{G(F), K}
\end{tikzcd}
\end{equation}

\end{defn}

\subsubsection{Interpretation under Satake isomorphism} 
Let $T \subset G$ be a maximal split torus. We have the Satake isomorphism
\[
\Cal{S}\co \Cal{H}_{G,K}\xrightarrow{\sim} \ol{\Q}_{\ell}[X_*(T)]^W
\]
where $W$ is the Weyl group of $G$ relative to $T$. We also have the Bernstein isomorphism 
\[
B\co Z(\Cal{H}_{G,J})\xrightarrow{\sim} \Cal{H}_{G,K}.
\]

We can define the base change homomorphism on the Satake side as follows. We define the \emph{norm homomorphism} 
\[
N \co \ol{\Q}_{\ell}[X_*(T_E)]^{W} \rightarrow \ol{\Q}_{\ell}[X_*(T_F)]^{W}
\]
to be that induced by the norm $T_E  \rightarrow T_F$. Since we are working in the split setting, this simply corresponds to multiplication by $r$ on $X_*(T_E) \xrightarrow{\sim} \Z^n$. 

Then $b \co \Cal{H}_{G(E), J} \rightarrow \Cal{H}_{G(F),J} $ is determined by the commutativity of the following diagram (\cite[\S 3.2]{Haines09})
\begin{equation}\label{base change satake}
\begin{tikzcd}
Z(\Cal{H}_{G(E), J}) \ar[r, "b"]  \ar[d, "- *_J \mbb{I}_K", "\sim"']  & Z(\Cal{H}_{G(F), J}) \ar[d, "- *_J \mbb{I}_K", "\sim"'] \\
\Cal{H}_{G(E), K} \ar[r, "b"] \ar[d, "S", "\sim"'] & \Cal{H}_{G(F), K} \ar[d, "S", "\sim"']\\
\ol{\Q}_{\ell}[X_*(T_E)]^{W} \ar[r,"N"]	 &  \ol{\Q}_{\ell}[X_*(T_F)	]^{W(F)}
\end{tikzcd}
\end{equation}	

For more on the base change homomorphism, see \cite[\S 3]{Haines09}.


\subsection{Geometrization of the Satake transform}\label{subsec: satake transform}
In this section we will recall a geometric interpretation of the Satake transform.

\subsubsection{The classical Satake transform} We first review the Satake transform \cite{Gro98}. Let $\Cal{H}_{G,K}$ be the spherical Hecke algebra of $G(F)$ with respect to a hyperspecial maximal compact subgroup $K$. The Hecke algebra for $T$ may be identified as $\Cal{H}_T =  \ol{\Q}_{\ell}[X_*(T)]$. 

We choose a Borel subgroup $B$ containing $T$ and let $N$ be its unipotent radical. There is a Satake transform $\Cal{S} \co \Cal{H}_{G,K} \rightarrow \Cal{H}_T$ given by 
\[
\Cal{S}f(t) =  \delta(t)^{1/2}  \int_{N} f(t x) dx,
\]
where the Haar measure $dx$ is normalized to assign volume 1 to $N(\Cal{O}_t)$.

\begin{thm}[Satake]\label{Satake transform} The Satake transform gives a ring isomorphism 
\[
\Cal{S} \co \Cal{H}_{G,K} \cong  \Cal{H}_T^W  \cong   R(\wh{G}) ,
\]
where $R(\wh{G})$ is the representation ring of $\wh{G}$ with $\ol{\Q}_{\ell}$-coefficients. 
\end{thm}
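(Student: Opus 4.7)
The plan is to prove the two claimed isomorphisms separately, treating $\Cal{H}_T^W \cong R(\wh{G})$ via character theory and $\Cal{S}\co \Cal{H}_{G,K}\xrightarrow{\sim} \Cal{H}_T^W$ via a direct computation with the Iwasawa decomposition. For the first isomorphism I would recall that for the Langlands dual group we have $X^*(\wh{T}) = X_*(T)$, and that a finite-dimensional representation of a connected reductive group is determined by its character on a maximal torus. This gives $R(\wh{G})\otimes_\Z\ol{\Q}_\ell \cong \ol{\Q}_\ell[X_*(T)]^W = \Cal{H}_T^W$ by sending a representation to its character on $\wh{T}$. Highest weight theory provides a basis $\{[V_\mu]\}_{\mu\in X_*(T)_+}$ of $R(\wh{G})$ corresponding to the $W$-symmetrized monomials $\chi_\mu \in \Cal{H}_T^W$.

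Next I would verify that $\Cal{S}$ is a ring homomorphism landing in $\Cal{H}_T^W$. The Weyl invariance is the classical content and relies on the Iwasawa decomposition $G(F) = B(F)K$ together with the observation that the factor $\delta(t)^{1/2}$ exactly corrects for the modular character of $B(F)$ so that conjugation by Weyl representatives preserves $\Cal{S}(f)$ on $T(F)$. Multiplicativity follows by Fubini applied to the double integral defining convolution, using that $K$-bi-invariance allows one to integrate first over $N(F)$ in each factor.

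The main obstacle is showing that $\Cal{S}$ is actually an isomorphism. For this I would use the bases $\{f_\mu\}$ of $\Cal{H}_{G,K}$ (from Definition \ref{defn: hecke function double coset}) and $\{\chi_\mu\}$ of $\Cal{H}_T^W$, both indexed by $X_*(T)_+$, and prove that $\Cal{S}$ is upper triangular with nonzero diagonal with respect to the dominance order, namely
\[
\Cal{S}(f_\mu) = q^{\langle \rho, \mu\rangle} \chi_\mu + \sum_{\nu < \mu} a_{\mu,\nu}\chi_\nu.
\]
Concretely this requires computing the measure of $Kt^\mu K \cap N(F) t^\nu K$, which amounts to counting $N(F)$-orbits of type $\nu$ inside the Schubert cell $\Gr_G^{\leq \mu}$. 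The essential input is that such intersections are nonempty precisely when $\nu \leq \mu$ in the dominance order, which is the combinatorics underlying the theory of Mirkovi\'c--Vilonen cycles. Triangularity with nonzero diagonal then yields that $\Cal{S}$ is an isomorphism onto $\Cal{H}_T^W$, and combined with the first step gives $\Cal{H}_{G,K}\cong R(\wh{G})$ as desired.

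A cleaner alternative, closer in spirit to the geometric viewpoint taken in the rest of the paper, would be to invoke the geometric Satake equivalence directly: by \cite{MV07}, the convolution category $\mrm{Perv}_{G(\Cal{O})}(\Gr_G)$ is tensor equivalent to $\mrm{Rep}(\wh{G})$, and passing to trace functions $\mathcal{F}\mapsto f_\mathcal{F}$ sends this to $\Cal{H}_{G,K}$ multiplicatively (since convolution of sheaves corresponds to convolution of functions under the sheaf-function dictionary). The image of $\mrm{Sat}_{\Gr_G}(\mu)$ is then $\psi_\mu$, which under $\Cal{S}$ is the $W$-symmetrization of $t^\mu$ up to lower order terms by the stalk formula in Lemma \ref{lem: spherical hecke formula}, recovering the same triangularity conclusion without an explicit harmonic-analytic calculation.
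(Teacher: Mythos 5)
The paper does not actually prove this statement: Theorem \ref{Satake transform} is the classical Satake isomorphism, quoted with a pointer to \cite{Gro98}, so there is no internal argument to compare against. Your outline reconstructs the standard proof from that reference: identify $R(\wh{G})\otimes_{\Z}\ol{\Q}_{\ell}$ with $\ol{\Q}_{\ell}[X_*(T)]^W$ by restricting characters to $\wh{T}$, check that the constant-term map is an algebra homomorphism using the Iwasawa decomposition and Fubini, and get bijectivity from upper triangularity of $\Cal{S}(f_{\mu})$ in the dominance order with nonzero diagonal entry (lower sets for the dominance order on dominant coweights being finite, triangularity with invertible diagonal suffices). In substance this is correct and is exactly the argument the paper is implicitly relying on.

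The one step that is under-justified as written is $W$-invariance of the image. Saying that $\delta^{1/2}$ ``exactly corrects for the modular character so that conjugation by Weyl representatives preserves $\Cal{S}f$'' is not by itself a proof: the integral is over $N(F)$, and conjugating by a representative of $w$ replaces $N$ by $wNw^{-1}$, which the normalization alone does not handle. The classical treatments either reduce to a rank-one ($\SL_2$) computation for each simple reflection, or identify $\delta(t)^{1/2}\int_N f(tn)\,dn$ for regular $t$ with an orbital integral of $f$ multiplied by a manifestly $W$-invariant discriminant factor and conclude by density; one of these should be supplied. Your alternative geometric route is fine in outline, but note that to identify the function-level map induced by geometric Satake with the integral transform $\Cal{S}$ you need the weight-functor computation on semi-infinite orbits, i.e.\ precisely Lemma \ref{geometrize satake transform} together with Theorem \ref{thm: mv thm 1}; with those inputs, $W$-invariance is automatic because characters of $\wh{G}$-representations are $W$-invariant.
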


For $\lambda \in X_*(T)$, write  $t^{\lambda}$ for the corresponding element of $X_*(T)$. 
Viewing $\Cal{S}f \in \Cal{H}_T \cong  \ol{\Q}_{\ell}[X_*(T)]$ as functions on $X_*(T)$, we may write 
\begin{equation}\label{eq: sat const term}
\Cal{S}f(t^{\lambda}) =  \delta(t^{\lambda})^{1/2}  \int_{N} f(t^{\lambda} x) dx.
\end{equation}

\subsubsection{Interpretation via semi-infinite orbits} We will interpret the function \eqref{eq: sat const term} geometrically, as the trace function associated to a certain subscheme $S_{\lambda}$, studied by Mirkovic-Vilonen in \cite[\S 3]{MV07}, in the sense that if $f_{\Cal{F}}$ is the trace function associated to $\Cal{F}$, then 
\[
\Cal{S}f_{\Cal{F}}(t^{\lambda}) = q^{-\rho(\lambda)} \Tr(\Frob, R\Gamma_c (S_{\lambda} \otimes_k \ol{k}, \Cal{F}))
\]
where $\rho$ is the usual half sum of the positive roots. 

Following the notation of \cite{MV07}, for $\lambda \in X_*(T)$ we let $L_{\lambda} =  t^{\lambda} G(\Cal{O}) $ denote the image of $\lambda$ in the affine Grassmannian $\Gr_{G}$, and $S_{\lambda}$ be its orbit under $N_F$.

\begin{lemma}\label{geometrize satake transform} Let $\Cal{F}$ be a sheaf on $\Gr_G$ defined over $k$, and $f_{\Cal{F}}$ its associated trace function (cf. \eqref{eq: trace function}). Then we have 
\[
\Cal{S}f_{\Cal{F}}(t^{\lambda}) = q^{-\rho( \lambda)} \Tr(\Frob, R\Gamma_c (S_{\lambda} \otimes_k \ol{k}, \Cal{F})).
\]
\end{lemma}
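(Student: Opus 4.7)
My plan is a direct computation: unfold the definition of the Satake transform, change variables to rewrite the integral as a sum over $S_\lambda(k)$, and finish with the Grothendieck--Lefschetz trace formula. The argument is the standard one underlying Mirkovi\'{c}--Vilonen's interpretation of the Satake transform via semi-infinite orbits; essentially all the content is careful bookkeeping of Haar-measure normalizations.

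First I would note $\delta(t^\lambda)^{1/2} = q^{-\rho(\lambda)}$, which follows since $G$ is split and $|\alpha(t^\lambda)|_F = q^{-\langle\alpha,\lambda\rangle}$ for each positive root $\alpha$. Next I would make the substitution $n = t^{-\lambda} m t^\lambda$ in $\int_N f_{\Cal{F}}(t^\lambda n)\,dn$, so that $t^\lambda n = m t^\lambda$. The Jacobian of this conjugation on $N$, computed root subgroup by root subgroup using that $\Ad(t^{-\lambda})$ acts on $N_\alpha \cong F$ by multiplication by $\alpha(t^{-\lambda})$, equals $q^{2\rho(\lambda)}$. Combined with the $\delta^{1/2}$ factor, this converts the definition of $\Cal{S} f_{\Cal{F}}(t^\lambda)$ into $q^{\rho(\lambda)} \int_N f_{\Cal{F}}(m t^\lambda)\,dm$.

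Then I would identify $f_{\Cal{F}}(m t^\lambda) = f_{\Cal{F}}(m \cdot L_\lambda)$, viewing the function $m \mapsto f_{\Cal{F}}(m t^\lambda)$ as pulled back via the orbit map $N(F) \to S_\lambda$, $m \mapsto m \cdot L_\lambda$. A direct check shows that $\Stab_N(L_\lambda) = N(F) \cap t^\lambda G(\Cal{O}) t^{-\lambda} = t^\lambda N(\Cal{O}) t^{-\lambda}$, which by another root-by-root computation has volume $q^{-2\rho(\lambda)}$ under the chosen normalization. The quotient integral formula then yields
\[
\int_N f_{\Cal{F}}(m t^\lambda)\,dm = q^{-2\rho(\lambda)} \sum_{x \in S_\lambda(k)} f_{\Cal{F}}(x),
\]
so the two appearances of $q^{2\rho(\lambda)}$ cancel and leave $\Cal{S} f_{\Cal{F}}(t^\lambda) = q^{-\rho(\lambda)} \sum_{x \in S_\lambda(k)} f_{\Cal{F}}(x)$.

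To conclude, since $\Cal{F}$ is a constructible sheaf on $\Gr_G$, its support meets $S_\lambda$ in a finite-type locally closed subscheme, so Grothendieck--Lefschetz identifies the sum with $\Tr(\Frob, R\Gamma_c(S_\lambda \otimes_k \ol{k}, \Cal{F}))$. The main obstacle is simply keeping the signs/directions of the two $q^{2\rho(\lambda)}$ factors consistent so that they cancel rather than reinforce, and verifying that the modular character, Jacobian, and stabilizer volume are mutually compatible; the geometry itself is straightforward.
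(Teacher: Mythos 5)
Your proposal is correct and takes essentially the same route as the paper: identify $S_{\lambda}(k)$ with a quotient of $N(F)$, convert the Satake integral into a point count, and apply the Grothendieck--Lefschetz trace formula while tracking the modulus character. The paper's version is slightly leaner in the bookkeeping -- it parametrizes $S_{\lambda}(k)$ by $N(F)/N(\Cal{O})$ via $m \mapsto t^{\lambda} m$, so the Jacobian $q^{2\rho(\lambda)}$ and the stabilizer volume $q^{-2\rho(\lambda)}$ that you compute (and correctly cancel) never appear.
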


\begin{proof}We study the rational points $S_{\lambda}(k)$, in preparation for an application of the Grothendieck-Lefschetz trace formula. The stabilizer of $t^{\lambda}$ is 
\begin{align*}
\Stab_{N(F)}(L_{\lambda}) &= \{ n\in N(F) \co nt^{\lambda} G(\Cal{O})  = t^{\lambda} G(\Cal{O})\}   \\
&= \{ n\in N(F) \co t^{-\lambda} n t^{\lambda} \in G(\Cal{O})\}.
\end{align*}
This says that the map $n \mapsto t^{-\lambda} n t^{\lambda}$ defines a bijection between $N(\Cal{O})$ and $\Stab_{N(F)}(L_{\lambda})$. Therefore the map $n \mapsto t^{\lambda} n$ defines a bijection between $S_{\lambda} = [N(F)/\Stab_{N(F)}(L_{\lambda})] \cdot L_{\lambda}$ and $N(F)/N(\Cal{O})$. An application of the Grothendieck-Lefschetz trace formula then yields
\[
\Tr(\Frob, R\Gamma_c (S_{\lambda} \otimes_k \ol{k}, \Cal{F})) =  
\sum_{n \in S_{\lambda}(k)} f_{\Cal{F}}(n) = \sum_{n \in N(F)/N(\Cal{O})} f_{\Cal{F}}(t^{\lambda} \cdot n) 
\]
which is exactly what was claimed upon recalling that $\delta(t^{\lambda}) = q^{\rho(\lambda)}$. 
\end{proof}

\begin{thm}[Mirkovic-Vilonen]\label{thm: mv thm 1}
There is a natural equivalence of functors
\[
H^*(-) \cong \bigoplus_{\lambda \in X_*(T)} H_c^{2\rho(\lambda)}(S_{\lambda}, -) \co P_{G_{\Cal{O}}} (\Gr_G, \ol{\Q}_{\ell}) \rightarrow \mrm{Mod}_{\ol{\Q_{\ell}}}.
\]
\end{thm}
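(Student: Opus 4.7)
My plan is to prove this by combining three ingredients: the stratification of $\Gr_G$ by semi-infinite orbits, a weight/dimension estimate guaranteeing that each $H_c^*(S_\lambda, -)$ is concentrated in a single degree, and a spectral sequence argument that forces the total cohomology to split as a direct sum.

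First, I would recall that the Iwasawa decomposition $G(F) = \bigsqcup_\lambda N(F) t^\lambda G(\Cal{O})$ gives a partition $\Gr_G = \bigsqcup_{\lambda \in X_*(T)} S_\lambda$, with the $S_\lambda$ locally closed, ind-finite dimensional, and satisfying a closure relation controlled by the dominance order. In parallel, one has the orbits $T_\mu$ of the opposite unipotent $N^-(F)$, and $S_\lambda \cap T_\lambda = \{L_\lambda\}$ is a point. For any $\Cal{F} \in P_{G_\Cal{O}}(\Gr_G,\ol{\Q}_\ell)$, the natural map
\[
\bigoplus_\lambda H_c^*(S_\lambda, \Cal{F}) \longrightarrow H^*(\Gr_G, \Cal{F})
\]
arising from the stratification is the candidate equivalence. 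To make sense of this, since $\Gr_G$ is ind-proper and $\Cal{F}$ is supported on a finite-type Schubert variety $\Gr_G^{\leq\mu}$, I would replace $\Gr_G$ by $\Gr_G^{\leq\mu}$ throughout; only finitely many strata $S_\lambda$ meet this support.

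Second, I would establish the key \emph{concentration lemma}: for $\Cal{F}$ perverse and $G_\Cal{O}$-equivariant, $H_c^i(S_\lambda, \Cal{F}) = 0$ unless $i = 2\rho(\lambda)$. Reducing to simple objects by exactness, it suffices to check this for $\Cal{F} = IC_\mu$. The crucial input is the dimension estimate $\dim(\Gr_G^{\leq\mu} \cap S_\lambda) \leq \rho(\lambda+\mu)$, proved by constructing explicit MV cycles or by a semi-infinite Bruhat cell argument; combined with the parity vanishing and purity of $IC_\mu$, this gives an upper bound $H_c^i(S_\lambda, IC_\mu) = 0$ for $i > 2\rho(\lambda)$. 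An analogous estimate for $T_\lambda$ gives the complementary vanishing for $H^i$ for $i < 2\rho(\lambda)$, and then a Braden-style hyperbolic localization (identifying $H_c^*(S_\lambda, -)$ with $H^*(T_\lambda, -)$ up to a shift, via the $\G_m$-action coming from $2\rho$) forces concentration in the single degree $2\rho(\lambda)$. Alternatively one can run a weight-theoretic argument: $H_c^*(S_\lambda, IC_\mu)$ is pure of weight $2\rho(\lambda)$ while the naive bound from the dimension estimate gives weight $\leq 2\rho(\lambda)$, forcing all other degrees to vanish.

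Third, with the concentration in hand, the spectral sequence
\[
E_1^{p,q} = \bigoplus_{\rho(\lambda) = -p/2} H_c^{p+q}(S_\lambda, \Cal{F}) \Longrightarrow H^{p+q}(\Gr_G, \Cal{F})
\]
associated to the stratification by codimension (opened up by successively attaching strata via the distinguished triangles $j_! j^* \to \mathrm{id} \to i_* i^*$) degenerates at $E_1$: the surviving groups live on the single diagonal $p + q = 0$ after reindexing by $\rho(\lambda)$, so no non-trivial differentials can occur. This produces the claimed canonical isomorphism, and functoriality in $\Cal{F}$ is automatic from the construction. Exactness of each $H_c^{2\rho(\lambda)}(S_\lambda, -)$ on the perverse heart — which is the statement that the semi-infinite restrictions are $t$-exact functors — follows from the same concentration argument applied to an arbitrary perverse $\Cal{F}$, and this delivers the equivalence of functors rather than just an isomorphism at each object.

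The main obstacle is the concentration lemma, i.e.\ the dimension estimate $\dim(\Gr_G^{\leq\mu} \cap S_\lambda) \leq \rho(\lambda+\mu)$ together with the argument that it forces single-degree vanishing. This is the geometric heart of geometric Satake and cannot be bypassed; everything else is formal manipulation of the resulting stratification. In the present context, one may simply cite \cite{MV07} Theorem 3.5 and its sequel for the dimension bound, reducing this step to an invocation, and then the remaining spectral sequence / weight argument is short.
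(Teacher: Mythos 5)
Your outline is essentially the standard Mirkovic--Vilonen argument; note, however, that the paper does not reprove this statement at all -- it is quoted from the literature, with the required generality (ℓ-adic sheaves over a finite field, in the normalization of \cite{RZ15}) supplied by \cite{HRpre}, Theorem 3.16, and the historical sources being \cite{MV07}, Theorem 3.6, and \cite{Zhu15}, Theorem 5.3.9. So the comparison is: you re-derive what the paper invokes, and your sketch follows the same chain of ideas as those references (Iwasawa stratification by the $S_\lambda$, the dimension estimate $\dim(\Gr_G^{\leq\mu}\cap S_\lambda)\leq \rho(\lambda+\mu)$, concentration of $H_c^*(S_\lambda,-)$ in degree $2\rho(\lambda)$ via the dual estimate for the opposite orbits $T_\lambda$ and hyperbolic localization, then degeneration of the stratification spectral sequence).

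Two points in your write-up deserve tightening if you want it to stand as a proof rather than a pointer. First, there is no ``natural map $\bigoplus_\lambda H_c^*(S_\lambda,\Cal{F})\to H^*(\Gr_G,\Cal{F})$ arising from the stratification'': a locally closed stratum only gives maps in one direction after passing through its closure, and the canonical direct-sum decomposition (as opposed to a mere filtration) is obtained by playing the filtration by the $\ol{S}_\lambda$ against the canonical isomorphism $H_c^{2\rho(\lambda)}(S_\lambda,\Cal{F})\cong H^{2\rho(\lambda)}_{T_\lambda}(\Gr_G,\Cal{F})$ coming from Braden's theorem -- the ingredient you mention but do not actually use to split the filtration. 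Second, the degeneration argument is cleaner than ``everything lives on one diagonal after reindexing'': since each $E_1$-term is concentrated in the even total degree $2\rho(\lambda)$ of its stratum and every differential raises total degree by one, source and target can never both be nonzero, so the spectral sequence degenerates by parity. Finally, since this paper needs the statement for $\ol{\Q}_\ell$-sheaves over $\F_q$ rather than in the topological setting of \cite{MV07}, the weight/purity and hyperbolic-localization steps should be run as in \cite{HRpre} (or \cite{Zhu15}), which is exactly why the paper cites those references instead of \cite{MV07} directly.
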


\begin{proof}
In the required generality, this is a consequence of \cite[Theorem 3.16]{HRpre}. For some historical perspective, this is a result of Mirkovic-Vilonen for the affine Grassmannian \emph{over $\CC$} \cite[Theorem 3.6]{MV07}. See also \cite[Theorem 5.3.9]{Zhu15} and the references indicated in \cite[Remark 5.3.10]{Zhu15}.
\end{proof}

\begin{prop}\label{cohomology of convolution}
We have
\[
H^*_c(S_{\lambda}, \Cal{F}_{\mu} * \Cal{F}_{\mu'})   \cong  \bigoplus_{\lambda_1+\lambda_2 = \lambda} H^*_c(S_{\lambda_1}, \Cal{F}_{\mu}) \otimes H^*_c(S_{\lambda_2}, \Cal{F}_{\mu'}) 
\]
\end{prop}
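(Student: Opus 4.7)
The starting point is the standard convolution diagram
\[
\Gr_G \times \Gr_G \xleftarrow{p} G(F) \times \Gr_G \xrightarrow{q} \Gr_G \wt{\times} \Gr_G \xrightarrow{m} \Gr_G
\]
where $\Gr_G \wt{\times} \Gr_G := G(F) \times^{G(\Cal{O})} \Gr_G$ and $m$ is multiplication. By definition $\Cal{F}_\mu * \Cal{F}_{\mu'} = m_!(\Cal{F}_\mu \wt{\boxtimes} \Cal{F}_{\mu'})$, where $\Cal{F}_\mu \wt{\boxtimes} \Cal{F}_{\mu'}$ is the (well-defined, thanks to the $G(\Cal{O})$-equivariance of $\Cal{F}_{\mu'}$) twisted external product. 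Proper base change then gives
\[
H^*_c(S_\lambda, \Cal{F}_\mu * \Cal{F}_{\mu'}) \cong H^*_c(m^{-1}(S_\lambda), \Cal{F}_\mu \wt{\boxtimes} \Cal{F}_{\mu'}).
\]

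The next step is to analyze $m^{-1}(S_\lambda)$ by way of its first-projection image in $\Gr_G$. Since $S_\lambda$ is $N(F)$-stable and the first-factor $N(F)$-action on $\Gr_G \wt{\times} \Gr_G$ intertwines with the diagonal action through $m$, the preimage $m^{-1}(S_\lambda)$ is $N(F)$-stable. Projecting to the first factor and stratifying $\Gr_G = \bigsqcup_{\lambda_1} S_{\lambda_1}$, I would check that over $S_{\lambda_1}$ the fiber is isomorphic to $S_{\lambda - \lambda_1}$. The key calculation is that, at the basepoint $t^{\lambda_1} \in S_{\lambda_1}$, the fiber consists of those $[g_2 G(\Cal{O})]$ with $t^{\lambda_1} g_2 \in S_\lambda$, i.e.\ $g_2 \in t^{-\lambda_1} S_\lambda = S_{\lambda - \lambda_1}$ (the latter identity using that $T$ normalizes $N$ and permutes the $L_\nu$). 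Thus
\[
m^{-1}(S_\lambda) = \bigsqcup_{\lambda_1 + \lambda_2 = \lambda} S_{\lambda_1} \wt{\times} S_{\lambda_2},
\]
as a stratification into locally closed subsets.

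Once this decomposition is in hand, Künneth for the twisted product (available because $\Cal{F}_{\mu'}|_{S_{\lambda_2}}$ descends via the local triviality of $G(F)|_{S_{\lambda_1}} \to S_{\lambda_1}$) yields
\[
H^*_c\bigl(S_{\lambda_1} \wt{\times} S_{\lambda_2},\, \Cal{F}_\mu \wt{\boxtimes} \Cal{F}_{\mu'}\bigr) \cong H^*_c(S_{\lambda_1}, \Cal{F}_\mu) \otimes H^*_c(S_{\lambda_2}, \Cal{F}_{\mu'}).
\]
The passage from a stratification to a direct sum on cohomology is where potential trouble usually lives (one normally only gets a spectral sequence). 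The saving grace here is Theorem~\ref{thm: mv thm 1}, which says each $H^*_c(S_\nu, \Cal{F}_\nu)$ is concentrated in the single degree $2\rho(\nu)$. A direct dimension count gives $\dim(S_{\lambda_1} \wt{\times} S_{\lambda_2}) = \rho(\lambda_1) + \rho(\lambda_2) = \rho(\lambda)$ worth of top-degree compactly supported cohomology, matching $2\rho(\lambda)$ on the left; all other terms in the stratification spectral sequence vanish for degree reasons, so the spectral sequence collapses and the sum of strata contributions equals the total. Summing over $\lambda_1 + \lambda_2 = \lambda$ then delivers the claim.

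The main obstacle, as indicated, is the orbit analysis of $m^{-1}(S_\lambda)$ --- in particular verifying that the $N(F)$-orbit decomposition of $m^{-1}(S_\lambda)$ really does biject with pairs $(\lambda_1, \lambda_2)$ with $\lambda_1 + \lambda_2 = \lambda$, and that the natural map from the stratum to $S_{\lambda_1} \wt{\times} S_{\lambda_2}$ is an isomorphism (not just a bijection on points). Once this is established, the cohomological concentration from Mirkovic--Vilonen makes the rest of the argument formal.
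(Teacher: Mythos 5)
Your argument is correct in outline, but it is worth noting that the paper does not actually prove this proposition directly: it simply cites \cite[Theorem 3.16]{HRpre} (with historical pointers to \cite{MV07} Proposition 6.4 and \cite{Zhu15} Proposition 5.3.14). What you have written is, in substance, a reconstruction of the proof sketched in those references: base change along the convolution map, the decomposition $m^{-1}(S_{\lambda}) = \bigsqcup_{\lambda_1+\lambda_2=\lambda} S_{\lambda_1}\wt{\times}S_{\lambda_2}$ (your fiber computation $t^{-\lambda_1}S_{\lambda} = S_{\lambda-\lambda_1}$ is the right one), and degeneration of the stratification spectral sequence because every stratum contributes only in the single degree $2\rho(\lambda_1)+2\rho(\lambda_2)=2\rho(\lambda)$ by Theorem \ref{thm: mv thm 1} — this use of the weight-functor concentration is not circular, since in the paper both statements are independently deduced from the external reference. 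Two caveats on where the real work sits: the concentration in one degree is not a "dimension count" (dimension estimates only bound the top degree; the vanishing below is the content of the MV theorem you invoke, so phrase it that way), and the Künneth step for the twisted product over a stratum requires knowing that the fiberwise compactly supported cohomology forms a \emph{constant} local system on $S_{\lambda_1}$ and that the stratum decomposition holds scheme-theoretically, not just on points — in the étale setting over $\F_q$ this is precisely the technical content supplied by the cited Haines--Richarz theorem, which is why the paper defers to it rather than rerunning the topological argument. So your route buys a self-contained proof modulo that local-constancy/Künneth input, while the paper's route simply outsources the whole statement to a reference proved in the required generality.
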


\begin{proof}

In the required generality, this is a consequence of \cite[Theorem 3.16]{HRpre}. For some historical perspective, this statement is proved implicitly in \cite[Proposition 6.4]{MV07} for the affine Grassmannian over $\CC$. In fact, in view of Theorem \ref{thm: mv thm 1} it is equivalent to \cite[Proposition 6.4]{MV07}. It is formulated in the general setting in \cite[Proposition 5.3.14]{Zhu15}, and a proof is sketched there.

\end{proof}

\subsection{Base change for spherical Hecke algebras}\label{subsec: BC spherical HA}

For $r \in \N$, let $k_r$ be the (unique) extension of $k$ of degree $r$ and $F_r$ denote the unique unramified field extension of $F$ of degree $r$. For each $\mu \in X_*(T)$, we let $\Sat_{\Gr_G, r}(\mu)$ be the associated perverse sheaf on $\Gr_{G,k_r}$ and we let $\psi_{r, \mu}$ be the trace function on $\Gr_{G}(k_r)$ associated to $\Sat_{\Gr_G, r}(\mu)$. We will give a geometric interpretation for $b(\psi_{r, \mu})$. 

\subsubsection{Weil restriction} It will be useful to adopt a different perspective on the Hecke algebra $\Cal{H}_{G(F_r), K_r}$, where $K_r$ denotes the maximal compact subgroup of $G(F_r)$ corresponding to our chosen hyperspecial vertex. As is usual, we can think of $\Cal{H}_{G(F_r), K_r}$ as functions on $\Gr_{G}(k_r) \rightarrow \ol{\Q}_{\ell}$ which are invariant with respect to the left $K_r$-action. However, using the identification 
\[
\Gr_{G}(k_r) = ( \Res_{k_r/k} \Gr_{G, k_r}) (k)
\]
we can instead consider $\Cal{H}_{G(F_r), K_r}$ as functions on $ ( \Res_{k_r/k} \Gr_{G, k_r}) (k)$. Let $\tau$ be the cyclic permutation on $\Gr_{G}^r$ given by 
\[
\tau(y_1, \ldots, y_r) = (y_r, y_1, \ldots, y_{r-1}).
\]
By the definition of Weil restriction, we have a canonical bijection
\[
\Gr_{G}(k_r) \xrightarrow{\sim} \Fix(\Frob \circ \tau, \Gr_{G}^r) \quad \text{sending} \quad 
y \mapsto  (y, \Frob(y), \ldots, \Frob^{r-1}(y)).
\]

\begin{defn}For each $\mu \in X_*(T)$, we let $\Cal{F}_{\mu} := \Sat_{\Gr_G}(\mu)$ be the perverse sheaf on $\Gr_{G}$. Consider the perverse sheaf 
\[
\Cal{F}_{\mu}^{(1)} \boxtimes \ldots \boxtimes \Cal{F}_{\mu}^{(r)} \in D_c^b(\Gr_{G}^r)
\]
where $\Cal{F}_{\mu}^{(i)} \cong \Cal{F}_{\mu}$; the superscripts are just labellings. The endomorphism $\tau$ on $\Gr_{G}^r$ lifts to an endomorphism $\wt{\tau}$ of $\Cal{F}_{\mu}^{(1)} \boxtimes \ldots \boxtimes \Cal{F}_{\mu}^{(r)}$ in an obvious way. Define a function
\[
\zeta_{r,\mu} \co \Fix(\Frob \circ \tau, \Gr_{G}^r) \rightarrow \ol{\Q}_{\ell}
\]
by 
\[
\zeta_{r,\mu}(y)   := \Tr(\Frob \circ \wt{\tau}, (\Cal{F}_{\mu}^{(1)} \boxtimes \ldots \boxtimes \Cal{F}_{\mu}^{(r)})_{\ol{y}}).
\] 
\end{defn}

\begin{prop}
The $\zeta_{r,\mu}$ form a basis for $\Cal{H}_{G(F_r),K_r}$. 
\end{prop}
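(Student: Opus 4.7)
The plan is to identify $\zeta_{r,\mu}$ with the trace function $\psi_{r,\mu}$ of $\mrm{Sat}_{\Gr_G, r}(\mu)$ on $\Gr_G(k_r)$ (as $\mu$ ranges over dominant coweights), and then to invoke the standard fact that $\{\psi_{r,\mu}\}$ forms a basis of $\Cal{H}_{G(F_r), K_r}$.

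For the identification, the key input is the Weil restriction isomorphism
\[
(\Res_{k_r/k} \Gr_{G,k_r}) \otimes_k \bar k \;\cong\; \Gr_G^r
\]
under which the geometric Frobenius of the left-hand side corresponds to $\Frob \circ \tau$; this is why the bijection
$\Gr_G(k_r) \xrightarrow{\sim} \Fix(\Frob \circ \tau, \Gr_G^r)$ in the paper takes the form $\wt y \mapsto (\wt y, \Frob \wt y, \ldots, \Frob^{r-1} \wt y)$. Under this isomorphism, the perverse sheaf $\mrm{Sat}_{\Gr_G, r}(\mu)$ descends from an object whose geometric realization is $\Cal{F}_\mu^{(1)} \boxtimes \cdots \boxtimes \Cal{F}_\mu^{(r)}$, and the descent datum (Frobenius structure) is precisely $\Frob \circ \wt\tau$. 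Hence the function-sheaf dictionary, applied to the Weil-restricted sheaf, identifies its trace function over $(\Res_{k_r/k} \Gr_{G,k_r})(k) = \Gr_G(k_r)$ with $\zeta_{r,\mu}$ on one hand and with $\psi_{r,\mu}$ on the other. (A purely linear-algebraic sanity check: for any endomorphism $F$ of a vector space $V$, the cyclic trace identity $\Tr(F^{\otimes r} \circ \tau,\, V^{\otimes r}) = \Tr(F^r,\, V)$ recovers $\zeta_{r,\mu}(\wt y) = \Tr(\Frob^r, (\Cal{F}_\mu)_{\bar{\wt y}}) = \psi_{r,\mu}(\wt y)$.)

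For the second step, one recalls that $\Sat_{\Gr_G}(\mu)$ is the IC sheaf of the Schubert variety $\Gr_G^{\leq \mu}$, reducing on the open stratum $\Gr_G^{=\mu}$ to a shifted rank-one constant sheaf. Consequently $\psi_{r,\mu}$ is supported on the closure of $K_r t^\mu K_r$ and takes a nonzero value at $K_r t^\mu K_r$. The change-of-basis matrix from $\{\psi_{r,\mu}\}$ to the indicator-function basis $\{\mbb{I}_{K_r t^\mu K_r}\}$ of $\Cal{H}_{G(F_r),K_r}$ is therefore triangular with nonvanishing diagonal in the Bruhat order, hence invertible.

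The only point requiring care is the descent step: namely, checking that the pair $(\Cal{F}_\mu^{\boxtimes r},\; \Frob \circ \wt\tau)$ really is the Frobenius-equivariant sheaf obtained by base-changing $\mrm{Sat}_{\Gr_G,r}(\mu)$ along $\Res_{k_r/k}\Gr_{G,k_r} \otimes_k \bar k \cong \Gr_G^r$. This is a standard but somewhat notation-heavy descent computation; I would phrase it simply as the observation that $(\Res_{k_r/k} Y) \otimes_k \bar k \cong Y_{\bar k}^r$ has geometric Frobenius $\Frob_k \times \tau$ acting as the $r$-th root of the Frobenius of $Y$ over $k_r$, so that $(\Frob_k \times \wt\tau)^r$ acts on stalks as $\Frob_{k_r}^{\otimes r}$, reproducing exactly the usual Frobenius on stalks of the pullback of $\mrm{Sat}_{\Gr_G,r}(\mu)$.
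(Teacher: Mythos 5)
Your argument is correct, and its decisive step is the same one the paper uses: the change-of-basis matrix from $\{\zeta_{r,\mu}\}$ (equivalently $\{\psi_{r,\mu}\}$) to the double-coset basis $\{f_\nu\}$ is triangular for the dominance order with nonvanishing diagonal, and since each lower set $\{\nu \leq \mu\}$ is finite this gives both independence and spanning. The difference is in the packaging: you front-load the identification $\zeta_{r,\mu}=\psi_{r,\mu}$ via the Weil-restriction description of $\Gr_G(k_r)$ and a stalkwise cyclic-trace computation, whereas the paper's proof of this proposition is a one-liner about $\zeta_{r,\mu}$ itself, and only later proves $\zeta_{r,\mu}=\psi_{r,\mu}$ (Lemma \ref{lemma2}) by comparing Satake transforms, with the cyclic-trace identity entering there through Lemma \ref{Saito-Shintani}. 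Your route is legitimate and in fact proves that later lemma for free, by a more direct (pointwise) method; but note the identification is not strictly needed for the basis statement, since the triangularity and the nonvanishing of the diagonal entry can be read off directly from the support of $\Cal{F}_\mu^{(1)}\boxtimes\cdots\boxtimes\Cal{F}_\mu^{(r)}$ on $(\Gr_G^{\leq\mu})^r$ together with $\Tr(\Frob\circ\wt{\tau})=\Tr(\Frob^r)\neq 0$ on the one-dimensional stalk over the open stratum. The one place to be careful in your version is the descent step you flag: one should check that the Frobenius structure on the external product really is $\Frob\circ\wt{\tau}$ under $(\Res_{k_r/k}\Gr_{G,k_r})\otimes_k \ol{k}\cong \Gr_{G,\ol{k}}^r$; your sketch of this is the standard computation and is fine.
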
 

\begin{proof}
This is well-known. It amounts to the fact that the change-of-basis matrix between the standard (double coset) basis of the Hecke algebra and the basis consisting of the $\zeta_{r,\mu}$ is upper-triangular for the Bruhat order. 
\end{proof}

\subsubsection{Convolution product} Recall that there is a convolution product $*$ on $\mrm{Perv}_{G(\Cal{O})} (\Gr_G)$ \cite[\S 5]{Zhu15}.

\begin{defn}\label{def: sph conv product} Consider $r$th convolution product 
\[
\Cal{F}_{\mu}^{*r} := \Cal{F}_{\mu}^{(1)} *  \ldots * \Cal{F}_{\mu}^{(r)} 
\]
as a perverse sheaf on $\Gr_{G}$. This has an automorphism $\kappa'$ of order $r$ given by the composition	
\[
 \Cal{F}_{\mu}^{(1)} *  \ldots * \Cal{F}_{\mu}^{(r)}  \xrightarrow{\kappa} \Cal{F}_{\mu}^{(r)}  *  \Cal{F}_{\mu}^{(1)} *  \ldots * \Cal{F}_{\mu}^{(r-1)}   \xrightarrow{\iota} \Cal{F}_{\mu}^{(1)} *  \ldots * \Cal{F}_{\mu}^{(r)}
\]
where 
\begin{itemize}
\item $\kappa$ is the cyclic permutation obtained from the commutativity constraint $\Cal{F} * \Cal{B} \cong \Cal{B} * \Cal{F}$ of Geometric Satake  \cite[Proposition 5.2.6]{Zhu15}, and 
\item $\iota$ is the tautological isomorphism $\Cal{F}_{\mu}^{(i\text{ mod }r) } \cong \Cal{F}_{\mu}^{(i+1\text{ mod }r ) 	}$ coming from the fact that all the $\Cal{F}_{\mu}^{(i)}$ are defined to be the same perverse sheaf. 
\end{itemize}
 
Define $\phi_{r, \mu} \co \Fix(\Frob , \Gr_g(\ol{k})) \rightarrow \ol{\Q}_{\ell} $ by
\[
\phi_{r,\mu}(y) =  \Tr(\Frob \circ \kappa', (\Cal{F}_{\mu}^{(1)} * \ldots * \Cal{F}_{\mu}^{(r)})_y).
\]
\end{defn}

\subsubsection{The base change identity} We now explain the relationship between these functions and the base change homomorphism.

\begin{prop}\label{prop1}
We have $b(\zeta_{r,\mu}) =  \phi_{r, \mu}$. 
\end{prop}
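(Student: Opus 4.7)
The strategy is to verify the identity after applying the Satake transform, using the diagram \eqref{base change satake} which characterizes the base change homomorphism $b$ on spherical Hecke algebras: under $\mathcal{S}$, $b$ corresponds to the norm homomorphism $N$, which (in the split setting) sends a finite formal combination $\sum a_\lambda e^\lambda \in \overline{\mathbb{Q}}_\ell[X_*(T_E)]^W$ to $\sum a_\lambda e^{r\lambda}$. So it suffices to show that $\mathcal{S}_F(\phi_{r,\mu}) = N(\mathcal{S}_E(\zeta_{r,\mu}))$ inside $\overline{\mathbb{Q}}_\ell[X_*(T)]^W$, where $\mathcal{S}_E$ denotes the Satake transform taken over the residue field $k_r$ of $E$.

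First I would compute $\mathcal{S}_E(\zeta_{r,\mu})$. A direct unwinding of the definition, together with the cyclic trace identity, shows that $\zeta_{r,\mu}(y) = \Tr(\Frob^r, \mathcal{F}_{\mu,y})$, so $\zeta_{r,\mu}$ is nothing but the trace function of $\mathcal{F}_\mu$ viewed as a sheaf over $k_r$. Applying Lemma \ref{geometrize satake transform} over $k_r$ and then $N$ gives
\[
N(\mathcal{S}_E(\zeta_{r,\mu}))(t^{r\lambda_0}) = q^{-r\rho(\lambda_0)} \Tr(\Frob^r, R\Gamma_c(S_{\lambda_0}, \mathcal{F}_\mu)),
\]
with $N(\mathcal{S}_E(\zeta_{r,\mu}))$ vanishing on cocharacters not in $rX_*(T)$.

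Next I would compute $\mathcal{S}_F(\phi_{r,\mu})$. A minor extension of Lemma \ref{geometrize satake transform}, allowing the trace function to incorporate the endomorphism $\kappa'$, yields
\[
\mathcal{S}_F(\phi_{r,\mu})(t^\lambda) = q^{-\rho(\lambda)} \Tr(\Frob \circ \kappa', R\Gamma_c(S_\lambda, \mathcal{F}_\mu^{*r})).
\]
Iterating Proposition \ref{cohomology of convolution} decomposes the cohomology as
\[
R\Gamma_c(S_\lambda, \mathcal{F}_\mu^{*r}) \cong \bigoplus_{\lambda_1 + \cdots + \lambda_r = \lambda} \bigotimes_{i=1}^r R\Gamma_c(S_{\lambda_i}, \mathcal{F}_\mu),
\]
and under this decomposition the operator $\kappa'$, being the composition of the commutativity constraint from Geometric Satake with the tautological identification $\iota$, cyclically permutes both the indexing tuples and the tensor factors, sending the $(\lambda_1, \ldots, \lambda_r)$-summand to the $(\lambda_r, \lambda_1, \ldots, \lambda_{r-1})$-summand. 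Consequently only diagonal tuples $\lambda_1 = \cdots = \lambda_r = \lambda_0$ contribute to $\Tr(\Frob \circ \kappa')$, forcing $\lambda = r\lambda_0$, and the cyclic trace identity on such a summand gives $\Tr(\Frob^r, R\Gamma_c(S_{\lambda_0}, \mathcal{F}_\mu))$. Matching this with the previous paragraph yields the desired equality after Satake.

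The main obstacle is justifying the explicit action of $\kappa'$ on the Mirkovic--Vilonen decomposition. This requires tracking the commutativity constraint from Geometric Satake through the iterated fusion decomposition of \cite{MV07}, and confirming that it acts by the naive cyclic shift (up to signs/weights that are compatible with the grading). This compatibility is built into the fact that the MV weight functor $\bigoplus_\lambda H^{2\rho(\lambda)}_c(S_\lambda,-)$ from Theorem \ref{thm: mv thm 1} is a symmetric monoidal fiber functor, so that the braiding from Satake corresponds at the level of the decomposition to the symmetric-monoidal permutation; nonetheless care is needed to verify the identification precisely, especially regarding the choice of Tate twists and the normalization ensuring $\kappa'$ has order exactly $r$.
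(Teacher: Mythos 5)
Your proof is correct and follows essentially the same route as the paper: reduce to an identity of Satake transforms via the diagram \eqref{base change satake}, geometrize the constant term using Lemma \ref{geometrize satake transform} and the (twisted) Lefschetz trace formula, decompose $R\Gamma_c(S_\lambda, \Cal{F}_\mu^{*r})$ by Proposition \ref{cohomology of convolution}, and observe that $\Frob\circ\kappa'$ permutes the off-diagonal summands freely, so only $\lambda \in r X_*(T)$ contributes and the diagonal terms match. The only cosmetic difference is that you convert both sides to $\Tr(\Frob^r,-)$ via the cyclic trace identity (Lemma \ref{Saito-Shintani}), thereby folding in the content of Lemma \ref{lemma2}, whereas the paper keeps $\Frob\circ\wt{\tau}$ on the $E$-side of this proposition (computing it on $S_\lambda^r$ with the box-product sheaf and K\"unneth) and defers that identification to the next lemma.
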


\begin{proof}
By Theorem \ref{Satake transform} it suffices to equate the Satake transforms of both sides. In other words, we must prove the following identity: 
\[
\int_{N(F)} b(\zeta_{r,\mu})(t^{\lambda} x) \, dx = \int_{N(F)} \phi_{r,\mu}(t^{\lambda} x) \, dx \text{ for all } \lambda \in X_*(T).
\]
By \eqref{base change satake}, this is equivalent to establishing the two equations 
\begin{equation}\label{eq: bc required eq 1}
\int_{N(F_{r})} \zeta_{r,\mu}(t^{\lambda} x_r) \, dx_r = \int_{N(F)} \phi_{r,\mu}(t^{r \lambda} x) \, dx
\end{equation}
\begin{equation}\label{eq: bc required eq 2}
\int_{N(F)} \phi_{r,\mu}(t^{\lambda} x) \, dx = 0 \quad \text{
 if $ \lambda \notin r \cdot X_*(T)$ }.
\end{equation}

To do this we use the Lefschetz trace formula\footnote{Note that this is applicable because $\Frob \circ \tau$ is the Frobenius for a twisted form of $Y$: see the discussion beginning in the last sentence of \cite[p. 651]{Ngo99}.}:
\begin{equation}\label{LTF sigma}
\sum_{y \in \Fix(\Frob \circ \tau, Y(\ol{k}))} \Tr(\Frob \circ \wt{\tau}, \Cal{F}_y) = \Tr(\Frob \circ \wt{\tau}, R  \Gamma_c(Y \otimes_k \ol{k}, \Cal{F})).
\end{equation}
\begin{equation}\label{LTF kappa}
\sum_{y \in \Fix(\Frob \circ \kappa', Y(\ol{k}))} \Tr(\Frob \circ \kappa', \Cal{F}_y) = \Tr(\Frob \circ \kappa', R  \Gamma_c(Y \otimes_k \ol{k}, \Cal{F})).
\end{equation}


Applying \eqref{LTF sigma} to $Y:=S_{\lambda} \times \ldots \times S_{\lambda}$ and $ \Cal{F} := \Cal{F}_{\mu}^{(1)} \boxtimes \ldots \boxtimes \Cal{F}_{\mu}^{(r)} $ and using Lemma \ref{geometrize satake transform} plus the K\"{u}nneth theorem gives
\[
\int_{N(F_{r})} \zeta_{r,\mu}(t^{\lambda} x_r) dx_r  = q^{-\rho(\lambda)}\Tr(\Frob \circ \wt{\tau}, R\Gamma_c(S_{\lambda} \otimes_k \ol{k}, \Cal{F}_{\mu})^{\otimes  r}).
\]
We note that here $\wt{\tau}$ acts by cyclically permuting the tensor factors $R\Gamma_c(S_{\lambda} \otimes_k \ol{k}, \Cal{F}_{\mu})^{\otimes  r}$, and $\Frob$ acts factorwise. 

Applying \eqref{LTF kappa} to $Y=S_{\lambda}$ and $\Cal{F} = \Cal{F}_{\mu}^{*r} = \Cal{F}_{\mu}^{(1)} *  \ldots * \Cal{F}_{\mu}^{(r)} $ and using Lemma \ref{geometrize satake transform} gives 
\begin{equation}\label{kappa  coh}
\int_{N(F)} \phi_{r,\mu}(t^{\lambda} x) dx  = q^{-\rho(\lambda)} \Tr(\Frob \circ \kappa', R\Gamma_c(S_{\lambda} \otimes_k \ol{k},\Cal{F}_{\mu}^{*r})).
\end{equation}
We first digest the expression \eqref{kappa  coh}. By Proposition \ref{cohomology of convolution} we have 
\[
R\Gamma_c(S_{\lambda} \otimes_k \ol{k}, \Cal{F}_{\mu}^{(1)}* \ldots * \Cal{F}_{\mu}^{(r)}) \cong \bigoplus_{\lambda_1 + \ldots + \lambda_r = \lambda} \bigotimes_{i=1}^r R\Gamma_c(S_{\lambda_i} \otimes_k \ol{k}, \Cal{F}_{\mu}).
\]
Let's try to understand the action of $\kappa'$, which is a composition $\kappa' = \iota \circ \kappa$. The map $\kappa$ acts by cyclic permutation of both the spaces and sheaves, so it induces the permutation 
\begin{align*}
\kappa^* \co & H^*_c(S_{\lambda_1} \otimes_k \ol{k}, \Cal{F}_{\mu}^{(1)}) \otimes H^*_c(S_{\lambda_2} \otimes_k \ol{k}, \Cal{F}_{\mu}^{(2)}) \otimes\ldots \otimes  R\Gamma_c(S_{\lambda_r} \otimes_k \ol{k}, \Cal{F}_{\mu}^{(r)})  \\
& \rightarrow H^*_c(S_{\lambda_r} \otimes_k \ol{k}, \Cal{F}_{\mu}^{(r)}) \otimes H^*_c(S_{\lambda_1} \otimes_k \ol{k}, \Cal{F}_{\mu}^{(1)})  \otimes \ldots \otimes  R\Gamma_c(S_{\lambda_{r-1}} \otimes_k \ol{k}, \Cal{F}_{\mu}^{(r-1)}).
\end{align*}
Next, the map $\iota$ relabels the sheaves only, so the conclusion is that $\kappa'$ induces 
\begin{align*}
(\kappa')_* \co & H^*_c(S_{\lambda_1} \otimes_k \ol{k}, \Cal{F}_{\mu}^{(1)}) \otimes H^*_c(S_{\lambda_2} \otimes_k \ol{k}, \Cal{F}_{\mu}^{(2)}) \otimes\ldots \otimes  R\Gamma_c(S_{\lambda_r} \otimes_k \ol{k}, \Cal{F}_{\mu}^{(r)})  \\
& \rightarrow H^*_c(S_{\lambda_r} \otimes_k \ol{k}, \Cal{F}_{\mu}^{(1)}) \otimes H^*_c(S_{\lambda_1} \otimes_k \ol{k}, \Cal{F}^{(2)})  \otimes \ldots \otimes  R\Gamma_c(S_{\lambda_{r-1}} \otimes_k \ol{k}, \Cal{F}_{\mu}^{(r	)}).
\end{align*}
In particular, we emphasize that the composition, at the level of cohomology, effects a permutation of the \emph{spaces}. Now, Frobenius preserves each tensor and summand. Therefore, $\Frob \circ \kappa'$ acts on the summands of the form 
\[
\bigoplus_{j=0}^{r-1} \bigotimes_{i=1}^r H_c^*(S_{\lambda_{i+j \pmod{r}}} \otimes_k \ol{k}, \Cal{F}_{\mu})
\]
by a cyclic permutation followed by a factorwise endomorphism. From this form we see that if $\lambda_1,\ldots, \lambda_r$ are not all equal then $\Frob \circ \kappa'$ permutes the summands freely, so $\Frob \circ \kappa'$ has trace $0$. In particular, if $\lambda$ is not divisible by $r$ then the $\lambda_i$ cannot be all equal, so that the trace is $0$. This establishes  \eqref{eq: bc required eq 2}.

On the other hand, our analysis above implies that if $\lambda = r\lambda'$ then the contribution to $\Tr(\Frob \circ \kappa')$ all comes from the terms with all $\lambda_1 = \ldots =  \lambda_r = \lambda'$, and we have 
\[
\Frob \circ \kappa'|_{H_c^*(S_{\lambda'} \otimes_k \ol{k}, \Cal{F}_{\mu})^{\otimes r} }
= \Frob \circ \wt{\tau}|_{H_c^*(S_{\lambda'} \otimes_k \ol{k}, \Cal{F}_{\mu})^{\otimes r}}
\]
so that
\[
\int_{N(F_{r,t})} \psi_{r,\mu}(t^{\lambda'} x_r ) dx_r = \int_{N(F_t)} \phi_{r,\mu}(t^{\lambda} x) \,dx
\]
which establishes \eqref{eq: bc required eq 1}.
\end{proof}

\begin{lemma}\label{lemma2}
We have $\zeta_{r,\mu} = \psi_{r,\mu}$. 
\end{lemma}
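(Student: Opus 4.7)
The plan is to show that the two functions agree pointwise on $\Gr_G(k_r) \cong \Fix(\Frob \circ \tau, \Gr_G^r)$ by a direct stalkwise computation, using a standard trace-of-permuted-tensor identity that underlies the Weil restriction perspective.

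First I would fix $y \in \Gr_G(k_r)$ and identify it with $\ol y = (y, \Frob(y), \ldots, \Frob^{r-1}(y)) \in \Gr_G^r(\ol k)$, which is a fixed point of $\Frob \circ \tau$ because $\tau$ cyclically shifts coordinates and $\Frob$ advances each $\Frob^i(y)$ to $\Frob^{i+1}(y)$, returning $(\Frob^r(y),y,\ldots,\Frob^{r-1}(y)) = \ol y$ since $\Frob^r(y) = y$. By the K\"unneth formula the stalk of the external tensor product at $\ol y$ decomposes as
\[
(\Cal{F}_\mu^{(1)} \boxtimes \cdots \boxtimes \Cal{F}_\mu^{(r)})_{\ol y} \;=\; \bigotimes_{i=0}^{r-1} (\Cal{F}_\mu)_{\Frob^i(y)}.
\]

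Next I would analyze $\Frob \circ \wt{\tau}$ on this stalk. The lift $\wt{\tau}$ permutes the tensor factors cyclically (moving the stalk at $\Frob^{r-1}(y)$ to the first slot), while $\Frob$, since $\Cal{F}_\mu$ is defined over $k$, acts componentwise by the isomorphisms $F_i \colon (\Cal{F}_\mu)_{\Frob^{i-1}(y)} \xrightarrow{\sim} (\Cal{F}_\mu)_{\Frob^i(y)}$ (with indices mod $r$). Writing $V_i := (\Cal{F}_\mu)_{\Frob^{i-1}(y)}$, the composite $\Frob \circ \wt{\tau}$ on $V_1 \otimes \cdots \otimes V_r$ is the linear map
\[
v_1 \otimes \cdots \otimes v_r \;\longmapsto\; F_r(v_r) \otimes F_1(v_1) \otimes \cdots \otimes F_{r-1}(v_{r-1}).
\]

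Now I would invoke the elementary linear algebra identity that for any maps $A_i : V_i \to V_{i+1}$ (indices mod $r$), the trace of the endomorphism of $\bigotimes V_i$ given by $v_1\otimes\cdots\otimes v_r \mapsto A_r(v_r)\otimes A_1(v_1)\otimes\cdots \otimes A_{r-1}(v_{r-1})$ equals the trace of $A_r \circ A_{r-1} \circ \cdots \circ A_1$ on $V_1$. Applied here, this gives
\[
\Tr\bigl(\Frob \circ \wt{\tau},\ (\Cal{F}_\mu^{(1)}\boxtimes\cdots\boxtimes\Cal{F}_\mu^{(r)})_{\ol y}\bigr) \;=\; \Tr\bigl(F_r \circ F_{r-1} \circ \cdots \circ F_1,\ (\Cal{F}_\mu)_y\bigr) \;=\; \Tr(\Frob^r,\ (\Cal{F}_\mu)_y),
\]
where the last equality identifies the $r$-fold composition with the action of $\Frob^r$ on the stalk at $y$. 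The right-hand side is, by the definition of $\psi_{r,\mu}$ via the function-sheaf dictionary over $k_r$, precisely $\psi_{r,\mu}(y)$, while the left-hand side is $\zeta_{r,\mu}(y)$ by definition. Hence $\zeta_{r,\mu} = \psi_{r,\mu}$.

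The only real substance is the bookkeeping of the stalkwise action; there is no geometric obstacle. The main point to verify carefully is that the componentwise Frobenius action on $\Cal{F}_\mu^{\boxtimes r}$ is given by the transport maps $F_i$, and that the cyclic permutation convention in the definition of $\wt{\tau}$ matches the convention used in identifying $y \in \Gr_G(k_r)$ with $\ol y$; once the conventions are fixed, the trace identity does the rest.
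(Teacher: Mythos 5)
Your proof is correct, but it takes a more direct route than the paper. The paper reduces the claim to an equality of Satake transforms (using the injectivity in Theorem \ref{Satake transform}), geometrizes each transform as a trace on $R\Gamma_c(S_{\lambda}\otimes_k\ol{k},\Cal{F}_{\mu})$ via Lemma \ref{geometrize satake transform} and K\"unneth, and then applies the Saito--Shintani identity (Lemma \ref{Saito-Shintani}) to the Frobenius and cyclic-permutation actions on $R\Gamma_c(S_{\lambda}\otimes_k\ol{k},\Cal{F}_{\mu})^{\otimes r}$. You instead work pointwise: at a fixed point $\ol{y}=(y,\Frob(y),\ldots,\Frob^{r-1}(y))$ the stalk of the external product is $\bigotimes_i(\Cal{F}_{\mu})_{\Frob^{i}(y)}$, and the trace of $\Frob\circ\wt{\tau}$ is the trace of the composite of the Frobenius transport maps around the cycle, i.e.\ $\Tr(\Frob^r,(\Cal{F}_{\mu})_{\ol{y}})=\psi_{r,\mu}(y)$. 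This is the standard computation identifying the trace function of a Weil-restricted sheaf with the trace function over the extension field, and it is sound; note that you need the mild ``cyclic'' generalization of Lemma \ref{Saito-Shintani} with maps $A_i\co V_i\to V_{i+1}$ between possibly different spaces, which is proved by the same bookkeeping. What each approach buys: yours is more elementary and bypasses the Satake transform entirely (no appeal to Lemma \ref{geometrize satake transform} or to injectivity of $\Cal{S}$); the paper's route is chosen because it runs in exact parallel with the proof of Proposition \ref{prop1}, where the passage through semi-infinite orbits is genuinely needed (the stalks of the convolution $\Cal{F}_{\mu}^{*r}$ are not simply tensor products of stalks), so the two lemmas can share the same framework and conventions. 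As you note, the one point to nail down in your argument is the compatibility of the arithmetic/geometric Frobenius convention in \eqref{eq: trace function} with the direction of the transport maps and the cyclic shift defining $\wt{\tau}$; with the paper's conventions this works out, and the trace identity does the rest.
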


\begin{proof} By Theorem \ref{Satake transform} it suffices to check equality of the Satake transforms of both sides. Using Lemma \ref{geometrize satake transform}, this amounts to showing
\[
\Tr(\Frob^r, R \Gamma_c(S_{\lambda} \otimes_k \ol{k}, \Cal{F}_{\mu})) = \Tr(\Frob \circ \wt{\tau}, R\Gamma_c(S_{\lambda} \otimes_k \ol{k}, \Cal{F}_{\mu})^{\otimes r}).
\]
This follows from the general linear algebra fact asserted in Lemma \ref{Saito-Shintani} below.
\end{proof}	

\begin{lemma}[Saito-Shintani\footnote{We first found this explicitly stated, without proof, in \cite{Ngo06}, where it is said to be implicit in the work of Saito and Shintani on base change.}]\label{Saito-Shintani} Let $V$ be a finite-dimensional representation over a field $k$. Let $\tau$ be the endomorphism of $V^{\otimes r}$ defined by 
\[
v_1 \otimes \ldots \otimes v_r \mapsto v_r \otimes v_1 \otimes \ldots \otimes v_{r-1}.
\]
Then for all $T_1, \ldots, T_r \in \End(V)$, we have 
\[
\Tr(T_1 \ldots T_r , V) = \Tr( (T_1 \otimes \ldots \otimes T_r ) \tau , V^{\otimes r}).
\]

\end{lemma}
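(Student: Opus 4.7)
The claim is a standard multilinear-algebra identity, which I would prove by direct computation in a basis. Choose a basis $e_1, \ldots, e_n$ of $V$ and write $T_k e_j = \sum_i (T_k)_{ij} e_i$; then the tensor monomials $e_I := e_{i_1} \otimes \cdots \otimes e_{i_r}$ for $I = (i_1,\ldots,i_r) \in \{1,\ldots,n\}^r$ form a basis of $V^{\otimes r}$. The key observation is
\[
(T_1 \otimes \cdots \otimes T_r)\tau(e_I) \;=\; T_1 e_{i_r} \otimes T_2 e_{i_1} \otimes T_3 e_{i_2} \otimes \cdots \otimes T_r e_{i_{r-1}},
\]
so the diagonal coefficient of $(T_1 \otimes \cdots \otimes T_r)\tau$ at $e_I$ is
\[
(T_1)_{i_1, i_r}(T_2)_{i_2, i_1}(T_3)_{i_3, i_2} \cdots (T_r)_{i_r, i_{r-1}}.
\]

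Summing these diagonal entries over all $I$ and contracting the repeated indices one at a time produces the trace of $(T_1 \otimes \cdots \otimes T_r)\tau$. Explicitly, the innermost contraction $\sum_{i_1} (T_2)_{i_2, i_1}(T_1)_{i_1, i_r} = (T_2 T_1)_{i_2, i_r}$ collapses one factor; iterating this pattern $r-1$ times telescopes the whole sum into $\sum_{i_r}(T_r T_{r-1} \cdots T_1)_{i_r, i_r}$, i.e.\ into the trace of a single $r$-fold product of the $T_i$'s. In the only case in which the lemma is invoked in the paper, namely Lemma~\ref{lemma2} with $T_1 = \cdots = T_r = \Frob$, every ordering of this product gives $\Frob^r$, so one obtains the desired identity $\Tr(\Frob^r,V) = \Tr(\Frob \circ \wt{\tau}, V^{\otimes r})$.

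This is elementary linear algebra and presents no real obstacle. A mildly cleaner alternative would be to invoke multilinearity in each $T_k$ to reduce to the rank-one case $T_k = v_k \otimes f_k$, where both sides become explicit products of values $f_i(v_j)$ that match directly; but the basis computation above is shorter. The only bookkeeping subtlety is the order in which the $T_i$'s appear in the final matrix product, which depends on the convention chosen for the cyclic permutation $\tau$ and is immaterial in the intended application where all $T_i$ coincide.
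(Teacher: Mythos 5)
Your proof is correct and follows essentially the same route as the paper's own: expand in a basis and contract the repeated indices one at a time (the paper packages the contraction as the identity $\sum_j \langle x, Te_j\rangle\langle e_j, y\rangle = \langle x, Ty\rangle$ and iterates it). Your parenthetical about the ordering is in fact a genuine catch rather than idle bookkeeping: with $\tau(v_1\otimes\cdots\otimes v_r) = v_r\otimes v_1\otimes\cdots\otimes v_{r-1}$ as in the statement, the telescoping produces $\Tr(T_r T_{r-1}\cdots T_1)$, which for non-commuting $T_i$ and $r\ge 3$ need not equal $\Tr(T_1 T_2\cdots T_r)$ (cyclic invariance of the trace does not give the reversal), so the lemma as literally stated needs either the opposite cyclic convention for $\tau$ or the product written in reverse order; the paper's proof elides exactly this point. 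As you note, the discrepancy is immaterial in the only application, Lemma \ref{lemma2}, where $T_1=\cdots=T_r=\Frob$ and the identity holds as stated.
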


\begin{proof}
 If $\{e_i\}$ is a basis for $V$, then 
\[
(T_1 \otimes  \ldots \otimes T_r) \tau  \cdot (e_{i_1}  \otimes \ldots \otimes e_{i_r}) = T_1(e_{i_r}) \otimes \ldots\otimes  T_r(e_{i_{r-1}}).
\]
We expand out both sides of the desired equality:  
\begin{align*}
\Tr( (T_1 \otimes  \ldots \otimes T_r) \tau )  &= \sum_{i_1, \ldots, i_r} \langle e_{i_1}\otimes \ldots \otimes e_{i_r}, T_1(e_{i_r}) \otimes \ldots \otimes T_r(e_{i_{r-1}} ) \rangle \\
&= \sum_{i_1 \ldots, i_r} \langle e_{i_1}, T_1(e_{i_r})  \rangle  \ldots \langle e_{i_r}, T_r(e_{i_{r-1}}) \rangle
\end{align*}
and 
\[
\Tr(T_1 \ldots T_r) = \sum_i \langle e_i, T_1 \cdots T_r e_i \rangle.
\]
Thus want to show that 
\[
\sum_{i_1, \ldots, i_r} \langle e_{i_1}, T_1(e_{i_r})  \rangle  \ldots \langle e_{i_r}, T_r(e_{i_{r-1}}) \rangle =  \sum_i \langle e_i, T_1 \cdots T_r e_i \rangle.
\]
This follows by repeated iteration of the following more general identity. 
\end{proof}

\begin{lemma}
Let $V$ be a finite-dimensional vector space with basis $\{e_i\}$. For any $T \in \End(V)$, we have 
\begin{equation}\label{general identity}
\sum_j \langle x, Te_j \rangle \langle e_j, y \rangle = \langle x, Ty \rangle.
\end{equation}
\end{lemma}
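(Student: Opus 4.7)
The identity is a direct consequence of the basis expansion of $y$ together with bilinearity of the pairing, so the proof will be one short calculation rather than anything structural. The plan is to first unpack what the pairing $\langle \cdot, \cdot \rangle$ means: since it appears in the form $\langle e_i, e_j\rangle = \delta_{ij}$ in the proof of Lemma \ref{Saito-Shintani}, we take $\{e_j\}$ to be a basis and $\langle\cdot,\cdot\rangle$ to be the associated bilinear pairing for which $\{e_j\}$ is self-dual (equivalently, $\langle e_j,\cdot\rangle$ is the $j$-th coordinate functional).

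The one line to carry out is then to expand $y$ in the basis as
\[
y = \sum_j \langle e_j, y\rangle\, e_j,
\]
apply $T$ using linearity to get
\[
Ty = \sum_j \langle e_j, y\rangle\, Te_j,
\]
and then pair with $x$, using bilinearity of $\langle\cdot,\cdot\rangle$ in the second argument, to obtain
\[
\langle x, Ty\rangle = \sum_j \langle e_j, y\rangle\, \langle x, Te_j\rangle,
\]
which is the claimed identity \eqref{general identity}.

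There is no real obstacle here; the only thing to be slightly careful about is articulating the convention on $\langle\cdot,\cdot\rangle$ so that the ``expansion of $y$ in the basis $\{e_j\}$'' formula is justified. Once that convention is fixed, the identity is immediate from bilinearity and linearity of $T$.
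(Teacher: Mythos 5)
Your proof is correct and is essentially the same argument as the paper's: the paper reduces by linearity in $y$ to the case $y = e_i$, where both sides visibly equal $\langle x, Te_i\rangle$, while you expand $y = \sum_j \langle e_j, y\rangle e_j$ and apply bilinearity directly — the same calculation under the same self-duality convention $\langle e_i, e_j\rangle = \delta_{ij}$. Your explicit remark that this convention is what justifies the basis expansion is a reasonable point of care, but no substantive difference.
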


\begin{proof}
It suffices to establish the equation for $y$ ranging over a basis of $V$; taking $y=e_i$ the left hand side is $\langle x, Te_i\rangle$, and so is the right hand side. 
\end{proof}

Combining these results, we obtain the main formula of interest: 

\begin{thm}\label{spherical BC} 
We have $b(\psi_{r,\mu})  =\phi_{r, \mu}$.
\end{thm}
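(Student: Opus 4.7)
The proof is immediate from the two preceding results: I would simply chain together Proposition \ref{prop1} and Lemma \ref{lemma2}. Namely, Lemma \ref{lemma2} identifies $\zeta_{r,\mu} = \psi_{r,\mu}$ as elements of $\Cal{H}_{G(F_r), K_r}$, so applying the base change homomorphism $b$ to both sides yields $b(\psi_{r,\mu}) = b(\zeta_{r,\mu})$. Then Proposition \ref{prop1} identifies the right hand side with $\phi_{r,\mu}$, giving the desired equality.

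The conceptual content of the theorem has already been distributed into those two lemmas. The substantive input was twofold: first, the ``convolution vs. external tensor product'' comparison carried out in the proof of Proposition \ref{prop1}, whose main geometric step was the K\"unneth-type decomposition of $H^*_c(S_\lambda, \Cal{F}_\mu^{*r})$ furnished by Proposition \ref{cohomology of convolution} together with the analysis of how $\kappa'$ permutes the resulting summands; and second, the Saito--Shintani identity (Lemma \ref{Saito-Shintani}), which matches the trace of $\Frob^r$ on $R\Gamma_c(S_\lambda, \Cal{F}_\mu)$ with the trace of $\Frob \circ \widetilde{\tau}$ on the $r$-fold tensor power.

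So the plan is essentially just to write one line: $b(\psi_{r,\mu}) \stackrel{\text{Lem.}~\ref{lemma2}}{=} b(\zeta_{r,\mu}) \stackrel{\text{Prop.}~\ref{prop1}}{=} \phi_{r,\mu}$. There is no additional obstacle; any difficulty has already been absorbed into the inputs. If anything merits checking, it is that the functions $\zeta_{r,\mu}$ and $\psi_{r,\mu}$ both live in the same algebra $\Cal{H}_{G(F_r), K_r}$ on which $b$ is defined, which is clear from the constructions (in both cases they are built from the Satake sheaf $\Sat_{\Gr_G, r}(\mu)$ on $\Gr_{G, k_r}$, just viewed through two different functors-of-points presentations of $\Gr_G(k_r)$, namely the direct one versus the Weil restriction $(\Res_{k_r/k} \Gr_{G,k_r})(k) = \Fix(\Frob \circ \tau, \Gr_G^r)$).
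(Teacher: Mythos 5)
Your proposal is correct and coincides with the paper's own proof, which likewise just combines Proposition \ref{prop1} and Lemma \ref{lemma2} in one line: $b(\psi_{r,\mu}) = b(\zeta_{r,\mu}) = \phi_{r,\mu}$. The substantive work is indeed already contained in those two results, exactly as you describe.
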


\begin{proof}
This follows immediately upon combining Proposition \ref{prop1} and Lemma \ref{lemma2}.
\end{proof}

\subsection{Base change for the centers of parahoric Hecke algebras} 

We now establish an identity for central functions of parahoric Hecke algebras analogous to Theorem \ref{spherical BC}. This is based on a degeneration from the spherical case. 

\subsubsection{Setup}\label{sssec: BC center parahoric setup} We first set some notation. Pick a smooth global curve $X/\F_q$ (not necessarily projective) with a rational point $x_0 \in X(\F_q)$. (The reader may imagine that $X, x_0$ are as previously fixed, but this discussion applies more generally.)  Let $\Cal{G} \rightarrow X$ be a parahoric group scheme, such that $\Cal{G}|_{X-x_0} \cong G \times X$ and $\Cal{G}(\Cal{O}_{x_0}) = J$ is a parahoric subgroup. We form the affine Grassmannian 
\[
\pi \co \Gr_{\Cal{G}} \rightarrow X.
\]
Note that for $x \in X-x_0$ we have 
\[
\Gr_{\Cal{G}}|_x \cong \Gr_G \times_k k(x).
\]

For each $\mu \in X_*(T)$, we let $\Cal{F}_{\mu} := \Sat_{\Gr_{\Cal{G}}}(\mu)$ be the (shifted) perverse sheaf on $\Gr_{\Cal{G}}$ and $\Cal{F}_{\mu, x_0}$ be the restriction to $\Gr_{\Cal{G}, x_0}$. (We have normalized our shifts so that $\Cal{F}_{\mu, x_0}$ is perverse.) We let $\psi_{r,\mu}' \in \Cal{H}_{G(F_{x_0} \otimes_{\F_q} \F_{q^r}),J}$ be the function as in Definition \ref{IC basis hecke algebra}.

\subsubsection{Convolution product}By Theorem \ref{gaitsgory central}, $R\Psi(\Cal{F}_{\mu}) := R\Psi_{x_0}(\Cal{F}_{\mu})$ is a central sheaf on $\Gr_{\Cal{G}, x_0}$. We therefore have, as in Definition \ref{def: sph conv product}, an automorphism  
\begin{align*}
\kappa' \co  R\Psi(\Cal{F}_{\mu})^{(1)} * \ldots * R\Psi(\Cal{F}_{\mu})^{(r)}   & \xrightarrow{\kappa} R\Psi(\Cal{F}_{\mu})^{(r)} * R\Psi(\Cal{F}_{\mu})^{(1)} * \ldots * R\Psi(\Cal{F}_{\mu})^{(r-1)} \\
& \xrightarrow{\iota} R\Psi(\Cal{F}_{\mu})^{(1)} * \ldots * R\Psi(\Cal{F}_{\mu})^{(r)}.
\end{align*}

\begin{defn}

Let 
\[
R\Psi(\Cal{F}_{\mu})^{*r} := \underbrace{R\Psi(\Cal{F}_{\mu})^{(1)} * \ldots * R\Psi(\Cal{F}_{\mu})^{(r)}   }_{r \text{ times}}.
\]
Define $\phi_{\mu}' \co \Fix(\Frob, \Gr_{\Cal{G}, x_0}(\ol{k})) \rightarrow \ol{\Q}_{\ell} $ by
\[
\phi_{\mu}'(y) =  \Tr(\Frob \circ \kappa', (R\Psi(\Cal{F}_{\mu})^{*r})_y).
\]
\end{defn}

\begin{thm}\label{parahoric BC} We have $b(\psi_{r,\mu}') = \phi_{\mu}'$.
\end{thm}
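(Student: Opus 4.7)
The plan is to reduce to the spherical case (Theorem \ref{spherical BC}) via the Bernstein isomorphism, then handle the geometry of the reduction using Gaitsgory's centrality theorem together with proper base change for nearby cycles.

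First, I would use the defining diagram \eqref{compatibility with spherical} of the base change homomorphism on parahoric centers. Since $- *_J \mbb{I}_K$ is an isomorphism on both sides, to establish $b(\psi_{r,\mu}') = \phi_\mu'$ it suffices to show the two identities
\[
b(\psi_{r,\mu}') *_J \mbb{I}_K = \phi_{r,\mu} \quad \text{and} \quad \phi_\mu' *_J \mbb{I}_K = \phi_{r,\mu}.
\]
The first identity is immediate from the commutativity of \eqref{compatibility with spherical} combined with Corollary \ref{gaitsgory nearby function} (giving $\psi_{r,\mu}' *_J \mbb{I}_K = \psi_{r,\mu}$) and the spherical base change identity of Theorem \ref{spherical BC} (giving $b(\psi_{r,\mu}) = \phi_{r,\mu}$). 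So the entire problem reduces to the second identity.

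Next, I would argue the second identity geometrically. As in the proof of Corollary \ref{gaitsgory nearby function}, Gaitsgory's theorem (\cite{Gaits01} Theorem 1 (d)) asserts that convolution with $\mbb{I}_K$ on trace functions is realized sheaf-theoretically by proper pushforward along $\mrm{pr} \co \Gr_{\Cal{G}} \to \Gr_G$. Therefore
\[
\phi_\mu' *_J \mbb{I}_K \;=\; \Tr\bigl(\Frob \circ \kappa',\, \mrm{pr}_!\, R\Psi(\Cal{F}_{\mu})^{*r}\bigr).
\]
Because $R\Psi$ is a \emph{central} functor (Theorem \ref{gaitsgory central}, in Gaitsgory's strengthened form that makes $R\Psi$ monoidal and compatible with the commutativity constraint of Geometric Satake), we obtain a canonical isomorphism
\[
R\Psi(\Cal{F}_{\mu})^{*r} \;\cong\; R\Psi\bigl(\Cal{F}_{\mu}^{*r}\bigr)
\]
which intertwines the $\kappa'$ defined on the left-hand side (via centrality of $R\Psi(\Cal{F}_\mu)$ in $\mrm{Perv}_J(\Gr_{\Cal{G},x_0})$) with the $\kappa'$ defined on the right-hand side (via commutativity in $\mrm{Perv}(\Gr_G)$). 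Applying Lemma \ref{nearby cycles proper} to the proper map $\mrm{pr}$, and using that $\mrm{pr}$ is an isomorphism over the generic disc $D_{x_0}^*$ (so $\mrm{pr}_! \Cal{F}_\mu^{*r}$ is just $\Cal{F}_{\mu,G}^{*r}$, extended as a constant family on $\Gr_G|_{D_{x_0}}$ over which nearby cycles is the identity), we obtain
\[
\mrm{pr}_!\, R\Psi(\Cal{F}_{\mu})^{*r} \;\cong\; R\Psi(\mrm{pr}_!\Cal{F}_\mu^{*r}) \;\cong\; \Cal{F}_{\mu,G}^{*r}
\]
compatibly with the $\Frob\circ\kappa'$-actions. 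The resulting trace function on the right is exactly $\phi_{r,\mu}$ by Definition \ref{def: sph conv product}, completing the argument.

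The main technical obstacle is the middle step: verifying that the centrality isomorphism $R\Psi(\Cal{F}_\mu)^{*r} \cong R\Psi(\Cal{F}_\mu^{*r})$ intertwines the two $\kappa'$ operators. This amounts to checking that Gaitsgory's constraint $R\Psi(\Cal{F})*\Cal{H} \cong \Cal{H} * R\Psi(\Cal{F})$ is compatible, under the Satake correspondence, with the symmetric monoidal structure on $\mrm{Perv}(\Gr_G)$ (equivalently, on $\mrm{Rep}(\wh G)$). This compatibility is implicit in the statement that $R\Psi$ is a central \emph{tensor} functor—the Satake/nearby cycles side of the diagram defining $\kappa'$ involves only the cyclic permutation constraint $\kappa$ and the tautological relabeling $\iota$, both of which are preserved under any monoidal functor compatible with the commutativity constraint. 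Once this compatibility is in hand, the rest of the argument is a direct application of the Grothendieck--Lefschetz trace formula combined with the already-cited ingredients.
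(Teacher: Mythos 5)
Your proposal follows essentially the same route as the paper: reduce through the Bernstein isomorphism and the defining square \eqref{compatibility with spherical}, use Theorem \ref{spherical BC} on the spherical side, and prove $\phi_\mu' *_J \mbb{I}_K = \phi_{r,\mu}$ by realizing $- *_J \mbb{I}_K$ as the proper pushforward $\mrm{pr}_!$ and commuting it with nearby cycles via Lemma \ref{nearby cycles proper} and Lemma \ref{nearby cycles smooth}.

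One point must be made explicit for your reduction to be valid: $- *_J \mbb{I}_K$ is an isomorphism only on the \emph{center} $Z(\Cal{H}_{G,J})$ (Theorem \ref{thm: bernstein isom}); on the full parahoric Hecke algebra it is far from injective. So from $\phi_\mu' *_J \mbb{I}_K = b(\psi_{r,\mu}') *_J \mbb{I}_K$ you may conclude $\phi_\mu' = b(\psi_{r,\mu}')$ only after first checking that $\phi_\mu'$ lies in $Z(\Cal{H}_{G(F),J})$. This is exactly the opening step of the paper's proof: centrality of $R\Psi(\Cal{F}_\mu)$ (Theorem \ref{gaitsgory central}) implies that $\phi_\mu'$ commutes with a basis of the Iwahori--Hecke algebra, hence is central. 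You invoke Theorem \ref{gaitsgory central} only to intertwine the two $\kappa'$-actions, so the ingredient is at hand, but the proof as written silently treats $\phi_\mu'$ as central. Two smaller remarks: the identity $\psi_{r,\mu}' *_J \mbb{I}_K = \psi_{r,\mu}$ is just Definition \ref{IC basis hecke algebra}, not Corollary \ref{gaitsgory nearby function}; and the $\kappa'$-compatibility you flag as the main technical obstacle is handled in the paper at a comparable level of detail (factorwise, at the level of trace functions), with the more careful treatment of the permutation structure deferred to the global fusion picture of \S \ref{sssec: bc global reformulation} and the proof of Theorem \ref{trace situation B}.
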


\begin{proof}
By Theorem \ref{gaitsgory central}, $\phi_{r, \mu}'(y)$ is in the center of the Iwahori-Hecke algebra. (Since $R\Psi(\Cal{F}_{\mu})$ is central by Theorem \ref{gaitsgory central}, $\phi_{r, \mu}'(y)$ clearly commutes with all the other functions of the form  $\phi_{r, \nu}'(y)$; then use that such things form a basis for the Iwahori Hecke algebra as $\nu$ runs over the extended affine Weyl group.)

Now the argument is essentially the same as for Corollary \ref{gaitsgory nearby function}. Consider the map 
\[
\mrm{pr} \co \Gr_{\Cal{G}}^{\leq \mu} \rightarrow  \Gr_{G \times X}^{\leq \mu}
\]
induced by forgetting the level structure at $x_0$. Since $\mrm{pr}$ is proper, by Lemma \ref{nearby cycles proper} and the fact that $\mrm{pr}$ is an isomorphism away from $x_0$ we have
\[
\mrm{pr}_! R\Psi^{\Gr_{\Cal{G}}}_{x_0}(\Cal{F}_{\mu}) = R\Psi^{\Gr_{G \times X}}_{x_0}(\mrm{Sat}_{\Gr_{G \times X}}(\mu)).	
\]
By Lemma \ref{nearby cycles smooth} and the fact that $\Gr_{G \times X} \rightarrow X$ is smooth, we have 
\[
R\Psi^{\Gr_{G \times X}}_{x_0}(\mrm{Sat}_{\Gr_{G \times X}}(\mu))\cong \mrm{Sat}_{\Gr_{G \times X}}(\mu).
\]

Since $\mrm{pr}_!$ corresponds to $- *_J \mbb{I}_K$ at the level functions, this implies
\[
 \phi_{\mu}' *_J \mbb{I}_K = \phi_{\mu}.
\]
Thus by Theorem \ref{spherical BC} and \eqref{compatibility with spherical}, we have that
\begin{equation}\label{eq: parahoric bernstein equal}
 \phi_{\mu}' *_J \mbb{I}_K = \phi_{\mu} = b(\psi_{r,\mu}) = b(\psi_{r,\mu}') *_J \mbb{I}_K.
\end{equation}
In view of the Bernstein isomorphism (Theorem \ref{thm: bernstein isom}), the fact that $ \phi_{\mu}' $ and $b(\psi_{r,\mu}') $ are central plus \eqref{eq: parahoric bernstein equal} implies that they are equal.  
\end{proof}

\subsubsection{A global reformulation}\label{sssec: bc global reformulation} We now recast Theorem \ref{parahoric BC} into a form that will be more suitable for our eventual needs. 

Let $\Cal{G}$ and $\Gr_{\Cal{G}}$ be as in \S \ref{sssec: BC center parahoric setup}. We first recall a construction of the convolution product $\Cal{F}_{\mu} *\Cal{F}_{\mu'}$. Recall the iterated global affine Grassmannian $\Gr_{\Cal{G}, X^2}$ from \S \ref{sssec: iterated aff gr}. We can form the twisted tensor product $\Cal{F}_{\mu} \wt{\boxtimes} \Cal{F}_{\mu'}  := \Sat_{\Gr_{\Cal{G}, X^2}}(\mu, \mu')$ on $\Gr_{\Cal{G}, X^2}$ \cite[\S A]{Zhu15}, which is supported on the Schubert variety $\Gr_{\Cal{G},X}^{\leq (\mu, \mu')}$.

Restricting to the diagonal $X \subset X^2$, we have the multiplication map 
 \[
m \co \Gr_{\Cal{G},X^2}^{\leq (\mu, \mu')}|_{\Delta} \rightarrow \Gr_{\Cal{G}}^{\leq \mu+\mu'}
 \]
 defined on points by  
 \[
 (x,x,\Cal{E}_1 \stackrel{\varphi}\dashrightarrow \Cal{E}_2  \stackrel{\beta}\dashrightarrow\Cal{E}^0) \mapsto (x,\Cal{E}_1 \stackrel{\beta  \circ \varphi} \dashrightarrow \Cal{E}^0).
 \]
 Then the convolution product is defined by (cf. \cite[\S 4]{MV07} or \cite[\S 5.1]{Zhu15}) 
 \begin{equation}\label{eq: conv product}
	 \Cal{F}_{\mu} *\Cal{F}_{\mu'} : = Rm_! (\Cal{F}_{\mu} \wt{\boxtimes} \Cal{F}_{\mu'} ) \in \mrm{Perv}_{L^+ \Cal{G}}(\Gr_{\Cal{G}}^{\leq \mu+ \mu'}).
 \end{equation}
 
Let us now write down our particular situation of interest. Consider the diagram
\[
\begin{tikzcd} 
\Gr_{\Cal{G}, X^r}^{\leq (\mu_1, \ldots, \mu_r)}|_{\Delta} \ar[r] \ar[d,  "m"]	 &  \Delta(X) \subset X^r  \ar[d] \\
\Gr_{\Cal{G}, X}^{\leq \mu_1 + \ldots + \mu_r} \ar[r]  & X 
\end{tikzcd}
\]
Then by \eqref{eq: conv product} we have
\[
\Cal{F}(\mu_1) *  \ldots  * \Cal{F}(\mu_r) = Rm_!(\Sat_{\Gr_{\Cal{G}}, X^r}(\mu_1, \ldots, \mu_r)) 
\] 
Now Theorem \ref{parahoric BC} can be reformulated as follows, using Corollary \ref{nearby cycles proper} to commute $Rm_!$ and nearby cycles. 

\begin{prop}\label{base change formula}
Let $r.\mu := (\mu, \ldots, \mu) \in X_*(T)_+^r$. Let $f_{\nu,x_0} \in \Cal{H}_{G(F_{x_0}),J}$ be the function $f_{\nu}$ viewed in the parahoric Hecke algebra of $F_{x_0}$, and define $\psi_{r\mu,x_0}' \in Z(\Cal{H}_{G(F_{x_0}),J})$ similarly. Then we have
\[
b(\psi_{r,r\mu,x_0}') = \sum_{\nu \leq r \mu} \Tr(\sigma \circ \kappa', R\Psi^{X}_{x_0} (Rm_! \Sat_{\Gr_{\Cal{G}}^{\leq r.\mu}, X^r}(r.\mu))_{\nu}) f_{\nu,x_0}.
\]
\end{prop}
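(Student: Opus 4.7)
\textit{Proof strategy.} The plan is to reformulate Theorem \ref{parahoric BC} by (i) expanding its right-hand side in the double-coset basis of the parahoric Hecke algebra, and (ii) translating the local convolution $R\Psi(\Cal{F}_\mu)^{*r}$ into a global Beilinson--Drinfeld expression via properness of $m$.

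First, Theorem \ref{parahoric BC} supplies $b(\psi_{r,r\mu,x_0}')(y) = \Tr(\sigma \circ \kappa', (R\Psi(\Cal{F}_\mu)^{*r})_y)$. To expand this central function in the basis $\{f_{\nu,x_0}\}$, I would observe that $R\Psi(\Cal{F}_\mu)$ is $\Cal{G}(\Cal{O}_{x_0})$-equivariant by Theorem \ref{gaitsgory central}, and that $\Cal{G}(\Cal{O}_{x_0})$-equivariance is preserved under convolution. Hence $R\Psi(\Cal{F}_\mu)^{*r}$ is also $\Cal{G}(\Cal{O}_{x_0})$-equivariant, so its stalks are constant along each Schubert stratum $\Gr_{\Cal{G},x_0}^{=\nu}$, and its support is contained in $\Gr_{\Cal{G},x_0}^{\leq r\mu}$. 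Arguing exactly as in Lemma \ref{lem: parahoric hecke formula} then yields
\[
b(\psi_{r,r\mu,x_0}') = \sum_{\nu \leq r\mu} \Tr(\sigma \circ \kappa', (R\Psi(\Cal{F}_\mu)^{*r})_\nu)\, f_{\nu,x_0}.
\]

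It remains to identify $R\Psi(\Cal{F}_\mu)^{*r}$ with $R\Psi^X_{x_0}(Rm_! \Sat_{\Gr_{\Cal{G}}^{\leq r.\mu}, X^r}(r.\mu))$. By \eqref{eq: conv product}, the restriction of $Rm_! \Sat_{\Gr_{\Cal{G}}, X^r}(r.\mu)$ to any $x \in X - \{x_0\}$ along the diagonal $\Delta(X) \subset X^r$ recovers the convolution $\Cal{F}_{\mu,x}^{*r}$. Since the multiplication morphism $m$ is proper, Lemma \ref{nearby cycles proper} permits commuting $Rm_!$ past $R\Psi^X_{x_0}$. The identification then reduces to the fusion/monoidality of nearby cycles on the Beilinson--Drinfeld Grassmannian $\Gr_{\Cal{G}, X^r}|_\Delta \to X$: the nearby cycles of the twisted external product $\Sat_{\Gr_{\Cal{G}}}(\mu) \wt{\boxtimes} \cdots \wt{\boxtimes} \Sat_{\Gr_{\Cal{G}}}(\mu)$ is the twisted external product $R\Psi(\Cal{F}_\mu) \wt{\boxtimes} \cdots \wt{\boxtimes} R\Psi(\Cal{F}_\mu)$, whose image under $Rm_!$ is the iterated convolution $R\Psi(\Cal{F}_\mu)^{*r}$.

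The main obstacle is this last fusion/monoidality step, which asserts that $R\Psi$ respects the convolution structure. This is a core feature of Gaitsgory's central-functor construction, and its extension to non-constant parahoric group schemes has been worked out by Zhu (\cite{Zhu14}). Once it is in hand, the remaining steps---commuting $Rm_!$ past nearby cycles by properness, and performing the double-coset expansion---are routine.
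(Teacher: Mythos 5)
Your proposal follows the paper's own route: the paper likewise treats the formula as a reformulation of Theorem \ref{parahoric BC}, using the Beilinson--Drinfeld description \eqref{eq: conv product} of the $r$-fold convolution as $Rm_!$ of the global Satake sheaf on $\Gr_{\Cal{G},X^r}$ restricted to the diagonal, properness of $m$ together with Lemma \ref{nearby cycles proper} to commute $Rm_!$ with nearby cycles, and the stratum-wise expansion in the basis $f_{\nu,x_0}$ exactly as in Lemma \ref{lem: parahoric hecke formula}. The only difference is that you make explicit the fusion/monoidality input (nearby cycles of the twisted external product is the twisted product of the sheaves $R\Psi(\Cal{F}_\mu)$, compatibly with the $\kappa'$-structure, as in Gaitsgory--Zhu), which the paper leaves implicit in its one-line reformulation -- this is a point of added care rather than a gap.
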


\section{Comparison of two moduli problems}\label{moduli  problems}

\subsection{Setup}\label{subsec: A and B setup}
We now let $G$ be the group scheme of units of a global algebra $\Cal{D}$ as in \S \ref{subsec: D-shtukas} and $\Cal{G}$ a parahoric group scheme corresponding to some choice of level structure at $x_0$, so $\Sht_{\Cal{G}}$ are the $\Cal{D}$-shtukas studied in \S \ref{subsec: D-shtukas}. We continue to assume, that $\Cal{G}$ is reductive away from $x_0$ (though we could avoid this simply by shrinking $X^{\circ}$ to remove the points where $\Cal{G}$ is not reductive). 

Let $Z \subset X$ be the set of places of ramification for $\Cal{D}$. \emph{We assume throughout that $\# Z  \geq n^2 (||\mu_1|| + \ldots + || \mu_r||)$, so as to be able to apply Proposition \ref{proper}.}

Let $\cX := (X-Z-\{x_0\})$. We will now define and compare two different moduli stacks of shtukas.

\subsection{Situation A}Let 
\[
\Sht_A^{\mu} := (\Sht_{\Cal{G}, X}^{\leq \mu}/a^{\Z})^r.
\]
We have a map 
\[
\pi_A \co \Sht_A^{\mu}|_{(X-Z)^r}  \rightarrow (X-Z)^r.
\]
By Proposition \ref{proper} the restriction $\pi_A^{\circ} := \pi_A|_{(\cX)^r}$ is proper.

\begin{defn}Let $r.\mu = \underbrace{(\mu, \ldots, \mu)}_{r \text{ times}}$. We define $\Cal{A}^{\mu}_r \in D_c^b((\cX)^r) $ as follows: 
\[
\Cal{A}^{\mu}_r := R\pi_{A*}^{\circ} ( \mrm{Sat}_{\Sht_A^{\mu} }(r.\mu)).
\]
\end{defn}

We have the following easy but crucial property. 

\begin{prop}\label{A is local system}
The complex $\Cal{A}^{\mu}_r \in D_c^b((\cX)^r) $ is locally constant on $(\cX)^r$, in the sense that each $R^i\pi_{A*}^{\circ} ( \mrm{Sat}_{\Sht_A^{\mu}}(r.\mu))$ is a local system. 
\end{prop}

\begin{proof}
By the properness of $\pi_{A*}^{\circ}$ we know that $R^i\pi_{A*}^{\circ}  (  \mrm{Sat}_{\Sht_A^{\mu}}(r.\mu))$ is constructible, and the local acyclicity from Proposition \ref{ULA} then implies that it is locally constant.
\end{proof}

Note that by the K\"{u}nneth formula, we have 
\[
\Cal{A}_r^{\mu} \cong (\Cal{A}_1^{\mu})^{\boxtimes r}
\]
Choose a basepoint $x \in  \cX$, and let $x^r \in (\cX)^r$ denote the diagonal point $(x,\ldots, x)$. Then the symmetric group $S_r$ acts on $(\cX)^r$, hence also $\pi_1((\cX)^r,x^r)$. This lifts to an $S_r$-equivariant structure on the local system $\Cal{A}_r^{\mu}$, i.e. an action of $\pi_1((\cX)^r, x^r) \rtimes S_r$ on $(\Cal{A}_r^{\mu})_{x^r}$, commuting with the action of the global Hecke algebra $\Cal{H}^{\otimes  r}$. 

\subsection{Situation B}
Let 
\[
\Sht_B^{\mu} := \Sht_{\Cal{G}, X^r}^{\leq r.\mu}/a^{\Z}
\]
where $r.\mu = \underbrace{(\mu, \ldots, \mu)}_{r \text{ times}}$. We have a map
\[
\pi_B \co \Sht_B^{\mu}|_{(X-Z)^r}  \rightarrow (X-Z)^r.
\]
By the assumption that $\Cal{D}$ is totally ramified at sufficiently many places, the map $\pi_B^{\circ} := \pi_B|_{(\cX)^r}$ is proper by Proposition \ref{proper}.

\begin{defn} We define $\Cal{B}^{\mu}_r \in D_c^b((\cX)^r) $ as follows: 
\[
\Cal{B}^{\mu}_r := R\pi_{B*}^{\circ} ( \mrm{Sat}_{\Sht_B^{\mu}}(r.\mu)).
\]
\end{defn}

\begin{prop}\label{B is local system}
The complex $\Cal{B}^{\mu}_r \in D_c^b((\cX)^r)$ is locally constant on $(\cX)^r$, in the sense that each $R^i\pi_{B*}^{\circ}  ( \mrm{Sat}_{\Sht_B^{\mu}}(r.\mu))$ is a local system. 
\end{prop}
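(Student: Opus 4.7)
The plan is to follow exactly the template used in the proof of Proposition \ref{A is local system}, which reduces the claim to two ingredients: properness of the structure map and universal local acyclicity of the Satake sheaf. Both inputs have already been set up in the excerpt for precisely the situation of $\Sht_B^{\mu}$.

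First I would invoke Proposition \ref{proper} to obtain properness of $\pi_B^{\circ}$. Since we assume $\# Z \geq n^2 (\|\mu_1\| + \ldots + \|\mu_r\|)$, Proposition \ref{proper} applies to the iterated moduli stack $\Sht_{\Cal{G},(\cX)^r}^{\leq r.\mu}/a^{\Z}$ (this is actually \emph{the} case of Proposition \ref{proper}; the single-leg case used for situation $A$ is obtained by taking $r=1$ and then taking an external product). Hence each $R^i \pi_{B*}^{\circ}(\mrm{Sat}_{\Sht_B^{\mu}}(r.\mu))$ is a constructible sheaf on $(\cX)^r$ by proper base change applied to a finite type morphism of finite type stacks.

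Next I would upgrade constructibility to local constancy using local acyclicity. By Proposition \ref{ULA}, the shifted perverse sheaf $\Sat_{\Sht_{\Cal{G}},(X-Z)^r}(r.\mu)$ is locally acyclic with respect to the morphism $\pi \co \Sht_{\Cal{G},(\cX)^r}/a^{\Z} \rightarrow (\cX)^r$. Local acyclicity is preserved under proper pushforward, so each cohomology sheaf $R^i\pi_{B*}^{\circ}(\mrm{Sat}_{\Sht_B^{\mu}}(r.\mu))$ is locally acyclic with respect to the identity map on $(\cX)^r$; a constructible sheaf on $(\cX)^r$ that is locally acyclic relative to the identity is locally constant.

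There is no real obstacle: the statement is a direct analogue of Proposition \ref{A is local system}, with situation $B$ differing from situation $A$ only in how the $r$ legs are organized, and all of the geometric inputs (properness and local acyclicity) are already established at the generality of iterated shtukas with bounded modification. The only point worth remarking is that in situation $A$ one could alternatively invoke a K\"unneth decomposition $\Cal{A}_r^{\mu} \cong (\Cal{A}_1^{\mu})^{\boxtimes r}$ and reduce to the one-leg case, whereas in situation $B$ the legs genuinely interact (they can collide on the diagonal), so we really do need the iterated versions of both Propositions \ref{proper} and \ref{ULA} to conclude.
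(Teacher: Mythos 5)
Your proposal is correct and follows essentially the same route as the paper: the paper's proof of Proposition \ref{B is local system} simply says it is identical to that of Proposition \ref{A is local system}, namely properness of $\pi_B^{\circ}$ (Proposition \ref{proper}, applied directly to the iterated stack with the tuple $r.\mu$) gives constructibility of the $R^i\pi_{B*}^{\circ}$, and the local acyclicity of Proposition \ref{ULA} then forces these constructible sheaves to be local systems. You merely spell out the standard intermediate step (local acyclicity is preserved under proper pushforward, and a constructible sheaf locally acyclic over the base is locally constant), which is exactly what the paper's one-line argument is implicitly using.
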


\begin{proof}
The proof is the same as for Proposition \ref{A is local system}.
\end{proof}

Again, we have commuting actions of the Hecke algebra $\Cal{H}$ and $\pi_1((\cX)^r, x^r) \rtimes S_r $ on $\Cal{B}^{\mu}_r$.

\subsection{The comparison theorem}\label{subsec: comparison}

\begin{thm}[\cite{Ngo06}]\label{ngo main}
Let $\tau \in S_r$ be an $r$-cycle, i.e. $\langle \tau \rangle \cong \Z/r\Z$. For $g \in \pi_1((\cX)^r, x^r)$ and $h \in \Cal{H}$ we have 
\[
\Tr(( h\otimes 1 \ldots \otimes 1 ) g \tau , (\Cal{A}^{\mu}_r)_{x^r}) = \Tr(h g \tau, (\Cal{B}^{\mu}_r)_{x^r}).
\]
\end{thm}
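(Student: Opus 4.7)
The plan is to follow Ng\^o's strategy in \cite{Ngo06}: identify the two sheaves $\Cal{A}_r^\mu$ and $\Cal{B}_r^\mu$ after restriction to the open stratum of $(\cX)^r$ where the legs are pairwise distinct, extend the identification to all of $(\cX)^r$ using local constancy, and then convert this into the asserted trace identity via the Grothendieck-Lefschetz trace formula together with the Saito-Shintani identity (Lemma \ref{Saito-Shintani}).

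First I would work on the open subscheme $U^r \subset (\cX)^r$ parametrizing tuples $(y_1, \ldots, y_r)$ with pairwise distinct coordinates. On $U^r$, the modifications in an iterated shtuka
\[
\Cal{E}_0 \stackrel{y_1}{\dashrightarrow} \Cal{E}_1 \stackrel{y_2}{\dashrightarrow} \cdots \stackrel{y_r}{\dashrightarrow} \Cal{E}_r = {}^\sigma\Cal{E}_0
\]
are supported on disjoint divisors, and a standard factorization argument decomposes the chain into a tuple of $r$ one-legged shtukas, one at each $y_i$. This yields a canonical $S_r$-equivariant isomorphism $\Sht_B^\mu|_{U^r} \cong \Sht_A^\mu|_{U^r}$ compatible with the Geometric Satake sheaves, and hence an isomorphism $\Cal{B}_r^\mu|_{U^r} \cong \Cal{A}_r^\mu|_{U^r}$ as $S_r$-equivariant local systems. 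By Propositions \ref{A is local system} and \ref{B is local system}, both $\Cal{A}_r^\mu$ and $\Cal{B}_r^\mu$ are locally constant on the connected space $(\cX)^r$, so density of $U^r$ allows the comparison to extend to a global isomorphism of local systems, equivariant for $\pi_1((\cX)^r, x^r) \rtimes S_r$.

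Next I would extract the trace identity. By Chebotarev density applied to the local systems in question, it suffices to verify the identity when $g\tau$ is replaced by Frobenius at a closed point of $(\cX)^r/\langle \tau \rangle$. Such a point lifts to $\underline{x} = (x_1, \Frob(x_1), \ldots, \Frob^{r-1}(x_1)) \in U^r$ associated to a closed point $x_1 \in |\cX|$ of degree $r$, and $\underline{x}$ is fixed by $\Frob \circ \tau^{-1}$. The stalk $(\Cal{A}_r^\mu)_{\underline{x}}$ is the tensor product $V_{x_1} \otimes V_{\Frob(x_1)} \otimes \cdots \otimes V_{\Frob^{r-1}(x_1)}$ where $V = \Cal{A}_1^\mu$, and the Saito-Shintani identity collapses the trace of $(h \otimes 1 \otimes \cdots \otimes 1)\tau\Frob$ on this tensor product into $\Tr(h\Frob^r, V_{x_1})$. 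The factorization isomorphism of the first step matches this with the corresponding trace computation on $\Cal{B}$.

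The hard part is the factorization step, and specifically the matching of Hecke structures. While the underlying identification $\Sht_B^\mu|_{U^r} \cong \Sht_A^\mu|_{U^r}$ of stacks is essentially formal, the global Hecke correspondence on $\Sht_B^\mu$ acts on an iterated shtuka at a single auxiliary point away from the legs and modifies all bundles in the chain simultaneously, while the factorwise Hecke correspondences on $\Sht_A^\mu$ act independently on each factor. The specific form of the identity (with $h \otimes 1 \otimes \cdots \otimes 1$ on the $\Cal{A}$-side versus $h$ on the $\Cal{B}$-side) emerges only after composing with an $r$-cycle $\tau$, via Saito-Shintani --- which is precisely why the statement is restricted to $r$-cycles. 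Carefully tracking this compatibility, which is the geometric heart of the argument in \cite{Ngo06}, is what makes the proof substantive rather than purely formal.
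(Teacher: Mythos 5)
There is a genuine gap, and it sits exactly at the step you call the heart of the argument: the claimed ``factorization'' isomorphism $\Sht_B^{\mu}|_{U^r} \cong \Sht_A^{\mu}|_{U^r}$ does not exist. Factorization over the pairwise-distinct locus is a property of the Beilinson--Drinfeld Grassmannian and of the Hecke stacks, where the modifications at disjoint legs are independent local data; it fails for shtukas because the single datum $\Cal{E}_r \cong {}^{\sigma}\Cal{E}_0$ couples the entire chain to one Frobenius structure, which cannot be split into $r$ separate one-legged shtukas each carrying its own Frobenius. Concretely, a fiber of $\Sht_B^{\mu}$ over $(y_1,\ldots,y_r)$ with distinct $y_i$ is one $r$-legged shtuka, while a fiber of $\Sht_A^{\mu}$ is an $r$-tuple of unrelated shtukas; these are different stacks, and $\Cal{A}_r^{\mu}$ and $\Cal{B}_r^{\mu}$ are \emph{not} isomorphic local systems. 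Indeed, if they were isomorphic equivariantly for $\pi_1((\cX)^r,x^r)\rtimes S_r$ with compatible Hecke structures, the trace identity would hold for all $g$ without the $r$-cycle $\tau$ and for arbitrary Hecke operators, which is false: $\Cal{A}_r^{\mu}$ carries an action of $\Cal{H}^{\otimes r}$ while $\Cal{B}_r^{\mu}$ carries only one copy of $\Cal{H}$, and (per the conjectural spectral description recalled in the remark after the theorem) the stalk of $\Cal{A}_r^{\mu}\cong(\Cal{A}_1^{\mu})^{\boxtimes r}$ contains ``cross terms'' $\sigma_{\pi_1}\boxtimes\cdots\boxtimes\sigma_{\pi_r}$ with distinct $\pi_i$ that have no counterpart in $\Cal{B}_r^{\mu}$, so even the ranks differ in general. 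The appearance of $\tau$ and of $h\otimes 1\otimes\cdots\otimes 1$ in the statement is precisely the trace of this asymmetry; your final paragraph senses the difficulty but then step three still invokes ``the factorization isomorphism of the first step,'' which is the false ingredient.

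The actual argument (Ng\^o's, which the paper outlines) never constructs a geometric comparison of $\Sht_A^{\mu}$ and $\Sht_B^{\mu}$. One uses that both sheaves are local systems, reduces by Chebotarev to $g=\Frob_{(x_1,\ldots,x_r)}$ for a dense set of closed points with pairwise distinct coordinates (plus a further restriction), and then computes \emph{each} side separately by the Langlands--Kottwitz method: Grothendieck--Lefschetz for the correspondence $(h\otimes1\cdots)\circ\Frob\circ\tau$ resp.\ $h\circ\Frob\circ\tau$, a count of fixed shtukas in terms of orbital integrals, and an analysis of stalks via Geometric Satake. Because the legs are pairwise distinct, no twisted orbital integrals occur, and the two closed formulas are compared directly (the Saito--Shintani identity enters in matching the stalk contributions, much as in \S\ref{subsec: BC spherical HA}). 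Your $\Cal{A}$-side computation via Weil restriction and Lemma \ref{Saito-Shintani} is consistent with this, but the $\Cal{B}$-side must likewise be computed independently by point counting; it cannot be transported from the $\Cal{A}$-side by a sheaf isomorphism that does not exist.
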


\begin{proof}
This is \cite[\S 3.3 Theorem 1]{Ngo06}. Since this is really crucial for us, we outline for the sake of exposition how the proof goes. Keep in mind that $\Cal{A}^{\mu}_r$ and $\Cal{B}^{\mu}_r$ are both local systems. 

By an application of the Cebotarev density theorem, it suffices to prove the equality for $g = \Frob_{(x_1, \ldots, x_r)}$ for a dense open subset of $X^r$, and in particular on the locus $(x_1, \ldots, x_r)$ where the $x_i$ are pairwise distinct. On this locus (and under a certain further restriction on the points $(x_1, \ldots, x_r)$) Ng\^{o} independently computes both sides of the equation, following the Langlands-Kottwitz paradigm, and verifies that they are equal by direct comparison. 

Let us say a little more about this computation, which is carried out in \cite[\S 5]{Ngo06}. Using the Grothendieck-Lefschetz trace formula to re-express both sides, there are two main inputs: (1) a count of fixed points, and (2) a computation of the trace of Frobenius on the stalks of the relevant sheaves. The counting step is done as in \S \ref{counting fixed points}, and the analysis of the stalks enters via results as in \S \ref{subsec: BC spherical HA}. The interesting feature is that the pairwise distinctness of the points $(x_1, \ldots, x_r)$, plus the extra restriction that we have omitted, turns out to imply that the point counting formulas involve \emph{no twisted orbital integrals}. Therefore, no fundamental lemma is required to prove the desired equality.

\end{proof}

\begin{remark}For a heuristic that underlies the theorem, coming from a conjectural description of the cohomology of shtukas, see \cite[\S 2.2, 3.3]{Ngo06}. The punchline is that after admitting this conjectural description, the identity in Theorem \ref{ngo main} reduces to Lemma \ref{Saito-Shintani}.
\end{remark}

\begin{defn}\label{def: A and B sheavs}
Let 
\[
R\Psi_{x_0^r}(\Cal{A}_r^{\mu}) := R\pi_{A!} R\Psi_{x_0^r}( \mrm{Sat}_{\Sht_A^{\mu} }(r.\mu)|_{\Delta(\cX)})  \in D_c^b(x_0)
\]
be the cohomology of nearby cycles at $x_0^r \in \Delta(X-Z)$, and let 
\[
R\Psi_{x_0^r}(\Cal{B}_r^{\mu})  := R\pi_{B!} R\Psi_{x_0^r}(\mrm{Sat}_{\Sht_B^{\mu} }(r.\mu)|_{\Delta(\cX)})  \in D_c^b(x_0)
\]
be the cohomology of nearby cycles at $x_0^r \in \Delta(X-Z)$.\footnote{This is a small abuse of the notation used in \S \ref{subsec: nearby cycles}.} Again we apply Remark \ref{rem: nearby cycles descent} to equip $R\Psi_{x_0^r}(\Cal{A}_r^{\mu})$ and $R\Psi_{x_0^r}(\Cal{B}_r^{\mu})$ with $\F_q$-structures.

Thanks to Proposition \ref{A is local system} and Proposition \ref{B is local system}, the complex $R\Psi_{x_0^r}(\Cal{A}_r^{\mu})$ is equipped with commuting actions of $\pi_1((\cX)^r, x^r) \rtimes S_r$ and $(\Cal{H})^{\otimes r}$, while $R\Psi_{x_0^r}(\Cal{B}_r^{\mu})$ is equipped with commuting actions of $\pi_1((\cX)^r, x^r) \rtimes S_r$ and $\Cal{H}$. 

\end{defn}

\begin{cor}\label{equality traces}
For $g \in \pi_1((\cX)^r, x^r)$ and $h \in \Cal{H}$ we have 
\begin{equation}\label{eq: trace A and B}
\Tr(( h\otimes 1 \ldots \otimes 1 ) \circ \Frob \circ \tau , R\Psi_{x_0^r}(\Cal{A}_r^{\mu})) = \Tr(h \circ \Frob  \circ \tau, R\Psi_{x_0^r}(\Cal{B}_r^{\mu})).
\end{equation}
\end{cor}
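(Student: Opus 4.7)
The plan is to deduce Corollary~\ref{equality traces} directly from Theorem~\ref{ngo main} by identifying each side of \eqref{eq: trace A and B} with a trace on the generic stalks $(\Cal{A}_r^{\mu})_{x^r}$ and $(\Cal{B}_r^{\mu})_{x^r}$ respectively, with the element $g = \Frob_{x_0^r} \in \pi_1((\cX)^r, x^r)$ (obtained from a choice of specialization path).

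The first step is to commute $R\pi_{A!}$ and $R\pi_{B!}$ past $R\Psi_{x_0^r}$ in Definition~\ref{def: A and B sheavs} via the proper base change result for nearby cycles (Lemma~\ref{nearby cycles proper}). For this one needs properness of $\pi_A$ and $\pi_B$ over a Zariski neighborhood of $x_0^r$ in $\Delta(X-Z)$, which holds by Proposition~\ref{prop: integral model proper} (directly for $\pi_B$, and for $\pi_A$ by applying the $r=1$ case to each of the $r$ factors in the product). Consequently, $R\Psi_{x_0^r}(\Cal{A}_r^{\mu})$ computes the nearby cycles at $x_0$ of the local system $\Cal{A}_r^{\mu}|_{\Delta(\cX)}$ on $\Delta(\cX) \cong \cX$ (which is a local system by Proposition~\ref{A is local system}), and similarly for $B$ using Proposition~\ref{B is local system}.

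The second step is the standard observation that for a lisse $\ol{\Q}_{\ell}$-sheaf $\Cal{L}$ on the smooth curve $\cX$, its nearby cycles at $x_0 \in X-Z$ canonically recover the stalk $\Cal{L}_{x^r}$ together with the decomposition-group action; once a specialization path from $x^r$ to a geometric point near $x_0^r$ is fixed, the geometric Frobenius on $R\Psi_{x_0^r}(\Cal{L})$ corresponds to the image of $\Frob_{x_0^r}$ in $\pi_1(\cX, x^r)$. Applied to $\Cal{A}_r^{\mu}|_{\Delta(\cX)}$ and $\Cal{B}_r^{\mu}|_{\Delta(\cX)}$, this yields identifications
\[
R\Psi_{x_0^r}(\Cal{A}_r^{\mu}) \cong (\Cal{A}_r^{\mu})_{x^r}, \qquad R\Psi_{x_0^r}(\Cal{B}_r^{\mu}) \cong (\Cal{B}_r^{\mu})_{x^r}
\]
under which $\Frob$ on the left corresponds to the action of $g := \Frob_{x_0^r}$ on the right, and these identifications are compatible with the $\tau$-symmetry (which is inherited from a globally defined symmetry of $(\cX)^r$) and with the Hecke operators (which commute with both $\pi_1$ and the specialization morphism).

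Plugging $g = \Frob_{x_0^r}$ into Theorem~\ref{ngo main} and transporting across these identifications then gives exactly \eqref{eq: trace A and B}. The main obstacle is not geometric but bookkeeping: verifying that the chosen specialization path makes the two identifications above intertwine the Frobenius endomorphism of nearby cycles with the $\Frob_{x_0^r}$-action on generic stalks, compatibly with $\tau$. Once this is in place, Theorem~\ref{ngo main} supplies the content of the corollary with no further work.
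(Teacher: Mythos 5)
Your proposal is correct and takes essentially the same route as the paper: the paper's proof likewise deduces the corollary from Theorem \ref{ngo main} by combining Proposition \ref{prop: integral model proper} with Corollary \ref{cor: nearby cycles = generic coh} (properness identifies the cohomology of nearby cycles at $x_0^r$ with the generic stalk of the local system, with the Frobenius of the $\F_q$-structure from Remark \ref{rem: nearby cycles descent} acting through an element of $\pi_1((\cX)^r,x^r)$, compatibly with $\tau$ and the Hecke action). The only cosmetic difference is that the paper additionally cites the Cebotarev density theorem, which your argument does not need since Theorem \ref{ngo main} already applies to every $g \in \pi_1((\cX)^r, x^r)$.
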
 

\begin{proof}
This immediate from Theorem \ref{ngo main}, Proposition \ref{prop: integral model proper} plus Corollary \ref{cor: nearby cycles = generic coh}, and the Cebotarev density theorem. 
\end{proof}

\section{Calculation of traces on the cohomology of nearby cycles	}\label{computation of trace}

Our next step is to combine the work of \S \ref{kottwitz for shtukas}, \S \ref{counting fixed points} and \S \ref{hecke base change} to prove Kottwitz-style formulas for both sides of \eqref{eq: trace A and B}. We maintain the notation of those preceding sections.

\subsection{Calculating the trace in situation A}\label{trace A section}

\begin{defn}For a Kottwitz triple $(\gamma_0, (\gamma_x), (\delta_x))$ write 
\[
c(\gamma_0, (\gamma_x), (\delta_x)) := 
\ker^1(F, G_{\gamma_0}) \cdot \mrm{vol}(\Xi \cdot J_{\gamma_0}(F)  \backslash J_{\gamma_0} (\A_F)) \cdot dg(K)^{-1}  
\]
where the notation is as in \S \ref{counting fixed points}.
\end{defn}

\begin{thm}\label{trace situation A}
Let $T' \subset |\cX|$. Assume that $K_v := \Cal{G}(\Cal{O}_v)$ is spherical at all $v \in T'$. Let
\[
\beta = (\beta_v  \in K_v \backslash G(F_v) /  K_v)_{v \in T'}
\]
and $h_{\beta}  \in \Cal{H}$ be the corresponding Hecke operator. Let $R\Psi_{x_0^r}(\Cal{A}_r^{\mu})$ be as in Definition \ref{def: A and B sheavs}, let $\tau$ be as in Theorem \ref{ngo main}, let $f_{\beta_v}$ be as in Definition \ref{defn: hecke function double coset}, and let $\psi_{r,\mu'}$ be as in \S \ref{sssec: BC center parahoric setup}. Then we have 
\begin{align*}
&\Tr(( h_{\beta} \otimes 1 \ldots \otimes 1 ) \circ \Frob_{x_0} \circ  \tau ,  R\Psi_{x_0^r}(\Cal{A}_r^{\mu}))  \\
& \hspace{1cm} =   \sum_{\substack{ (\gamma_0, (\gamma_x),(\delta_x)) \\ \inv(\gamma_0, (\gamma_x),(\delta_x) ) = 0  }} 
c(\gamma_0, (\gamma_x), (\delta_x))   \cdot  \left(\prod_{v \neq x_0}\mrm{O}_{\gamma_v}(f_{\beta_v})  \right) \mrm{TO}_{\delta_{x_0} \sigma}(\psi_{r,\mu}')
\end{align*}
\end{thm}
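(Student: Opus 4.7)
The plan is to apply the Grothendieck-Lefschetz trace formula and then resolve the two resulting ingredients separately: a count of fixed points, carried out via the formalism of \S\ref{counting fixed points}, and a computation of the stalk of nearby cycles, carried out via the formalism of \S\ref{kottwitz for shtukas}. By the Lefschetz trace formula,
\[
\Tr\bigl((h_{\beta} \otimes 1^{\otimes(r-1)}) \circ \Frob_{x_0} \circ \tau, R\Psi_{x_0^r}(\Cal{A}_r^{\mu})\bigr) = \sum_{\xi} \frac{1}{\#\Aut(\xi)} \Tr\bigl(\Phi, R\Psi_{x_0^r}(\Sat_{\Sht_A^{\mu}}(r.\mu)|_{\Delta(\cX)})_{\xi}\bigr),
\]
where $\Phi = (h_{\beta} \otimes 1^{\otimes(r-1)}) \circ \Frob_{x_0} \circ \tau$ and $\xi$ runs over fixed points of the associated correspondence on $\Sht_A^{\mu}|_{x_0^r}$. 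Writing out $\Sht_A^{\mu} = (\Sht_{\Cal{G}}^{\leq \mu}/a^{\Z})^r$ and unwinding the action of $\tau$, such a fixed point is an $r$-tuple $(\xi_1, \Frob(\xi_1), \ldots, \Frob^{r-1}(\xi_1))$ with $\xi_1$ a $\Cal{G}$-shtuka over $\F_{q^r}$ of modification type $\leq \mu$ at $x_0$ and carrying an additional Hecke modification of type $\beta$ at $T'$. By Lemma \ref{lem: equivalence of groupoids}, applied with base $\F_{q^r}$, the groupoid of such fixed points is identified with $\Cal{C}(\mu, \beta; \{x_0\}, T'; r)_{a^{\Z}}$.

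To compute the stalk trace, we use that $\Sht_A^{\mu}$ is a Cartesian product and $\Sat_{\Sht_A^{\mu}}(r.\mu) = \Sat_{\Sht_{\Cal{G}}}(\mu)^{\boxtimes r}$. A K\"unneth-type identification for nearby cycles along the diagonal---valid thanks to universal local acyclicity (Proposition \ref{prop: sht sat ula})---gives
\[
R\Psi_{x_0^r}(\Sat_{\Sht_A^{\mu}}(r.\mu)|_{\Delta(\cX)})_{\xi} \;\cong\; \bigotimes_{i=1}^{r} R\Psi_{x_0}(\Sat_{\Sht_{\Cal{G}}}(\mu))_{\xi_i}.
\]
By the local model diagram (Corollary \ref{cor: local model sheaf}) applied factor-by-factor and the Kottwitz-type identification (Corollary \ref{cor: kottwitz for shtukas}), each factor equals $R\Psi_{x_0}(\Sat_{\Gr_{\Cal{G}}}(\mu))_{\nu}$, where $\nu \leq \mu$ is the common Schubert stratum containing all the $\xi_i$ (common because Schubert strata are $\Frob$-stable and $\xi_i = \Frob^{i-1}(\xi_1)$). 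The operator $\Phi$ acts on this tensor product as the cyclic permutation of factors (from $\tau$) composed with factorwise Frobenius, so the Saito-Shintani identity (Lemma \ref{Saito-Shintani}) yields stalk trace $\Tr(\Frob^{r}, R\Psi_{x_0}(\Sat_{\Gr_{\Cal{G}}}(\mu))_{\nu})$.

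Finally, we assemble. Stratifying the fixed-point count by the Schubert stratum $\nu$ at $x_0$ and applying Corollary \ref{cor: number fixed point formula} stratum-by-stratum (replacing the function $f_{\mu}$ there by the parahoric Schubert-cell indicator $f_{\nu}$ in the Hecke algebra of $G(F_{x_0} \otimes_{\F_q} \F_{q^r})$), weighted by the stalk trace from the previous step, gives
\[
\sum_{\substack{(\gamma_0, (\gamma_x), (\delta_x)) \\ \inv = 0}} c(\gamma_0, (\gamma_x), (\delta_x)) \cdot \prod_{v \neq x_0} \mrm{O}_{\gamma_v}(f_{\beta_v}) \cdot \sum_{\nu \leq \mu} \Tr\bigl(\Frob^{r}, R\Psi_{x_0}(\Sat_{\Gr_{\Cal{G}}}(\mu))_{\nu}\bigr) \cdot \mrm{TO}_{\delta_{x_0}\sigma}(f_{\nu}).
\]
By Lemma \ref{lem: parahoric hecke formula}, applied over $E = F_{x_0} \otimes_{\F_q} \F_{q^r}$, the inner sum $\sum_{\nu \leq \mu} \Tr(\Frob^{r}, R\Psi_{x_0}(\Sat_{\Gr_{\Cal{G}}}(\mu))_{\nu}) \cdot f_{\nu}$ is precisely $\psi_{r,\mu}'$, so by linearity of the twisted orbital integral the $x_0$-contribution collapses to $\mrm{TO}_{\delta_{x_0}\sigma}(\psi_{r,\mu}')$, matching the statement. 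The main technical obstacle is justifying the K\"unneth identification for nearby cycles along the diagonal, which relies essentially on the universal local acyclicity of the Satake sheaves; the bookkeeping relating Frobenius powers over $\F_q$ versus $\F_{q^r}$ (i.e., between the Weil-restriction picture on $\Sht_A^\mu$ and the single-shtuka picture over $\F_{q^r}$) must also be handled carefully.
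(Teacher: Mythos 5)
Your proposal is correct and follows essentially the same route as the paper's proof: Lefschetz trace formula, identification of the fixed points of $(h_{\beta}\otimes 1\ldots\otimes 1)\circ\Frob_{x_0}\circ\tau$ with the groupoids $\Cal{C}(\nu_{x_0},\beta;x_0,T';r)$ counted by Corollary \ref{cor: number fixed point formula}, reduction of the stalk of nearby cycles to the local model via Corollary \ref{cor: kottwitz for shtukas}, the K\"unneth/Saito--Shintani step converting $\Frob\circ\tau$ on the $r$-fold product into $\Frob^r$ on a single factor, and finally Lemma \ref{lem: parahoric hecke formula} to collapse $\sum_{\nu\leq\mu}\Tr(\Frob^r,\cdot)_{\nu}\,\mrm{TO}_{\delta_{x_0}\sigma}(f_{\nu})$ into $\mrm{TO}_{\delta_{x_0}\sigma}(\psi_{r,\mu}')$. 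The only cosmetic difference is that the paper passes to the local model before invoking K\"unneth (where the sheaf is literally an external product on $\Gr_{\Cal{G}}^r$), whereas you invoke a ULA-based K\"unneth directly on the product of shtukas; both come to the same thing.
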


\begin{proof}
We'll use the Lefschetz trace formula to rewrite the trace in terms of a sum of traces over fixed points. The effect of $\sigma \tau$ on a point of $\Sht_A$ is illustrated below: 
\[
\left\{ \begin{array}{@{}c@{}}
{}^{\sigma}\Cal{E}_1|_{X-x_0} \xrightarrow{\leq \mu} \Cal{E}_1|_{X-x_0} \\
{}^{\sigma}\Cal{E}_2|_{X-x_0}\xrightarrow{\leq \mu} \Cal{E}_2|_{X-x_0} \\
 \vdots \\
{}^{\sigma}\Cal{E}_r|_{X-x_0} \xrightarrow{\leq \mu} \Cal{E}_r|_{X-x_0}
\end{array} \right\} \xrightarrow{\sigma \tau} 
\left\{ \begin{array}{@{}c@{}}
{}^{\sigma^2}\Cal{E}_r|_{X-x_0} \xrightarrow{\leq \mu} {}^{\sigma}\Cal{E}_r|_{X-x_0} \\
{}^{\sigma^2}\Cal{E}_1|_{X-x_0} \xrightarrow{\leq \mu} {}^{\sigma}\Cal{E}_1 |_{X-x_0}\\
 \vdots \\
{}^{\sigma^2}\Cal{E}_{r-1}|_{X-x_0} \xrightarrow{\leq \mu} {}^{\sigma}\Cal{E}_{r-1}|_{X-x_0}.
\end{array} \right\}.
\]
Therefore, a fixed point of the correspondence $( h_{\beta} \otimes 1 \ldots \otimes 1 ) \circ \Frob_{x_0} \circ  \tau$ corresponds to a point as above such that
\begin{align*}
\Cal{E}_2 & = {}^{\sigma} \Cal{E}_1 \\
& \vdots  \\
\Cal{E}_r & = {}^{\sigma} \Cal{E}_{r-1} \\
\Cal{E}_{1} &\xrightarrow{=\beta} {}^{\sigma} \Cal{E}_r.
\end{align*}
By substitution this can be rewritten in terms of $\Cal{E}_1$, and we find that a fixed point is equivalent to the data of commuting modifications
\begin{align*}
t \co {}^{\sigma} \Cal{E}_1|_{X-x_0} \xrightarrow{\leq \mu} \Cal{E}_1|_{X-x_0} \\
t' \co {}^{\sigma^r} \Cal{E}_1|_{X-T'} \xrightarrow{=\beta} \Cal{E}_1|_{X-T'}.
\end{align*}
Hence in the notation of \S \ref{counting fixed points} we see that 
\[
\Fix(( h_{\beta}\otimes 1 \ldots \otimes 1 ) \circ \Frob_{x_0} \circ  \tau)   = \bigcup_{\nu \leq \mu} \Cal{C}(\nu_{x_0}, \beta; x_0,T'; r).
\]

By Lemma \ref{nearby cycles proper} plus Proposition \ref{prop: integral model proper}, we have 
\[
R\Psi_{x_0^r}(\Cal{A}_r^{\mu}) \cong  R\pi_{A!} (R\Psi_{x_0^r}(\mrm{Sat}_{\Sht_A}(r.\mu))).
\]
Now invoking the Grothendieck-Lefschetz trace formula, we have 
\begin{align*}
&\Tr(( h_{\beta} \otimes 1 \ldots \otimes 1 ) \circ \Frob_{x_0} \circ  \tau , R\Psi_{x_0^r}(\Cal{A}_r^{\mu})) \\
&\hspace{1cm} = \sum_{\nu \leq \mu} \sum_{\xi \in \Cal{C}(\nu_{x_0}, \beta; x_0,T'; r)}  \Tr(( h_{\beta} \otimes 1 \ldots \otimes 1 ) \circ \Frob \circ  \tau , R\Psi_{x_0^r}(\mrm{Sat}_{\Sht_A}(r.\mu))_{\xi}).
\end{align*}
By Corollary \ref{cor: kottwitz for shtukas}, for all $\xi \in  \Cal{C}(\nu_{x_0}, \beta; x_0,T'; r)$ we have 
\[	
R\Psi_{x_0^r}(\mrm{Sat}_{\Sht_A}(r.\mu))_{\xi}  = R\Psi_{x_0^r} (\mrm{Sat}_{\Gr_{\Cal{G}}^r}(r.\mu))_{\nu}.
\]
Now using Corollary \ref{cor: number fixed point formula}, we can rewrite our formula as
\begin{align}\label{eq: A long eq 1}
&\Tr(( h_{\beta} \otimes 1 \ldots \otimes 1 ) \circ \Frob_{x_0} \circ  \tau , R\Psi_{x_0^r}(\Cal{A}_r^{\mu}))  \nonumber \\ 
& \hspace{1cm} =  \sum_{\substack{ (\gamma_0, (\gamma_x),(\delta_x)) \\ \inv(\gamma_0, (\gamma_x),(\delta_x) ) = 0  }} 
c(\gamma_0, (\gamma_x), (\delta_x))   \cdot \prod_{v \neq x_0} \mrm{O}_{\gamma_v}(f_{\beta_v})  \nonumber \\
&\hspace{2cm} \cdot 	\sum_{\nu \leq \mu}  \mrm{TO}_{\delta_{x_0} \sigma}(f_{\nu})  \cdot \Tr(( h_{\beta} \otimes 1 \ldots \otimes 1 ) \circ \Frob \circ  \tau , R\Psi_{x_0^r} (\mrm{Sat}_{\Gr_{\Cal{G}}^r}(r.\mu))_{\nu}).
\end{align}
Since the Hecke operator $h_{\beta}$ supports a modification at $T'$, which is disjoint from $x_0$, it acts trivially on all the stalks lying over $x_0^r$, so we may ignore it when computing the trace in \eqref{eq: A long eq 1}. Since
\begin{equation}\label{eq: kunneth}
R\Psi_{x_0^r} (\mrm{Sat}_{\Gr_{\Cal{G}}^r}(r.\mu)) \cong R\Psi_{x_0}(\mrm{Sat}_{\Gr_{\Cal{G}}}(\mu))^{\boxtimes r},
\end{equation}
the trace of $\Frob \circ \tau$ coincides with the trace of Frobenius for the Satake sheaf on the Weil restriction $\Res_{k_r/k}( \Gr_{\Cal{G}}^{\leq \mu} \otimes_{\F_q} \F_{q^r})$. Therefore, by \cite[\S 5.2  Proposition 3]{Ngo06}, we have\footnote{The proof of this formula, which is not explicitly written in \cite{Ngo06}, goes as follows. By \eqref{eq: kunneth} we have 
\begin{align*}
\Tr( \Frob  \circ  \tau , R\Psi_{x_0^r} (\mrm{Sat}_{\Gr_{\Cal{G}}^r}(r.\mu))_{\nu}) & = \Tr(\Frob \circ \tau, R\Psi_{x_0}(\mrm{Sat}_{\Gr_{\Cal{G}}}(\mu))^{\boxtimes r}_{\nu}) \\
&= \Tr(\Frob^r  ,R\Psi_{x_0}(\mrm{Sat}_{\Gr_{\Cal{G}}}(\mu))_{\nu}),
\end{align*}
where in the last equality we used Lemma \ref{Saito-Shintani}.}
\begin{equation}\label{eq: A weil restriction}
\Tr(\Frob \circ  \tau , R\Psi_{x_0^r} (\mrm{Sat}_{\Gr_{\Cal{G}}^r}(r.\mu))_{\nu}) = \Tr(\Frob^r, R\Psi_{x_0^r} (\Sat_{ \Gr_{\Cal{G}} }(\mu))_{\nu}).
\end{equation}

Substituting \eqref{eq: A weil restriction} into \eqref{eq: A long eq 1} we arrive at
\begin{align}\label{eq: A long eq 2}
&\Tr(( h_{\beta} \otimes 1 \ldots \otimes 1 ) \circ \Frob_{x_0} \circ  \tau , R\Psi_{x_0^r}(\Cal{A}_r^{\mu}))  \nonumber \\ 
& \hspace{1cm} =  \sum_{\substack{ (\gamma_0, (\gamma_x),(\delta_x)) \\ \inv(\gamma_0, (\gamma_x),(\delta_x) ) = 0  }} 
c(\gamma_0, (\gamma_x), (\delta_x))   \cdot \prod_{v \neq x_0} \mrm{O}_{\gamma_v}(f_{\beta_v})  \nonumber \\
&\hspace{3cm} \cdot 	\sum_{\nu \leq \mu}  \mrm{TO}_{\delta_{x_0} \sigma}(f_{\nu})  \cdot \Tr(\Frob^r, R\Psi_{x_0^r} (\Sat_{ \Gr_{\Cal{G}} }(\mu))_{\nu}).
\end{align}
By Lemma \ref{lem: parahoric hecke formula} we have the following identity:
\[
\psi_{r,\mu}' = \sum_{\nu \leq \mu} \Tr(\Frob^r, R\Psi_{x_0^r}(\Sat_{ \Gr_{\Cal{G}} }(\mu))_{\nu})  f_{\nu} .
\]
Substituting this in \eqref{eq: A long eq 2}, we finally find
\begin{align*}
&\Tr(( h_{\beta} \otimes 1 \ldots \otimes 1 ) \circ \Frob_{x_0} \circ  \tau , R\Psi_{x_0^r}(\Cal{A}_r^{\mu}))  \\
& \hspace{1cm} =   \sum_{\substack{ (\gamma_0, (\gamma_x),(\delta_{x_0})) \\ \inv(\gamma_0, (\gamma_x),(\delta_x) ) = 0  }} 
c(\gamma_0, (\gamma_x), (\delta_x))   \cdot  \left( \prod_{v \neq x_0}\mrm{O}_{\gamma_v}(f_{\beta_v})  \right) \cdot  \mrm{TO}_{\delta_{x_0} \sigma}(\psi_{r,\mu}')
\end{align*}
which is what we wanted to show. 
\end{proof}

\subsection{Calculating the trace in situation B}\label{trace B section}
We now want to prove an  analogous formula for the trace in situation B. The computation in this case is a little more involved. The main reason is that the action of $S_r$ on $\Cal{B}_{x_0^r}$ is difficult to express explicitly, since it is obtained by ``continuation'' from a locus where it is described explicitly. More precisely, it was obtained from the fact that $\Cal{B}|_U$ was a local system, so that we could extend it over $X$. However, this process obfuscates the geometric meaning of this action, and we will need to use the results of \S \ref{hecke base change}, particularly the geometric model of base change studied in \S \ref{sssec: bc global reformulation}, in order to understand it.

\begin{thm}\label{trace situation B}
Let $T' \subset |\cX|$. Assume that $K_v := \Cal{G}(\Cal{O}_v)$ is spherical at all $v \in T'$. Let 
\[
\beta = (\beta_v  \in K_v \backslash G(F_v) /  K_v)_{v \in T'}
\]
and $h_{\beta}  \in \Cal{H}$ be the corresponding Hecke operator. Let $R\Psi_{x_0^r}(\Cal{B}_r^{\mu})$ be as in Definition \ref{def: A and B sheavs}, let $\tau$ be as in Theorem \ref{ngo main}, and let $\psi_{r,\mu'}$ be as in \S \ref{sssec: BC center parahoric setup}. Then we have 
\begin{align*}
&\Tr( h_{\beta} \circ \Frob_{x_0} \circ  \tau , R\Psi_{x_0^r}(\Cal{B}_r^{\mu}))  \\
& \hspace{1cm} =   \sum_{\substack{ (\gamma_0, (\gamma_x),(\delta_x)) \\ \inv(\gamma_0, (\gamma_x),(\delta_x) ) = 0  }} 
c(\gamma_0, (\gamma_x), (\delta_x))   \cdot  \left(\prod_{v \neq x_0} \mrm{O}_{\gamma_v}(f_{\beta_v}) \right) \cdot \mrm{O}_{\gamma_{x_0} }(b(\psi_{r,\mu}')).
\end{align*}
\end{thm}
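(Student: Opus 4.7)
The plan is to mirror the proof of Theorem~\ref{trace situation A}, applying the Grothendieck-Lefschetz trace formula together with the counting result Corollary~\ref{cor: number fixed point formula} and the local model of \S\ref{kottwitz for shtukas}, except that the final identification of the local factor at $x_0$ invokes the geometric model of base change (Proposition~\ref{base change formula}) rather than the Saito-Shintani identity. The structural reason $b(\psi_{r,\mu}')$ replaces $\psi_{r,\mu}'$ is that under the multiplication map $m \co \Gr_{\Cal{G}, X^r}^{\leq r.\mu}|_{\Delta} \to \Gr_{\Cal{G}}^{\leq r.\mu}$, the cyclic symmetry $\tau$ on the legs passes to the convolution cycle $\kappa'$ of \S\ref{subsec: satake transform}, whose trace of Frobenius on nearby cycles computes $b(\psi_{r,\mu}')$.

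First I would identify the groupoid of fixed points of $h_\beta \circ \Frob_{x_0} \circ \tau$ on $\Sht_B^\mu|_{x_0^r}$. Whereas in Situation A the cyclic permutation $\tau$ permutes $r$ independent shtukas so that $\Frob \circ \tau$-fixed points are shtukas over $\F_{q^r}$, in Situation B the iterated structure of $\Sht_{\Cal{G}, X^r}$ over the diagonal means that $\tau$ coincides with the partial Frobenius of iterated shtukas, and a $\Frob \circ \tau$-fixed iterated datum $(\Cal{E}_0, \dots, \Cal{E}_r = {}^\sigma \Cal{E}_0, \varphi_i)$ collapses to a single $\Cal{G}$-bundle $\Cal{E}$ equipped with $t \co {}^\sigma \Cal{E}|_{X - x_0} \xrightarrow{\leq r\mu} \Cal{E}|_{X - x_0}$ and a Hecke modification $t'$ of type $\beta$ at $T'$ satisfying the usual commutativity axiom. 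Hence, stratifying by the relative position $\nu \leq r\mu$ of $t$ at $x_0$, the fixed-point groupoid is $\bigcup_{\nu \leq r\mu} \Cal{C}(\nu_{x_0}, \beta; x_0, T'; 1)$, with Frobenius degree $d = 1$ rather than $d = r$. This is precisely what will produce ordinary rather than $\sigma$-twisted orbital integrals.

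Applying Grothendieck-Lefschetz and Corollary~\ref{cor: number fixed point formula} (noting that for $d = 1$ one has $\mrm{TO}_{\delta_{x_0}\sigma}(f_\nu) = \mrm{O}_{\gamma_{x_0}}(f_\nu)$) then yields
\[
\Tr(h_\beta \circ \Frob_{x_0} \circ \tau, R\Psi_{x_0^r}(\Cal{B}_r^\mu)) = \sum_{\substack{(\gamma_0, (\gamma_x), (\delta_x)) \\ \inv = 0}} c(\gamma_0, (\gamma_x), (\delta_x)) \prod_{v \neq x_0} \mrm{O}_{\gamma_v}(f_{\beta_v}) \sum_{\nu \leq r\mu} \mrm{O}_{\gamma_{x_0}}(f_\nu) \cdot T_\nu,
\]
where $T_\nu := \Tr(\Frob \circ \tau, R\Psi_{x_0^r}(\mrm{Sat}_{\Sht_B^\mu}(r.\mu))_\nu)$. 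By Theorem~\ref{varshavsky local model} and Corollary~\ref{cor: local model sheaf}, $T_\nu$ coincides with the analogous stalk on $\Gr_{\Cal{G}, X^r}^{\leq r.\mu}$. Pushing forward along the proper map $m$ (using Lemma~\ref{nearby cycles proper} so that $Rm_!$ commutes with nearby cycles) and recognizing $\tau$ as $\kappa'$, we obtain
\[
T_\nu = \Tr(\Frob \circ \kappa', R\Psi^X_{x_0}(Rm_! \mrm{Sat}_{\Gr_{\Cal{G}, X^r}}(r.\mu))_\nu).
\]

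Finally, Proposition~\ref{base change formula} yields $\sum_{\nu \leq r\mu} T_\nu f_{\nu, x_0} = b(\psi_{r,\mu}')$, so $\sum_\nu \mrm{O}_{\gamma_{x_0}}(f_\nu) \cdot T_\nu = \mrm{O}_{\gamma_{x_0}}(b(\psi_{r,\mu}'))$, giving the claimed identity. The hard part will be the first step: rigorously identifying the $\Frob \circ \tau$-fixed points on iterated shtukas with the $d = 1$ groupoid from \S\ref{counting fixed points}, which requires a careful analysis of how the partial Frobenius on $\Sht_{\Cal{G}, X^r}$ intertwines with the leg-cyclic symmetry $\tau$ to produce the effective collapse from degree $r$ to degree $1$ that explains the replacement of twisted by ordinary orbital integrals.
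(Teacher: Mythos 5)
Your overall route is the same as the paper's: Grothendieck--Lefschetz, the counting result (Corollary \ref{cor: number fixed point formula}) applied with $d=1$ so that the local factor at $x_0$ is an ordinary orbital integral, the local model theorem to transfer stalk traces to the affine Grassmannian, commuting $Rm_!$ with nearby cycles via Lemma \ref{nearby cycles proper}, and finally Proposition \ref{base change formula} to assemble $\sum_{\nu}\mrm{O}_{\gamma_{x_0}}(f_\nu)\,T_\nu$ into $\mrm{O}_{\gamma_{x_0}}(b(\psi'_{r,\mu}))$. The fixed-point groupoid you arrive at, $\bigcup_{\nu\leq r\mu}\Cal{C}(\nu_{x_0},\beta;x_0,T';1)$, is exactly the one in the paper.

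The genuine gap is in how you handle the $\tau$-action, and it sits at the step you wave through ("recognizing $\tau$ as $\kappa'$") rather than at the step you flag as hard. The $S_r$-action on $\Cal{B}^{\mu}_r$, hence on $R\Psi_{x_0^r}(\Cal{B}^{\mu}_r)$, is \emph{not} given by a geometric automorphism of $\Sht_B^{\mu}|_{x_0^r}$: it is defined only by continuation of the equivariant structure of the local system from the locus of pairwise distinct legs, so your opening claim that "$\tau$ coincides with the partial Frobenius of iterated shtukas" and your direct identification of $\Frob\circ\tau$-fixed points on the iterated fiber presuppose a geometric description of this continued action at the diagonal that has not been established. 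The paper sidesteps this by pushing down along the multiplication map $m$ to $\Sht^{\mu}_{B,X^{(r)}}$ over the symmetric power $X^{(r)}$, where $\tau$ acts trivially on the \emph{space}; the fixed points are then read off on the single-leg space $\Sht_{\Cal{G},X}^{\leq r\mu}/a^{\Z}$ with no partial-Frobenius input, and the entire subtlety is concentrated in showing that the induced action of $\tau$ on the stalks of $R\Psi_{x_0^r}(Rm_!\Sat_{\Sht_B^{\mu}}(r.\mu))$ agrees with the convolution rotation $\kappa'$ of \S\ref{sssec: bc global reformulation}. That identification is proved by a compatibility diagram of local models over $X^r$ and $X^{(r)}$ together with a continuation argument reducing the check to the locus of distinct points, where both actions are visibly the same geometric permutation. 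To repair your proposal you would either need to supply that continuation/local-model argument for "$\tau=\kappa'$", or independently justify the partial-Frobenius description of the continued $S_r$-action at the diagonal, which amounts to the same amount of work.
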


\begin{proof}
Let $X^{(r)} = \Sym^r(X)$ by the $r$th symmetric power of $X$, and  
\[
\Sht_{B, X^{(r)}}^{\mu} :=  \Sht_{\Cal{G}, X^{(r)}}^{\leq r.\mu}/a^{\Z}.
\]
The idea here is to push down the computation from $X^r$ to $X^{(r)}$, which trivializes the $S_r$ action on the fiber $\Sht_{B, X^{(r)}}$ over a point, transferring the effect of this action completely to the sheaf theory, where it was studied in \S \ref{hecke base change}. Consider the following commutative diagram, in which the front cartesian square is the fiber of the back cartesian square over the diagonal copy of $X$ in $X^{(r)}$. 
\begin{equation}\label{eq: sit B big diagram}
\begin{tikzcd} 
& \Sht_B^{\mu}\ar[r] \ar[d, "m"]	 &  X^r  \ar[d, "\mrm{add}"] \\
&  \Sht_{B, X^{(r)}}^{\mu}	 \ar[r]  & X^{(r)} \\
\Sht_B^{\mu}|_{\Delta} \ar[uur, hook, dashed]  \ar[d, "m"] \ar[r] &   \Delta (X) \ar[uur, hook, dashed] \ar[d, "\mrm{add}"] \\
\Sht_{\Cal{G},X}^{\leq r\mu}/a^{\Z}  \ar[r, "\pi"]  \ar[uur, hook, dashed] & \Delta(X) \ar[uur, hook, dashed] 
\end{tikzcd}
\end{equation}
The map $\mrm{add} \co X^r \rightarrow X^{(r)}$ is totally ramified over the diagonal $\Delta(X) \subset X^r$, so for any \'{e}tale sheaf $\Cal{F}$ on $X^r$ the stalk of $\Cal{F}$ at $x^r \in X^r$ is canonically identified with the stalk of $\mrm{add}_*\Cal{F}$ at $\mrm{add}(x^r) \in X^{(r)}$. Therefore, from the front cartesian square we have
\[
\Tr( h_{\beta} \circ \Frob_{x_0} \circ  \tau , R\Psi_{x_0^r}(\Cal{B}_r^{\mu}))  = \Tr( h_{\beta} \circ \Frob_{x_0} \circ  \tau ,  R\Psi_{x_0^r}(R\pi^{\circ}_! Rm_! \Sat_{\Sht_B^{\mu}}(r.\mu))) 
\]
where $\pi^{\circ}$ is the restriction of $\pi$ to the fiber over $X^{\circ}$. We now proceeding as before, using the Grothendieck-Lefschetz trace formula to rewrite the trace in terms of a sum of traces over fixed points. We begin by describing the fixed points of $h_{\beta} \circ \Frob_{x_0} \circ  \tau $ on $\Sht_{\Cal{G},X}^{\leq r\mu}/a^{\Z} $. 

Now, on  $\Sht_{B, X^{(r)}}^{\mu}	$ the permutation $\tau$ evidently acts trivially. A point of $\Sht_{\Cal{G},X}/a^{\Z}|_{x_0}$ is a modification 
\[
{}^{\sigma} \Cal{E}_1|_{X-x_0} \xrightarrow{\leq r \mu} \Cal{E}_1|_{X-x_0} 
\]
occurring over $x_0$. The map $\sigma  $ takes this to 
\[
{}^{\sigma^2} \Cal{E}_1|_{X-x_0}  \xrightarrow{\leq r \mu} {}^{\sigma} \Cal{E}_1|_{X-x_0} .
\]
Therefore, a fixed point of the correspondence $ h_{\beta} \circ \Frob_{x_0} \circ  \tau$ is equivalent to the data of commuting modifications
\begin{align*}
t \co {}^{\sigma} \Cal{E}_1|_{X-x_0}  \xrightarrow{\leq r \mu} \Cal{E}_1 |_{X-x_0} \\
t' \co {}^{\sigma} \Cal{E}_1 |_{X-T'} \xrightarrow{=\beta} \Cal{E}_1 |_{X-T'} .
\end{align*}

Hence we see that (remember that $x_0$ is assumed to have degree $1$)
\[
\Fix(h_{\beta} \circ \Frob_{x_0} \circ  \tau)   = \bigcup_{\nu \leq r\mu} \Cal{C}(\nu_{x_0}, \beta; x_0,T'; 1)
\]
so by the Grothendieck-Lefschetz trace and arguing as in \S \ref{trace A section} for situation A, 
\begin{align*}
&\Tr( h_{\beta} \circ \Frob_{x_0} \circ  \tau , R\Psi_{x_0^r}(\Cal{B}_r^{\mu}))  = \sum_{\nu \leq r\mu} \# \Cal{C}(\nu_{x_0}, \beta; x_0,T'; 1) \\
&  \hspace{2.5in} \cdot  \Tr( h_{\beta} \circ \Frob \circ  \tau,  R\Psi_{x_0^r}( Rm_!\Sat_{\Sht_B^{\mu}}(r.\mu))_{\nu}).
\end{align*}
Using Corollary \ref{cor: number fixed point formula}, we rewrite this as 
\begin{align}\label{B eq 1}
&
\Tr( h_{\beta} \circ \Frob_{x_0} \circ  \tau , R\Psi_{x_0^r}(\Cal{B}_r^{\mu}))   \nonumber \\
& \hspace{1cm} =  \sum_{\substack{ (\gamma_0, (\gamma_x),(\delta_x)) \nonumber \\ \inv(\gamma_0, (\gamma_x),(\delta_x) ) = 0  }} 
c(\gamma_0, (\gamma_x), (\delta_x))   \cdot \prod_{v \neq x_0} \mrm{O}_{\gamma_v}(f_{\beta_v}) \nonumber \\
& \hspace{3cm} \cdot \sum_{\nu \leq \mu}  \mrm{TO}_{\delta_{x_0}}(f_{\nu})  \cdot \Tr(h_{\beta} \circ \Frob \circ  \tau, R\Psi_{x_0^r}(Rm_! \Sat_{\Sht_B^{\mu}}(r.\mu))_{\nu}) .
\end{align}

As  in the previous calculation for situation A, the Hecke operator acts trivially on the stalk at $x_0$ because is supported on a disjoint set of points. We use the affine Grassmannian as a local model to calculate $ \Tr( \Frob_{x_0} \circ  \tau, R\Psi_{x_0^r}(Rm_! \mrm{Sat}_{\Sht_B}(r.\mu))_{\nu}) $. By Theorem \ref{varshavsky local model}, we have 
\begin{equation}\label{eq: B eq 2}
\Tr( \Frob_{x_0} \circ  \tau, R\Psi_{x_0^r}( Rm_! \Sat_{\Sht_B^{\mu}}(r.\mu))_{\nu}) = \Tr(\Frob_{x_0} \circ \kappa', R \Psi_{x_0^r} (Rm_! \Sat_{\Gr_{\Cal{G},X^r}^{\leq r.\mu}}(r.\mu))_{\nu}),
\end{equation}
where the notation on the right hand side is as in Corollary \ref{base change formula}, if we can show that the permutation $\tau$ on the left side is carried by Theorem \ref{varshavsky local model} to the permutation $\kappa'$ studied in \S \ref{sssec: bc global reformulation}. (The same issue is raised and explained in \cite[\S 5.6 Proposition 3]{Ngo06}.) To prove it, consider the diagram
\[
\begin{tikzcd} 
& \wt{W}^{\leq r.\mu}_{X^r} \ar[dr, twoheadrightarrow, "\text{\'{e}tale}"] \ar[dl]  \ar[d, "m"] \\
\Gr_{\Cal{G},X^r}^{\leq r.\mu} \ar[d, "m"] & \wt{W}^{\leq r\mu}_{X^{(r)}} \ar[dl] \ar[dr, twoheadrightarrow, "\text{\'{e}tale}"]  &  \Sht_B^{\mu} \ar[r] \ar[d,  "m"]	 & X^r  \ar[d, "\mrm{add}"] \\
\Gr_{\Cal{G},X^{(r)}}^{\leq r\mu} & & \Sht_{B, X^{(r)}}^{\mu} \ar[r]  & X^{(r)} 
\end{tikzcd}
\]
Here $\wt{W}^{\leq r.\mu}_{X^r} $ expresses $\Gr_{\Cal{G},X^r}^{\leq r.\mu}$ as a local model for $ \Sht_B^{\leq r.\mu}|_{\Delta}$ and $\wt{W}^{\leq r\mu}_{X^{(r)}}$ expresses $\Gr_{\Cal{G},X^{(r)}}^{\leq r\mu} $ as a local model for $ \Sht_{B, X^{(r)}}^{\mu} $. The existence of such a commutive diagram is immediate from the proof of Theorem \ref{varshavsky local model}. The claim is then that the permutation actions on $R\Psi_{x_0^r}(Rm_! \Sat_{\Gr_{\Cal{G},X^r}}(r.\mu))$ and $R\Psi_{x_0^r}(Rm_! \mrm{Sat}_{\Sht_B}(r.\mu))$, induced by middle extension from $(\cX)^r$ to $(X-Z)^r$, are compatible. This is clear from the diagram and the fact that the identity can be checked on the locus where the points $(x_1, \ldots, x_r)$ are distinct, where it is evidently given by the same geometric permutation action in both cases. 

Now combining Proposition \ref{base change formula}, Corollary \ref{cor: kottwitz for shtukas}, and Lemma \ref{lem: parahoric hecke formula}, we have 
\begin{equation}\label{eq: B eq 3}
b(\psi_{r,\mu}') = \sum_{\nu \leq r\mu} \Tr( \Frob \circ  \kappa', R \Psi_{ x_0^r} (Rm_!\Sat_{\Gr_{\Cal{G}}}(\mu))_{\nu})  f_{\nu} .
\end{equation}
Putting together \eqref{B eq 1}, \eqref{eq: B eq 2}, and \eqref{eq: B eq 3} gives
\begin{align*}
&\Tr(( h_{\beta} \otimes 1 \ldots \otimes 1 ) \circ \Frob_{x_0} \circ  \tau , R\Psi_{x_0^r}(\Cal{B}_r^{\mu}))  \\
& \hspace{1cm} =   \sum_{\substack{ (\gamma_0, (\gamma_x),(\delta_{x_0})) \\ \inv(\gamma_0, (\gamma_x),(\delta_x) ) = 0  }} 
c(\gamma_0, (\gamma_x), (\delta_x))   \cdot  \left( \prod_{v \neq x_0}\mrm{O}_{\gamma_v}(f_{\beta_v})   \right) \mrm{TO}_{\delta_{x_0}}(b(\psi_{r,\mu}')),
\end{align*}
which is what we wanted to show. 
\end{proof}

\subsection{The base change fundamental lemma for parahoric Hecke algebras}\label{FL proof}

We can now deduce some cases of the base change fundamental lemma. 

\begin{cor}\label{twisted and untwisted} Let $T' \subset |\cX|$. Assume that $K_v := \Cal{G}(\Cal{O}_v)$ is spherical at all $v \in T'$. Let 
\[
\beta = (\beta_v  \in K_v \backslash G(F_v) /  K_v)_{v \in T'}
\]
and $h_{\beta}  \in \Cal{H}$ be the corresponding Hecke operator. Let $\psi_{r,\mu'}$ be as in \S \ref{sssec: BC center parahoric setup} and the base change homomorphim $\psi_{r,\mu'} \mapsto b(\psi_{r,\mu'})$ be as in \S \ref{defn base change}. Then we have
\begin{align*}
&\sum_{\substack{ (\gamma_0, (\gamma_x),(\delta_x)) \\ \inv(\gamma_0, (\gamma_x),(\delta_x) ) = 0  }} 
c(\gamma_0, (\gamma_x), (\delta_x))   \cdot  \left(\prod_{v \neq x_0} \mrm{O}_{\gamma_0}(f_{\beta_v}) \right) \cdot   \mrm{TO}_{\delta_{x_0} \sigma}(\psi_{r,\mu}') \\
& \hspace{2cm} = 
 \sum_{\substack{ (\gamma_0, (\gamma_x),(\delta_{x_0})) \\ \inv(\gamma_0, (\gamma_x),(\delta_x) ) = 0  }} 
c(\gamma_0, (\gamma_x), (\delta_x))   \cdot  \left(\prod_{v \neq x_0} \mrm{O}_{\gamma_0}(f_{\beta_v})\right) \cdot  \mrm{O}_{\gamma_0}(b(\psi_{r,\mu}'))
\end{align*}
\end{cor}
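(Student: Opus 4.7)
The proof will be essentially a direct combination of the three preceding results: Theorem \ref{trace situation A}, Theorem \ref{trace situation B}, and Corollary \ref{equality traces}. In other words, the corollary is the ``bare formula'' obtained by equating the Kottwitz-style expressions for the traces of $(h_\beta \otimes 1 \otimes \cdots \otimes 1) \circ \Frob_{x_0} \circ \tau$ on $R\Psi_{x_0^r}(\Cal{A}_r^\mu)$ and of $h_\beta \circ \Frob_{x_0} \circ \tau$ on $R\Psi_{x_0^r}(\Cal{B}_r^\mu)$, once one knows the two traces themselves are equal.

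First I would invoke Theorem \ref{trace situation A} to rewrite
\[
\Tr((h_\beta \otimes 1 \otimes \cdots \otimes 1) \circ \Frob_{x_0} \circ \tau,\, R\Psi_{x_0^r}(\Cal{A}_r^\mu))
\]
as the sum appearing on the LHS of the corollary, involving $\mrm{TO}_{\delta_{x_0}\sigma}(\psi_{r,\mu}')$. Next, I would invoke Theorem \ref{trace situation B} to rewrite
\[
\Tr(h_\beta \circ \Frob_{x_0} \circ \tau,\, R\Psi_{x_0^r}(\Cal{B}_r^\mu))
\]
as the sum on the RHS, involving $\mrm{O}_{\gamma_{x_0}}(b(\psi_{r,\mu}'))$. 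Finally, the desired equality follows immediately from Corollary \ref{equality traces}, which states that these two traces coincide (this being the global input from Ng\^{o}'s comparison theorem, Theorem \ref{ngo main}, applied after observing that the nearby cycles at $x_0^r$ reduce the comparison of sheaves $\Cal{A}_r^\mu$ and $\Cal{B}_r^\mu$ over the generic locus to an equality of traces at the boundary point).

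Since all the substantive geometric and combinatorial work is already packaged in the cited results, there is no real ``hard step'' remaining: the only thing to verify is that the three results are being applied to the same data, i.e. that the Hecke operator $h_\beta$, the permutation $\tau$, and the central function $\psi_{r,\mu}'$ occur with matching conventions on both sides. This is a bookkeeping check and is immediate from the definitions in \S \ref{subsec: comparison} and \S \ref{sssec: BC center parahoric setup}. The only conceptual subtlety worth a sentence of comment is that in Theorem \ref{trace situation B}, the ``$\tau$ on situation A'' is being matched with the ``$\kappa'$ on situation B'' via the local model diagram and the base change identity of Proposition \ref{base change formula}, but this identification is part of the proof of Theorem \ref{trace situation B} itself and so needs only to be invoked, not reproved.
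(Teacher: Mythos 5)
Your proposal is correct and follows the paper's own proof essentially verbatim: the corollary is obtained by substituting Theorem \ref{trace situation A} and Theorem \ref{trace situation B} into Corollary \ref{equality traces}. The only point the paper makes explicit that you fold into ``bookkeeping'' is the replacement of $\mrm{O}_{\gamma_v}$ by $\mrm{O}_{\gamma_0}$ in the resulting sums, which is justified because $\Cal{G}(F_x)\cong\GL_n(F_x)$ for all $x\in X-Z$, so stable conjugacy coincides with conjugacy at those places.
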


\begin{proof}
This follows immediately from substituting Theorem \ref{trace situation A} and Theorem \ref{trace situation B} into Corollary \ref{equality traces}, and the following comment about changing the $\gamma_v$ to $\gamma_0$: since by definition of $X-Z$ we have that $\Cal{G}(F_x) \cong \GL_n(F_x)$ for all $x \in X-Z$, the notion of stable conjugacy coincides with the notion of conjugacy. 
\end{proof}

It seems to be  ``well-known'' how to deduce a fundamental lemma from a statement such as Corollary \ref{twisted and untwisted}.\footnote{It is remarked on p.84 of the Arxiv version 2 of \cite{Ngo06} that this is ``standard'', and a reference is given to \cite{Clo90}.} Nevertheless, let us give a proof for completeness, following \cite[\S 5.7 Th\'{e}or\`{e}me 1]{Ngo06}. First we introduce a piece of notation. 

\begin{defn}
For $\mu  = (\mu_1, \ldots, \mu_n) \in X_*(\GL_n) \cong \Z^n$, we define 
\[
|\mu| := \mu_1 + \ldots + \mu_n.
\]
The stack $\Sht_{\Cal{G}}^{\leq \mu}$ is non-empty if and only if $|\mu|=0$, because a $\Cal{G}$-bundle has the notion of degree on $X$, which is preserved by the Frobenius twist $\sigma$ on $S$. Let 
\[
Z(\Cal{H}_{G(F_{x_0,r}),J})_0 \subset Z(\Cal{H}_{G(F_{x_0}),J})
\] 
be the subspace generated by the $\psi_{r,\mu}'$ with $|\mu| =0$, which is the same as the subspace generated by the $\psi_{r,\mu}$ with $|\mu|=0$. 
\end{defn}

\begin{thm}
Let $F_{t}$ be a local field of characteristic $p$, and $F_{t,r}/F_{t}$ the unramified extension of degree $r$. Let $\delta$ be a $\sigma$-conjugacy class in $\GL_n(F_{t,r})$, with norm $N \delta_{t} = \gamma_{t} \in \GL_n(F_{t})$. Assume $\gamma_{t}$ is regular semisimple and separable. If $\phi \in Z(\Cal{H}_{G(F_{t,r}),J})_0$, then we have
\[
\mrm{TO}_{\delta_{t} \sigma }(\phi) = \mrm{O}_{\gamma_{t}}(b(\phi)).
\]
\end{thm}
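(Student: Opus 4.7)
The plan is to deduce this purely local identity from the global identity Corollary~\ref{twisted and untwisted} by globalizing the given local data and then isolating a single Kottwitz triple via auxiliary test functions. By linearity in $\phi$, it suffices to treat the case $\phi = \psi'_{r,\mu}$ with $|\mu|=0$, which is precisely the function appearing in Corollary~\ref{twisted and untwisted}.

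\smallskip

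First I would set up the globalization. Choose a smooth projective curve $X/\F_q$ with function field $F$, equipped with a rational point $x_0 \in X(\F_q)$ such that the completion $F_{x_0}$ is isomorphic to $F_t$. Choose a central division algebra $D/F$ of dimension $n^2$, unramified at $x_0$, ramified at a finite set $Z$ of places large enough to satisfy the hypothesis $\#Z \ge n^2\lVert\mu\rVert$ of Proposition~\ref{proper} (this is the point of using $D$-shtukas). Let $\Cal{G}$ be a parahoric group scheme as in \S\ref{subsec: D-shtukas} inducing $J$ at $x_0$. Since $\gamma_t$ is regular semisimple and separable, $F_t[\gamma_t]$ is an \'etale algebra of degree $n$; by the standard globalization lemma for regular semisimple elements (as in \cite{Clo90} and used in \cite{Ngo06} \S5.7) we find $\gamma_0 \in D^{\times}(F)$ whose localization at $x_0$ is conjugate to $\gamma_t$. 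Simultaneously globalize the $\sigma$-conjugacy class of $\delta_t$ to a global datum whose local component at $x_0$ corresponds to $\delta_t$, so that $(\gamma_0, (\gamma_v), (\delta_{x_0}))$ forms a Kottwitz triple with vanishing Kottwitz invariant.

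\smallskip

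Next I would apply Corollary~\ref{twisted and untwisted} to this global setup with $\phi = \psi'_{r,\mu}$. The resulting identity can be rearranged as
\[
\sum_{(\gamma_0', (\gamma_v'), (\delta_{x_0}'))} c(\gamma_0', (\gamma_v'), (\delta_{x_0}')) \cdot \Bigl(\prod_{v \ne x_0}\mathrm{O}_{\gamma_v'}(f_{\beta_v})\Bigr)\cdot \Bigl[\,\mathrm{TO}_{\delta_{x_0}'\sigma}(\psi'_{r,\mu}) - \mathrm{O}_{\gamma_0'}(b(\psi'_{r,\mu}))\,\Bigr] = 0,
\]
where the sum is over Kottwitz triples with trivial invariant. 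The goal is to choose the auxiliary Hecke data $\beta = (\beta_v)_{v \in T'}$ so that only the triple extending our chosen $(\gamma_0, \delta_{x_0})$ contributes.

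\smallskip

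The separation step is the key technical point. I would choose auxiliary places $v_1, \dots, v_N \in |\cX|$ where $\Cal{G}$ is hyperspecial, and pick Hecke functions $f_{\beta_{v_i}} \in \Cal{H}_{G(F_{v_i}), K_{v_i}}$ whose orbital integrals are supported in a small neighborhood of the conjugacy class of $\gamma_0$ at $v_i$ in $G(F_{v_i})$. Since $\gamma_0$ is regular semisimple separable (stable conjugacy coincides with conjugacy for $\GL_n$), its conjugacy class is determined by its characteristic polynomial, and one can construct such truncating functions by standard methods (e.g.\ by convolving the characteristic function of a small compact neighborhood of $\gamma_0$ with itself, or by using pseudocoefficients as in \cite{Clo90}). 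Combined with the fact (Proposition~\ref{prop: gen fibers}) that the set of stable conjugacy classes contributing to the sum with bounded support is finite, taking $N$ large enough forces only one global stable conjugacy class at $x_0$ to survive, namely that of $\gamma_0$. After this isolation the global identity collapses to
\[
C \cdot \bigl[\,\mathrm{TO}_{\delta_{x_0}\sigma}(\psi'_{r,\mu}) - \mathrm{O}_{\gamma_0}(b(\psi'_{r,\mu}))\,\bigr] = 0
\]
with $C$ a nonzero product of the volume factor and the auxiliary orbital integrals at $v_i$, whence the desired equality at $x_0$. Translating back to $F_t$ gives the theorem.

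\smallskip

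The main obstacle will be the combined globalization/separation. Producing $\gamma_0 \in D^\times(F)$ with the prescribed local behavior at $x_0$ (and constraints at the ramified set $Z$, which must remain large enough for properness) already requires weak approximation for the torus $F_t[\gamma_t]^{\times}$ and control of local invariants of $D$; the separation argument then requires constructing enough auxiliary Hecke functions whose orbital integrals distinguish $\gamma_0$ from all other candidate global stable classes with matching Kottwitz invariants. Both ingredients are standard in character-theoretic proofs of base change à la Clozel--Labesse, but their combined execution for division algebras over function fields is the only delicate step; everything else is a direct consequence of the trace formulas developed in \S\ref{trace A section} and \S\ref{trace B section}.
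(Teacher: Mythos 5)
Your proposal follows essentially the same route as the paper: globalize to the $\Cal{D}$-shtuka setting, apply Corollary \ref{twisted and untwisted} with $\phi=\psi'_{r,\mu}$, use auxiliary Hecke functions at places away from $x_0$ together with the finiteness of contributing Kottwitz triples to isolate a single term, and divide by the nonzero constant (nonvanishing coming from $|\mu|=0$). The one refinement in the paper is that it does not attempt the exact globalization you list as the delicate step: it takes $\wt{\gamma}\in G(F)$ merely $t$-adically close to $\gamma_t$ and of the form $N(\wt{\delta}_{x_0})$ via weak approximation, and invokes local constancy of (twisted) orbital integrals near regular semisimple separable elements to transfer the identity back to $(\gamma_t,\delta_t)$, which sidesteps the issue you raise.
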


\begin{proof}
Let $\F_q$ be the residue field of $F_{t}$. Choose a global curve $X$ over $\F_q$ having a rational point $x_0$, and function field $F$, so that $F_{x_0} \cong F_t \cong \F_q((t)) $. Choose a division algebra $D$ as in \S \ref{subsec: A and B setup}, and define $G$ and $\Cal{G}$ as in \S \ref{subsec: D-shtukas}. We can then apply Corollary \ref{twisted and untwisted}. 

For a fixed function $h \in \Cal{H}_{G,K}(\A)$ the orbital integrals and twisted orbital integrals are locally constant near regular semisimple separable elements. Therefore, by weak approximation we can choose $\wt{\gamma} \in G(F)$ close enough to $\gamma_t$ in the $t$-adic topology so that $\wt{\gamma} = N(\wt{\delta}_{x_0}) \in G(F_t \otimes_{\F_q} \F_{q^r})$ for some $\wt{\delta}_{x_0} \in G(F_t \otimes_{\F_q} \F_{q^r})$, and such that   
\begin{align*}
 \mrm{O}_{\gamma_{x_0}}(b(\psi_{\mu})) &=  \mrm{O}_{\wt{\gamma}}(b(\psi_{\mu})) \\
 \mrm{TO}_{\delta_{x_0} \sigma_{x_0} }(\psi_{\mu}) &=  \mrm{TO}_{\wt{\delta}_{x_0} \sigma_{x_0}}(\psi_{\mu}).
\end{align*}
We can choose an appropriate Hecke operator $h = (h_v) \in \Cal{H}_G(\A)$ so that $\mrm{O}_{\wt{\gamma}}(h_v) \neq 0$ for $v \neq x$. 

Because a fixed choice of $h$ is the identity at all but finitely many places, any Kottwitz triple for which the product of orbital integrals is non-zero forces the $\gamma_v$ to be in $K_v$ at all but finitely many $v$. Then by \cite[Proposition 7.1]{Kott86} there are only finitely many possibilities for the Kottwitz triple, as all $\gamma_v$ outside a fixed finite set must be (rationally) conjugate to $\gamma$. (Technically this discussion is unnecessary here because we are only dealing with $\GL_n$ at this point.) Therefore, since the support of any adelic Hecke operator is compact open in $G(\A)$, while $G(F)$ is discrete, for any fixed $h \in \Cal{H}_{G,K}(\A)$ there are only finitely many non-zero summands in Corollary \ref{twisted and untwisted}. 

For $\GL_n$, different conjugacy classes of $\gamma_0$ are also different stable conjugacy classes. Hence we may choose the Hecke operator at an unramified auxiliary place appropriately to ensure that 
\[
\left(\prod_{v \neq x_0} \mrm{O}_{\gamma_0 }(h_v) \right) \cdot   \mrm{TO}_{\delta_{x_0} \sigma}(\psi_{r,\mu}') \quad \text{ and } \left(\prod_{v \neq x_0} \mrm{O}_{\gamma_0}(h_v)\right) \cdot  \mrm{O}_{\gamma_0}(b(\psi_{r,\mu}'))
\]
vanish except for the chosen $\gamma_{x_0}$. Then we have 
\begin{align*}
&
c(\gamma_0, (\gamma_x), (\delta_x))   \cdot  \left(\prod_{v \neq x_0} \mrm{O}_{\gamma_0}(h_v) \right) \cdot   \mrm{TO}_{\delta_{x_0} \sigma}(\psi_{r,\mu}') \\
& \hspace{2cm} = 
c(\gamma_0, (\gamma_x), (\delta_x))   \cdot  \left(\prod_{v \neq x_0} \mrm{O}_{\gamma_0}(h_v)\right) \cdot  \mrm{O}_{\gamma_0}(b(\psi_{r,\mu}'))
\end{align*}
Since $|\mu|=0$, $\Sht_{\Cal{G}}^{\leq \mu}$ is non-empty so these terms are not $0$. Dividing out by the common (necessarily non-zero) factor $c(\gamma_0, (\gamma_x), (\delta_x))   \cdot  \left(\prod_{v \neq x_0} \mrm{O}_{\gamma_0}(h_v)\right)$ then yields the desired equality for all $\psi_{r, \mu}'$ with $|\mu|=0$. We conclude by observing that these span $ Z(\Cal{H}_{G(F_{x_0,r}),J})_0$. 
\end{proof}

\bibliographystyle{amsalpha}

\bibliography{Bibliography}

\end{document}